\newtheorem{theorem}{Theorem}
\newtheorem{corollary}[theorem]{Corollary}
\newtheorem{lemma}[theorem]{Lemma}
\newtheorem{proposition}[theorem]{Proposition}
\newtheorem{remark}[theorem]{Remark}
\numberwithin{theorem}{section}
\numberwithin{figure}{section}
\numberwithin{equation}{section}
\DeclareMathOperator{\dist}{dist}
\DeclareMathOperator{\SLE}{SLE}
\DeclareMathOperator{\hSLE}{hSLE}
\DeclareMathOperator{\GFF}{GFF}
\DeclareMathOperator{\diam}{diam}
\DeclareMathOperator{\ust}{UST}
\def\cZ{\mathcal{Z}}
\def\cU{\mathcal{U}}
\def\cT{\mathcal{T}}
\def\cR{\mathcal{R}}
\def\cQ{\mathcal{Q}}
\def\cN{\mathcal{N}}
\def\cH{\mathcal{H}}
\def\cG{\mathcal{G}}
\def\cD{\mathcal{D}}
\def\cA{\mathcal{A}}
\def\ph{\varphi}
\begin{document}

\title{Piecewise Temperleyan dimers and a multiple SLE$_8$}

 \author{Nathana\"el Berestycki\thanks{University of Vienna, \texttt{nathanael.berestycki@univie.ac.at}} \ and Mingchang Liu\thanks{Tsinghua University, \texttt{liumc18@mails.tsinghua.edu.cn} }}

\date{\today}

\maketitle

\abstract{
We consider the dimer model on piecewise Temperleyan, simply connected domains, on families of graphs which include the square lattice as well as superposition graphs. 
We focus on the spanning tree $\cT_\delta$ associated to this model via Temperley's bijection, which turns out to be a Uniform Spanning Tree with singular alternating boundary conditions. Generalising the work of the second author with Peltola and Wu \cite{LiuPeltolaWuUST} we obtain a scaling limit result for $\cT_\delta$. For instance, in the simplest nontrivial case, the limit of $\cT_\delta$ is described by a pair of trees whose Peano curves are shown to converge jointly to a multiple SLE$_8$ pair. The interface between the trees is shown to be given by  an SLE$_2(-1, \ldots, -1)$ curve.   
More generally we provide an equivalent description of the scaling limit in terms of imaginary geometry. This allows us to make use of the results developed by the first author and Laslier and Ray \cite{BLRdimers}. We deduce that, universally across these classes of graphs, the corresponding height function converges to a multiple of the Gaussian free field with boundary conditions that jump at each non-Temperleyan corner. After centering, this generalises a result of Russkikh \cite{RusskikhDimers} who proved it in the case of the square lattice. Along the way, we obtain results of independent interest on chordal hypergeometric SLE$_8$; for instance we show its law is equal to that of an SLE$_8 (\bar \rho)$ for a certain vector of force points, conditional on its hitting distribution on a specified boundary arc.
}


%

\tableofcontents

\newcommand{\eps}{\epsilon}
\newcommand{\ov}{\overline}
\newcommand{\U}{\mathbb{U}}
\newcommand{\T}{\mathbb{T}}
\newcommand{\HH}{\mathbb{H}}
\newcommand{\LA}{\mathcal{A}}
\newcommand{\LB}{\mathcal{B}}
\newcommand{\LC}{\mathcal{C}}
\newcommand{\LD}{\mathcal{D}}
\newcommand{\LF}{\mathcal{F}}
\newcommand{\LK}{\mathcal{K}}
\newcommand{\LE}{\mathcal{E}}
\newcommand{\LG}{\mathcal{G}}
\newcommand{\LL}{\mathcal{L}}
\newcommand{\LM}{\mathcal{M}}
\newcommand{\LQ}{\mathcal{Q}}
\newcommand{\LP}{\mathcal{P}}
\newcommand{\LR}{\mathcal{R}}
\newcommand{\LT}{\mathcal{T}}
\newcommand{\LS}{\mathcal{S}}
\newcommand{\LU}{\mathcal{U}}
\newcommand{\LV}{\mathcal{V}}
\newcommand{\LX}{\mathcal{X}}
\newcommand{\PartF}{\mathcal{Z}}
\newcommand{\LH}{\mathcal{H}}
\newcommand{\R}{\mathbb{R}}

\newcommand{\C}{\mathbb{C}}
\newcommand{\N}{\mathbb{N}}
\newcommand{\Z}{\mathbb{Z}}
\newcommand{\E}{\mathbb{E}}
\newcommand{\PP}{\mathbb{P}}
\newcommand{\QQ}{\mathbb{Q}}
\newcommand{\A}{\mathbb{A}}
\newcommand{\one}{\mathbb{1}}
\newcommand{\bn}{\mathbf{n}}
\newcommand{\MR}{MR}
\newcommand{\cond}{\,|\,}
\newcommand{\la}{\langle}
\newcommand{\ra}{\rangle}
\newcommand{\tree}{\Upsilon}
\newcommand{\prob}{\mathbb{P}}
\renewcommand{\Im}{\mathrm{Im}}
\renewcommand{\Re}{\mathrm{Re}}
\newcommand{\ii}{\mathfrak{i}}

\def\d{\delta}
\def\Gd{G^\delta}

\section{Introduction}
\label{sec::intro}

The dimer model is one of the simplest but also most intriguing models of statistical mechanics. Introduced in the pioneering work of Temperley and Fisher \cite{TemperleyFIsher}
and, independently and nearly simultaneously, Kasteleyn \cite{Kasteleyn}, in the 1960s, the model can be defined on any finite weighted graph $G = (V, E)$ admitting a perfect matching, and is the probability measure on perfect matchings $\mathbf{m}$ of $G$ given by 
\begin{equation}\label{E:gibbs}
\mu ( \textbf{m}) = \frac1Z \prod_{e\in \mathbf{m}} w_e
\end{equation}
where $(w_e)_{e\in E}$ denote the edge weights of $G$. The dimer model is particularly well studied in the case where $G$ is planar and bipartite, in which case it is in some sense exactly solvable. Nevertheless, despite remarkable progress over the last sixty years, the dimer model on large but finite graphs (say, on subdomains of the plane) remains notoriously difficult to study due to its extreme sensitivity to the microscopic details of the boundary conditions.

The dimer model is typically studied through its \textbf{height function}, $h: F \to \R$ introduced by Thurston and defined on the set of faces $F$ of the graph. The height function's definition depends on the choice of a reference dimer configuration $\mathbf{m}_0$ or more generally reference flow, and even then is defined only up to constants. However, the centered height function $h - \E(h)$ is canonically associated with the dimer model. See \cite{KeyonDimerLecture} for references and relevant definitions. This height function turns the dimer model into a model of random surfaces and the main question concern its large scale behaviour. A remarkable conjecture of Kenyon and Okunkov predicts that the large scale behaviour is in great generality described by the Gaussian free field, albeit after a possibly nontrivial (and not conformal) change of coordinates. This conjecture was proved in the case of Temperleyan boundary conditions in the landmark paper of Kenyon \cite{Kenyon_ci}, \cite{Kenyon_GFF} and in a handful of other cases \cite{RusskikhDimers, BLRdimers, BLR_Riemann1, Petrov}; see \cite{Gorin_book} and references therein for an introduction in the case of lozenge tilings. 

The Temperleyan boundary conditions introduced and studied by Kenyon in \cite{Kenyon_ci} are perhaps the most canonical from the combinatorial point of view; they appear naturally both in the methods involving the discrete holomorphicity of the inverse Kasteleyn matrix, and for methods involving the Temperley bijection as studied say in \cite{BLRdimers, BLR_Riemann1, BLR_Riemann2} and which will be relevant to this article. It is worth noting that they are in some sense \emph{trivial} from the point of view of the phenomenology of the dimer model, since they exclude the so-called frozen and gaseous phases, and lead to GFF fluctuations without change of coordinates.

In this paper, we will consider the dimer model in $2n$-black-piecewise Temperleyan domains on the square lattice (or simply $2n$-piecewise Temperleyan in the following), which we will introduce precisely in Section~\ref{subsec::Temperleyan}. These boundary conditions, first considered in the work of Russkikh \cite{RusskikhDimers}, are perhaps the simplest way to relax the Temperleyan condition on the boundary. 
By definition, in such a domain there is a total of $2n$ white corners for some $n\ge 2$, of which necessarily $n-1$ are concave, and $n+1$ convex (see Figure \ref{fig::temperleyan} for an illustration). 
We will make an additional assumption on these domains: between two consecutive concave white corners of $\Omega$ (i.e., not separated by intermediate white corners), the boundary black squares belong to a fixed type of black vertices, which we take to be
 $\blacksquare_0$. 
 
 This can always be assumed without loss of generality if $n =2, 3$ but not if $n\ge 4$ (in that case there exist examples of $2n$-piecewise Temperleyan domains which do not satisfy this additional condition). 
 In fact our results will be valid much more generally for graphs obtained by planar superposition satisfying very mild assumptions, see Section \ref{SS:gen} for a precise description. For such graphs we will formulate a natural analogue of the boundary conditions described above (piecewise Temperleyan boundary together with the additional assumption).     
Our main result, valid both for the square lattice and more generally for superposition graphs defined in Section \ref{SS:gen}, is as follows.

\begin{theorem}\label{thm::heightconv}
Suppose $\Omega$ is a simply connected domain with $C^1$ simple boundary and suppose that $\{\Omega_\delta\}_{\delta>0}$ is a sequence of $2n$-black-piecewise Temperleyan domains satisfying the above assumption, which converges to $\Omega$. We denote by $h_\delta$ the height function of the dimer model on $\Omega_\delta$ and denote by $h$ the Gaussian free field on $\Omega$ with Dirichlet boundary conditions. Define $\chi:=\frac{1}{\sqrt 2}$. Then, we have for every $f\in C_c^\infty(\Omega)$, and $k\ge 1$, we have
\[\E\left[\left|(h_\delta-\E[h_\delta],f)-\frac{1}{\chi}(h,f)\right|^k\right]\to 0.\]
\end{theorem}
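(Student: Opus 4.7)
The plan is to reduce the statement to convergence of the spanning tree $\cT_\delta$ and of its Peano curves via Temperley's bijection, to identify the scaling limit through the multiple $\SLE_8$ result proved earlier in this paper, and then to apply the framework of \cite{BLRdimers} which, via imaginary geometry, converts Peano-curve convergence into height-function convergence.

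First, Temperley's bijection encodes the dimer configuration on $\Omega_\delta$ by the UST $\cT_\delta$ (together with its dual), subject to singular alternating wired/free boundary conditions at the $2n$ non-Temperleyan corners. The fundamental identity underlying \cite{BLRdimers} writes the centred height function $h_\delta - \E[h_\delta]$, tested against a smooth function, as a weighted sum of winding angles of branches of $\cT_\delta$ running from interior faces to the boundary, plus an error controlled on the bulk. The scaling-limit result of this paper (generalising \cite{LiuPeltolaWuUST}) provides the joint convergence of the Peano curves of $\cT_\delta$ to a multiple $\SLE_8$ with endpoints at the non-Temperleyan corners, and determines a candidate for the limit.

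Second, I would couple the limiting multiple $\SLE_8$ with an auxiliary GFF $\Phi$ on $\Omega$ via Miller--Sheffield imaginary geometry: $\Phi$ has piecewise-constant boundary data with prescribed jumps at the $2n$ marked points encoding the angles of the curves, and the Peano curves are realised as counterflow lines of $\Phi$. The standard identity between the winding of a counterflow line at a bulk point $z$ and the value $\Phi(z)/\chi$ (with $\chi = 1/\sqrt 2$ matching $\kappa=8$, as in the theorem) then gives, in the limit, that the winding sum converges to $\frac{1}{\chi}(\Phi, f)$ plus a deterministic contribution coming from the boundary data of $\Phi$. Because this contribution is deterministic, it is cancelled by the subtraction of $\E[h_\delta]$, and since $\Phi$ minus its mean has the law of a Dirichlet GFF on $\Omega$, the limit is $\frac{1}{\chi}(h,f)$ as required; the natural coupling in the statement of the theorem is then the one in which $h$ arises from $\Phi$.

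Third, to upgrade distributional convergence to convergence in all moments I would establish uniform integrability of $(h_\delta - \E[h_\delta], f)^k$, using moment bounds for windings of loop-erased random walk branches (obtained via Wilson's algorithm, as in \cite{BLRdimers}), together with the fact that $f \in C_c^\infty(\Omega)$ is supported away from the boundary so that the non-Temperleyan corners cause no additional singularity in the testing. The main obstacle, in my view, is extending the single-curve Temperleyan analysis of \cite{BLRdimers} to the present multi-curve setting with singular alternating boundary conditions: one must verify the winding--height identification uniformly in the bulk despite the jumps in the boundary data of $\Phi$ near non-Temperleyan corners, check that the a priori estimates of \cite{BLRdimers} (good embedding, Russo--Seymour--Welsh type bounds) go through on the superposition graphs of Section \ref{SS:gen} under these new boundary conditions, and finally establish joint convergence of the winding decorations of the Peano curves in a topology strong enough to pass to the limit inside the winding sums.
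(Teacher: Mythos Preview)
Your proposal is essentially correct and follows the paper's overall architecture: Temperley's bijection reduces the height function to a winding field of the spanning tree $\cT_\delta$, convergence of $\cT_\delta$ is established via SLE/imaginary geometry, and the framework of \cite{BLRdimers} then converts tree convergence into convergence of the winding field to $(1/\chi)$ times a GFF with the appropriate boundary data, which becomes purely Dirichlet after centering.

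There is one point of emphasis worth flagging. You organise the argument around the Peano curves and \emph{counterflow} lines (multiple $\SLE_8$), invoking an identity between ``winding of a counterflow line at $z$'' and $\Phi(z)/\chi$. The paper instead works directly with the \emph{branches} of $\cT_\delta$ --- that is, with the \emph{flow} lines. Concretely, the proof first shows (Theorem~\ref{thm::genflow}) that the $n-1$ distinguished boundary branches $\gamma_1^\delta,\ldots,\gamma_{n-1}^\delta$ emanating from the concave corners converge to the flow lines of $h_\Omega+u_\Omega$ described in Theorem~\ref{thm::flowcoupling}; these partition $\Omega$ into $n$ subdomains, and conditionally on them each subtree is a UST with Dobrushin-type boundary conditions. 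The winding field is defined as the intrinsic winding of the branch $\gamma_z$ from a reference boundary point to $z$, and the \cite{BLRdimers} machinery (moment bounds on winding, hitting estimates for branches, control of the truncated winding) is run for these branches. The Peano curves and the counterflow coupling enter only indirectly, as the mechanism that identifies the limiting branches with flow lines of the correct GFF. Your formulation via counterflow lines is dual to this and would ultimately yield the same limit, but the object whose winding one actually controls is the branch, not the space-filling curve.

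The technical obstacles you list are precisely the ones the paper addresses in Section~\ref{sec::Convergence of winding}: the Beurling-type estimate (Lemma~\ref{lem::Beur}) and the upper/lower bounds on the conditioning event (Lemma~\ref{lem::estimate}) are what allow the \cite{BLRdimers} estimates to survive the singular alternating boundary conditions, and the convergence of $\E[h_t(z)]$ to $(1/\chi)u_\Omega$ is computed explicitly using the flow-line coupling.
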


See \eqref{eqn::converge1} for the precise notion of convergence of $\Omega_\delta$ to $\Omega$. In the case of the square lattice, this result had already been proved by Russkikh \cite{RusskikhDimers}. Russkikh's proof follows the method developed in the groundbreaking work of Kenyon \cite{Kenyon_ci, Kenyon_GFF} which relies on the analysis of the inverse Kaseteleyn matrix. In the Temperleyan setting, Kenyon had shown that the inverse Kasteleyn matrix converges in a suitable to the holomorphic function associated with the derivative of the Green function with Dirichlet boundary conditions. These eventually lead Dirichlet boundary conditions for the field. By contrast, in the piecewise Temperleyan case analysed by Russkikh, the inverse Kasteleyn matrix corresponds to a Green function with \textbf{mixed Neumann--Dirichlet} boundary conditions. Yet surprisingly, as a result of fairly striking cancellations, she demonstrated that the limiting centered height field has purely Dirichlet boundary conditions, as is the case in Theorem \ref{thm::heightconv}. We shed some light on this phenomenon, showing in a precise geometric way how the presence of non-Temperleyan corners influences the limiting geometry of the model. 

This geometric phenomenon can in fact also be formulated purely in terms of height functions. This relies on the connection to \textbf{imaginary geometry} initiated in \cite{BLRdimers}, and which we now begin to discuss.
As mentioned above, $h_\delta - \E[h_\delta]$ does not depend on the choice of a reference flow. There is however \emph{one} particular choice of reference flow which turns out to be quite canonical, and which corresponds to the normalised winding of the edges in the graph (see e.g. \cite{dimer_tree, BLRtgraph} or \cite{BLR_Riemann1} for definitions). 
For this choice, it will turn out that $\E[h_\delta]$ converges to a harmonic function $u$ (which will be defined in Section~\ref{subsec::GFF}) locally uniformly. This harmonic function will give the boundary values of the Gaussian free field. In contrast to the purely Temperleyan case, these boundary values are now nonconstant, and have jumps at each non-Temperleyan corner. Thus the effect of the non-Temperleyan boundary conditions is to create jumps at the corner (these jumps can already be seen to occur at the discrete level). Of course, when we subtract the expectation, the corresponding field becomes purely Dirichlet, and this cancels the effect of the non-Temperleyan corners. This explains why the limiting field in Russkikh's work \cite{RusskikhDimers} is the Dirichlet Gaussian free field, just as in the purely Temperleyan case. 

\paragraph{Temperley's bijection.} To formulate this geometrically, we make use of {Temperley's bijection} which associates to any dimer configuration a spanning tree on a related graph. Let $\cT_\delta$ denote the random tree associated to the dimer configuration $\mathbf{m}$ of \eqref{E:gibbs} in a piecewise-Temperleyan domain (to be precise, $\cT_\delta$ is the $\blacksquare_0$-tree; Temperley's bijection also associates to $\mathbf{m}$ a tree living on the $\blacksquare_1$-lattice, but this is simply the planar dual of $\cT_\delta$). Let $u$ denote the harmonic function from $\Omega$ which jumps alternatively by $\pm \lambda$ at each non-Temperleyan corner of $\partial \Omega$ (see Figure \ref{fig::general domain} for an illustration): that is, 
let us write the $2n$ white corners as $x_1, \ldots, x_{2n}$ as we go along the boundary in counterclockwise order, with both $x_{1}$ and $x_{2n}$ assumed to be convex (recall there are two more convex corners than concave). Assume first that $\Omega = \HH$.
 We define $u = u_\HH$ on the real line to be $+\lambda$ on $(-\infty, x_1)$ and then jump by $-\lambda$ on each convex corner, and $+\lambda$ at each concave corner. Note that $u$ will be equal to $-\lambda$ on $(x_{2n}, \infty)$. Then $u_\HH$ is simply the harmonic extension of this boundary data. When $\Omega$ is not the upper-half plane, $u$ is obtained by applying the conformal change of coordinates of imaginary geometry associated with $\kappa =2$, that is, 
$$
u_\Omega = u_\HH \circ \ph^{-1} -\chi\arg (\ph^{-1})'; 
$$
 where $\ph: \HH \to \Omega$ is a conformal mapping and $\chi = 2/ \sqrt{2} - \sqrt{2}/2 = 1/\sqrt{2}$.  
\begin{theorem}\label{T:IG}
As the mesh size $\delta \to 0$, the Temperleyan tree $\cT_\delta$ converges to the tree consisting of the flow lines for $\kappa =2$ of the field $h_\Omega+u_\Omega$, where $h_\Omega$ is a $\GFF$ with Dirichlet boundary conditions on $\Omega$.  
\end{theorem}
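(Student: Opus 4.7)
The overall strategy is to combine a scaling limit of $\cT_\delta$ in terms of multiple $\SLE_2$ curves (an extension of \cite{LiuPeltolaWuUST} to the piecewise Temperleyan setting) with Miller--Sheffield's imaginary geometry. The first stage describes the limit as a specific family of coupled SLE-type curves; the second identifies this family with the flow-line tree of $h_\Omega+u_\Omega$.

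The first thing I would carry out is the identification on the imaginary geometry side. At $\kappa=2$ one has $\chi=1/\sqrt 2$ and $\lambda=\pi/\sqrt 2$. A boundary jump of $+2\lambda$ in the boundary data of a GFF produces, under the Miller--Sheffield flow-line coupling, an $\SLE_2$-type curve emanating from that jump point. With the alternating choice in the construction of $u_\Omega$, such a jump sits at each of the $2n$ white corners $x_1,\ldots,x_{2n}$, so flow lines emanate from each white corner; each is an $\SLE_2(\bar\rho)$ whose force points are the remaining $x_j$ and whose weights $\bar\rho$ are read off from the $\pm\lambda$ boundary data. Because at $\kappa=2$ two flow lines with matching boundary data merge (rather than cross) upon meeting, the union of these curves is a tree whose leaves lie on the Dirichlet arcs of $\partial\Omega$; this agrees, branch by branch, with the multiple $\SLE_2$/$\SLE_2(-1,\ldots,-1)$ system highlighted in the abstract, and dually its Peano curves form the multiple $\SLE_8$ pair.

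The second step is convergence of $\cT_\delta$ to this object. The core lemma is convergence of branches and interfaces: for interior points $z\in\Omega$ approximated by $z_\delta$ on the $\blacksquare_0$-lattice, the branch of $\cT_\delta$ from $z_\delta$ should converge, jointly with finitely many other branches and with the interfaces emanating from each white corner, to the corresponding multiple-SLE system on the IG side. Following \cite{LiuPeltolaWuUST}, I would argue via a martingale observable for the Loewner driving process, built from the partition function of the piecewise Temperleyan UST; the key new input is a partition function reflecting the alternating singular boundary data. A useful ingredient, advertised in the abstract, is the identification of chordal hypergeometric $\SLE_8$ with an $\SLE_8(\bar\rho)$ conditioned on its hitting distribution, which pins down the dual (Peano) description. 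Passing from branches to the full tree is then done by tightness in a suitable topology (convergence of branches from a countable dense set) plus uniqueness of the subsequential limit, the latter because the IG tree is a.s.\ determined by the flow lines from any dense collection of interior points.

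The most delicate step will be the exact matching between the discrete combinatorics of the non-Temperleyan boundary and the IG boundary data. Specifically, one needs to check that (i) the alternation of convex and concave white corners on $\partial\Omega_\delta$ translates into precisely the $\pm\lambda$ alternation entering $u_\HH$, (ii) the conformal covariance correction $-\chi\arg(\ph^{-1})'$ absorbs the winding of boundary edges so that only non-Temperleyan corners produce jumps in $u_\Omega$, and (iii) the force points extracted from the Loewner analysis of $\cT_\delta$ coincide with those predicted by the IG coupling with boundary data $u_\Omega$. Point (iii), together with the merging (not crossing) behaviour of flow lines at $\kappa=2$, is what upgrades a collection of coupled SLE curves to a genuine random tree, and it is here that the geometric effect of non-Temperleyan corners — ultimately responsible for both the jumps in $\E[h_\delta]$ and the cancellations observed in \cite{RusskikhDimers} — is localized.
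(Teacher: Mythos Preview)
Your high-level two-stage strategy---prove convergence of $\cT_\delta$ to a system of SLE-type curves, then identify that system with the flow-line tree of $h_\Omega+u_\Omega$---is correct and matches the paper's approach. However, there is a concrete error in your description of the imaginary-geometry side, and your proposed technical route for the convergence step differs from what the paper actually does.

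The error is in the claim that flow lines emanate from \emph{each} of the $2n$ white corners. In the boundary data $u_\Omega$ actually used (Theorem~\ref{thm::flowcoupling}), the value is $0$ on the free arcs $(x_{2i-1}x_{2i})$ and $\pm\lambda$ on the wired arcs; a full $2\lambda$ jump (from $-\lambda$ to $+\lambda$) occurs only at the $n-1$ points $z_1,\ldots,z_{n-1}$ adjacent to the \emph{concave} white corners. These are the only boundary points from which flow lines are launched; the remaining $x_j$'s appear as force points of weight $-1$. Correspondingly, in the discrete picture there are exactly $n-1$ distinguished branches $\gamma_1^\delta,\ldots,\gamma_{n-1}^\delta$ in $\cT_\delta$ (emanating from $z_1^{\delta,+},\ldots,z_{n-1}^{\delta,+}$), not $2n$. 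Getting this count right is essential: the conditioning that singles out $\cT_\delta$ among alternating-boundary USTs is precisely that each of these $n-1$ branches starts through a prescribed edge.

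On the technical side, the paper does not use a partition-function martingale in the style you sketch. For $n=2$, the driving function of $\gamma_1^\delta$ is identified directly (Proposition~\ref{prop::middle}) via harmonic-measure martingales together with a ratio-of-harmonic-functions estimate (Lemma~\ref{lem::ratio}) that handles the degenerate conditioning. For general $n$, the paper compares the law of $\gamma_1^\delta$ under the full conditioning $E^*$ to its law under the conditioning on only the first branch; the Radon--Nikodym derivative is computed explicitly as a Fomin-type determinant of harmonic measures (Lemma~\ref{lem::disradon}), and its scaling limit yields exactly the Girsanov drift matching the flow-line SDE of Theorem~\ref{thm::flowcoupling}. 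The hypergeometric $\SLE_8$ material is a byproduct describing the Peano-curve marginals, not an input to the tree identification. Your partition-function route is closer in spirit to \cite{KarrilaKytolaPeltolaCorrelationsLERWUST,KarrilaUSTBranches} and might in principle be pushed through, but you would still have to handle the reflecting (Neumann) arcs and the asymptotically degenerate conditioning on $E^*$, which is where most of the paper's work lies.
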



As announced above, note that the effect of the white corners is thus to change the boundary data of the limiting field (but, as already mentioned, this disappears when we take away the expectation). We also obtain a more direct, geometric description of the scaling limit of $\cT_\delta$ in the particular case when $n =2$, which is the simplest non trivial case. In that case there are four white corners which we denote respectively by $a, b, c, d$. Consistent with our earlier notation, the single white concave corner is denoted by $b$. It can be shown that, deterministically, the arcs $(ab), \ldots, (da)$ of the topological quadrilateral $(\Omega, a, b,c,d)$ alternate between Dirichlet and Neumann portions for $\cT_\delta$ in a precise sense. Furthermore, $\cT_\delta$ contains a unique curve, which we call $\gamma_\delta$, starting from $b$ and ending in a random location $z$ on the arc $(cd)$. This curves splits $\cT_\delta$ into two trees, which we call $\cT_\delta^L, \cT_\delta^R$ respectively (left and right correspond to the planar orientation inherited from $\gamma_\delta$ viewed as a curve oriented from $b$ to $(cd)$). To this pair of trees we can associate a pair of Peano curves, which we call $\eta_\delta^L$ and $\eta_\delta^R$ respectively. These curves emanate from $d$ and $c$ respectively, and terminate in $a$ and $b$ respectively. See Figure \ref{F:mult} for an illustration. We now state a scaling limit result describing the limiting law of these random curves. 
\begin{figure}
\begin{center}
\includegraphics[scale=.5]{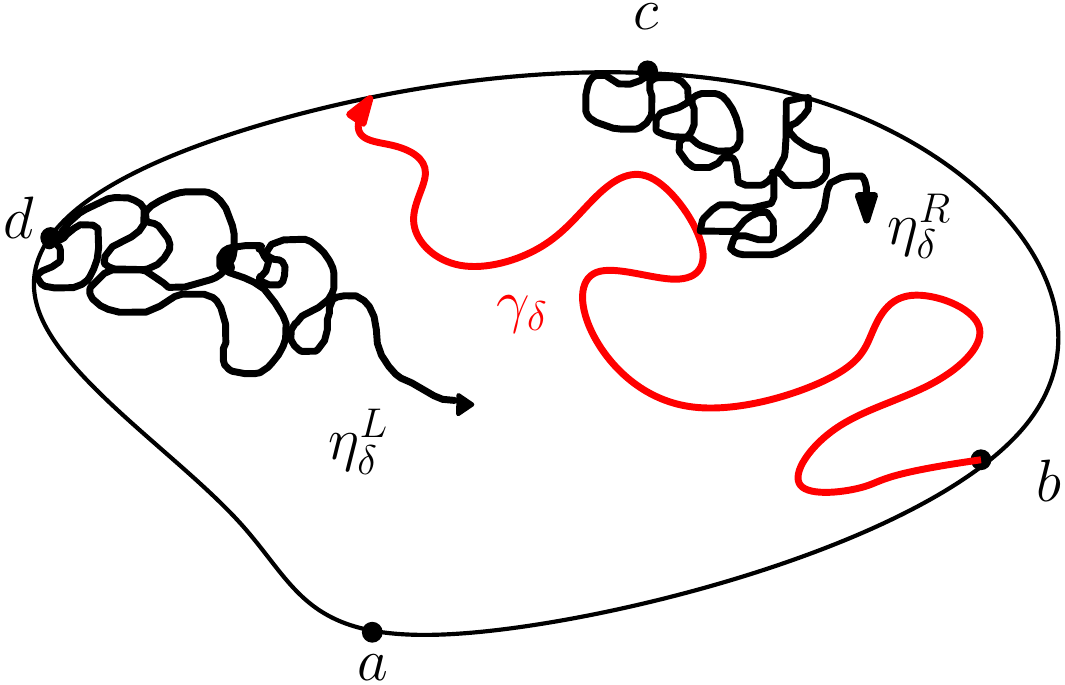}
\caption{Setup of Theorem \ref{T:multi}. In the limit, we get a multiple SLE$_8$ separated by a chordal SLE$_2 (-1, \ldots, -1)$ interface.}
\label{F:mult}
\end{center}
\end{figure}

\begin{theorem}\label{T:multi}
As $\delta \to 0$, the random curves $\gamma_\delta, \eta^L_\delta, \eta^R_\delta$ converge jointly to a triple $(\gamma, \eta^L, \eta^R)$. The interface $\gamma$ between the two trees has the law of a chordal SLE$_2 (-1, -1, -1, -1)$ from $b$, stopped when hitting the arc $(cd)$ (call $z$ this hitting point). 
Given $\gamma$, $\eta^R$ and $\eta^L$ are independent standard chordal SLE$_8$ in their respective domains.
Hence the joint law of $(\eta^L, \eta^R)$ is a \textbf{multiple SLE$_8$} in the sense that given $\eta^L$, the curve $\eta^R$ is simply a standard SLE$_8$ in $\Omega \setminus \eta^L$, from $c$ to $b$ (and the same holds for $\eta^L$ given $\eta^R$). 
\end{theorem}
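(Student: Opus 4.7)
The plan is to derive Theorem~\ref{T:multi} from the imaginary geometry description of the scaling limit of $\cT_\delta$ provided by Theorem~\ref{T:IG}, together with the Miller--Sheffield theory of flow and counterflow lines of the Gaussian free field. By Theorem~\ref{T:IG}, the limit of $\cT_\delta$ is the tree of $\kappa=2$ flow lines of the field $h_\Omega+u_\Omega$, and the limit $\gamma$ of the interface $\gamma_\delta$ should then be identified as the flow line of this field emanating from the unique concave white corner $b$ and stopped upon first hitting the arc $(cd)$.

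To compute the law of $\gamma$, I would transport everything to $\HH$ via a conformal map $\varphi:\HH\to\Omega$ with $\varphi(0)=b$ and preimages $\tilde a<0<\tilde c<\tilde d$ of the other three white corners. Reading off the boundary values of $u_\HH$ from the alternating jump rule gives $\lambda,\,0,\,\lambda,\,0,\,-\lambda$ on the five real intervals determined by $\tilde a,0,\tilde c,\tilde d$, with $\lambda=\pi/\sqrt{2}$. Comparing to the Miller--Sheffield boundary data of an SLE$_\kappa(\underline\rho)$ flow line (namely $\mp\lambda$ immediately to the left/right of the starting point, with an outward shift of $\lambda\rho_i$ across each force point), a global constant shift by $-\lambda$ matches these values provided one takes force points of weight $-1$ at each of $0^-,\tilde a,\tilde c,\tilde d$. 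Since flow lines are insensitive to global constant shifts, this identifies $\gamma$ as chordal SLE$_2(-1,-1,-1,-1)$ from $b$ targeted at $(cd)$; the extra force point at $0^-$ encodes the jump of $u$ by $+\lambda$ at the starting corner $b$ itself.

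For the Peano curves, I would invoke the standard Miller--Sheffield correspondence: the Peano curve of a tree of $\kappa=2$ flow lines of a GFF is the associated $\kappa'=16/\kappa=8$ counterflow line. Conditionally on $\gamma$, the usual GFF Markov property gives independence of the restrictions of $h_\Omega+u_\Omega$ to the two components $\Omega^L,\Omega^R$ of $\Omega\setminus\gamma$; the boundary values on $\gamma$ are pinned by the flow-line rule to $\pm\lambda-\chi\cdot(\text{winding of }\gamma)$. A short computation then verifies that in each component the resulting data is precisely the boundary data of an \emph{unmarked} chordal SLE$_8$ counterflow line between the prescribed source/target ($c\to b$ in $\Omega^R$ and $d\to a$ in $\Omega^L$), yielding the multiple SLE$_8$ statement.

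To upgrade Theorem~\ref{T:IG}, which is a statement about trees, to the asserted joint convergence of $(\gamma_\delta,\eta^L_\delta,\eta^R_\delta)$, I would follow the framework of \cite{BLRdimers}: convergence of the Temperleyan tree implies convergence of distinguished branches and of Peano curves in the natural topologies, together with the necessary tightness. The main technical obstacle will be the careful bookkeeping of boundary data, in two places: first, correctly identifying the ``hidden'' force point at $b^-$ of weight $-1$ coming from the jump of $u$ at the starting corner; and second, checking that after subtracting the flow-line values together with the winding correction $\chi\arg(\varphi^{-1})'$ the remaining boundary data in $\Omega^L,\Omega^R$ produce standard SLE$_8$'s rather than SLE$_8(\underline\rho)$'s with residual force points.
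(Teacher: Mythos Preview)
Your approach is correct and coincides with the paper's: Theorem~\ref{T:multi} is obtained in one line from Theorem~\ref{thm::treeconv} (the precise $n=2$ version of Theorem~\ref{T:IG}, whose statement already records that $\gamma$ is chordal $\SLE_2(-1,-1,-1,-1)$, this having been proved directly at the discrete level in Proposition~\ref{prop::middle}) together with the counterflow coupling of Theorem~\ref{thm::countercoupling}. Two small corrections: no global shift by $-\lambda$ is needed in your boundary-data computation, since the values $\lambda,0,\lambda,0,-\lambda$ already coincide exactly with the Miller--Sheffield data for a $\kappa=2$ flow line with weight $-1$ force points at $\tilde a,0^-,\tilde c,\tilde d$; and the passage from tree convergence to joint convergence of $(\gamma_\delta,\eta^L_\delta,\eta^R_\delta)$ is carried out in Section~\ref{subsec::coupling} (conditionally on $\gamma$, LSW-type convergence of the Peano curve in each complementary Dobrushin domain), not via the framework of \cite{BLRdimers}, which concerns height functions rather than curve topologies.
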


\paragraph{Marginal laws.} The marginal laws of $\eta^L$ and $\eta^R$ can also be described: $\eta^L$ has the law of a chordal SLE$_8(2, - 2)$ in $\Omega$ from $d$ to $a$ with force points at $c$ (corresponding to weight $\rho = 2$) and $b$ (corresponding to weight $\rho = -2$). Similarly, $\eta^R$ has the law of a chordal SLE$_8(2, 2)$ in $\Omega$ from $c$ to $b$, with force points at $d$ and $a$. Note that in this theorem, the target point of $\gamma$ needs not be specified, as given the value of the force points (four force points of weight $\rho_i = -1$, summing up to a total weight of $\sum_i \rho_i = -4 = \kappa - 6$), the curve has the locality property and is hence target independent.

\paragraph{Multiple SLE.} A joint law on curves $(\eta_1, \eta_2)$ is called multiple (say chordal) SLE$_\kappa$ in a domain $D$, if the conditional law of $\eta_i$ given $\eta_{2-i}$ is a chordal SLE$_\kappa$ in the complementary domain. When $\kappa\le 4$ such laws are known to be unique by a result of Beffara, Peltola and Wu \cite{BeffaraPeltolaWuUniqueness}. This is however not known when $\kappa >4$. In fact, for $\kappa \ge 8$ there can be no uniqueness as this is the space-filling regime: any choice of a law for the interface $\gamma$ separating the two curves gives rise to a pair $(\eta_1, \eta_2)$ of multiple SLEs in the sense of this definition.  We do not know how to canonically characterise the law of the multiple SLE$_8$ $(\eta^L, \eta^R)$ appearing in the theorem, except by specifying the marginals as we have done.


\paragraph{Connection to hypergeometric SLE$_8$.} It is useful at this point to make a comparison with the paper \cite{HanLiuWuUST}, in which Uniform Spanning Trees with related alternating boundary conditions were also studied. To explain the model they considered, let $(\Omega, a,b,c,d)$ denote a topological rectangle. The boundary conditions divide the boundary into four arcs: $(ab), (bc), (cd), (da)$. They impose Neumann (also known as reflecting or free) boundary conditions on the arcs $(bc)$ and $(da)$, while the arcs $(ab)$ and $(cd)$ are each wired -- but \emph{not} wired to each other. They then consider a uniform spanning tree $\tilde \cT_\delta$  with these boundary conditions, if the mesh size is $\delta>0$. Note that since $(a b)$ and $(c d)$ are not wired with one another, this forces the presence of a path  (let us call it $\tilde \gamma_\delta$) connecting $(a b)$ to $(c d)$. The Temperleyan tree $\cT_\delta$ associated to a piecewise Temperleyan dimer model in the case $n=2$ can therefore be viewed as an asymptotically degenerate version of $\tilde \cT_\delta$ in which the interface $\tilde \gamma_\delta$ is conditioned to start in $b_\delta$. One of the main results of \cite{HanLiuWuUST} is that the pair of discrete Peano curves $(\tilde \eta^L_\delta, \tilde \eta^R_\delta)$ on either side of $\tilde \gamma_\delta$ in $\tilde \cT_\delta$ converges to a scaling limit given by a multiple SLE$_8$ pair $(\tilde \eta^L, \tilde \eta^R)$, but whose marginal law is the so-called \emph{hypergeometric} SLE$_8$ with parameter $\nu = 0$ (whose driving function will be described later in \eqref{E:hSLE})\footnote{We warn the reader that two related notions, both bearing the name of hypergeometric SLE and the notation hSLE, have appeared almost simultaneously and independently in two different papers by two different authors, namely Wei Qian in \cite{WeiTrichordal} and Hao Wu in \cite{WuHyperSLE}. 
In both cases the driving function is given by the same formula involving a hypergeometric function. 
To describe the curve unambiguously, one must additionally specify the value of the three parameters $a,b $ and $c$ entering the hypergeometric function. In \cite{WeiTrichordal} this choice is only explicit for the value $\kappa = 8/3$, which is the main focus of that paper. \\
In addition, the hypergeometric SLE$_\kappa$ with parameter $\nu$ (in the notations of \cite{WuHyperSLE}) coincides with the so-called \emph{intermediate} SLE$_\kappa$ introduced earlier by Dapeng Zhan in \cite[Section 3]{Zhan}, with the minor difference that there the range of values of $\kappa$ was restricted to $\kappa \in (0,4)$ and $\nu \ge \kappa/2-2$ (in fact, when $\nu \le \kappa/2-6$, the choice of parameters in the definition of hypergeometric SLE$_\kappa(\nu)$ considered in \cite{WuHyperSLE} is different from the case where $\nu \ge \kappa/2 -6$; thus the corresponding hypergeometric SLE$_\kappa(\nu)$ is in fact different from the intermediate SLEs considered in \cite{Zhan} in that case). 
%
For the avoidance of doubt, in this paper we are concerned with the case $\kappa =8$, and we will use the word ``hypergeometric SLE$_8$'' and the notation hSLE$_8$ to refer to the hypergeometric SLE$_8$ with parameter $\nu = 0$ in the notations of \cite{WuHyperSLE}. Once again, this corresponds to Zhan's intermediate SLE generalised to $\kappa = 8$, and with the parameter $\rho$ from \cite{Zhan} taken to be $ \rho = 2$. We thank Wei Qian and Hao Wu for discussions regarding this.\label{footnote}}. 
Combining our results with those of \cite{HanLiuWuUST}, we are therefore able to obtain some simple descriptions of this hypergeometric SLE$_8$ once we condition on the hitting point of the opposite arc. For instance, consider the curve $\tilde\eta^R$ from $c$ to $b$ and condition on the hitting position of the arc $(ab)$. Despite the apparent complexity of the driving function, once we condition on the hitting point of the arc $(ab)$ by the curve, then hSLE$_8$ reduces to a more standard SLE$_8(\bar \rho)$ where the weight vector $\bar \rho$ can be explicitly described.


\begin{theorem}\label{T:condSLE8}
Suppose $\tilde\eta^R$ has the law of the hypergeometric $\SLE_8$ in $(\Omega;a,b,c,d)$ from $c$ to $b$ with force points $d$, $a$. We denote by $T$ the hitting time by $\tilde\eta^R$ of $(ab)$. Then, given $\eta^R(T) = z$, the conditional distribution of $\tilde\eta^R([0,T])$ equals the law a chordal $\SLE_8(2,2,-4)$ up to time $T$ in $\Omega$ from $c$ to $b$, with marked points $d$, $a$ and $z$ respectively. Furthermore, conditionally on $\tilde\eta^R(T) = z$ and on $\tilde\eta^R([0, T])$,  $\eta^R$ evolves after time $T$ as a chordal SLE$_8$ from $z$ to $b$ in the domain $\Omega \setminus \tilde\eta^R([0, T])$. 
\end{theorem}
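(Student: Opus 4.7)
The plan is to proceed via a partition function / h-transform (Girsanov) argument. After mapping conformally to the upper half-plane with $c \mapsto 0$, $b \mapsto \infty$, $d \mapsto v_d < 0$, $a \mapsto v_a > 0$, and the conditioning point $z \mapsto v_z \in (v_a, \infty)$, the hypergeometric $\SLE_8$ from $c$ to $b$ with force points $d, a$ has a driving function determined by a partition function $\cZ_{\text{hSLE}}(w, v_d, v_a)$, built from powers of differences and the hypergeometric function $_2F_1$ (as in \cite{WuHyperSLE}), while the $\SLE_8(2,2,-4)$ with marked points $d, a, z$ has a driving function determined by an algebraic partition function $\cZ_*(w, v_d, v_a, v_z)$, namely a product of powers of pairwise differences whose exponents are determined by $\kappa = 8$ and $\rho = 2, 2, -4$.

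I would first verify that
\[ M_t \;:=\; \frac{\cZ_*(W_t, V^d_t, V^a_t, V^z_t)}{\cZ_{\text{hSLE}}(W_t, V^d_t, V^a_t)}\,\bigl(g_t'(v_z)\bigr)^{h_z}, \qquad h_z = \frac{\rho_z(\rho_z + 4 - \kappa)}{4\kappa} = 1, \]
is a local martingale under the law of hSLE$_8$ up to the hitting time $T$ of the arc $(v_a, \infty)$. This is a direct Itô-formula calculation exploiting the second-order (BPZ / null-vector) PDEs satisfied by both partition functions for $\kappa = 8$. The normalised martingale $M_t/M_0$ equals, on $\cF_t$, the Radon--Nikodym derivative of the conditional measure $\PP_{\text{hSLE}}(\cdot \cond \tilde\eta^R(T) = z)$ with respect to $\PP_{\text{hSLE}}$; this follows by disintegrating the hSLE$_8$ hitting distribution on $(v_a, \infty)$, whose density is given explicitly in terms of $\cZ_{\text{hSLE}}$. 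Girsanov's theorem then converts the drift $8\,\partial_W \log \cZ_{\text{hSLE}}$ into $8\,\partial_W \log \cZ_*$, which is by definition the driving SDE of $\SLE_8(2,2,-4)$ with marked points $d, a, z$; the weight $\rho_z = -4 = -\kappa/2$ guarantees that the curve almost surely reaches $v_z$, consistent with the conditioning event. This establishes the first assertion.

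For the continuation after $T$, I would appeal to the strong Markov property of $\SLE_8(2,2,-4)$: at time $T$ the force point $v_z$ is absorbed at the tip, and the geometry of the hitting event (together with the swallowing behaviour induced by the $\rho_z = -4$ force point at $v_z$) forces both $v_a$ and $v_d$ to already lie inside the hull by time $T$. The combined weight transferred to the tip is then $2 + 2 - 4 = 0$, so that the residual driving SDE has no force-point drift and the continuation is a standard chordal $\SLE_8$ from $z$ to $b$ in $\Omega \setminus \tilde\eta^R([0,T])$. The main obstacle I anticipate is the martingale verification, which requires the explicit form of $\cZ_{\text{hSLE}}$ together with some hypergeometric identities for $_2F_1$ to check algebraically that the ratio $\cZ_*/\cZ_{\text{hSLE}}$ (with the correct Jacobian factor) produces a local martingale; everything else is a fairly standard application of Girsanov and the SLE Markov property.
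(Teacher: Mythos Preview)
Your approach is correct but differs from the paper's. The paper argues via the discrete model: it observes that conditioning the hSLE$_8$ Peano curve on its hitting point on $(ab)$ is the scaling limit of conditioning the UST Peano curve $\tilde\eta_R^\delta$ on hitting $(a_\delta b_\delta)$ at a prescribed vertex $z_\delta$, which in turn is the same as conditioning the tree branch $\tilde\gamma_\delta$ on its starting vertex. The discrete Radon--Nikodym derivative with respect to the unconditioned Peano curve is then written down exactly as in Lemma~\ref{lem::disRN} (with $z_\delta$ in place of $b_\delta$), its limit is identified as a continuous local martingale by the argument of Proposition~\ref{prop::hsle}, and Girsanov yields the $\SLE_8(2,2,-4)$ drift. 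Thus the final Girsanov step is the same as yours, but the martingale arises as a limit of discrete martingales rather than being postulated and checked by It\^o calculus.

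What each approach buys: the paper's route avoids any direct verification that $M_t$ is a local martingale (and hence avoids the hypergeometric identities you anticipate as the main obstacle), since this is inherited from the discrete martingale property; it also produces the hitting density on $(ab)$ as a byproduct of the same computation. Your route is self-contained in the continuum and does not rely on the UST scaling-limit machinery, at the price of the explicit BPZ/hypergeometric computation. For the post-$T$ behaviour, the paper simply invokes the domain Markov property of hSLE$_8$ together with the known fact (from \cite{HanLiuWuUST}) that hSLE$_8$ continues as ordinary $\SLE_8$ after hitting the opposite arc; your argument via absorption of the force points in $\SLE_8(2,2,-4)$ is also valid, though the claim that $v_d,v_a$ lie in the hull at time $T$ deserves a one-line justification (for $\kappa=8$ the hull's intersection with the boundary is an interval containing both $0$ and $v_z$, hence $v_d,v_a$).
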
  

Given the above result, it is natural to ask if other hypergeometric SLEs (still in the sense of \cite{WuHyperSLE}) can be described in terms of SLE$_\kappa (\rho)$ after conditioning. This will be addressed in future work, see \cite{Liu_reverse}; see also the fourth bullet point of Remark~\ref{rem::hyper} for additional discussion. We conclude by noting that Qian \cite{WeiTrichordal} was able to relate the law of the hypergeometric SLE  to tilted SLE$_\kappa(\rho)$ processes via a Girsanov change of measure, see Section 4.3.2 of \cite{WeiTrichordal}. Nevertheless Theorem \ref{T:condSLE8} is new to the best of our knowledge; for instance the numbers of force points here and in Section 4.3.2 of \cite{WeiTrichordal} are not identical. 

\paragraph{Acknowledgements.} Nathana\"el Berestycki’s research is supported by FWF grant P33083, “Scaling limits in random conformal geometry”. This work took place while Mingchang Liu was visiting the University of Vienna, whose hospitality is gratefully acknowledged. This visit was made possible thanks to the support of the State Scholarship Fund No.202106210235 from the Chinese government. 
N.B. also thanks Beno\^it Laslier and Marianna Russkikh for many useful discussions which took place prior to this work.



\section{Preliminaries}
\label{sec::preliminaries}

\subsection{Piecewise Temperleyan domain and Temperley's bijection}
\label{subsec::Temperleyan}

\label{SS:bij}
In this subsection, we will recall the definition of black-piecewise Temperleyan domain and establish the corresponding Temperleyan's bijection between dimer configurations and spanning trees.

First of all, let us recall the definition of black-piecewise Temperleyan domain, which was introduced in~\cite{RusskikhDimers}. Suppose $\Omega\subset\C$ is a simply connected domain, whose boundary $\partial\Omega$ is a simple curve on $\Z^2$. $\Omega$ can be divided into squares of size one which inherit from $\Z^2$ the checkboard black-white colouring. It is further useful to subdivide the black squares into two distinct sublattices: more precisely, denote by $\blacksquare_0$ the black squares on even rows and denote by $\blacksquare_1$ the black color on odd rows. We call $\Omega$ is a $2n$-black-piecewise Temperleyan domain, if it has $2n$ white corners. It can then be shown that of those, necessarily  $n+1$ are convex and $n-1$ are concave corners. If $n \ge 3$, we assume that between two consecutive concave white corners of $\Omega$ (i.e., not separated by intermediate white corners), the boundary black squares belong to 
 $\blacksquare_0$.  See the left hand side of Figure~\ref{fig::temperleyan} for an example of black-piecewise Temperleyan domain in the case $n=2$. When $n \ge 3$, it is possible for two consecutive white corners to be concave, but the above assumption imposes restrictions on the combinatorics, as we will see below.

\begin{figure}[ht!]
\begin{center}
\includegraphics[width=1\textwidth]{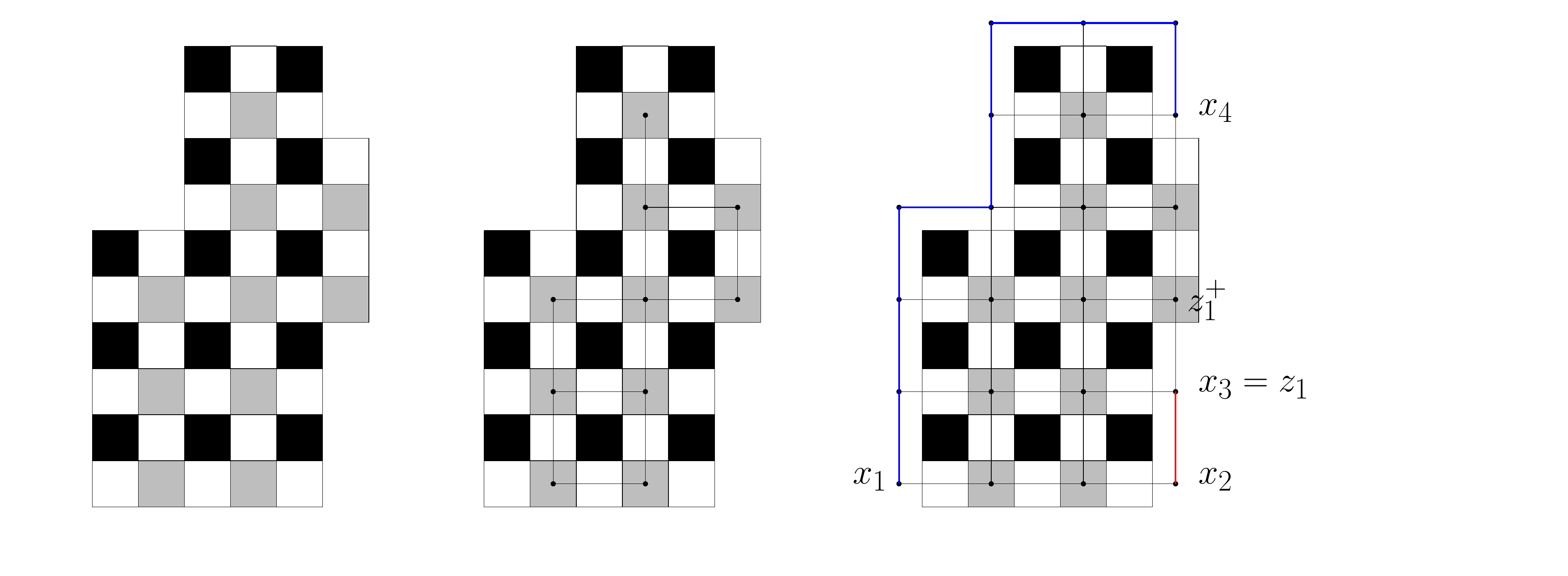}
\end{center}
\caption{\label{fig::temperleyan}On the left is a $4$-black-piecewise Temperleyan domain. The black squares belong to type $\blacksquare = \blacksquare_1$ and the grey squares belong to type ${\textcolor{gray}{\blacksquare}} = \blacksquare_0$. The middle picture shows the graph $\Omega_0^*$. The right picture shows $\Omega_0^*$ and its outer boundary, which result in the graph $\overline\Omega_0^*$. The boundary $\overline\Omega_0^*\setminus\Omega_0^*$ is shown in colour and consists of two components, coloured respectively red and blue.}
\end{figure}

Now, we introduce the Temperleyan's bijection when $\Omega$ is a $2n$-black-piecewise Temperleyan domain.
Define $\Omega^*$ to be the dual graph of $\Omega$. The vertices of $\Omega^*$ are the centers of the squares of $\Omega$ and the edges are straight lines connecting nearest pairs of vertices. We then say that a vertex of $\Omega^*$ is $\blacksquare_0$, 
$\blacksquare_1$ or white, depending on the colour of the corresponding square in $\Omega$. Define $\Omega_0^*$ to be the graph whose vertices are the $\blacksquare_0$ vertices of $\Omega^*$ and whose edges are straight lines connecting nearest pairs of vertices. Define $\overline\Omega^*_0$ to be the graph whose vertices are the union of $\Omega_0^*$ and $\blacksquare_0$ vertices adjacent to $\Omega_0^*$ and whose edges are straight lines connecting nearest pairs of vertices. The boundary $\overline \Omega_0^*\setminus\Omega_0^*$ consists of $n$ connected components. We call the endpoints of the components $x_1, \ldots, x_{2n}$, labelled in counterclockwise order in such a way that the connected components are respectively formed by $(x_{2}x_3),\ldots,(x_{2n}x_{1})$. We can assume without loss of generality that $x_1 $ and $x_{2n}$ are adjacent to two convex corners. (In general, $x_i$ will be adjacent to a white corner when that corner is convex, and will be within distance 3 of that corner if that corner is concave). See Figure~\ref{fig::temperleyan} for an illustration. In fact, the arc $(x_{2n}x_1)$ is distinguished by the fact that it is the only connected portion of the boundary $\overline \Omega_0^*\setminus\Omega_0^*$ connecting two vertices adjacent to a convex corner. In other for each $1\le i \le n-1$, among $x_{2i}$ and $x_{2i +1}$ there is exactly one of these two vertices which is adjacent to a convex corner (the other will be within distance three of a concave white corner). 
This comes from our assumption that between two consecutive concave corners, the black vertices are all of type $\blacksquare_0$. As we see, this does not prevent concave white corners to come consecutively, but in particular no three consecutive white corners can exist.

 
 We denote by $z_1^+,\ldots,z_{n-1}^+$ the vertices of $\Omega^*_0$ adjacent to the concave white corners, also labelled in  counterclockwise order starting immediately after $x_1$ (so $z_i^+$ can be identified with a black corner, and is on the outer boundary of $\Omega_0^*$). As already mentioned, by our assumption, for $1\le i\le n-1$, there is exactly one vertex of $\{x_{2i},x_{2i+1}\}$ which is adjacent to $z_i^+$. Let us denote it by $z_i \in \{x_{2i}, x_{2i+1}\}$. See Figure~\ref{fig::temperleyan}.

A dimer cover of $\Omega^*$, denoted by $\omega$, is a subset of the edges of $\Omega^*$ such that each vertex belongs to exactly one edge in $\omega$. Denote by $\LD(\Omega^*)$ the set of dimer covers of $\Omega^*$. 
Given a dimer configuration $\omega \in \LD(\Omega^*)$, we now associate to $\omega$ a tree on $\overline\Omega_0^*$ in the following manner.  
Let us consider each edge $e$ in $\omega$ as directed from black to white vertex, and let us extend it to the next black vertex along this direction (i.e., ``double'' the oriented edge $e$). We only consider the edges starting from $\blacksquare_0$ vertices. In this manner, we obtain a subset of edges of $\overline\Omega^*_0$. We add the edges $\cup_{i=1}^{n}(x_{2i}x_{2i+1})\cup\cup_{i=1}^{n-1}\{z_iz_i^+\}$ to this subset and also direct the edges $\cup_{i=1}^{n-1}(x_{2i}x_{2i+1})\cup\cup_{i=1}^{n-1}\{z_iz_i^+\}$ from the black vertices to white vertices. 
Note however that we do \emph{not} direct the edges along $(x_{2n}x_{1})$ (indeed no consistent orientation on the last arc).
Denote by $\omega^*$ the subset of edges we obtain, and note that except on $(x_{2n} x_1)$ out of every vertex in $\Omega_0^*$ there is a single edge going out of this vertex in $\omega^*$.
See Figure~\ref{fig::Dimer}.
\begin{figure}[ht!]
\begin{center}
\includegraphics[width=1\textwidth]{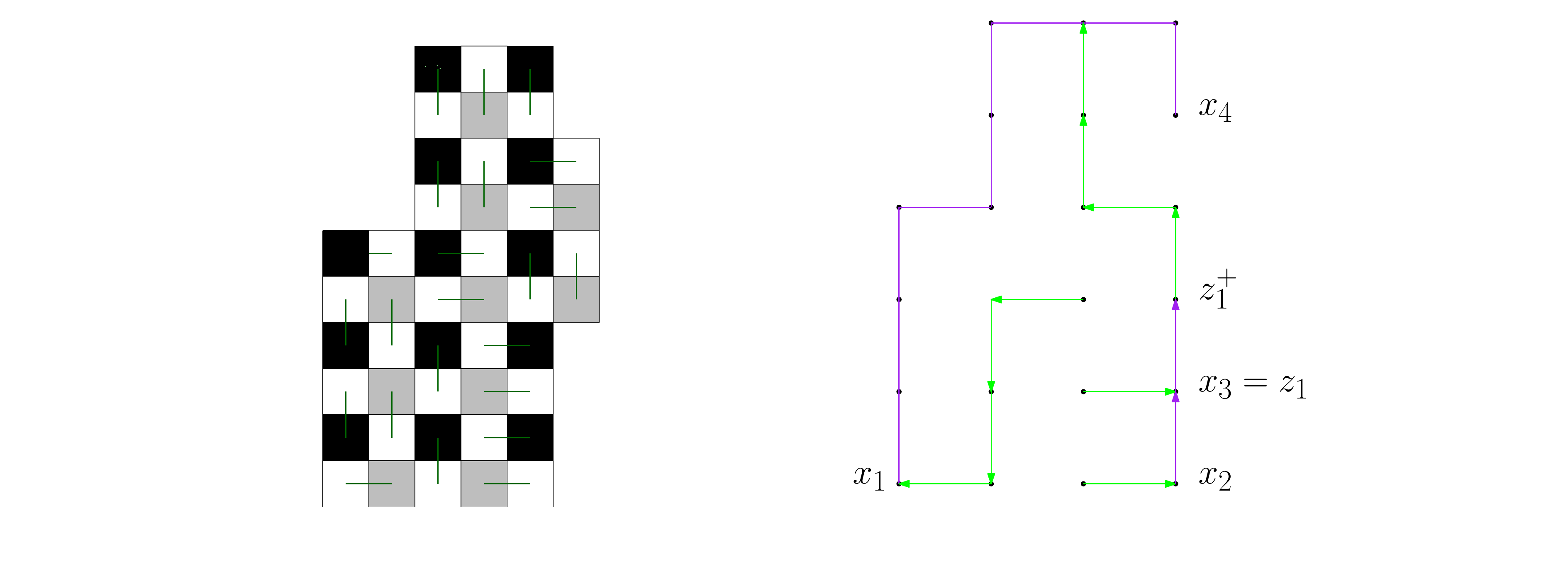}
\end{center}
\caption{\label{fig::Dimer}In the left picture, the set of dark green edges is a dimer configuration. In the right picture, the green edges are from the dimer configuration and the purple edges are added artificially.}
\end{figure}
Denote by $\LU(\overline\Omega^*_0)$ the subset of spanning trees on $\overline\Omega^*_0$
satisfying the following conditions: the spanning tree contains $\cup_{i=1}^n(x_{2i}x_{2i+1})$ and for every $1\le i\le n-1$, the first step of the branch connecting $(x_{2i}x_{2i+1})$ to $(x_{2n}x_{1})$ is through the edge $z_iz_i^+$. Equivalently, we could also wire the boundary components $c_1= (x_2 x_3), \ldots c_n = (x_{2n} x_1)$ and view the spanning tree as an oriented (or rooted) spanning tree, rooted at $c_n = (x_{2n} x_1)$, and such that the path connecting $c_i$ to $c_n$ starts with the oriented edge $z_iz_i^+$.

\begin{lemma}[Temperley's bijection]\label{lem::bij}
The map $\omega\mapsto\omega^*$ is a bijection between $\LD(\Omega^*)$ and $\LU(\overline\Omega_0^*)$.
\end{lemma}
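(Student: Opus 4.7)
The plan is to verify the three required properties: that $\omega^*$ lies in $\LU(\overline\Omega_0^*)$, that the map $\omega\mapsto\omega^*$ is injective, and that it is surjective. Injectivity and surjectivity will both be read off from an explicit inverse, so the real work is in showing that the image is a spanning tree with the claimed boundary data. First I would check the local degree count: every $\blacksquare_0$ vertex $v\in V(\Omega_0^*)$ is matched by $\omega$ to exactly one white neighbour, so the doubling procedure supplies exactly one outgoing edge from each such $v$. After wiring each boundary component $c_i=(x_{2i}x_{2i+1})$ into a single node $v_i$ and declaring $v_n$ the root, the appended edge $z_iz_i^+$ gives $v_i$ (for $1\le i\le n-1$) a unique outgoing edge, so the wired object is a functional graph in which every non-root vertex has out-degree exactly one. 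The required boundary condition, namely that the branch from $v_i$ to $v_n$ begins with $z_iz_i^+$, then holds by construction, and the claim that this functional graph is a spanning tree reduces to showing it has no directed cycle.

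Proving acyclicity is the main obstacle. I would use the classical planar-duality argument adapted to the piecewise Temperleyan setting: if $\omega^*$ (wired as above) contained an oriented cycle $\gamma$, then $\gamma$ would be a simple closed curve in the plane enclosing a bounded region $R$. All dimer edges of $\omega$ incident to the $\blacksquare_0$ vertices on $\gamma$ lie on $\gamma$ itself (they are the first halves of the doubled edges composing $\gamma$), so the remaining vertices of $\Omega^*$ strictly inside $R$ -- whites, $\blacksquare_1$'s, and possibly interior $\blacksquare_0$'s -- must be matched among themselves by $\omega$. A direct count, using simple connectedness of $\Omega$ and the parity structure inherited from the checkerboard colouring, shows that the number of white vertices strictly inside $R$ differs from the combined number of black vertices inside $R$ by exactly one, giving the required contradiction. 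The additional edges $z_iz_i^+$ and the boundary arcs $(x_{2i}x_{2i+1})$ have to be handled separately when the hypothetical cycle uses them; the assumption made in Section~\ref{subsec::Temperleyan} that between two consecutive concave white corners the boundary black squares are all of type $\blacksquare_0$ is exactly what keeps the parity count clean in this case.

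For the inverse, given $T\in\LU(\overline\Omega_0^*)$ rooted at $c_n$, each non-root vertex $w$ has a unique parent edge $e_w$ in $T$. The edge $e_w$ is an edge of $\overline\Omega_0^*$ and therefore corresponds to a length-two path in $\Omega^*$ through a unique intermediate white vertex $y_w$. Reading the first half of $e_w$ (the edge from $w$ to $y_w$) for each $w\in V(\Omega_0^*)$ produces a collection of edges in $\Omega^*$ that covers every $\blacksquare_0$ vertex exactly once and each white vertex at most once. The leftover white vertices together with all $\blacksquare_1$ vertices are then paired by the planar dual of $T$, which is a spanning tree on the $\blacksquare_1$-lattice whose doubled edges supply the remaining dimers; that the dual is again a tree is a purely topological consequence of planarity and of the boundary data built into $\LU(\overline\Omega_0^*)$. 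This reconstructs a dimer cover $\omega\in\LD(\Omega^*)$ with $\omega^*=T$, and since the two constructions are clearly inverse to each other at the level of edges, the map $\omega\mapsto\omega^*$ is a bijection.
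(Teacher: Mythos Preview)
Your argument is correct and follows the paper's route almost verbatim: the odd-parity count on the region enclosed by a hypothetical cycle for acyclicity, the out-degree-one structure to force every forward path to terminate at $c_n$, and the planar dual tree for the explicit inverse. One minor quibble: the extra assumption that boundary black squares between consecutive concave white corners are of type $\blacksquare_0$ is used only to make the definition of the points $z_i$ unambiguous, not to rescue the parity count when the hypothetical cycle passes through the added edges $z_iz_i^+$ or the arcs $(x_{2i}x_{2i+1})$ --- the paper's proof does not invoke it at that step, and the parity argument goes through uniformly once the map $\omega\mapsto\omega^*$ is defined.
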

\begin{proof}
First, we will show that $\omega^*$ is indeed an element in $\LU(\overline\Omega_0^*)$. Since $\omega$ is a dimer cover, it is clear that $\omega^*$ contains all the vertices of $\overline\Omega_0^*$. We need to show that $\omega^*$ does not contain loops. Suppose that $L$ is a loop on $\overline\Omega_0^*$ which is contained in  $\omega^*$. We denote by $\tilde L$ the union of $L$ and the squares surrounded by $L$. By induction on the number $\blacksquare_1$ squares in $\tilde L$, we have that the number of squares in $\tilde L$ is odd. Note that if $L$ belongs to $\omega^*$, the subgraph $\tilde L$ is covered by edges in $\omega$. This implies that the number of squares in $\tilde L$ should be even, which is a contradiction. Thus, the loop $L$ can not belong to $\omega^*$. This implies that $\omega^*$ is a spanning forest on $\overline\Omega_0^*$.

To show that $\omega^*$ is a tree, recall that starting from every vertex in $\overline \Omega_0^* $, except on $c_n = (x_{2n} x_1)$, there is a single oriented edge in $\omega^*$ going out of that vertex. Following the edges in the forward direction, we obtain a path which necessarily ends in $c_n$. Thus every vertex is connected to $c_n$ and $\omega^*$ is in $\LU(\overline\Omega_0^*)$ (i.e., is a spanning tree), as by definition of $\omega^*$, this path connecting $c_i$ to $c_n$ starts with the oriented edge $z_iz_i^+$. 


Second, we will show that this is a bijection. Define the dual graph of $\overline\Omega_1^*$ similarly as defining $\overline\Omega_0^*$ by considering the $\blacksquare_1$ squares. Note that every spanning tree on $\overline\Omega_0^*$ determines a dual forest on the dual graph of $\overline\Omega_1^*$. Since every dimer cover $\omega$ is determined by $\omega^*$ and the corresponding dual forest, we deduce that the map $\omega\mapsto\omega^*$ is injective. For every $\omega^*\in\LU(\overline\Omega_0^*)$, it is clear that dividing the edges of $\omega^*$ and the edges of the corresponding dual forest, we will get a dimer cover on $\Omega$. Thus, the map $\omega\mapsto\omega^*$ is surjective. This completes the proof.
\end{proof}

\subsection{Generalisation}
\label{SS:gen}

We now explain briefly how the setup considered in this paper can be generalised to a much bigger class of planar graphs. Roughly these graphs are obtained by superposing a planar graph and their dual (which is a setup in which Temperley's bijection is known to apply, see \cite{KPW}) but where we pay attention to the choice of boundary conditions. Let $\{G_\delta\}_{\delta >0}$ denote a sequence of graphs embedded in a domain $\Omega$, with an edge outer-boundary forming a simple curve $\lambda$.   
%
%
Let us also fix $n\ge 2$ and boundary arcs $(x_2 x_3), \ldots, (x_{2n} x_1)$ on $\partial \Omega$. We let $ x_1^\delta, x_{2n}^\delta$ denote approximations of $x_1, \ldots, x_{2n}$ along $\lambda$ (and hence on $V(G_\delta)$). Let $c_i$ denote the (wired) arc $(x_{2i} x_{2i+1})$ (understood cyclically), and for each $1\le i \le n$, fix $z_i \in \{x_{2i}, x_{2i+1}\}$. As in Theorem \ref{thm::heightconv} we assume that $\Omega$ is a simply connected domain with simple $C^1$ boundary. In fact it would suffice to assume that the reflecting arcs correspond to disjoint simple $C^1$ curves. 

Let $\cZ = \cup_{i=1}^n \{ z_i\}$.

 

Let $G^*_\delta$ denote the interior planar dual of $G_\delta$, and let $G^s$ denote the superposition of $G_\delta$ and $G^*_\delta$. The graph $G^s$ is bipartite, with $V(G^s) = B^s\cup W^s$ with $B^s = V(G_\delta) \cup V(G^*_\delta)$, and $W = E(G_\delta)$, and there is an edge between $e $ between $b \in B^s$ and $w \in W^s$ if and only if $(bw)$ is part of a primal or dual edge of $G_\delta$. 
 Finally, for each $1\le i \le n$, we remove from the superposition graph $G^s$ the arcs $c_i = (x_{2i} x_{2i+1})$, understood cyclically, and if $1\le i \le n-1$ we also remove the edge $\{z_i w_i\}$, where $w_i$ is the unique white vertex adjacent to $z_i$ on $\lambda$. We let $\cG^D_\delta$ be the resulting graph and call it the dimer graph.  

To make the parallel with the previous section clearer, it will be useful to call $z_i^+$ the black vertex adjacent to $w_i$ on $\lambda$ which is not $z_i$, so $w_i \in \{z_i z_{i}^+\}$. We call a white vertex on the boundary of $\cG^D_\delta$ a concave corner if it is adjacent to the arc $z_i^+$ for some $1\le i \le n-1$. We call it a convex corner if it is adjacent to $c_i$, for $1\le i \le n$. Thus there are $n+1$ convex white corners, and $n-1$ white concave corners.


We make the following assumptions about $G^\d $.
We may allow $G^\d$ to have
oriented edges with weights (but all the edge weights on $G^*_\delta$ will be set to one). A continuous time simple random walk
$\{X_t\}_{t \ge 0}$ on such a graph
$G^\d$ is defined in the usual way: the walker jumps from $u$ to $v$ at
rate $w(u,v)$ where $w(u,v)$ denotes the weight of the oriented edge
$(u,v)$. Furthermore, the walk is killed as soon as it reaches a vertex from $G_\delta \setminus G^-_\delta$. Given a vertex $u $ in $G^\d$, let $\PP_u$ denote the law of
continuous time simple random walk on $G^\d$ started from $u$.

For $A \subset \C$,
we denote by $A^\d$ the set of vertices of $\Gd$ in $A$.

\begin{enumerate} 

  \item \label{Bounded}\textbf{(Bounded density)} There exists $C$ such that for any $x \in \C$, the number of vertices of $G^\d$ in the square $x + [0,\delta]^2$ is smaller than $C$.

  \item \label{embedding} \textbf{(Good embedding)} The edges of the graph are embedded
  in such a way that they are piecewise smooth, do not cross each other and have uniformly bounded winding. Also, $0$ is a vertex.


\item \label{irreducibility}  \textbf{(Irreducible)} The continuous-time random walk on $G^\d$, is irreducible in the
  sense that for any two vertices $u$ and $v$ in $G^\d$, $\PP_u(X_1 =
  v)>0$.
\item \label{InvP} \textbf{(Invariance principle)}
The continuous time random walk $\{X_t\}_{t \ge 0}$ on $G^\d$
  started from $0$ satisfies:
\[
 {( X_{t/\delta^2})_{t \ge 0}} \xrightarrow[\delta \to 0 ]{(d) }
(B_{\phi(t)})_{t \ge 0}
\]
where $(B_t, t \ge 0)$ is a Brownian motion in $\Omega$ with \textbf{normal reflection} along $(x_1 x_2) \cup \ldots \cup (x_{2n-1} x_{2n})$, started from
$0$, and $\phi$ is a nondecreasing, continuous, possibly random function
satisfying $\phi(0) = 0 $ and $\phi(\infty) = \infty$. The above convergence
holds in law in Skorokhod topology.

\item \label{crossingestimate} \textbf{(Uniform crossing estimate).}  
Let
    $\cR$
   be the horizontal rectangle $[0,3]\times [0,1]$ and $\cR'$ be the vertical
rectangle with same dimensions, and let $B_1 := B((1/2,1/2),1/4)$ be the
\emph{starting ball} and $B_2:= B((5/2,1/2),1/4) $ be the \emph{target ball}. There
exist constants $\delta_0 >0$ and $\alpha_0>0$ such that for all $z
\in \C, \delta>0$, $\ell \ge 1/\delta_0$, $v \in \ell \delta B_1$ such that $v+z \in \Gd$,
\begin{equation}
  \PP_{v+z}(X \text{ hits }(\ell \delta  B_2+z) \text{ before exiting }  (\ell \delta \cR+z))
>\alpha_0.\label{eq:cross_left_right}
\end{equation}

The same statement as above holds for crossing from right to left, i.e., for
any $v \in \ell \delta B_2$, \eqref{eq:cross_left_right} holds if we replace $B_2$ by
$B_1$.
Also, the corresponding statements hold for the vertical rectangle $\cR'$.
\end{enumerate}

These assumptions essentially mirror those of \cite{BLRdimers}. In addition, in order to handle potential difficulties near the interface between reflecting and wired boundary pieces, we make the following additional assumption: 

 \begin{enumerate}
 
 \item[6.] For every topological quadrilateral $(\Lambda, a,b,c,d) \subset \Omega$, the discrete and continuous extremal lengths of $\Lambda$ are uniformly comparable. 
 
 \end{enumerate}

For precise definitions of the notions of discrete and continuous extremal lengths above we refer the reader for instance to \cite{ChelkakRobustComplexAnalysis}. In particular, by Theorem 7.1 in \cite{ChelkakRobustComplexAnalysis}, this additional assumption holds as soon as the edges of the graph $G_\delta$ are straight, edge lengths are locally comparable, the edge weights are uniformly elliptic, and the angles between edges are uniformly bounded away from 0 and $\pi$; see again \cite{ChelkakRobustComplexAnalysis} for details.

\medskip We will consider the set $\mathcal{U}( G_\delta)$ of spanning trees on $G_\delta$ with the boundary conditions that the arcs $c_1 = (x_2 x_3), \ldots c_n = (x_{2n} x_1)$ are each wired (but not to one another). Since $G_\delta$ is directed, by definition, a spanning tree $\mathbf{t} \in \cU(G_\delta)$ is such that there is exactly one forward edge out of every non-wired vertex.
 There is a natural measure on $\cU(G_\delta)$ which is simply
$$
\PP( \mathbf{t}) \propto \prod_{e \in \mathbf{t}} w_e; \quad \mathbf{t} \in \cU(G_\delta).
$$
We also consider the set $\cD (\cG^D_\delta)$ of dimer configurations on $\cG^D_\delta$, which we equip with a probability measure
$$
\PP (\mathbf{m})\propto  \prod_{ e \in \mathbf{m}} w_e; \quad \mathbf{m} \in \cD(\cG^D_\delta),
$$
where an edge $e$ of $\cG^D_\delta$ inherits the weight $w_e =1$ if it was part of a dual edge of $G_\delta$, and otherwise the weight of the corresponding primal edge oriented from black to white.

Temperley's bijection has an obvious extension to this setting: namely, given $\mathbf{m} \in \cD(\cG^D_\delta)$ we associate to it the spanning tree $\mathbf{t} \in \cU(G_\delta)$ by considering all the edges of $\mathbf{m}$ emanating out of a primal black vertex and doubling them up: i.e., for each $(bw) \in \mathbf{m}$ with $b \in V(G_\delta)$, we add to $\mathbf{t}$ the unique (primal) edge $bb'$ such that $w\in (bb')$. 

It is not hard to see that Temperley's bijection can be generalised to this setup:
\begin{lemma}[Generalised Temperley's bijection]\label{lem::bij2}
The map $\mathbf{m} \mapsto\mathbf{t}$ is a measure-preserving bijection between $\cD(\cG^D_\delta)$ and $\cU(G_\delta)$.
\end{lemma}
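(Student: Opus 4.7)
The plan is to closely follow the proof of Lemma~\ref{lem::bij}, replacing the specific $\Z^2$ combinatorics by the intrinsic planar/topological structure. This is essentially the classical generalised Temperley bijection of Kenyon--Propp--Wilson \cite{KPW}, adapted to the presence of several wired boundary arcs. The forward map is defined exactly as before: each non-wired primal black vertex $b \in V(G_\delta)$ is matched in $\mathbf{m}$ to a unique white vertex $w$ representing a primal edge $(bb') \in E(G_\delta)$, and we include $(bb')$ in $\mathbf{t}$ oriented from $b$ to $b'$. This immediately gives that every non-wired vertex has exactly one outgoing edge, while vertices on the wired arcs $c_i$ have no outgoing edge, the corresponding arc edges having been removed from $\cG^D_\delta$.

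The nontrivial step will be to show that $\mathbf{t}$ contains no cycles. If $L$ were a primal cycle in $\mathbf{t}$, it would bound a topological disk $\tilde L$ in the planar embedding. Counting inside $\tilde L$ (including its boundary $L$) the vertices and edges of $\cG^D_\delta$, say $a$ primal vertices, $b$ dual vertices (the interior faces of $\tilde L$), and $c$ white vertices (the primal edges of $\tilde L$), Euler's formula yields $a - c + (b+1) = 2$, hence $a + b = c + 1$: the black vertices outnumber the white vertices by exactly one. Since $L \subset \mathbf{t}$, no dimer edge of $\mathbf{m}$ can cross $L$, so $\mathbf{m}$ restricted to $\tilde L$ would have to pair all enclosed black vertices with enclosed white vertices, which is impossible by the parity imbalance. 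Acyclicity together with the directed structure then yields connectivity and the correct rooting: following outgoing edges forward from any non-wired vertex produces a self-avoiding path that must terminate at some arc $c_i$. For the inverse map, given $\mathbf{t} \in \cU(G_\delta)$, planar duality identifies $E(G_\delta) \setminus \mathbf{t}$ with a dual spanning structure $\mathbf{t}^*$ on $G^*_\delta$ rooted at the outer face; assembling the oriented half-edges of $\mathbf{t}$ and $\mathbf{t}^*$ then uniquely reconstructs $\mathbf{m}$. Measure preservation is automatic, since dual edges carry weight $1$ and therefore $\prod_{e \in \mathbf{m}} w_e = \prod_{e \in \mathbf{t}} w_e$.

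The main delicate point will be to check that the removal of the specific edges $\{z_i w_i\}$ at each concave corner (for $1 \le i \le n-1$) correctly implements the multiple-root spanning structure. This removal forces $w_i$ to be matched either to $z_i^+$ or to one of its neighbouring dual vertices, which in turn constrains the local combinatorics of $\mathbf{t}$ near the boundary; one must verify that under these constraints the inverse map always produces a well-defined $\mathbf{m} \in \cD(\cG^D_\delta)$ (and never re-uses the removed edges), and conversely that the forward image always lies in $\cU(G_\delta)$ rather than some degenerate variant. Once this boundary accounting is carried out carefully, the parity argument and the duality reconstruction above go through exactly as in the $\Z^2$ case treated in Lemma~\ref{lem::bij}.
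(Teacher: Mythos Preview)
The paper does not actually give a proof of this lemma: it simply states the result after remarking ``It is not hard to see that Temperley's bijection can be generalised to this setup''. Your sketch is therefore more detailed than what the paper provides, and it is the correct standard argument (essentially the Kenyon--Propp--Wilson proof). The Euler formula computation replacing the $\Z^2$-specific parity induction from the proof of Lemma~\ref{lem::bij} is exactly the right generalisation, and your identification of the boundary accounting near the removed edges $\{z_i w_i\}$ as the one point requiring care is accurate.

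One small caution: the paper's definition of $\cU(G_\delta)$ in Section~\ref{SS:gen} is stated somewhat informally (``arcs $c_1,\ldots,c_n$ are each wired but not to one another'', with one forward edge out of every non-wired vertex), and does not explicitly repeat the first-step condition from the square-lattice case (that the branch from $c_i$ to $c_n$ begins with the edge $z_i z_i^+$). For the bijection to be exact---given that the edges $\{z_i w_i\}$ are removed from $\cG^D_\delta$---this condition must be part of $\cU(G_\delta)$, exactly as in Lemma~\ref{lem::bij}. Your ``delicate point'' paragraph is where this would be verified; just be aware that you are implicitly reading the first-step constraint into the definition of $\cU(G_\delta)$, which is surely the intended meaning.
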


As a result of this bijection, all the results we prove for the dimer model on black-piecewise Temperleyan domains generalise in a straightforward manner to the dimer model on $\cG^D_\delta$ with the above law. We will not comment further on this distinction except where necessary, and we will write our proofs for concreteness in the setup of Section \ref{SS:bij}.

\subsection{Coupling of flow lines, counterflow lines and GFF}
\label{subsec::GFF}

In this section, we will recall some facts about imaginary geometry from~\cite{MillerSheffieldIG1} and~\cite{MillerSheffieldIG4}. We will use them to establish the coupling of the Gaussian free field (which will be the limit of the height function) and the continuous tree (which will be the limit of the Temperleyan tree of the last section), which will be described later. 

We first fix some constants:
\[\kappa=2;\quad\kappa'=8;\quad\lambda=\frac{\pi}{\sqrt 2};\quad\lambda'=\frac{\pi}{\sqrt 8};\quad\chi=\frac{1}{\sqrt 2}.\]

Let us start with the so-called \textbf{counterflow line coupling}. For any simply connected domain $(\Omega;a,b)$, where $a,b$ are two points on the boundary of $\Omega$ (understood as prime ends), we define the harmonic function $u_{(\Omega;a,b)}$ as following. Fix a  conformal map $\varphi$ from $\Omega$ onto the upper half plane $\HH$, which maps $a$ to $0$ and $b$ to $\infty$. 
Define $u_{(\HH;0,\infty)}$ to be the unique bounded harmonic function, which equals $\lambda'$ on $(-\infty,0)$ and equals $-\lambda'$ on $(0,\infty)$. Define
\[u_{(\Omega;a,b)}:=u_{(\HH;0,\infty)}\circ\varphi-\chi\arg\varphi'.\]
It can be checked that this does not depend on the choice of the map $\varphi$. 
Denote by $h_{\Omega}$ the Gaussian free field (GFF) with Dirichlet boundary condition on $\Omega$, with two-point or covariance function given by the Green function $G(x, y)$, normalised so that $G(x,y) = - \log |x-y| + O(1)$ as $y \to x$. Denote by $\gamma$ a chordal $\SLE_{\kappa'}$ in $\Omega$ from $a$ to $b$. 

\begin{theorem}~\cite[Theorem 1.1, Theorem 1.2]{MillerSheffieldIG1}\label{thm::countercoupling}
Fix a simply connected domain $(\Omega;a,b)$. There is a unique coupling of $h = h_{\Omega}+u_{(\Omega;a,b)}$ and $\gamma$, such that  for any stopping time $\tau$ of $\gamma$, the conditional law of $h_{\Omega}+u_{(\Omega;a,b)}$ given $\gamma[0,\tau]$ restricted in $(\Omega\setminus\gamma[0,\tau];\gamma(\tau),b)$ equals the law of 
\[h_{\Omega\setminus\gamma[0,\tau]}+u_{(\Omega\setminus\gamma[0,\tau];\gamma(\tau),b)},\]
where $h_{\Omega\setminus\gamma[0,\tau]}$ is a on $(\Omega\setminus\gamma[0,\tau];\gamma(\tau),b)$ which is independent of $\gamma((0, \tau])$. Moreover, in this coupling, $\gamma$ is measurably determined by $h_{\Omega}$.
\end{theorem}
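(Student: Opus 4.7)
The plan is to follow the standard $\SLE$/$\GFF$ coupling strategy of Schramm--Sheffield, as generalised to counterflow lines in Miller--Sheffield. First, by conformal invariance, reduce to $(\HH, 0, \infty)$: chordal $\SLE_{\kappa'}$ is conformally invariant, a Dirichlet $\GFF$ transforms as a Dirichlet $\GFF$, and the boundary data $u_{(\Omega; a,b)}$ is defined precisely so as to be compatible with the $-\chi \arg \varphi'$ correction, in order that the Markov property we want to prove transports under conformal maps. It therefore suffices to construct the coupling in the upper half plane and verify uniqueness there.

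For existence, sample $\gamma$ as a chordal $\SLE_8$ in $\HH$ driven by $U_t = \sqrt{8}\, B_t$, with centered Loewner flow $g_t$. For $z \in \HH \setminus \gamma[0,t]$ define the forecast
\[
\mathfrak{h}_t(z) := u_{(\HH \setminus \gamma[0,t];\, \gamma(t),\, \infty)}(z) = \frac{2\lambda'}{\pi} \arg(g_t(z) - U_t) - \lambda' - \chi \arg g_t'(z).
\]
A direct It\^o computation from the Loewner equation shows that, for each fixed $z$, $t \mapsto \mathfrak{h}_t(z)$ is a continuous local martingale: the drift vanishes precisely because $\chi = 2\lambda'/\pi$, which together with $\lambda' = \pi/\sqrt{\kappa'}$ forces $\kappa' = 8$. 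This is the key algebraic identity. One further verifies that the quadratic covariation $d \la \mathfrak{h}_\cdot(z), \mathfrak{h}_\cdot(w) \ra_t$ equals (up to a universal constant) the decrease in the Dirichlet Green function $G_{\HH \setminus \gamma[0,t]}(z,w)$ as $t$ grows. These two facts are the hypotheses of the Schramm--Sheffield coupling lemma, which produces a coupling of $\gamma$ with a field $h$ of the prescribed marginal law such that, conditionally on $\gamma[0,\tau]$, the restriction of $h$ to $\HH \setminus \gamma[0,\tau]$ is a Dirichlet $\GFF$ shifted by $\mathfrak{h}_\tau$. Concretely, one tests $\mathfrak{h}_t$ against smooth compactly supported functions and applies optional stopping, using the characterisation of the $\GFF$ by its covariance.

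Uniqueness of the coupling follows directly from the prescribed marginal of $h$ combined with the conditional Markov property. The deeper assertion, that $\gamma$ is a measurable function of $h$, is the main obstacle. Here I would follow Dub\'edat and Miller--Sheffield: couple two curves $\gamma, \tilde\gamma$ to the \emph{same} field $h$ using the existence result applied twice, then use the conformal Markov property to show that their driving functions agree at every stopping time, hence $\gamma = \tilde\gamma$ almost surely. An additional technical point specific to $\kappa' = 8$ is that the counterflow line is space-filling and touches the boundary densely, so one must verify that $\mathfrak{h}_t(z)$ is a genuine martingale up to the stopping times of interest; this is handled by exhausting $\HH \setminus \gamma[0,t]$ by compacts and passing to the limit using the explicit form of $\mathfrak{h}_t$ to furnish the required uniform integrability.
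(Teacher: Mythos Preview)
The paper does not prove this theorem: it is quoted as a preliminary fact from \cite[Theorem~1.1, Theorem~1.2]{MillerSheffieldIG1} and no argument is given in the paper itself. Your sketch is a reasonable outline of the standard Schramm--Sheffield/Miller--Sheffield proof that appears in the cited reference (local martingale computation for the harmonic forecast, quadratic variation matching the decrement of the Green function, and the two-curves-to-one-field argument for measurability), so there is nothing to compare against within this paper.

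One small caution on your outline: for $\kappa'=8$ the curve is space-filling and the complement $\HH\setminus\gamma[0,t]$ is not simply connected in the naive sense once $\gamma$ swallows points, so the meaning of ``restriction to $\HH\setminus\gamma[0,\tau]$'' and of the centred Loewner map $g_t$ has to be interpreted via the unbounded component (or, in the Miller--Sheffield formulation, via the space-filling $\SLE$ parametrisation). Your final paragraph acknowledges the boundary-touching issue but slightly understates this; in the actual proof one works either with the driving function directly or with the Miller--Sheffield space-filling construction rather than literally with $\mathfrak h_t$ on the shrinking complement.
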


Second, we introduce the \textbf{flow line coupling}. Fix a sequence $\{x_1,\ldots,x_{2n}\}$ such that $x_1<\ldots<x_{2n}$ and fix a sequence $\{z_1,\ldots,z_{n-1}\}$ such that $x_{2i}\le z_i\le x_{2i+1}$ for $1\le i\le n-1$. We define $u_{\HH}(\cdot;x_1,\ldots,x_{2n};z_1,\ldots,z_{n-1})$ to be the unique bounded harmonic function with the following boundary conditions: 
\begin{align*} \label{eq: coef_expansion}
u_{\HH}(\cdot;x_1,\ldots,x_{2n};z_1,\ldots,z_{n-1}) = 
\begin{cases}
0 , & \text{on } \cup_{i=1}^n(x_{2i-1},x_{2i}), \\
-\lambda ,  &\text{on } \cup_{i=1}^{n-1}(x_{2i},z_i)\cup(x_{2n},\infty), \\
\lambda, &\text{on } \cup_{i=1}^{n-1}(z_i,x_{2i+1})\cup(-\infty,x_{1}).
\end{cases}
\end{align*}
We fix a simply connected domain $(\Omega;x_1,\ldots,x_{2n};z_1,\ldots,z_{n-1})$, where $x_1,\ldots,x_{2n}$ are boundary  points on $\partial \Omega$ in counterclockwise orientation and $z_i$ is on the arc $[x_{2i}x_{2i+1}]$ for $1\le i\le n-1$. We fix a conformal map $\varphi$ from $\Omega$ onto $\HH$ such that $\varphi(x_1)<\ldots<\varphi(x_{2n})$.
Define
\[u_{\Omega}(\cdot;x_1,\ldots,x_{2n};z_1,\ldots,z_{n-1}):=u_{\HH}(\cdot;\varphi(x_1),\ldots,\varphi(x_{2n});\varphi(z_1),\ldots,\varphi(z_{n-1}))\circ\varphi-\chi\arg\varphi'.\]
Again, it can be checked this does not depend on the choice of the conformal map $\varphi$. 
In~\cite{MillerSheffieldIG1}, the authors established the coupling of $h_\Omega$ and the flow lines starting from the boundary points in the following sense (this can be viewed as a recursive definition of the set of flow lines emanating out of $z_1, \ldots, z_{n-1})$. 

\begin{theorem}~\cite[Theorem 1.1, Theorem 1.2]{MillerSheffieldIG1}\label{thm::flowcoupling}
Fix a simply connected domain $\Omega$ together with marked boundary points $(x_1,\ldots,x_{2n};z_1,\ldots,z_{n-1})$ as above. There is a unique coupling of curves $\{\gamma_1^I,\ldots,\gamma_{n-1}^I\}$ and $h_{\Omega}+u_{\Omega}(\cdot;x_1,\ldots,x_{2n};z_1,\ldots,z_{n-1})$ such that the following properties hold almost surely.
\begin{itemize}
\item
The law of the flow line $\gamma^I_1$ can be described as follows: we denote by $W$ the driving function of $\varphi(\gamma^I_1)$ and denote by $\{g_t:t\ge 0\}$ the corresponding Loewner flow in the upper half plane $\HH$, then we have
\begin{equation}\label{eqn::drving}
d W_t=\sqrt{2}dB_t+\sum_{i=1}^{2n}\frac{-1}{W_t-g_t(x_i)}+\sum_{i=2}^{n-1}\frac{2}{W_t-g_t(z_i)},\quad W_0=z_1,
\end{equation}
where $(B_t:t\ge 0)$ is the standard one-dimentional Brownian motion.
\item
The curve $\gamma_1^I$ will end in $\cup_{j=2}^{n}(x_{2j}z_j)$, where we identify $z_n = x_1$. Given $\gamma^I_1$, and given the event that $\gamma^I_1$ ends in $\cup_{j=2}^{n}(x_{2j}z_{j})$, we denote by $\Omega_L$ the connected component of $\HH\setminus\gamma^I_1$ in the left side of $\gamma^I_1$ and denote by $\Omega_R$ the connected component of $\HH\setminus\gamma^I_1$ in the right side of $\gamma^I_1$. The conditional law of $h_\Omega+u_{\Omega}(\cdot;x_1,\ldots,x_{2n};z_1,\ldots,z_{n-1})$ restricted in $\Omega_L$ equals
\[h_{\Omega_L}+u_{\Omega_L}(\cdot;x_1,x_2, x_{2j+1},\ldots,x_{2n};z_{j},\ldots,z_{n-1})\]
and the conditional law of $h_\Omega+u_{\Omega}(\cdot;x_1,\ldots,x_{2n};z_1,\ldots,z_{n-1})$ restricted in $\Omega_R$ equals
\[h_{\Omega_R}+u_{\Omega_R}(\cdot;x_{3},\ldots,x_{2j-1},x_{2j};z_2,\ldots,z_{j-1}).\]
The conditional law of $\{\gamma^I_{j},\ldots,\gamma^I_{n-1}\}$ equals the law of the flow lines of 
\[h_{\Omega_L}+u_{\Omega_L}(\cdot;x_1,x_2, x_{2j+1},\ldots,x_{2n};z_{j},\ldots,z_{n-1}).\]
 The conditional law of $\{\gamma^I_{2},\ldots,\gamma^I_{j-1}\}$ equals the law of the flow lines of 
\[h_{\Omega_R}+u_{\Omega_R}(\cdot;x_{3},\ldots,x_{2j-1},x_{2j};z_2,\ldots,z_{j-1}).\]
\item
Given $\cup_{i=1}^{n-1}\gamma_i$, there will be $n$ connected components of $\Omega\setminus\cup_{i=1}^{n-1}\gamma_i$ and each componnent contains one boundary arc $(x_{2i-1}x_{2i})$ for some $1\le i\le n$. We denote by $\Omega_i$ the connected component which contains $(x_{2i-1}x_{2i})$ on its boundary.
The conditional law of $h_\Omega+u_{\Omega}(\cdot;x_1,\ldots,x_{2n};z_1,\ldots,z_{n-1})$ restricted in $\Omega_i$ equals the law of 
\[h_{\Omega_i}+u_{(\Omega_i;x_{2i-1},x_{2i})}-\frac{\pi\chi}{2},\]
\end{itemize}
\end{theorem}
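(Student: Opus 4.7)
The approach is to apply the imaginary-geometry dictionary of Miller and Sheffield~\cite{MillerSheffieldIG1} directly, matching the piecewise-constant boundary data of $u_\HH$ with the force-point weights of an $\SLE_2(\underline{\rho})$ process, and then iterating.

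First I would reduce to $\Omega=\HH$. By conformal covariance of the GFF and the defining transformation rule $u_\Omega=u_\HH\circ\varphi-\chi\arg\varphi'$, all objects in the theorem transform covariantly under the conformal map $\varphi:\Omega\to\HH$, so the general case follows from the case $\Omega=\HH$. In $\HH$, the harmonic function $u_\HH$ is piecewise constant with values in $\{-\lambda,0,+\lambda\}$: reading from left to right along $\mathbb{R}$, it jumps by $-\lambda$ at each $x_i$ and by $+2\lambda$ at each $z_i$. Near the starting point $z_1$ the field has left and right boundary values $-\lambda$ and $+\lambda$, so applying the imaginary-geometry correspondence between piecewise-constant boundary data and $\SLE_2(\underline{\rho})$ force-point weights (as in~\cite[Section~2]{MillerSheffieldIG1}) one identifies a unique flow line of $h_\HH+u_\HH$ issued from $z_1$. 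Reading off the weights from the jumps, one finds $\rho=-1$ at each $x_i$ and $\rho=+2$ at each $z_i$ for $i\ge 2$, giving exactly the driving function~\eqref{eqn::drving}; existence, uniqueness, and measurability of $\gamma^I_1$ with respect to the field are then the content of~\cite[Theorems~1.1 and~1.2]{MillerSheffieldIG1}.

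Next I would identify the terminal arc of $\gamma^I_1$. The $n$ arcs $\cup_{j=2}^n(x_{2j}z_j)$ (with the convention $z_n=x_1$) are precisely those on which $u_\HH\equiv -\lambda$, matching the value on the left side of $\gamma^I_1$ and so permitting continuous termination; the remaining arcs either carry the wrong value or lie across a ``forbidden'' jump from $z_1$. After conditioning on $\gamma^I_1$, the GFF Markov property (implicit in the proof of~\cite[Theorem~1.2]{MillerSheffieldIG1}) describes the law of $h_\HH+u_\HH$ restricted to each of the complementary domains $\Omega_L,\Omega_R$: on the flow-line portion of $\partial\Omega_{L/R}$ the boundary data picks up an additive shift of $\pm\lambda$. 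Applying a conformal map $\psi$ from $\Omega_{L/R}$ onto $\HH$ and incorporating the twist $-\chi\arg\psi'$, a direct calculation shows that these updated boundary data coincide with $u_{\Omega_L}(\cdot;x_1,x_2,x_{2j+1},\ldots,x_{2n};z_j,\ldots,z_{n-1})$ and $u_{\Omega_R}(\cdot;x_3,\ldots,x_{2j-1},x_{2j};z_2,\ldots,z_{j-1})$ respectively. The joint law of $\{\gamma^I_2,\ldots,\gamma^I_{n-1}\}$ then follows by induction on $n$ applied separately to $\Omega_L$ and $\Omega_R$.

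For the final assertion, after all $n-1$ flow lines have been drawn, each component $\Omega_i$ is bounded by the Dirichlet arc $(x_{2i-1}x_{2i})$ together with flow-line arcs on which the conditional boundary data is $-\lambda$ (with $\Omega_i$ consistently on the left of its bounding flow lines because of the nested structure of the $\gamma^I_j$). Using the identities $\lambda=2\lambda'$ and $\pi\chi/2=\lambda'$, these values are exactly those of $u_{(\Omega_i;x_{2i-1},x_{2i})}-\pi\chi/2$. The main obstacle throughout is the careful sign bookkeeping: one must track which side of each flow line inherits $+\lambda$ versus $-\lambda$ (the sign convention here is mildly non-standard and must be matched carefully with \cite{MillerSheffieldIG1}), verify that each $\Omega_i$ lies on the correct side of its bounding flow lines, and confirm that the induction on $\Omega_L,\Omega_R$ produces exactly the marked-point configurations claimed. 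Beyond this bookkeeping, no probabilistic input beyond~\cite{MillerSheffieldIG1} is required.
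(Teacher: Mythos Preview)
Your approach is correct and is essentially the only sensible one: the paper does not give a proof of this theorem at all, but simply cites it as~\cite[Theorems~1.1 and~1.2]{MillerSheffieldIG1} and refers to Figure~\ref{fig::general domain} for an illustration. Your sketch---reducing to $\HH$ by conformal covariance, reading off the $\SLE_2(\underline{\rho})$ weights from the jumps of $u_\HH$, invoking the Miller--Sheffield coupling and measurability, and then iterating via the GFF Markov property---is precisely how one extracts this specific formulation from their general theory, and the bookkeeping cautions you flag are the only real content to be checked.
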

See Figure~\ref{fig::general domain} for an illustration.
\begin{figure}[ht!]
\begin{center}
\includegraphics[width=0.5\textwidth]{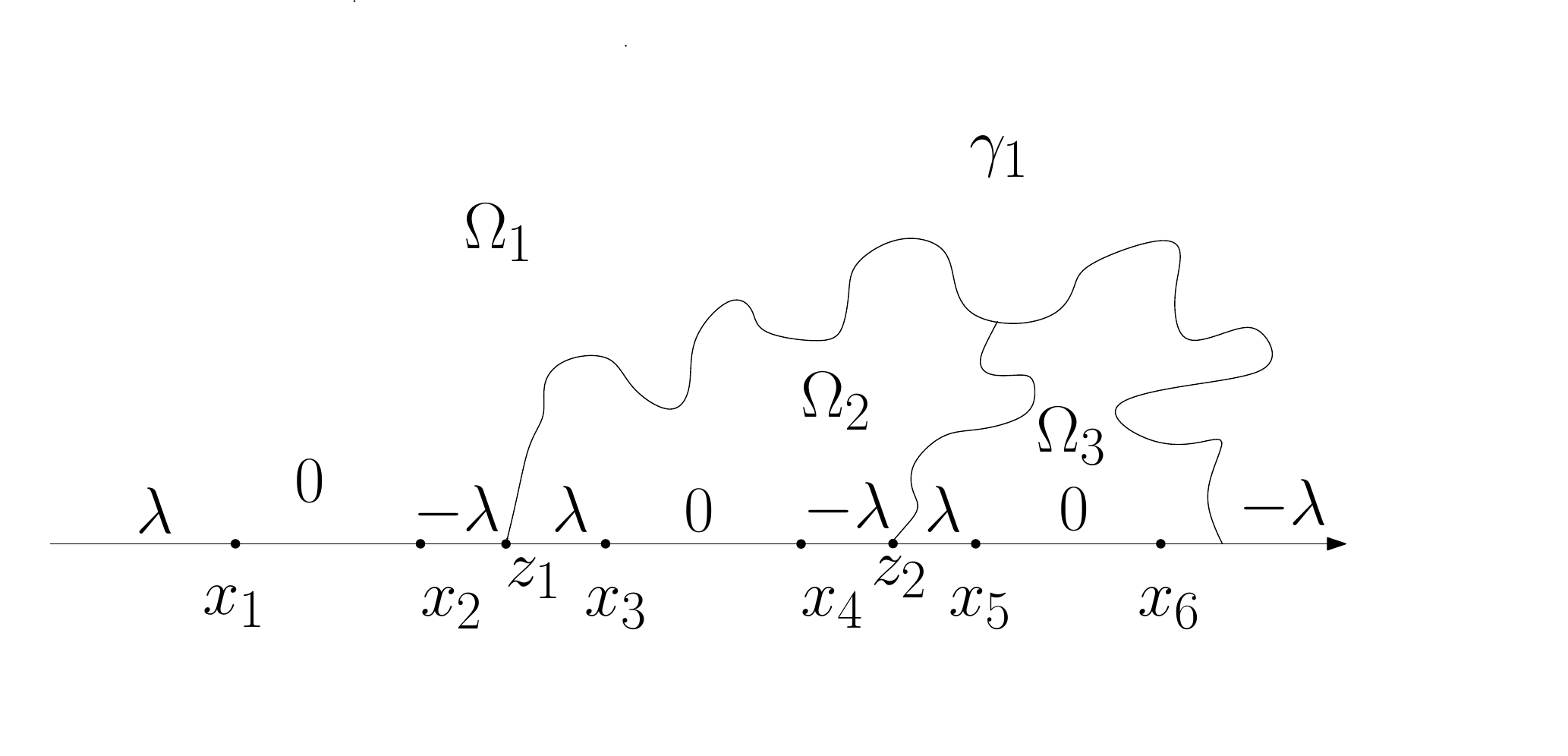}
\end{center}
\caption{\label{fig::general domain}This is an illustration of the boundary values of the $\GFF$ and flow lines when $n=3$.}
\end{figure}

The counterflow line coupling of Theorem  \ref{thm::countercoupling} can be augmented in a useful way by considering the left and right boundaries of the counterflow line $\gamma$ when it reaches a point $z \in \Omega$. 
This leads to the following coupling of counterflow and flow lines starting from interior points in imaginary geometry, stated in \cite[Theorem 1.1]{MillerSheffieldIG4}. 
Fix a dense countable set $\cQ = \{z_i\}_{i\ge 1}$ in $\Omega$. Let $h$ and $\gamma$ be coupled as in the counterflow line coupling of Theorem \ref{thm::countercoupling}. For every $z\in \Omega$, we denote by $\gamma_z$ the flow line of $h$ from $z$ with angle $-\frac{\pi}{2}$ and denote by $\gamma^*_z$ the flow line of $h$ from $z$ with angle $\frac{\pi}{2}$. They can be viewed as the boundaries of the counterflow line $\gamma$ when it reaches the point $z$. We will need the following properties of this coupling: 

\begin{theorem}~\cite[Theorem 1.1, Theorem 1.2, Theorem 1.7, Theorem 1.13]{MillerSheffieldIG4}\label{thm::intcoupling}
Fix a simply connected domain $(\Omega;a,b)$. There exists a unique coupling of $\{\gamma_{z_i}\}_{i\ge 1}$, $\{\gamma^*_{z_i}\}_{i\ge 1}$, $\gamma$ and $h_{\Omega}$ such that the following properties hold almost surely. 
\begin{itemize}
\item
The curve $\gamma$ is the counterflow line of $h_{\Omega}$ in the sense of Theorem~\ref{thm::countercoupling}. Let $n\ge 1$ and fix $z_1,\ldots,z_n \in \cQ$. The conditional law of $h_{\Omega}+u_{(\Omega;a,b)}$ given $\{\gamma_{z_i}\}_{1\le i\le n}$ equals to the law of 
\[h_{\Omega\setminus\cup_{i=1}^{n}\gamma_{z_i}}+u_{(\Omega\setminus\cup_{i=1}^{n}\gamma_{z_i};a,b)},\]
where $h_{\Omega\setminus\cup_{i=1}^{n}\gamma_{z_i}}$ is an independent GFF. The conditional law of $h_{\Omega}+u_{(\Omega;a,b)}$ given $\{\gamma_{z_i}^*\}_{1\le i\le n}$ equals to the law of 
\[h_{\Omega\setminus\cup_{i=1}^{n}\gamma_z^*}+u_{(\Omega\setminus\cup_{i=1}^{n}\gamma_z^*;a,b)},\]
where $h_{\Omega\setminus\cup_{i=1}^{n}\gamma_z^*}$ is independent of $\{\gamma^*_{z_i}\}_{1\le i\le n}$. Moreover, $\{\gamma_{z_i}\}_{1\le i\le n}$ and $\{\gamma^*_{z_i}\}_{1\le i\le n}$ are determined by $h_{\Omega}$.
\item
For every $i\ge 1$, the flow lines $\gamma_{z_i}$ and $\gamma^*_{z_i}$ are simple curves. Moreover, $\gamma_{z_i}$ hits $\partial \Omega$ at $(ab)$ and $\gamma_{z_i}^*$ hits $\partial \Omega$ at $(ba)$ and $\gamma_{z_i}\cap\gamma_{z_i}^*=\{z_i\}$. For $i\neq j$, the curves $\gamma_{z_i}$ merges with $\gamma_{z_j}$ when they intersect each other and the curves $\gamma^*_{z_i}$ merges with $\gamma^*_{z_j}$ when they intersect each other.
\item
Define $\tau_z$ to be the first time that $\gamma$ hits $z$. Almost surely, the left boundary $\gamma^*[0,\tau_z]$ equals to $\gamma_z$ and the right boundary of $\gamma[0,\tau_z]$ equals to $\gamma_z$. 
\end{itemize}
\end{theorem}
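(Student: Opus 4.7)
The plan is to assemble this coupling from the boundary flow/counterflow line couplings by an approximation-and-conditioning procedure, following the strategy of \cite{MillerSheffieldIG4}. The starting point is that Theorem~\ref{thm::countercoupling} already provides the counterflow line $\gamma$ together with its coupling to $h_\Omega$; what remains is to introduce the two families of interior flow lines and to establish their properties and their relation to $\gamma$.

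First, I would construct the interior flow lines $\gamma_{z_i}$ (angle $-\pi/2$) and $\gamma^*_{z_i}$ (angle $+\pi/2$). The cleanest way is to approximate each $z_i$ by a sequence of boundary prime ends $y_n \to z_i$ (e.g.\ along small crosscuts near $z_i$), run the usual boundary flow line coupling of Theorem~\ref{thm::flowcoupling}, and show tightness plus uniqueness of subsequential limits using absolute continuity between the restricted GFFs (the boundary values of $h_\Omega + u_{(\Omega;a,b)}$ seen from $y_n$ converge in a mutually absolutely continuous way, after the conformal/imaginary-geometry correction $-\chi\arg\varphi'$, to the ``interior'' boundary data needed for the angle $\mp\pi/2$ rays). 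Simplicity of $\gamma_{z_i}, \gamma^*_{z_i}$ follows because $\kappa=2<4$ and the driving SDE near $z_i$ has the same local form as chordal SLE$_2$ with $\rho$'s that keep it away from self-intersection. The statement that $\gamma_{z_i}$ terminates on $(ab)$ and $\gamma^*_{z_i}$ on $(ba)$ is read off from the sign of the boundary data of $h_\Omega + u_{(\Omega;a,b)}$ on each side, using the standard rule that a flow line is forbidden to hit boundary segments whose relative boundary height differs by more than $\pi\chi$.

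Second, I would establish the domain Markov property: given $\gamma_{z_1},\ldots,\gamma_{z_n}$, the conditional law of $h_\Omega + u_{(\Omega;a,b)}$ on the complement is an independent GFF plus the appropriate harmonic imaginary-geometry boundary data, and similarly for the starred family. This is proved by induction on $n$: for $n=1$, one approximates $\gamma_{z_1}$ by its boundary analogues, passes the known Markov property from Theorem~\ref{thm::flowcoupling} to the limit, and checks that the harmonic extension on the complement converges to $u$ with the correct jump of $\pm 2\lambda$ across the curve. For $n\ge 2$, one conditions on $\gamma_{z_1}$ first, decomposes $\Omega\setminus\gamma_{z_1}$ into the connected components containing each remaining $z_i$, and applies the induction hypothesis in each component, using conformal covariance of the setup. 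Merging of $\gamma_{z_i}$ with $\gamma_{z_j}$ upon intersection follows from this Markov description: once the two curves meet, the remaining boundary data in the unbounded component is identical in law to that seen by a single flow line, so by the measurability of the flow line with respect to the field (Theorem~\ref{thm::countercoupling}), the two curves agree thereafter.

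The hardest step, and the one I would expect to require the most care, is the third bullet: the identification of $\gamma_{z}$ with the right boundary and $\gamma^*_z$ with the left boundary of $\gamma[0,\tau_z]$. The strategy is the light cone construction: one shows that the right boundary of $\gamma$ up to any stopping time when it swallows $z$ is a flow line emanating from the tip with appropriate angle, and then sends this stopping time to $\tau_z$. The key input is Dub\'edat's duality/SLE duality in the imaginary geometry form, which identifies the outer boundary of SLE$_{\kappa'}$ with flow lines of the same field at the dual parameter $\kappa=16/\kappa'$. Concretely one first proves the analogous statement at the boundary (left/right boundaries of $\gamma$ stopped when hitting a boundary arc are flow lines of $h$), then conditions on the counterflow line's behaviour to get continuous analogues of this near $z$, and finally uses the uniqueness from the first bullet together with the fact that both candidate curves are determined by $h$ to identify them almost surely. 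The main obstacle is that $\kappa' = 8$ is space-filling, so $\tau_z$ is finite and one must argue that the sequential limit of outer boundaries is genuinely a flow line with the prescribed angle, which requires the regularity/continuity of the light cone construction at interior points and delicate control of the behaviour of $\gamma$ near $z$ at time $\tau_z$.
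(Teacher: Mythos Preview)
The paper does not prove this theorem. It is stated in Section~\ref{subsec::GFF} as a preliminary result quoted directly from \cite{MillerSheffieldIG4} (Theorems~1.1, 1.2, 1.7, 1.13 there), with no argument given beyond the citation. So there is no ``paper's own proof'' to compare against.

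Your sketch is a reasonable high-level outline of the Miller--Sheffield construction, and the three-step structure (define interior flow lines, establish the Markov property and merging, then identify left/right boundaries via the light-cone/duality argument) is indeed the skeleton of their proof. One point to be careful about: your proposed construction of interior flow lines by ``approximating $z_i$ by boundary prime ends $y_n\to z_i$'' is not quite how \cite{MillerSheffieldIG4} proceeds; interior flow lines are defined there via a local absolute continuity argument with respect to a whole-plane GFF and then coupled to the domain GFF, rather than by pushing boundary flow lines into the interior. Also, for $\kappa'=8$ the counterflow line is space-filling and the light-cone identification of left/right boundaries requires the specific continuity and reversibility inputs developed in that paper; your last paragraph correctly flags this as the delicate step but understates how much machinery it needs. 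In any case, for the purposes of the present paper the theorem is taken as a black box.
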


The marginal law (and joint law) of $\gamma_z$ and $\gamma_z^*$ could further be specified but we will not need this in the following, in fact we will recover a description of these laws as part of our proof.

\section{Coupling of the continuous tree and GFF when $n=2$}
\label{sec::GFFtree}

In this section, we will prove the convergence of the discrete Temperleyan trees introduced in Lemma~\ref{lem::bij} and establish the coupling of the limiting tree with the GFF with boundary conditions specified in Theorem \ref{thm::flowcoupling}, in the case $n=2$.
In this section, we will assume that $z_1=x_3$. (The proof in the other case $z_1=x_2$ is almost the same). It is convenient to change our notations slightly in the cas $n =2$, and denote the point $z_1$ by $b$, while we denote the remaining points $x_2,x_4,x_1$ by $a,c,d$ respectively.

Suppose $(\Omega_\delta;a_\delta,b_\delta,c_\delta,d_\delta)$ is a discrete simply connected domain with four marked boundary points, where $\Omega_\delta$ is a subset of $\delta\Z^2$ and $a_\delta,b_\delta,c_\delta$ and $d_\delta$ are on $\partial \Omega_\delta$. Consider the uniform spanning tree ($\ust$) on $\Omega_\delta$ on the graph where the boundary $(a_\delta b_\delta)$ is wired to be one vertex and the boundary arc $(c_\delta d_\delta)$ is wired to be another vertex. (This is the UST with so-called \emph{alternating boundary conditions} introduced in \cite{HanLiuWuUST} and already discussed above Theorem \ref{T:condSLE8}). Recall that there is a (single) branch, $\gamma_\delta$, connecting the two arcs $(a_\delta b_\delta)$ and $(c_\delta d_\delta)$ in this tree.  We may thus define the event 
\[E_\delta:=\{\text{the branch $\gamma_\delta$ starts from }b_\delta b_\delta^+\}.\]
As already explained above Theorem \ref{T:condSLE8}, it is not hard to see that the tree $\cT_\delta$ obtained from Temperley's bijection in the black $4$-Temperleyan domain is nothing but a UST with alternating boundary conditions, conditionally given the event $E_\delta$.
 Recall also 
 $\eta_{L,\delta}$ is the discrete Peano curve on the left side of $\gamma_{\delta}$ starting from $d_\delta$ and ending in $a_\delta$, while  $\eta_{R,\delta}$ is the discrete Peano curve from $c_\delta$ to $b_\delta$ on the right side of $\gamma_{\delta}$. For every $z_\delta\in \Omega_\delta$, denote by $\gamma_{z_\delta}$ the path from $z_\delta$ to $(a_\delta b_\delta)\cup(c_\delta d_\delta)\cup\gamma_{\delta}$.

We now recall the Schramm topology (introduced in \cite{{SchrammFirstSLE}}) which we use for the convergence of trees.
\begin{itemize}
\item
Let $\mathcal{P}$ be the space of unparameterised paths in $\C$.  Define a metric on $\mathcal{P}$ as follows:
\begin{align}\label{eqn::curves_metric}
d(\gamma_1, \gamma_2):=\inf\sup_{t\in[0,1]}\left|\hat{\gamma}_1(t)-\hat{\gamma}_2(t)\right|,
\end{align}
where the infimum is taken over all the choices of  parameterisations  $\hat{\gamma}_1$ and $\hat{\gamma}_2$ of $\gamma_1$ and $\gamma_2$. Suppose $\{{\gamma_\delta}\}_{\delta>0}$ and $\gamma$ are continuous curves in $\overline\HH$  from $0$ to $\infty$ and their Loewner driving functions are continuous. If $\gamma_\delta$ converges to $\gamma$ under the metric~\eqref{eqn::curves_metric}, then after parametrising by half-plane capacity, $\gamma_\delta$ converges to $\gamma$ locally uniformly as continuous functions on $[0,+\infty)$.
\item
For any metric space $X$, denote by $\LH(X)$ the space of closed subsets of $X$ equipped with the Hausdorff metric. For every $z,w\in \overline \Omega$, denote by $\LP(z,w,\overline \Omega)$ the set of paths connecting $z$ and $w$ in $\overline \Omega$. We will view 
\[\{z:z\in\overline \Omega\}\times \{w:w\in\overline \Omega\}\times \cup_{z,w\in\overline \Omega}\LP(z,w,\overline \Omega)\]
as a subset of $\LH(\overline \Omega\times \overline \Omega\times \LH(\overline \Omega))$. We will denote by $d_{\LH}$ the corresponding metric and denote by $\overline \Omega_{\LH}$ the closure of $\{z:z\in\overline \Omega\}\times \{w:w\in\overline \Omega\}\times \cup_{z,w\in\overline \Omega}\LP(z,w,\overline \Omega)$ under $d_{\LH}$. 
\end{itemize}

We now give the setup in which we will prove convergence of the discrete Temperleyan trees. Fix a simply connected domain $(\Omega;a,b,c,d)$ such that $\partial \Omega$ is locally connected and $(a,b,c,d)$ are four points on $\partial \Omega$ in counterclockwise order, and suppose that there exists a simply connected domain $\tilde \Omega$ whose boundary is a  simple $C^1$ curve, such that $\Omega\subset\tilde \Omega$ and $\partial \Omega\cap\partial\tilde \Omega$ equals $[bc]\cup[da]$. Suppose also that a sequence of discrete simply connected domains $\{(\Omega_\delta;a_\delta,b_\delta,c_\delta,d_\delta)\}_{\delta>0}$ converges to $(\Omega;a,b,c,d)$ in the following sense: there exists $C>0$ such that
\begin{equation}\label{eqn::quad}
d((b_\delta c_\delta),(bc))\le C\delta,\quad d((d_\delta a_\delta),(da))\le C\delta\quad\text{and}\quad d((a_\delta b_\delta),(ab))\to 0,\quad d((c_\delta d_\delta),(cd))\to 0\text{ as }\delta\to 0.
\end{equation}
For convenience, we require $\Omega_\delta\subset \Omega$. We emphasise that the assumptions on $\partial \Omega$ and the convergence of discrete domains are mainly {needed} to handle the reflecting boundaries: the assumption on $\tilde \Omega$ amounts to saying that the reflecting arcs $(bc)$ and $(da)$ should be simple and smooth, while the wired arcs $(ab)$ and $(cd)$ can be fairly arbitrary. We will view the discrete tree $\LT_\delta$ as 
\[\{(x_\delta,y_\delta,\gamma_{x_\delta,y_\delta})|x_\delta,y_\delta\in \Omega_\delta\text{ and }\gamma_{x_\delta,y_\delta}\text{ is the unique path connecting }x_\delta\text{ and }y_\delta\text{ in }\LT_\delta\},\] which is an element in $\overline \Omega_{\LH}$.

\begin{theorem}\label{thm::treeconv}
As $\delta \to 0$, the discrete tree $\LT_\delta$ converges in distribution (with respect to the metric $d_{\LH}$). Let $\LT$ denote the limit, which we call a continuous tree. The continuous tree $\LT$ can be constructed as follows: 

Fix any dense set $\cQ = \{z_i\}_{i\ge 1}$ in $\Omega$. Let $\gamma$ be a chordal $\SLE_2(-1,-1;-1,-1)$ curve in $\Omega$ with force points $a,d$ and $b,c$ starting from $b$, and suppose it is coupled with $h_\Omega+u_\Omega(\cdot;a,b,c,d;b)$ as in the flow line coupling of Theorem~\ref{thm::flowcoupling}. Given $\gamma$, let $\Omega_L$ and $\Omega_R$ be the two domains on either side of $\gamma$, as in Theorem~\ref{thm::flowcoupling}. Denote by $\cQ_L$ (resp. $\cQ_R$) the points of $\cQ$ that fall in $\Omega_L$ (resp. $\Omega_R$). Then, we sample $\big\{\gamma_{z_i^L}\big\}_{i\ge 1}$ and sample $\big\{\gamma^*_{z_i^R}\big\}_{i\ge 1}$ as in  Theorem~\ref{thm::intcoupling}, applied in $\Omega_L$ and $\Omega_R$ respectively. Then $\cT$, viewed as a random variable in the space $\cH$, is simply the closure of the union of $\big\{\gamma_{z_i^L}\big\}_{i\ge 1}$, $\big\{\gamma^*_{z_i^R}\big\}_{i\ge 1}$, and $\gamma$.
\end{theorem}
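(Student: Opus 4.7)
The overall plan is to build the scaling limit of $\cT_\delta$ in stages via Wilson's algorithm: first analyze the interface $\gamma_\delta$ between the two wired arcs $(a_\delta b_\delta)$ and $(c_\delta d_\delta)$, then condition on it and build the tree branch-by-branch from the interior. Under the event $E_\delta$, the curve $\gamma_\delta$ is realized as the loop-erased trace of a random walk started from $b_\delta^+$ (the forced first step), absorbed on the wired arcs, reflected on the free arcs $(b_\delta c_\delta) \cup (d_\delta a_\delta)$, and biased by the event that the walk is absorbed on $(c_\delta d_\delta)$ rather than on $(a_\delta b_\delta) \setminus \{b_\delta\}$.

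First I would show that $\gamma_\delta$ converges in the Schramm topology to a chordal $\SLE_2(-1,-1,-1,-1)$ in $\Omega$ from $b$, with four force points of weight $-1$ placed at $a, c, d$ and at $b^+$. This uses the scaling limit of LERW with mixed absorbing/reflecting boundary, which is applicable under the graph assumptions listed in Section~\ref{SS:gen}. The four weights arise naturally: two $-1$'s from the endpoints $c, d$ of the target wired arc (as for an ordinary LERW targeting a wired boundary arc), and the remaining two from the conditioning/corner effects at $a$ and at $b^+$ (the latter incorporating the reflecting boundary on $(bc)$). The resulting driving SDE matches \eqref{eqn::drving} for $n=2$ with $z_1 = x_3 = b$, so by Theorem~\ref{thm::flowcoupling} the limit $\gamma$ of $\gamma_\delta$ is the flow line of $h_\Omega + u_\Omega(\cdot;a,b,c,d;b)$ from $b$.

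Second, conditionally on $\gamma_\delta$ and using Wilson's algorithm, the branch $\gamma_{z_\delta}$ from an interior point $z_\delta \in \Omega_L^\delta$ is an LERW in $\Omega_L^\delta$ targeting $\gamma_\delta \cup (a_\delta b_\delta) \cup (d_\delta z_\delta^*)$ (where $z^*$ is the terminal point of $\gamma_\delta$ on $(c_\delta d_\delta)$) and reflecting on $(d_\delta a_\delta)$. Combining the scaling limit of LERW in such reflecting domains (as developed in \cite{BLRdimers}) with the convergence of $\gamma_\delta$ from the first step, one obtains that $\gamma_{z_\delta}$ converges jointly with $\gamma_\delta$ to a chordal $\SLE_2(\rho)$-type curve $\gamma_z$ in $\Omega_L$. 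By the conformal Markov property of the flow-line coupling (Theorem~\ref{thm::flowcoupling} applied inside $\Omega_L$, with boundary data inherited from $h_\Omega + u_\Omega$ via the Markov property of the GFF), this $\gamma_z$ coincides with the flow line of angle $-\pi/2$ emanating from $z$ inside $\Omega_L$, i.e., the curve $\gamma_{z^L_i}$ of Theorem~\ref{thm::intcoupling}. A symmetric argument on the right side (with angle $+\pi/2$) gives convergence to $\gamma^*_{z^R_j}$. Iterating Wilson's algorithm handles any finite collection $z_1,\ldots,z_k$ of interior points, and the merging property of LERW upon coalescing passes to the limit and matches the flow-line merging of Theorem~\ref{thm::intcoupling}.

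Finally, convergence of the full tree in the $d_{\cH}$ metric follows from joint convergence of countably many branches together with tightness, the latter coming from the crossing estimate (assumption~5 in Section~\ref{SS:gen}) by standard UST arguments. The main obstacle is the first step: showing rigorously that the conditioning on $E_\delta$ produces precisely the weights $(-1,-1,-1,-1)$ in the limiting driving function. This reduces to identifying the (discrete) Radon--Nikodym derivative $\PP[E_\delta \mid \cF_t^\delta]$ -- essentially a discrete harmonic-measure ratio -- with the appropriate SLE martingale observable carrying these weights, in the spirit of the LERW conditioning computations of \cite{HanLiuWuUST} and \cite{LiuPeltolaWuUST}.
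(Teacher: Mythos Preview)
Your outline matches the paper's three-stage strategy: first the interface $\gamma_\delta$ (Proposition~\ref{prop::middle}), then the interior branches given $\gamma$, then Schramm's lemma (Lemma~\ref{lem::subtree}) for the full tree in $d_{\cH}$. Two points of comparison are worth noting.

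For the first step, rather than tracking the Radon--Nikodym derivative $\PP[E_\delta\mid\cF_t^\delta]$ along the whole curve as you propose, the paper stops $\gamma_\delta$ at its first exit from $B(b,s)$: after this time the conditioning on $E_\delta$ has been absorbed into the geometry of $\Omega\setminus\gamma_\delta[0,\tau(s)]$, and the remainder of the curve is an \emph{unconditioned} LERW with alternating boundary whose tip no longer sits at a free/wired transition point. This places it exactly in the setup of~\cite{HanLiuWuUST}, from which the $\SLE_2(-1,-1;-1,-1)$ driving SDE is read off; one then lets $s\to 0$. The delicate work goes instead into Lemmas~\ref{lem::ratio}--\ref{lem::tightness}, which control ratios of discrete harmonic measures seen from $b_\delta^+$ well enough to get tightness and to show the limit does not touch $(bc)\cup(da)$. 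Your RN-derivative route would also work and is in fact precisely what the paper does for the Peano curves (Proposition~\ref{prop::hsle}) and for the general-$n$ interfaces in Section~\ref{sec::Convergence of branches}; the stopping trick is a shortcut specific to $n=2$.

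For the second step, there is a subtlety you glide over: once $\gamma$ is revealed, the conditional domain $\Omega_L$ has non-smooth ($\SLE$-type) boundary along $\gamma$, so the strong convergence results of~\cite{LawlerSchrammWernerLERWUST} for the interior LERW branches do not apply as stated. The paper notes that driving-function (hence Carath\'eodory) convergence still holds in this generality, and then upgrades to uniform convergence by proving tightness separately via the bubble estimate of Lemma~\ref{lem::simple}, which is easier here since no degenerate conditioning is present.
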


It is clear that Theorem \ref{T:multi} follows from Theorem \ref{thm::treeconv} and the counterflow line coupling of Theorem \ref{thm::countercoupling}. The rest of Section \ref{subsec::coupling} is devoted to a proof of Theorem \ref{thm::treeconv}.
We will frequently use the following discrete Beurling estimate for random walk on $\Omega_\delta$ the reflecting boundaries $(b_\delta c_\delta)$ and $(d_\delta a_\delta)$.
\begin{lemma}\label{lem::Beur}
There exists constants $C>0, \delta_0>0, 0<\eps<1$ such that the following estimate hold. 
For every $\delta\le r\le\delta_0$, for every connected subgraph $A$ of $\Omega_\delta$ with $\diam(A)>r$ and for every $z_\delta\in \Omega_\delta$ with $d= d(z_\delta, A\cup(a_\delta b_\delta)\cup(c_\delta d_\delta))\le r$, we have
\[
\PP\left[\LR\text{ hits }\partial B(z_\delta,C r) \text{ before }A\cup(a_\delta b_\delta)\cup(c_\delta d_\delta)\right]\le C(d/r)^\epsilon,
\]
where $\LR$ is the random walk starting from $z_\delta$, reflecting at $(b_\delta c_\delta)\cup(d_\delta a_\delta)$, and stopped when it hits $(a_\delta b_\delta)\cup(c_\delta d_\delta)$. 
\end{lemma}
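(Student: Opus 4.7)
The plan is to follow the classical dyadic chaining scheme for Beurling-type estimates, with the key one-scale input coming from the uniform crossing estimate (Assumption \ref{crossingestimate}) combined with the extremal length comparison (Assumption 6), and with the $C^1$-smoothness of the reflecting arcs $(bc)\cup(da)$ used to tame the effect of reflection at small scales.

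First, I would set up the dyadic decomposition. Let $\rho_k = 2^k d$ for $k=0,1,\ldots,K$, where $K$ is the largest integer with $\rho_K\le r/8$, and write $A_k = B(z_\delta, 2\rho_k)\setminus B(z_\delta,\rho_k)$. Because $\dist(z_\delta,A\cup(a_\delta b_\delta)\cup(c_\delta d_\delta))\le d$ and $\diam(A)>r$, the set $S:=A\cup(a_\delta b_\delta)\cup(c_\delta d_\delta)$ is connected, meets $B(z_\delta,d)$, and contains a point at distance at least $r-d \ge r/2$ from $z_\delta$ (provided $d\le r/2$; the case $d>r/2$ is trivial by choosing $C$ large). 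Hence, for each $1\le k\le K$, the set $S$ has a connected subarc that crosses the annulus $A_k$ from its inner to its outer boundary.

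Second, I would prove a one-scale absorption estimate: there exists $\alpha>0$, independent of $\delta$ and $k$, such that for every $v\in\partial B(z_\delta,\rho_k)\cap\Omega_\delta$,
\[
\PP_v\big[\LR \text{ exits } B(z_\delta,\rho_{k+2}) \text{ before hitting } S\big] \le 1-\alpha.
\]
The argument is the usual one: the existence of a connected crossing of $A_{k+1}$ inside $S$ forces the walk to complete a full "loop" around $z_\delta$ inside $A_{k+1}\setminus S$ in order to escape outward without being absorbed. Chaining a uniformly bounded number of copies of the uniform crossing estimate of Assumption \ref{crossingestimate} around a Whitney-type cover of $A_{k+1}$ then shows that such a loop has probability at most $1-\alpha$. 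Where the annulus meets the reflecting arcs $(b_\delta c_\delta)\cup(d_\delta a_\delta)$, I use the $C^1$-regularity to straighten the arc locally: at scales below the curvature scale, the reflecting arc is within $o(\rho_k)$ of a straight line, so the crossing estimate applied in a slightly enlarged rectangle still yields a uniform lower bound once we account for the reflection by a folding argument. Assumption 6 (comparability of discrete and continuous extremal lengths) plays the role of guaranteeing that such one-scale crossing probabilities are uniformly controlled even in the slightly distorted geometry near the reflecting boundary and near the four transition points $a,b,c,d$.

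Third, I would iterate: by the strong Markov property applied at the successive exit times of $B(z_\delta,\rho_{2j})$, the one-scale estimate yields
\[
\PP_{z_\delta}\big[\LR \text{ exits } B(z_\delta,\rho_K) \text{ before hitting } S\big] \le (1-\alpha)^{\lfloor K/2\rfloor} \le C(d/r)^{\epsilon},
\]
with $\epsilon=-\log(1-\alpha)/(2\log 2)$. Choosing $C$ so that $B(z_\delta,Cr)\supset B(z_\delta,\rho_K)$ gives the claim.

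The main obstacle I anticipate is making the one-scale estimate genuinely uniform near the reflecting boundary, and in particular near the four arcs endpoints $a,b,c,d$ where a reflecting and a wired arc meet. Away from $\{a,b,c,d\}$, the $C^1$-assumption together with Assumption 6 makes the straightening argument routine. Near the transition points, the reflecting arcs are simple $C^1$ curves meeting transversally the (only topologically controlled) wired arcs, so the local picture is that of a half-disc with part of its diameter wired and the rest reflecting; in this configuration the uniform crossing estimate and extremal-length comparison still combine to give a scale-invariant absorption probability, and the rest of the iteration proceeds as above.
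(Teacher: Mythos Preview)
Your approach is correct but genuinely different from the paper's. Both arguments share the same skeleton---reduce to a one-scale absorption estimate (the walk hits $S$ with probability $\ge\alpha$ before escaping an annulus) and then iterate dyadically---but they establish the one-scale bound by different means. You argue \emph{constructively}: chain the uniform crossing estimate (Assumption~\ref{crossingestimate}) around a Whitney cover of the annulus so that the walk makes a separating circuit with uniformly positive probability, and use $C^1$-straightening plus folding to handle scales where the annulus meets a reflecting arc. The paper instead argues \emph{by contradiction and compactness}: assuming the one-scale bound fails along a sequence, it rescales so that $\Omega_{\delta_n}\cap B(z_{\delta_n},Cr_n)$ converges, invokes the invariance principle (Assumption~\ref{InvP}) to pass to a limiting reflected Brownian motion, and obtains a contradiction from the strict positivity (minimum principle) of the limiting mixed-boundary harmonic function. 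In particular the paper does not use Assumption~6 here at all, whereas you lean on it (together with Assumption~\ref{crossingestimate}) in lieu of the invariance principle.

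What each buys: your route is more quantitative and in principle gives explicit constants, but the step you flag as the main obstacle---uniformity of the one-scale bound near the reflecting arcs and especially near the four transition points $a,b,c,d$---does require genuine work (making precise the ``half-circuit'' topology in the folded half-annulus and checking that the crossing estimate survives the $C^1$ perturbation). The paper's soft argument sidesteps exactly this: after rescaling, the reflecting arc becomes a straight segment in the limit, and a single appeal to the minimum principle for the limiting harmonic function replaces all of the explicit boundary analysis. One small phrasing issue in your write-up: escaping the annulus without hitting $S$ does \emph{not} force the walk to complete a loop; rather, the point is that with probability $\ge\alpha$ the walk \emph{does} complete a separating circuit before exiting, and any such circuit must meet the crossing $S$.
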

\begin{proof}
If $\Omega_\delta$ is a subgraph of $\delta\Z^2$, the proof is same as the proof of~\cite[Lemma 11.2]{SchrammFirstSLE}. To generalize the discrete Beurling estimate
to the general setting mentioned in Section~\ref{SS:gen}, we will prove the result based on the assumptions there. 
To prove it, it suffices to prove the result in the case $d = r$ and the right hand side an absolute constant strictly less than 1, say $\epsilon$. 
Hence we may assume that $d(z_\delta, A\cup(a_\delta b_\delta)\cup(c_\delta d_\delta))=r$.

We will proceed by contradiction. Suppose for a sequence of $\{\eps_n,\delta_n,r_{n}, C_n\}$ and $\{\Omega_{\delta_n},z_{\delta_n}, A_{\delta_n}\}_{n\ge 1}$ such that $\delta_n\to 0$, $\eps_n\to 0$, $C_n\to \infty$, and $\diam(A_{\delta_n})>r_n$; but
\begin{equation}\label{eqn::au1}
\PP\left[\LR\text{ hits }\partial B(z_{\delta_n},C_n r_n)\text{ before }A_{\delta_n}\cup(a_{\delta_n} b_{\delta_n})\cup(c_{\delta_n} d_{\delta_n})\right]>1-\eps_n.
\end{equation}
We may assume that $r_{n}$ tends to zero. (Otherwise, if $C_{n}>2\diam(\Omega)$, then~\eqref{eqn::au1} trivially fails.) We denote $A_{\delta_n}\cup(a_{\delta_n} b_{\delta_n})\cup(c_{\delta_n} d_{\delta_n})$ by $\overline A_{\delta_n}$.

Since $\partial\Omega$ is $C^1$ and simple, if we denote this curve by $l:[0,1]\to \C$, there exists a constant $M>0$, such that for $0\le s<t<1$, we have
\begin{equation}\label{eqn::au2}
M^{-1}\le\frac{|l(t)-l(s)|}{(t-s)\wedge(1-t+s)}\le M.
\end{equation}
We denote by $w_{\delta_n}\in \overline A_{\delta_n}$ a point such that 
\[d(z_{\delta_n},w_{\delta_n})=d (z_{\delta_n},\overline A_{\delta_n}).\]
 If $z_{\delta_n}$ and $\overline A_{\delta_n}$ is separated by $l$, then there exist $s<t$ such that $l(s)$ and $l(t)$ belong to the straight line connecting $z_{\delta_n}$ and $w_{\delta_n}$ and $l[s,t]$ hits $\partial B(z_{\delta_n},2M^2r_n)$. In particular, we have
\[d(l(s),l(t))\le r_{n}\quad{\text{ and }}\diam(l[s,t])\ge M^2r_{n},\]
which is a contradiction with~\eqref{eqn::au2}.

Suppose first there exists $M'>0$, such that $\frac{ r_n }{\delta_n}\le M'$. In $B(z_{\delta_n}, 4M^2r_{n})$, by our assumptions~\eqref{Bounded} and~\eqref{irreducibility}, the probability that the random walk starting from $z_{\delta_n}$ hits $\overline A_{\delta_n}$ before $\partial B(z_{\delta_n},C_nr_n)$ is uniformly bounded away from zero. For $n$ large enough, we have $C_n>4M^2$, and this leads to a contradiction in conjunction with~\eqref{eqn::au1}.

Now suppose instead that $\frac{\delta_n}{r_{n}}$ tends to zero. By translation, we may assume $z_{\delta_n}=0$ for $n\ge 1$. We will consider 
$$\left\{\frac{1}{r_n}\left(\Omega_{\delta_n}\cap B(z_{\delta_n}, 4M^2r_n)\right),\frac{1}{r_n}(\overline A_{\delta_n}\cap  B(z_{\delta_n}, 4M^2r_n))\right\}_{n\ge 1}.$$
 There are two cases:
\begin{itemize}
\item
In the first case, if the free boundary $(b_{\delta_n} c_{\delta_n})\cup (d_{\delta_n} a_{\delta_n})$ does not intersect $B(z_{\delta_n},4M^2r_n)$, then we can apply directly the usual Beurling estimate (see e.g., \cite[Lemma 4.3]{BLRdimers}), hence we  derive a contradiction directly.
\item
In the second case, we may assume that the free boundary intersect $B(z_{\delta_n},4M^2r_n)$ for all $n
\ge 1$. Then, as $n\to\infty$, the intersection of $B(z_{\delta_n},4M^2r_n)\cap\partial\Omega_{\delta_n}$ converges to a straight line (under the metric~\eqref{eqn::curves_metric}) since the arc is $C^1$. (Recall that under our assumption $\partial\Omega_\delta=\partial\Omega$ in the setup of Section~\ref{SS:gen}.) Thus, without loss of generality, we may assume that $\frac{1}{r_n}\left(\Omega_{\delta_n}\cap B(z_{\delta_n}, 4M^2r_n)\right)$ converges to one component of $B(0,4M^2)\setminus l$ (in the sense that the boundary converges under the metric~\eqref{eqn::curves_metric}), where $l$ is a straight segment. Moreover, we will assume $\frac{1}{r_n}(\overline A_{\delta_n}\cap  B(z_{\delta_n}, 4M^2r_n))$ converges to a compact set $\overline A$ under the Hausdorff metric. Note that $l$ does not separate $\overline A$ and $z=0$. In general the segment $l$ might contain both some wired and reflecting parts. 
Without loss of generality we only consider the case where $l$ consists exclusively of the limit of a free boundary arc (in fact the argument is only easier if $l$ also contains some wired portion). 

 For $v_{\delta_n}\in V(\Omega_{\delta_n})$, We define 
\[h_{\delta_n}(v_{\delta_n}):=\PP\left[\LR\text{ hits }\partial B(z_{\delta_n},C_nr_n)\text{ before }\overline A_{\delta_n}\right],\] 
where $\LR$ is the random walk starting from $v_{\delta_n}$. 
By our assumption~\eqref{InvP}, $h_{\delta_n}$ converges to a harmonic function $h$ uniformly on $\frac{1}{r_n}\left(\Omega_{\delta_n}\cap B(z_{\delta_n}, 4M^2r_n)\right)$. Here $h$ is the unique bounded harmonic function with the following boundary conditions: $h=1$ on $\overline A$ and $h=0$ on $\partial B(0,4M^2)$, and $\partial_n h=0$ on $l$. By the minimum principle, we know that $h(0)>0$ (note that here we allow $0\in l$) and hence $h_{\delta_n}(z_{\delta_n})$ is uniformly bounded away from zero. Since for $n$ large enough, we have $C_n>4M^2$, this leads to a contradiction in conjunction with ~\eqref{eqn::au1}.
\end{itemize} 
This completes the proof of the lemma.
\end{proof}

\subsection{Convergence of $\gamma_\delta$ and discrete Peano curves}
\label{subsec::middle}

In this subsection, we will prove the convergence of $\{\gamma_{\delta}\}_{\delta>0}$ under the metric~\eqref{eqn::curves_metric} and discuss the convergence of the discrete Peano curves $\{\eta_L^\delta\}_{\delta>0}$ and $\{\eta_R^\delta\}_{\delta>0}$. Similar results have been proved in~\cite{HanLiuWuUST}, but the setup is different. In that paper, the authors considered the $\ust$ without conditioning. The line of the proof is almost the same and we will point out how to modify the proof there to the setting in this paper. We denote by $K$ the \textbf{conformal modulus} of $(\Omega;a,b,c,d)$ and denote by $\varphi$ the conformal map from $(\Omega;a,b,c,d)$ onto the rectangle $(0,1)\times (0,K)$ which sends $(a,b,c,d)$ to $(0,1,1+\ii K,\ii K)$. 

We will deal with the convergence of $\{\gamma_{\delta}\}_{\delta>0}$ at first. First of all, we need the following lemmas to get rid of the conditioning. Suppose $A$ is a circle such that $b \notin A$ and $A\cap \partial \Omega\neq\emptyset$. We denote by $\Omega_A$ the connected component of $\Omega\setminus A$ whose boundary contains $b$. We denote by $A_\delta$ the corresponding discrete approximations on $\Omega_\delta$ and denote by $\Omega_{\delta,A_\delta}$ the connected component of $\Omega_\delta\setminus A_\delta$ which contains $b_\delta$. Define
\[h_{A_\delta}(v):=\PP[\LR\text{ starting from }v \text{ hits }A_\delta\text{ before exits }\Omega_\delta],\]
where $\LR$ has the same law as the random walk reflecting at $(b_\delta c_\delta)\cup(d_\delta a_\delta)$, and ending at $(a_\delta b_\delta)\cup(c_\delta d_\delta)$. 
Denote by $h_A$ the unique bounded harmonic function on $\Omega_A$ with the following boundary conditions: $h_A$ equals one on $A$ and equals zero on $((ab)\cup(cd))\cap\partial \Omega_A$ and $\partial_{n}(h_A\circ\varphi^{-1})$ equals zero on \[([0,\ii K]\cup [1,1+\ii K])\cap\partial\varphi(\Omega_A),\]
 where $n$ is the outer normal, and $\varphi$ is the unique conformal from $\Omega$ to a rectangle $(0,1) \times (0,K)$ (when we identify $\C$ with $\R^2$) sending $a,b,c,d$ to the four corners of the rectangle in counterclockwise order starting from 0.
\begin{lemma}\label{lem::conv_circ}
For every $r>0$, we define $\Omega_{A,r}$ to be $\Omega_A\setminus\{z:d(z,A\cap((ab)\cup(cd)))<r\}$
and define $\Omega_{A,r,\delta}$ to be the discrete approximation of $\Omega_{A,r}$ on $\Omega_{\delta}$. Then, 
$h_{A_\delta}|_{\Omega_{A,r,\delta}}$ converges to $h_A|_{\Omega_{A,r}}$ uniformly.
\end{lemma}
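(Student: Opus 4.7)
The plan is to interpret both sides of the claimed convergence probabilistically and then invoke the invariance principle. By construction, $h_{A_\delta}(v_\delta)$ is the probability that the reflected random walk from $v_\delta$ hits $A_\delta$ before the absorbing arcs $(a_\delta b_\delta)\cup(c_\delta d_\delta)$, while by standard theory for the mixed Dirichlet--Neumann problem with $C^1$ Neumann part, $h_A(v)$ equals the probability that Brownian motion in $\Omega$ normally reflected on $(bc)\cup(da)$ hits $A$ before $(ab)\cup(cd)$.

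First I would establish equicontinuity of $\{h_{A_\delta}\}_\delta$ on $\overline{\Omega_{A,r}}$. This follows from the standard coupling argument: two reflected walks started at points $v,v'$ at distance $\rho$ are run independently until they come close together, then coupled to agree outside a small ball, and the probability that they disagree about which absorbing set they hit first is controlled by a power $\rho^\epsilon$ via the discrete Beurling estimate of Lemma~\ref{lem::Beur}. The restriction to $\Omega_{A,r}$ keeps us at distance at least $r$ from the points $A\cap((ab)\cup(cd))$, which are the only places where the limiting boundary value of $h_A$ is genuinely discontinuous. At the other boundary-type interfaces---the corners $a,b,c,d$ of $\Omega$ and the intersections $A\cap((bc)\cup(da))$---the boundary value of $h_A$ is continuous, and continuity up to the boundary for the $h_{A_\delta}$ still follows from the same Beurling estimate.

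Having Arzel\`a--Ascoli precompactness, I would identify any subsequential uniform limit $\tilde h$ with $h_A$. For $v_\delta \to v$, the invariance principle (assumption~\eqref{InvP}) gives that the rescaled reflected walk from $v_\delta$ converges in law in Skorokhod topology to normally reflected Brownian motion in $\Omega$ started at $v$, up to a continuous time change that does not affect hitting distributions. Since the $C^1$ hypothesis on the reflecting arcs together with the smoothness of $A$ ensures that reflected BM hits $A\cup(ab)\cup(cd)$ at an almost surely continuous time, hitting probabilities pass to the limit and we obtain $h_{A_\delta}(v_\delta)\to h_A(v)$. Uniqueness of the limit promotes subsequential to full convergence, hence uniformly on $\overline{\Omega_{A,r}}$.

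The main obstacle is the behaviour at the places where boundary conditions or absorbing/reflecting status change: the corners $a,b,c,d$ of $\Omega$ and the intersections of $A$ with $\partial\Omega$, in particular with the reflecting arcs. Away from these points the convergence is routine, but near them one must verify both that the discrete Beurling estimate remains effective under reflection and that the reflected BM's hitting distribution depends continuously on the starting point. The $C^1$ hypothesis on the reflecting arcs, together with the Skorokhod-topology formulation of assumption~\eqref{InvP} and the Beurling estimate just established, are exactly what is required to handle these delicate points.
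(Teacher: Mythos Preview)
Your outline is correct and matches the paper's strategy: equicontinuity via the Beurling estimate of Lemma~\ref{lem::Beur}, Arzel\`a--Ascoli, and identification of the limit via the invariance principle (the paper invokes assumption~\eqref{InvP} for the general setup and cites \cite[Lemma~B.2]{HanLiuWuUST} for the square lattice).

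One small point of divergence worth flagging: for equicontinuity the paper does not use a coupling of two walks. Instead it argues by a three-case analysis. If $z_\delta$ is within $\sqrt\epsilon$ of $A_\delta$ (resp.\ of $(a_\delta b_\delta)\cup(c_\delta d_\delta)$), Beurling forces both $h_{A_\delta}(z_\delta)$ and $h_{A_\delta}(w_\delta)$ to be within $C\epsilon^c$ of $1$ (resp.\ of $0$). If $z_\delta$ is at least $\sqrt\epsilon$ from both absorbing sets, one draws a discrete path $l_\delta$ from $z_\delta$ to $A_\delta$ along which $h_{A_\delta}\ge h_{A_\delta}(z_\delta)$ (possible by discrete harmonicity), and Beurling applied to $A=l_\delta$ shows the walk from $w_\delta$ hits $l_\delta$ before the absorbing boundary with probability $\ge 1-C\epsilon^c$, giving $h_{A_\delta}(w_\delta)\ge(1-C\epsilon^c)h_{A_\delta}(z_\delta)$ and symmetrically. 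This level-set-path argument only uses Lemma~\ref{lem::Beur} and works uniformly up to the reflecting arcs without any coupling construction; your coupling sketch would need some care to make the coupling respect reflection near $(bc)\cup(da)$, which the paper's argument sidesteps entirely.
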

\begin{proof} 
For the setup considered in Section~\ref{SS:gen}, we simply observe that random walk $\cR$ on $\Omega_\delta$ will converge to Brownian motion orthogonally reflected on $(bc)$ and $(da)$ and killed on $(ab)$ and $(cd)$ (orthogonal reflection is defined e.g. by doing the reflection in the rectangle and applying the conformal map $\varphi^{-1}$).

We also give a proof when we are considering the lattice $\Z^2$. We first show the equicontinuity of $\{h_{A_\delta}|_{\Omega_{A,r,\delta}}\}_{\delta>0}$. We fix $0<\eps < r$ sufficiently small, and fix $w_\delta, z_\delta\in \Omega_{A,r,\delta}$ such that $|w_\delta-z_\delta|<\eps$. There are three cases:
\begin{itemize}
\item
If $d(z_\delta,A_\delta)\le \sqrt\epsilon$, then by Lemma~\ref{lem::Beur}, there exist constants $C>0$ and $c>0$ such that 
\[1-h_{A_\delta}(z_\delta)\le C\epsilon^c\quad\text{ and }\quad 1-h_{A_\delta}(w_\delta)\le C\epsilon^c.\]
In this case, we have 
\[|h_{A_\delta}(w_\delta)-h_{A_\delta}(z_\delta)|\le C\epsilon^c.\]
\item
If $d(z_\delta,(a_\delta b_\delta)\cup(c_\delta d_\delta))\le \sqrt\epsilon$, by the similar argument above, we also have
\[|h_{A_\delta}(w_\delta)-h_{A_\delta}(z_\delta)|\le C\epsilon^c.\]
\item
If $d(z_\delta,(a_\delta b_\delta)\cup(c_\delta d_\delta)\cup A_\delta)> \sqrt\epsilon$, from the discrete harmonicity of $h_{A_\delta}$, we can draw a discrete path $l_\delta$ from $z_\delta$ to $A_\delta$, such that for every $v_\delta\in l_\delta$, we have $h_{A_\delta}(v_\delta)\ge h_{A_\delta}(z_\delta)$. Thus, by Lemma~\ref{lem::Beur}, there exist constants $C>0$ and $c>0$, we have
\[h_{A_\delta}(w_\delta)\ge \left(1-C\epsilon^c\right) h_{A_\delta}(z_\delta).\]
This implies that
\[
h_{A_\delta}(z_\delta)-h_{A_\delta}(w_\delta)\ge-C\epsilon^c.
\]
By symmetry, we have
\[|h_{A_\delta}(z_\delta)-h_{A_\delta}(w_\delta)|\le C\epsilon^c.\]
\end{itemize}
The estimates in these three cases give the equicontinuity of $\{h_{A_\delta}|_{\Omega_{A,r,\delta}}\}_{\delta>0}$.

To identify uniquely the limit, 
we can use argument similar to~\cite[Lemma B.2]{HanLiuWuUST}.
\end{proof}
We also need a similar result when $A$ is a subarc of $(cd)$. Suppose $(\tilde c\tilde d)$ is a subarc of $(cd)$ and we denote by $(\tilde c_\delta \tilde d_\delta)$ a discrete approximation of $(\tilde c\tilde d)$ on $\Omega_\delta$. Define
\begin{equation}\label{E:hcd}
h_{(\tilde c_\delta \tilde d_\delta)}(v):=\PP[\LR\text{ starting from }v \text{ hits }(a_\delta b_\delta)\cup(c_\delta d_\delta)\text{ at }(\tilde c_\delta\tilde d_\delta)],
\end{equation}
where $\LR$ has the same law as the random walk reflecting at $(b_\delta c_\delta)\cup(d_\delta a_\delta)$, and ending at $(a_\delta b_\delta)\cup(c_\delta d_\delta)$.

Denote by $h_{(\tilde c\tilde d)}$ the unique bounded harmonic function on $\Omega$ with the following boundary conditions: $h_{(\tilde c\tilde d)}$ equals one on $(\tilde c\tilde d)$ and equals zero on $(ab)\cup(c\tilde c)\cup(\tilde d d)$ and $\partial_{n}(h_{(\tilde c\tilde d)}\circ\varphi^{-1})$ equals zero on $[0,\ii K]\cup [1,1+\ii K]$, where $n$ is the outer normal. It is straightforward to check that $h_{(\tilde c\tilde d)}$ is nowhere zero on the arc $(bc)$ or $(da)$, i.e., $ h_{(\tilde c\tilde d)} (z) >0$ for $ z \in (bc) \cup (da)$.

\begin{lemma}\label{lem::conv_subarc}
For every $r>0$, we define the domain 
\[\Omega_{\tilde c,\tilde d,r}:= \Omega\setminus\{z:d(z,\{\tilde c,\tilde d\})<r\}\]
and define $\Omega_{\tilde c,\tilde d,r,\delta}$ to be the discrete approximation of $\Omega_{\tilde c,\tilde d,r}$ on $\Omega_{\delta}$. Then, the discrete harmonic function $h_{(\tilde c_\delta \tilde d_\delta)}|_{\Omega_{\tilde c,\tilde d,r,\delta}}$ converges to $h_{(\tilde c\tilde d)}|_{\Omega_{\tilde c,\tilde d,r}}$ uniformly.
\end{lemma}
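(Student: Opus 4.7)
The plan is to mirror exactly the two-step argument used in the proof of Lemma~\ref{lem::conv_circ} (the circle case), replacing the role of the circle $A$ by the subarc $(\tilde c \tilde d)\subset (cd)$. Concretely, I would first establish equicontinuity of the family $\{h_{(\tilde c_\delta \tilde d_\delta)}|_{\Omega_{\tilde c,\tilde d,r,\delta}}\}_{\delta>0}$, and then identify the subsequential limit as the unique bounded harmonic function $h_{(\tilde c \tilde d)}$ with the prescribed mixed Dirichlet--Neumann boundary data.

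For the equicontinuity, I would fix small $\eps<r$ and two points $w_\delta,z_\delta\in\Omega_{\tilde c,\tilde d,r,\delta}$ with $|w_\delta-z_\delta|<\eps$, and split into three cases exactly as in Lemma~\ref{lem::conv_circ}. If $d(z_\delta,(\tilde c_\delta \tilde d_\delta))\le\sqrt\eps$, then Lemma~\ref{lem::Beur} applied to the complementary wired arc gives $1-h_{(\tilde c_\delta \tilde d_\delta)}(z_\delta)\le C\eps^c$, and similarly for $w_\delta$. If $d(z_\delta,(a_\delta b_\delta)\cup(c_\delta \tilde c_\delta)\cup(\tilde d_\delta d_\delta))\le\sqrt\eps$, the same estimate gives $h_{(\tilde c_\delta \tilde d_\delta)}(z_\delta)\le C\eps^c$. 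In the remaining case, where $z_\delta$ is at distance at least $\sqrt\eps$ from every wired arc, I would draw, using discrete harmonicity, a path $l_\delta$ from $z_\delta$ along which $h_{(\tilde c_\delta\tilde d_\delta)}$ is non-decreasing up to a point at which Beurling again controls the harmonic measure, and deduce $|h_{(\tilde c_\delta \tilde d_\delta)}(z_\delta)-h_{(\tilde c_\delta \tilde d_\delta)}(w_\delta)|\le C\eps^c$ by symmetry. Note that Lemma~\ref{lem::Beur} is already formulated to cope with the reflecting arcs $(b_\delta c_\delta)\cup(d_\delta a_\delta)$, so no new estimate is needed in order to handle points near those arcs.

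For the identification of the limit, in the $\Z^2$ setting I would follow the approach of~\cite[Lemma~B.2]{HanLiuWuUST}: any subsequential limit is continuous in $\overline{\Omega}_{\tilde c,\tilde d,r}$ by the above equicontinuity, harmonic in $\Omega$ by discrete harmonicity and the convergence of discrete harmonic measures, equal to $1$ on $(\tilde c \tilde d)$ and $0$ on the remaining wired portions of $\partial\Omega$ by the Beurling-based boundary estimate, and satisfies the Neumann condition on $(bc)\cup(da)$ by testing against discrete harmonic functions corresponding to reflected random walks. For the general graphs from Section~\ref{SS:gen}, as in the previous lemma, the identification is more direct: assumption~\eqref{InvP} guarantees that $\cR$ started from $v_\delta$ converges to the orthogonally reflected, killed Brownian motion in $\Omega$, so that $h_{(\tilde c_\delta \tilde d_\delta)}(v_\delta)$ converges to the hitting probability $h_{(\tilde c \tilde d)}(v)$ pointwise in the interior, and combining with the uniform equicontinuity promotes this to local uniform convergence.

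The main subtlety, as usual, lies at the four transition points $a,b,c,d$ where Dirichlet meets Neumann conditions, as well as at $\tilde c$ and $\tilde d$ where the Dirichlet data itself jumps; the exclusion of a neighborhood of $\{\tilde c,\tilde d\}$ built into $\Omega_{\tilde c,\tilde d,r}$ handles the latter, while the former is absorbed by the $C^1$ assumption on $\partial\Omega$ and the Beurling estimate, in exactly the same way as in Lemma~\ref{lem::conv_circ}. The positivity claim $h_{(\tilde c\tilde d)}(z)>0$ for $z\in(bc)\cup(da)$ then follows from the strong maximum principle applied to the harmonic extension across the reflecting arc (i.e., after conformally mapping to a rectangle and reflecting through the Neumann segments, $h_{(\tilde c\tilde d)}$ extends harmonically to a non-constant function which cannot attain its minimum in the interior).
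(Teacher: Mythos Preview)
Your proposal is correct and takes essentially the same approach as the paper, which simply states that the proof is similar to that of Lemma~\ref{lem::conv_circ}. You have spelled out that similarity in detail (equicontinuity via the three Beurling-based cases, then identification of the limit via \cite[Lemma~B.2]{HanLiuWuUST} or the invariance principle), and your additional remark on the positivity of $h_{(\tilde c\tilde d)}$ on the reflecting arcs via reflection and the strong maximum principle is also fine.
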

\begin{proof}
The proof is similar to the proof of Lemma~\ref{lem::conv_circ}. 
\end{proof}

\begin{lemma}\label{lem::ratio}
We assume $A$ is subarc of $(cd)$ or $A$ is a circle such that $A\cap\partial \Omega\neq\emptyset$. Then, we have
\[\lim_{\delta\to 0}\frac{h_{A_\delta}(b^+_\delta)}{h_{(c_\delta d_\delta)}(b^+_\delta)}=\frac{\partial_n( h_A\circ\varphi^{-1})(b)}{\partial_n (h_{(cd)}\circ\varphi^{-1})(b)},\]
where $n$ is the outer normal perpendicular to $(0,1)$.
\end{lemma}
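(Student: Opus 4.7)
}
The plan is a two-scale comparison: first pass to an intermediate circle of radius $\rho$ around $b$ (where the uniform convergence of $h_{A_\delta}, h_{(c_\delta d_\delta)}$ supplied by Lemmas~\ref{lem::conv_circ} and~\ref{lem::conv_subarc} kicks in), then send $\rho\to 0$ to extract the normal derivatives at $b$.

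Step 1 (local expansion of the continuous limits near $b$). Transplanting by $\varphi$ sends $b$ to the corner $1$ of the rectangle $(0,1)\times(0,K)$; in this picture both $h_A\circ\varphi^{-1}$ and $h_{(cd)}\circ\varphi^{-1}$ vanish on the bottom side $(0,1)$ and satisfy $\partial_n(\cdot)=0$ on the right side $\{1\}\times(0,K)$. Even reflection across the right side followed by odd reflection across the (extended) bottom turns each into a harmonic function in a full neighbourhood of $1$, vanishing at $1$ and odd in the imaginary variable. Hence each admits a Taylor expansion with leading term $\partial_n(h\circ\varphi^{-1})(b)\cdot y$, so that, in $\Omega$,
\begin{equation}\label{prop:asymp}
h_A(z)=\partial_n(h_A\circ\varphi^{-1})(b)\cdot\Im\varphi(z)\cdot(1+o(1)),\qquad h_{(cd)}(z)=\partial_n(h_{(cd)}\circ\varphi^{-1})(b)\cdot\Im\varphi(z)\cdot(1+o(1)),
\end{equation}
as $z\to b$ inside $\Omega$, with the $o(1)$ terms uniform in the direction of approach.

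Step 2 (splitting at an intermediate circle). Fix $\rho>0$ small enough that $A\cup(cd)$ lies outside $B(b,2\rho)$, and let $S_\rho^\delta$ be the inner boundary of $B(b,\rho)\cap\Omega_\delta$. Let $\tau_\rho$ be the hitting time of $S_\rho^\delta$ and $\tau_K$ the killing time on $(a_\delta b_\delta)\cup(c_\delta d_\delta)$ for the reflected random walk $\cR$. Since the walk must cross $S_\rho^\delta$ to reach either $A_\delta$ or $(c_\delta d_\delta)$, the strong Markov property yields
\begin{equation*}
h_{A_\delta}(b^+_\delta)=\sum_{w\in S_\rho^\delta}p_\delta(w)\,h_{A_\delta}(w),\qquad h_{(c_\delta d_\delta)}(b^+_\delta)=\sum_{w\in S_\rho^\delta}p_\delta(w)\,h_{(c_\delta d_\delta)}(w),
\end{equation*}
where $p_\delta(w)=\PP_{b^+_\delta}[X_{\tau_\rho}=w,\tau_\rho<\tau_K]$. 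Normalising by $Z_\delta:=\sum_w p_\delta(w)$ to form the probability measure $q_\delta:=p_\delta/Z_\delta$ on $S_\rho^\delta$, we obtain
\begin{equation*}
\frac{h_{A_\delta}(b^+_\delta)}{h_{(c_\delta d_\delta)}(b^+_\delta)}=\frac{\int h_{A_\delta}(w)\,dq_\delta(w)}{\int h_{(c_\delta d_\delta)}(w)\,dq_\delta(w)}.
\end{equation*}

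Step 3 (passing to the limit in $\delta$, then in $\rho$). For fixed $\rho$, the arc $S_\rho^\delta$ stays in the ``interior" region where Lemmas~\ref{lem::conv_circ} and~\ref{lem::conv_subarc} give uniform convergence of $h_{A_\delta}$ to $h_A$ and of $h_{(c_\delta d_\delta)}$ to $h_{(cd)}$. Since $h_A,h_{(cd)}$ are continuous and strictly positive on the compact set $S_\rho$, the denominators are bounded away from $0$. The family $\{q_\delta\}$ is tight (supported on a bounded set), so along any subsequence it converges weakly to a probability measure $q=q_\rho$ on $S_\rho$, and
\begin{equation*}
\lim_{\delta\to 0}\frac{h_{A_\delta}(b^+_\delta)}{h_{(c_\delta d_\delta)}(b^+_\delta)}=\frac{\int_{S_\rho}h_A\,dq_\rho}{\int_{S_\rho}h_{(cd)}\,dq_\rho}.
\end{equation*}
Plugging in \eqref{prop:asymp} and using that the common factor $\Im\varphi$ is positive on $S_\rho$, this ratio equals
\begin{equation*}
\frac{\partial_n(h_A\circ\varphi^{-1})(b)}{\partial_n(h_{(cd)}\circ\varphi^{-1})(b)}\cdot\bigl(1+o_\rho(1)\bigr),
\end{equation*}
where $o_\rho(1)\to 0$ as $\rho\to 0$ uniformly in the subsequence. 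A standard diagonal argument (choose $\rho$ small, then $\delta$ small, then take $\epsilon\to 0$) establishes that the full limit exists and has the stated value.

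Step 4 (main obstacle). The only delicate point is tightness and weak convergence of the conditional measures $q_\delta$ on $S_\rho^\delta$: because the starting vertex $b^+_\delta$ sits at distance $O(\delta)$ from the Dirichlet arc $(a_\delta b_\delta)$, the normalising constant $Z_\delta$ tends to $0$. Tightness is however trivial (the support is compact), and uniqueness of the limit is not needed since we only use subsequential limits and the argument in Step 3 gives the same value for any such limit. One could alternatively identify $q_\rho$ as the law of reflected Brownian motion excursion from $b$ reaching $S_\rho$ before being killed on $(ab)\cup(cd)$, but this is unnecessary for the present lemma.
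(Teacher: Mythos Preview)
Your overall two-scale strategy (strong Markov at an intermediate arc around $b$, then shrink the arc) is exactly the paper's. The difference is in how the endpoint of the arc on the wired boundary $(ab)$ is handled, and this is where your argument has a genuine gap.

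In Step~3 you assert that $h_A$ and $h_{(cd)}$ are \emph{strictly positive on the compact set $S_\rho$}. This is false: the arc $S_\rho=\partial B(b,\rho)\cap\Omega$ has one endpoint $x$ on the Dirichlet arc $(ab)$, where both $h_A$ and $h_{(cd)}$ vanish. Consequently you cannot deduce that $\int h_{(c_\delta d_\delta)}\,dq_\delta$ is bounded away from $0$ from positivity alone. Your asymptotic~\eqref{prop:asymp} does show that the \emph{continuous} ratio $h_A/h_{(cd)}$ extends continuously across $x$ with the correct value; but to exploit this you would need either (i) the discrete ratio $h_{A_\delta}/h_{(c_\delta d_\delta)}$ to be close to this constant uniformly on $S_\rho^\delta$, including near $x_\delta$, or (ii) a bound showing that no subsequential limit $q_\rho$ of the conditional hitting measures can concentrate at $x$. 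You provide neither. Note that (ii) is not cheap: since $b_\delta^+$ sits at distance $O(\delta)$ from the killing arc, the normalisation $Z_\delta\to 0$, and ruling out $q_\delta\Rightarrow\delta_x$ amounts to a lower bound on $h_{(c_\delta d_\delta)}(b_\delta^+)/h_{S_\rho^\delta}(b_\delta^+)$ --- which is precisely the type of statement the lemma is trying to establish.

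The paper closes this gap by taking route~(i): it invokes a boundary Harnack--type result (\cite[Corollary~3.8]{ChelkakWanMassiveLERW}) to show that for $v_\delta$ on $l_{b,r,\delta}$ \emph{close to the Dirichlet endpoint} $x_\delta$, the discrete ratio $h_{A_\delta}(v_\delta)/h_{(c_\delta d_\delta)}(v_\delta)$ is already within $\epsilon$ of $g\circ\varphi(x)$. Together with the interior uniform convergence away from $x_\delta$ (which you also use), this gives $|h_{A_\delta}/h_{(c_\delta d_\delta)}-g\circ\varphi(b)|\le\epsilon$ on the \emph{entire} arc, after which the strong Markov representation
\[
\frac{h_{A_\delta}(b_\delta^+)}{h_{(c_\delta d_\delta)}(b_\delta^+)}
=\frac{\E_{b_\delta^+}\!\big[\mathbf{1}_{\{\tau_l<\tau_K\}}\,\tfrac{h_{A_\delta}}{h_{(c_\delta d_\delta)}}(\cR_{\tau_l})\,h_{(c_\delta d_\delta)}(\cR_{\tau_l})\big]}
{\E_{b_\delta^+}\!\big[\mathbf{1}_{\{\tau_l<\tau_K\}}\,h_{(c_\delta d_\delta)}(\cR_{\tau_l})\big]}
\]
is immediately seen to lie within $\epsilon$ of $g\circ\varphi(b)$, with no need to analyse the hitting measure. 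In short, your Step~1 reproduces the continuity of $g\circ\varphi$ near $b$, but the passage from continuous to discrete near the Dirichlet endpoint is the crux, and it requires an input you have not supplied.
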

\begin{proof}
We define the function $g$ on a (subset of the) rectangle as follows:
\begin{align} 
g(z) :=
\begin{cases}
\frac{h_A\circ\varphi^{-1}(z)}{h_{(cd)}\circ\varphi^{-1}(z)}&\text{ if }z\in \varphi(\Omega_A)\\
\frac{\partial_n h_A\circ\varphi^{-1}(z)}{\partial_n h_{(cd)}\circ\varphi^{-1}(z)},&\text{ if }z\in (0,1)\cap\partial\varphi(\Omega_A).
\end{cases}
\end{align}
Note that $g\circ\varphi$ is continuous near $b$. Now, we consider a circle $\partial B(b,r)$  and we denote by $l_{b,r}$ the arc of that circle separating $b$ and $(cd)$, and denote by $x:=l_{b,r}\cap(ab)$. We denote by $l_{b,r,\delta}$ and $x_\delta$ some discrete approximation on $\Omega_\delta$ of $l_{b,r}$ and $x$ respectively. For every $\eps>0$, we may choose $r$ small enough, such that for every $w\in B(b,r)\cap\Omega$, we have
\[|g\circ\varphi(w)-g\circ\varphi(b)|\le \eps.\]
By uniform convergence in Lemma~\ref{lem::conv_circ} and Lemma~\ref{lem::conv_subarc}, combining with~\cite[Corollary 3.8]{ChelkakWanMassiveLERW}, there exists $\eta>0$, such that if $v_\delta \in \Omega_\delta$ with $|v_\delta-x_\delta|<\eta$, then we have
\[\left|\frac{h_{A_\delta}(v_\delta)}{h_{(c_\delta d_\delta)}(v_\delta)}-g\circ\varphi(x)\right|\le\epsilon,\quad\text{ for }\delta\text{ small enough}.\]
By the uniform convergence in Lemma~\ref{lem::conv_circ} and Lemma~\ref{lem::conv_subarc} we have $\frac{h_{A_\delta}}{h_{(c_\delta d_\delta)}}\big|_{l_{b,r,\delta}\cap B^c(x_\delta,\eta)}$ converges to $g\circ\varphi|_{l_{b,r}\cap B^c(x,\eta)}$ uniformly (by positivity of $h_{(\tilde c\tilde d)}$ on $l_{b,r,}\cap B^c(x,\eta)$). Thus, we have
\begin{equation}\label{eqn::conv}
\left|\frac{h_{A_\delta}(v_\delta)}{h_{(c_\delta d_\delta)}(v_\delta)}-g\circ\varphi(b)\right|\le\epsilon,\quad\text{ for all }v_\delta\in l_{b,r,\delta}.
\end{equation}
Moreover, we can assume without loss of generality that $r$ is small enough that $l_{b,r,\delta}$ seperates $A_\delta$ and $b_\delta$ in $\Omega_\delta$. We denote by $\tau_\delta(l_{b,r,\delta})$ the hitting time of random walk $\LR_\delta$ at $l_{b,r,\delta}$ and denote by $\tau_\delta((a_\delta b_\delta))$ and $\tau_\delta(A_\delta)$ similarly. Note that
\begin{align*}
\frac{h_{A_\delta}(b^+_\delta)}{h_{(c_\delta d_\delta)}(b^+_\delta)}&=\frac{\PP_{b^+_\delta}[\one_{\tau_\delta(l_{b,r,\delta})<\tau_\delta(A_\delta)}h_{A_\delta}(\tau_\delta(l_{b,r,\delta}))]}{\PP_{b^+_\delta}[\one_{\tau_\delta(l_{b,r,\delta})<\tau_\delta((c_\delta d_\delta))}h_{(c_\delta d_\delta)}(\tau_\delta(l_{b,r,\delta}))]}\\
&=\frac{\PP_{b^+_\delta}\left[\one_{\tau_\delta(l_{b,r,\delta})<\tau_\delta(A_\delta)}\frac{h_{A_\delta}(\tau_\delta(l_{b,r,\delta}))}{h_{(c_\delta d_\delta)}(\tau_\delta(l_{b,r,\delta}))}h_{(c_\delta d_\delta)}(\tau_\delta(l_{b,r,\delta}))\right]}{\PP_{b^+_\delta}[\one_{\tau_\delta(l_{b,r,\delta})<\tau_\delta((c_\delta d_\delta))}h_{(c_\delta d_\delta)}(\tau_\delta(l_{b,r,\delta}))]}.
\end{align*}
Combining with~\eqref{eqn::conv}, we have
\[\lim_{\delta\to 0}\frac{h_{A_\delta}(b^+_\delta)}{h_{(c_\delta d_\delta)}(b^+_\delta)}=\frac{\partial_n h_A\circ\varphi^{-1}(b)}{\partial_n h_{(cd)}\circ\varphi^{-1}(b)}.\]
This completes the proof. 
\end{proof}
\begin{remark}
Though~\cite[Corollary 3.8]{ChelkakWanMassiveLERW} is stated for the square lattice, the proof is based on ~\cite{ChelkakRobustComplexAnalysis}. The assumptions on the graph is much weaker in~\cite{ChelkakRobustComplexAnalysis} and we have included them in our assumptions in Section~\ref{SS:gen}.
\end{remark}
We now explain how this can be used to prove the convergence of $\{\gamma_{\delta}\}_{\delta>0}$. With Lemma~\ref{lem::ratio}, it is relatively easy to modify the proof in~\cite{HanLiuWuUST} to this setting. We will now explain it in detail. 
The tightness of $\{\gamma_{\delta}\}_{\delta>0}$ will be a consequence of the following Lemma, which is similar to~\cite[Theorem~3.9]{LawlerSchrammWernerLERWUST}. See also~\cite[Lemma~C.5]{HanLiuWuUST} where a similar argument is used. Recall that $\LR_\delta$ denotes the random walk in $\Omega_\delta$ with reflecting boundary conditions on $(b_\delta c_\delta)\cup(d_\delta a_\delta)$ and killed on $(a_\delta b_\delta)\cup(c_\delta d_\delta)$. Denote by $\PP_{v_\delta}$ the law of the random walk starting from $v_\delta$, for every $v_\delta\in \Omega_\delta$. 

\begin{lemma}\label{lem::simple}
Let $z_0\in\Omega$. For any $\alpha>0$ and $0<\beta\le (1/4)\dist((ab),(cd))$, let $\LA_{\delta}(z_0,\beta,\alpha;\gamma_{\delta})$ denote the event that there are two points $o_1,o_2\in\gamma_{\delta}$ with
$o_1,o_2\in B(z_0,\beta/4)$ with $|o_1 - o_2| \le \alpha$, and such that the subarc of $\gamma_{\delta}$ between $o_1$ and $o_2$ is not contained in $B(z_0,\beta)$. For every $\epsilon>0$ and every $\beta>0$, we can choose $\alpha=\alpha(\beta,\epsilon)$, small enough  that for every $\delta>0$, we have
\[\PP[\LA_{\delta}(z_{0},\beta,\alpha;\gamma_{\delta})]\le\eps.\]
\end{lemma}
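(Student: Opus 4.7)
The plan is to adapt the proofs of \cite[Theorem~3.9]{LawlerSchrammWernerLERWUST} and \cite[Lemma~C.5]{HanLiuWuUST} to our setting, in which the two new features are (i) the reflecting boundary on $(b_\delta c_\delta)\cup(d_\delta a_\delta)$, and (ii) the conditioning of the UST on the event $E_\delta$.

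By Wilson's algorithm rooted at the wired arcs, the curve $\gamma_\delta$ (restricted to its segment starting from $b_\delta^+$) is distributed as the loop-erasure of a random walk $\LR_\delta$ started from $b_\delta^+$, reflecting on $(b_\delta c_\delta)\cup(d_\delta a_\delta)$, killed on $(a_\delta b_\delta)\cup(c_\delta d_\delta)$, and conditioned to be killed on $(c_\delta d_\delta)$ first. This conditioning is a Doob $h$-transform by the discrete harmonic function $h_{(c_\delta d_\delta)}$ of \eqref{E:hcd}. By Lemma~\ref{lem::conv_subarc}, $h_{(c_\delta d_\delta)}$ converges uniformly on any compact subset of $\Omega\setminus\{b,c,d\}$ to a smooth, strictly positive function. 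Hence the transition probabilities of the conditioned walk are uniformly comparable to those of the unconditioned reflected walk on any ball at positive macroscopic distance from $\{b,c,d\}$; in particular, the conclusion of Lemma~\ref{lem::Beur} still holds for the conditioned walk, with possibly a different multiplicative constant.

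Occurrence of $\LA_\delta(z_0,\beta,\alpha;\gamma_\delta)$ forces the trace of $\LR_\delta$ to contain two points $o_1,o_2\in B(z_0,\beta/4)$ with $|o_1-o_2|\le\alpha$ separated, along the walk, by an excursion leaving $B(z_0,\beta)$. Covering $B(z_0,\beta/4)$ by $O((\beta/\alpha)^2)$ balls of radius $\alpha$, this forces the existence of some centre $y$ for which $\LR_\delta$ enters $B(y,2\alpha)$, then exits $B(y,3\beta/4)$, and finally returns to $B(y,2\alpha)$ before termination. By the strong Markov property and Lemma~\ref{lem::Beur}, the probability of such a return, given the first entry, is at most $C(\alpha/\beta)^{\epsilon}$ for some universal $\epsilon\in(0,1)$.

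A direct union bound over $y$ gives only $O((\beta/\alpha)^{2-\epsilon})$, which is insufficient. To close the estimate one iterates Wilson's algorithm as in \cite[Lemma~C.5]{HanLiuWuUST}: conditionally on a partial branch already sampled, the remainder of $\gamma_\delta$ is a fresh LERW of a (reflected, Doob-conditioned) walk from a new starting point, and the Beurling estimate applies on each dyadic annulus between scales $\alpha$ and $\beta$. Chaining these estimates along concentric annuli yields a geometric decay in the number of ``large excursions'' required, giving a final bound $\PP[\LA_\delta(z_0,\beta,\alpha;\gamma_\delta)]\le C(\alpha/\beta)^{\epsilon'}$ for some $\epsilon'>0$. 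The main obstacle is to verify that the iterative scheme of \cite{HanLiuWuUST} remains valid in the presence of both the reflecting boundary and the Doob conditioning; both are handled uniformly via Lemma~\ref{lem::Beur} and the comparability derived above. Choosing $\alpha=\alpha(\beta,\eps)$ small enough then makes the probability at most $\eps$, as required.
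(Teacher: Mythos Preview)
Your outline has the right starting point (Wilson's algorithm and loop-erasure of a conditioned reflected walk), but the two steps you use to get from there to the bound both have gaps.

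\medskip
\textbf{The Doob-transform comparability is not valid where you need it.} You claim that because $h_{(c_\delta d_\delta)}$ converges to a positive smooth function on compacts away from $\{b,c,d\}$, the conditioned walk satisfies Lemma~\ref{lem::Beur} with a modified constant. But the conditioned walk starts at $b_\delta^+$, where $h_{(c_\delta d_\delta)}(b_\delta^+)\to 0$; the transition kernel $p^h(x,y)=p(x,y)h(y)/h(x)$ is singular there, and nothing in your argument controls the initial piece of the walk. The lemma also allows $z_0$ arbitrarily close to $b$, so $B(z_0,\beta/4)$ may contain $b_\delta^+$. The paper does \emph{not} try to run Beurling on the conditioned walk. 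Instead it writes
\[
\PP_{b_\delta^+}[\,\cdot\mid \LR_\delta \text{ hits }(c_\delta d_\delta)\text{ first}\,]
\;\le\;
\frac{h_{\partial B_\delta(z_0,\beta/4)}(b_\delta^+)}{h_{(c_\delta d_\delta)}(b_\delta^+)}
\cdot \max_{v_\delta\in \partial B_\delta(z_0,\beta/4)} \PP_{v_\delta}[\,\cdot\,],
\]
applies Beurling to the \emph{unconditioned} walk in the second factor, and controls the ratio of harmonic functions at $b_\delta^+$ using Lemma~\ref{lem::ratio} (treating separately the cases $b_\delta^+\in B(z_0,\beta/4)$ and $b_\delta^+\notin B(z_0,\beta/4)$). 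This ratio-of-normal-derivatives lemma is precisely what replaces your comparability claim, and it is the key new input compared to \cite{HanLiuWuUST}.

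\medskip
\textbf{The ``iterative Wilson / dyadic annuli'' fix is not the actual mechanism.} Your union bound over $O((\beta/\alpha)^2)$ centres followed by a vague chaining does not match either the paper or \cite[Lemma~C.5]{HanLiuWuUST}. The correct decomposition is on the number of crossings of the annulus $B(z_0,\beta)\setminus B(z_0,\beta/4)$ by the random walk itself: with stopping times $s_j$ (enter $B(z_0,\beta/4)$) and $t_j$ (exit $B(z_0,\beta)$), one has
\[
\LA_\delta \;\subset\; T_{m+1}\;\cup\;\bigcup_{j=2}^m Y_j,
\]
where $T_{m+1}=\{t_{m+1}<\tau_\delta(c_\delta d_\delta)\}$ and $Y_j$ is the near-return event for the loop-erasure up to $t_j$. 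One then shows $\PP[\cup_{j\le m}Y_j\mid\cdot]\le C m^2(\alpha/\beta)^k$ via Beurling (no union bound over centres is needed), and $\PP[T_{m+1}\mid\cdot]\le C(1-q)^m$ because each full crossing has uniformly positive probability of terminating on $(a_\delta b_\delta)\cup(c_\delta d_\delta)$. Choosing $m$ large and then $\alpha$ small gives the result. The ``iteration'' is over crossings of a single fixed annulus, not an inductive resampling of the LERW along dyadic scales.
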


\begin{proof}
Denote by $B_\delta(z_0,\beta)$ the discrete approximation of $B(z_0,\beta)$. By Wilson's algorithm, we know that the law of $\gamma_{\delta}$ is same as the law of loop-erasure of the random walk $\LR_\delta$, starting from $b_\delta^+$, conditional on hitting $(c_\delta d_\delta)$ before $(a_\delta b_\delta)$. We will use the same notations in~\cite[Lemma~C.5]{HanLiuWuUST}. Let $t_0=0$. For $j\ge 1$, define inductively \[s_j:=\inf\left\{r\ge t_{j-1}:\LR_\delta\in B\left(z_0,\beta/4\right)\right\}\quad \text{and}\quad t_j:=\inf\{r\ge s_{j}:\LR_\delta\notin B(z_0,\beta)\}.\] 
Let $\tau_\delta(c_\delta d_\delta)$ be the hitting time of $\LR_\delta$ at $(c_\delta d_\delta)$. For every $r>0$, define $\text{LE}(\LR_\delta[0,r])$ to be the loop erasure of $\LR_\delta[0,r]$. Define the event $\LA_{\delta}(z_0,\beta,\alpha;\text{LE}(\LR_\delta[0,t_j]))$ similarly as $\LA_{\delta}(z_0,\beta,\alpha;\gamma_{\delta})$ and define the event $T_j:=\{t_j\le\tau_\delta(c_\delta d_\delta)\}$ that there are $j$ crossings. Denote $\LA_{\delta}(z_0,\beta,\alpha;\text{LE}(\LR_\delta[0,t_j]))$ by $Y_j$ for simplicity. Note that $Y_1=\emptyset$ and 
\[\LA_{\delta}(z_0,\beta,\alpha;\gamma_{\delta})\subset\{\cup_{j=2}^{\infty}(Y_j\cap T_j)\}\subset \{T_{m+1}\cup (\cup_{j=2}^{m}Y_j)\},\quad\text{ for every }m>1.\]
Thus, we only need to control 
\[\PP_{b_\delta^+}[\cup_{j=1}^{m}Y_j\cond\LR_\delta\text{ hits }(c_\delta d_\delta)\text{ before }(a_\delta b_\delta)]\text{ and }\PP_{b_\delta^+}[T_{m+1}\cond\LR_\delta\text{ hits }(c_\delta d_\delta)\text{ before }(a_\delta b_\delta)].\]
For the first estimate, we have
\begin{align*}
&\PP_{b_\delta^+}[\cup_{j=1}^{m}Y_j\cond\LR_\delta\text{ hits }(c_\delta d_\delta)\text{ before }(a_\delta b_\delta)]\\
=&\frac{\PP_{b_\delta^+}[\one_{\{\LR_\delta\text{ hits }\partial B_{\delta}(z_0,\beta/4)\}}\PP_{\LR_\delta(s_1)}[\{\cup_{j=1}^{m}Y_j\}\cap\{\LR_\delta\text{ hits }(c_\delta d_\delta)\text{ before }(a_\delta b_\delta)\}]]}{h_{(c_\delta d_\delta)}(b_\delta^+)}\\
\le &\frac{h_{\partial B_\delta(z_0,\beta/4)}(b_\delta^+)}{h_{(c_\delta d_\delta)}(b_\delta^+)}\left(\max_{v_\delta\in\partial B_\delta(z_0,\beta/4)}\PP_{v_\delta}[\cup_{j=1}^{m}Y_j]\right).
\end{align*}
By the Beurling estimate of Lemma \ref{lem::Beur}, there exist $C>0$ and $k>0$, such that
\[\max_{v_\delta\in\partial B_\delta(z_0,\beta/4)}\PP_{v_\delta}[\cup_{j=1}^{m}Y_j]\le Cm^2\left(\frac{\alpha}{\beta}\right)^k.\]
If $b_\delta^+\in B(z_0,\beta/4)$, then combining with Lemma~\ref{lem::ratio}, there exists $C>0$, such that 
\[\frac{h_{\partial B_\delta(z_0,\beta/4)}(b_\delta^+)}{h_{(c_\delta d_\delta)}(b_\delta^+)}\le C.\]
If $b_\delta^+\notin B(z_0,\beta/4)$, then if we denote by $d=d(b_\delta^+,\partial B_\delta(z_0,\beta/4))$, we have
\[\frac{h_{\partial B_\delta(z_0,\beta/4)}(b_\delta^+)}{h_{(c_\delta d_\delta)}(b_\delta^+)}\le \frac{h_{\partial B_\delta(b_\delta^+,d)}(b_\delta^+)}{h_{(c_\delta d_\delta)}(b_\delta^+)}\le C,\]
where the last inequality is from Lemma~\ref{lem::ratio}.
Thus, we have
\[\PP_{b_\delta^+}[\cup_{j=1}^{m}Y_j\cond\LR_\delta\text{ hits }(c_\delta d_\delta)\text{ before }(a_\delta b_\delta)]\le Cm^2 \left(\frac{\alpha}{\beta}\right)^k.\]
For the second estimate, we have 
\begin{align*}
&\PP_{b_\delta^+}[T_{m+1}\cond\LR_\delta\text{ hits }(c_\delta d_\delta)\text{ before }(a_\delta b_\delta)]\\
=&\frac{\PP_{b_\delta^+}[\one_{\{\LR_\delta\text{ hits }\partial B_{\delta}(z_0,\beta)\}}\PP_{\LR_\delta(t_1)}[\{t_j\le\tau\}\cap\{\LR_\delta\text{ hits }(c_\delta d_\delta)\text{ before }(a_\delta b_\delta)\}]]}{h_{(c_\delta d_\delta)}(b_\delta^+)}\\
\le &\frac{h_{\partial B_\delta(z_0,\beta)}(b_\delta^+)}{h_{(c_\delta d_\delta)}(b_\delta^+)}\left(\max_{v_\delta\in\partial B_\delta(z_0,\beta)}\PP_{v_\delta}[\LR_\delta\text{ hits }\partial B_\delta(z_0,\beta/4)\text{ before }(a_\delta b_\delta)\cup(c_\delta d_\delta)]\right)^j.
\end{align*}
By the same argument in~\cite[Equation~C.2]{HanLiuWuUST} (or by using a uniform crossing estimate), we have that there exists $0<q = q(\beta)<1$ such that for every $\delta>0$, 
\[
\min_{v_\delta\in\partial B_\delta(z_0,\beta)}\PP_{v_\delta}[\LR_\delta\text{ hits }(a_\delta b_\delta)\cup(c_\delta d_\delta)\text{ before }\partial B_\delta(z_0,\beta/4)]\ge q.
\]
By the same argument in the first estimate, we have that there exists $C>0$, such that
\[
\PP_{b_\delta^+}[T_{m+1}\cond\LR_\delta\text{ hits }(c_\delta d_\delta)\text{ before }(a_\delta b_\delta)]\le C(1-q)^j.
\]
(See also Lemma 4.13 in \cite{BLRdimers} for a similar estimates). Thus, for every $\eps>0$, by first choosing $m$ large enough and then choosing appropriate $\alpha=\alpha(\beta,\epsilon)$, we have
\[
\PP[\LA_{\delta}(z_0,\beta,\alpha;\gamma_{\delta})]\le\eps.
\]
This completes the proof.
\end{proof}

By Lemma~\ref{lem::simple}, we can deduce the tightness of $\{\gamma_{\delta}\}_{\delta>0}$.
\begin{lemma}\label{lem::tightness}
The sequence $\{\gamma_{\delta}\}_{\delta>0}$ is tight. Moreover, any subsequential limit is a simple curve in $\overline\Omega$ which intersects $[ab]\cup[cd]$ only at two ends. 
\end{lemma}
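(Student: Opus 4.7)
The plan is to derive tightness, simpleness of the limit, and the boundary intersection property from three tools already established in the paper: the annulus--crossing estimate of Lemma~\ref{lem::simple}, the discrete Beurling estimate of Lemma~\ref{lem::Beur}, and the ratio control of Lemma~\ref{lem::ratio}. The overall strategy mirrors~\cite[Theorem~3.9]{LawlerSchrammWernerLERWUST} and its adaptation in~\cite[Lemma~C.5]{HanLiuWuUST}.

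For tightness, parametrise $\gamma_\delta$ (say by arclength, or by half-plane capacity after conformally mapping the bulk to $\HH$). Cover $\overline\Omega$ by finitely many balls of radius $\beta/4$ and apply Lemma~\ref{lem::simple} together with a union bound; this furnishes a uniform modulus of continuity for the parametrisation. Combining Arzel\`a--Ascoli with compactness of $\overline\Omega$ yields tightness in the metric~\eqref{eqn::curves_metric}. Passing to a convergent subsequence on a common probability space, we work with an almost sure limit $\gamma$.

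To show $\gamma$ is simple, argue by contradiction. At the discrete level $\gamma_\delta$ is simple, being a branch of the UST (equivalently, a loop-erased random walk via Wilson's algorithm), so any failure of simpleness in the limit must arise as a ``bubble'': some parametrisation $\hat\gamma$ with $\hat\gamma(s)=\hat\gamma(t)=:z_0$, $s<t$, and $\hat\gamma|_{[s,t]}$ exiting $B(z_0,\beta)$ for some $\beta>0$. Uniform convergence of $\gamma_\delta$ to $\gamma$ then implies that, for every $\alpha>0$ and every sufficiently small $\delta$, the curve $\gamma_\delta$ admits two points in $B(z_0,\beta/4)$ at distance at most $\alpha$ whose connecting subarc exits $B(z_0,\beta)$; this is precisely the event $\LA_\delta(z_0,\beta,\alpha;\gamma_\delta)$, whose probability Lemma~\ref{lem::simple} makes arbitrarily small, a contradiction.

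For the boundary statement, fix a compact subarc $K\subset([ab]\cup[cd])\setminus\{b,c,d\}$ and its $\eps$--neighbourhood $K_\eps$. We claim $\PP[\gamma_\delta\cap K_\eps\ne\emptyset]\to 0$ as $\delta\to 0$ then $\eps\to 0$. By Wilson's algorithm, $\gamma_\delta$ is the loop-erasure of $\LR_\delta$ started at $b_\delta^+$ and conditioned to hit $(c_\delta d_\delta)$ before $(a_\delta b_\delta)$. Entering $K_\eps$ requires the walk to cross a macroscopic annulus around some point of $K$, whose unconditioned probability is controlled by Lemma~\ref{lem::Beur}; the conditioning is absorbed by Lemma~\ref{lem::ratio}, which via a stopping-time decomposition at the annulus boundary (writing hitting probabilities through the intermediate hitting point and invoking the uniform bound on $h_{A_\delta}/h_{(c_\delta d_\delta)}$) turns the conditional probability into a bounded multiple of the unconditioned one, uniformly in $\delta$. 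Exhausting $([ab]\cup[cd])\setminus\{b,c,d\}$ by such compact $K$ and treating the random endpoint on $[cd]$ separately by continuity and the fact that $\gamma_\delta$ terminates on first hitting $(c_\delta d_\delta)$ completes the argument. The main obstacle is precisely this last step: the conditioned walk is not itself a Markov chain to which Lemma~\ref{lem::Beur} applies directly, so Lemma~\ref{lem::ratio} is essential for converting the conditioning into a bounded Radon--Nikodym factor uniformly in $\delta$, especially when $b_\delta^+$ lies close to the annulus being crossed.
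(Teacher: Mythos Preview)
Your tightness and simpleness arguments match the paper's: both invoke Lemma~\ref{lem::simple} together with the Aizenman--Burchard/Lawler--Schramm--Werner compactness principle (a family of curves satisfying uniform annulus-crossing bounds is precompact, and any limit is simple because discrete simplicity rules out bubbles in the limit).

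For the boundary statement there is a genuine slip. Your displayed claim $\PP[\gamma_\delta\cap K_\eps\ne\emptyset]\to 0$ is \emph{false} when $K\subset[cd]$: the curve terminates on $(c_\delta d_\delta)$, so with probability bounded away from zero it lands in (and hence enters the $\eps$-neighbourhood of) any fixed subarc $K$. Saying you will ``treat the random endpoint separately'' does not repair this, because the issue is not just the terminal point but the entire approach to it. The correct claim is that the curve does not enter an $\eps$-neighbourhood of the wired boundary and then \emph{escape to some fixed macroscopic distance $s$} before returning; Beurling (Lemma~\ref{lem::Beur}) then gives a bound $C(\eps/s)^c$, and one sends $\eps\to 0$ then $s\to 0$.

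The paper implements exactly this for $[ab]$: define $\tau_{s,\delta}$ as the first exit of $\partial B(b_\delta^+,s)$ and $\tau_{s,\delta}(\eps)$ as the first subsequent time the curve comes within $\eps$ of $(a_\delta b_\delta)$; bound $\PP[\tau_{s,\delta}(\eps)<\infty]$ by the ratio $h_{\partial B_\delta(b_\delta^+,s)}(b_\delta^+)/h_{(c_\delta d_\delta)}(b_\delta^+)$ (finite by Lemma~\ref{lem::ratio}, absorbing the conditioning) times $\max_{v\in\partial B_\delta(b_\delta^+,s)}\PP_v[\text{enter $\eps$-nbhd of $(ab)$ and still reach $(cd)$}]\le C\eps^c$ (Beurling). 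Letting $\delta\to 0$, then $\eps\to 0$, then $s\to 0$ finishes. Your compact-exhaustion version is equivalent for $[ab]$ once you restrict $K$ to be bounded away from $b$, but you should restate the $[cd]$ claim in the ``enter-then-escape'' form to make it correct.
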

\begin{proof}
The tightness of $\gamma_{\delta}$ follows from Lemma \ref{lem::simple}
using the ingenious argument given in \cite[Lemma 3.12]{LawlerSchrammWernerLERWUST}: essentially, a family of curves without bubbles forms a compact set. See also the proof of~\cite[Lemma C.2]{HanLiuWuUST} where this argument is also discussed. 
Since the proof of tightness is identical, we do not include here. 

Thus let us suppose that $\gamma$ is any subsequential limit. Without loss of generality and with a slight abuse of notation, we will use $\{\gamma_{\delta}\}_{\delta>0}$ for the subsequence converging to $\gamma$.
 We begin with the proof that $\gamma$ intersects $(ab) \cup (cd)$ at exactly two ends. 
 For this we modify the argument in~\cite[Lemma C.2]{HanLiuWuUST}. It suffices to show that almost surely $\gamma$ intersects $[ab]$ at one end, since the proof for $[cd]$ is identical.

For every $s>0$, we define $\tau_{s,\delta}$ to be the first time that $\gamma_{\delta}$ hits $\partial B_\delta(b^+_\delta,s)$ and define 
\[
\tau_{s,\delta}(\eps):=\inf\{t\ge\tau_{s,\delta}:d(\gamma_{\delta}(t),(a_\delta b_\delta))<\eps\}.
\]
Note that
\begin{align*}
\PP[\tau_{s,\delta}(\eps)<\infty]\le&\frac{\PP_{b_\delta^+}[\LR_\delta\text{ hits }\partial B_\delta(b_\delta^+,s)\text{ before }(a_\delta b_\delta)]}{\PP_{b_\delta^+}[\LR_\delta\text{ hits }(c_\delta d_\delta)\text{ before }(a_\delta b_\delta)]}\\
&\times \max_{v_\delta\in\partial B_\delta(b_\delta^+,s)}\PP_{v_\delta}[\LR_\delta\text{ hits the }\eps\text{-neighbour of }(a_\delta b_\delta)\text{ and hits }(c_\delta d_\delta)\text{ before }(a_\delta b_\delta)]\\
=&\frac{h_{\partial B_\delta(b_\delta^+,s)}(b_\delta^+)}{h_{(c_\delta d_\delta)}(b_\delta^+)}\\
&\times \max_{v_\delta\in\partial B_\delta(b_\delta^+,s)}\PP_{v_\delta}[\LR_\delta\text{ hits the }\eps\text{-neighbour of }(a_\delta b_\delta)\text{ and hits }(c_\delta d_\delta)\text{ before }(a_\delta b_\delta)].
\end{align*}
Combining with the Beurling estimate of Lemma~\ref{lem::Beur} and Lemma~\ref{lem::ratio} on the limiting behaviour of the ratio of harmonic functions on the right hand side, we have that there exist $C = C(s)>0$ and $c>0$, such that
\[\PP[\tau_{s,\delta}\le\tau_{s,\delta}(\eps)<\infty]\le C\epsilon^c.\]
Define $\tau_s$ and $\tau_s(\eps)$ for $\gamma$ similarly as defining $\tau_{s,\delta}$ and $\tau_{s,\delta}(\eps)$ for $\gamma_{\delta}$. By letting $\delta\to 0$, we have
\[\PP[\tau_{s}\le\tau_{s}(\eps)<\infty]\le C\epsilon^c.\]
By letting $\eps\to 0$, we know that almost surely, $\gamma$ will never hit $(ab)$ after the time $\tau_s$. By letting $s\to 0$, we know that almost surely, $\gamma$ intersects $[ab]$ only at one end. This completes the proof.
\end{proof}
Now, we begin to identify the limiting curve. We fix a subsequential limit $\gamma$ and we suppose $\{\gamma_{\delta_n}\}_{n\ge 0}$ converge to $\gamma$ in law. We parametrise $\gamma_{\delta_n}$ and $\gamma$ by the time interval $[0,1]$. We will show that $\gamma$ is a chordal $\SLE_2(-1,-1;-1,-1)$ curve in $\Omega$ from $b$ to $(cd)$ with the marked points given by $(a,d;b,c)$ (recall this curve is target invariant, so we do not need to specify the target point of $\gamma$ in $\Omega$, and just stop it upon hitting the arc $(cd)$).

The proof is almost the same as the proof of~\cite[Theorem 1.6]{HanLiuWuUST}. The main difference is that because of the conditioning, we can not deduce that $\gamma\cap((bc)\cup(da))=\emptyset$ at first. We will explain how to modify the proof in our setting in detail. 
\begin{proposition}\label{prop::middle}
The discrete curves $\{\gamma_{\delta}\}_{\delta>0}$ converges to $\SLE_2(-1,-1;-1,-1)$ curve from $b$ to $(cd)$ with the marked points $(a,d;b,c)$ in law, when $\delta\to 0$.
\end{proposition}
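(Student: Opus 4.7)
The plan is to combine the tightness from Lemma~\ref{lem::tightness} with a standard driving-function / martingale-observable argument, adapted to the present setting with both reflecting arcs and the starting-edge conditioning $E_\delta$. This is structurally the same strategy as in \cite[Theorem 1.6]{HanLiuWuUST}; the main novelty is in how the conditioning on $E_\delta$ enters through Wilson's algorithm.

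First, I would take an arbitrary subsequential limit $\gamma$ of $\gamma_\delta$. By Lemma~\ref{lem::tightness}, $\gamma$ is a simple curve touching $[ab]\cup[cd]$ only at its ends. I then fix a conformal map $\psi:\Omega\to\HH$ with $\psi(b)=0$, push $\gamma$ over to $\HH$, and reparametrize by half-plane capacity. The simple-curve and no-bubble statements of Lemmas~\ref{lem::simple} and~\ref{lem::tightness} imply that the capacity is a continuous strictly increasing function of time, so $\psi(\gamma)$ admits a continuous Loewner driving function $W_t$. A short auxiliary step --- one more application of the Beurling estimate in Lemma~\ref{lem::Beur} --- is needed to rule out that $\gamma$ returns to the reflecting arcs $(bc)\cup(da)$ after leaving a neighborhood of $b$, as otherwise the Loewner description could break down at a reflecting excursion.

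Second, I would identify the law of $W_t$ through a martingale observable. By Wilson's algorithm, the branch $\gamma_\delta$ --- which under conditioning on $E_\delta$ is exactly the Temperley branch --- is distributed as the loop-erasure of a reflected random walk $\cR_\delta$ started at $b_\delta^+$ and Doob-conditioned to exit at $(c_\delta d_\delta)$ rather than $(a_\delta b_\delta)$. The Doob conditioning function is the discrete harmonic function $h_{(c_\delta d_\delta)}$ of~\eqref{E:hcd}. By the domain Markov property of LERW, for each discrete step $n$, the quantity
\[
M_n^\delta := h_{(c_\delta d_\delta)}^{\Omega_\delta\setminus\gamma_\delta[0,n]}\bigl(\gamma_\delta(n)^+\bigr),
\]
i.e.\ the exit probability through $(c_\delta d_\delta)$ of reflected random walk in the slit domain started at the tip, is a martingale under the conditioned law of $\gamma_\delta$.

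Third, I would pass to the scaling limit. Using Lemma~\ref{lem::conv_subarc} applied in slit domains together with Lemma~\ref{lem::ratio} at the tip, $M_n^\delta$ converges to the continuous mixed Dirichlet--Neumann harmonic measure in $\Omega\setminus\gamma[0,t]$. Pulled back to $\HH$ by the Loewner map this becomes an explicit function $F(W_t,V^1_t,\ldots,V^m_t)$ of the driving function and of the evolved images of the marked points, which can be computed by uniformizing the slit upper half plane to a rectangle, exactly as in the present section. Requiring that $F$ be a local martingale along $W_t$ and applying It\^o's formula forces the SDE
\[
dW_t = \sqrt{2}\,dB_t + \sum_{i}\frac{-1}{W_t-V^i_t}\,dt,
\]
where the sum runs over the Loewner images of the four marked points of weight $-1$, which is exactly the driving SDE for $\SLE_2(-1,-1;-1,-1)$ in $(\Omega;a,b,c,d)$ with force points $(a,d;b,c)$.

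The hard part is the third step, and specifically the boundary behavior of $M_n^\delta$ near the four transition corners $a,b,c,d$ where wired and reflecting arcs meet, and above all near the tip $\gamma_\delta(n)^+$. Lemma~\ref{lem::conv_subarc} provides uniform convergence only \emph{away} from these corners, while extracting the drift of $W_t$ requires ratios of discrete harmonic functions at the tip; this is exactly the scope of Lemma~\ref{lem::ratio}. A secondary technical point is to check that the slit-domain convergence is sufficiently robust to commute with the stopping-time arguments used to extract the SDE; this is where the $C^1$ regularity of $\partial\Omega$ and the discrete/continuous extremal-length comparability (assumption~6 of Section~\ref{SS:gen}) enter the picture.
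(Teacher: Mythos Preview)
Your overall strategy --- tightness, then identification of the driving function via a harmonic martingale observable --- is the right one, and is also what the paper does. But the specific observable you propose does not work, and the paper's argument is organized differently precisely to avoid the difficulties you run into.

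\medskip
\textbf{The observable is not a martingale under the conditioned law.} Under the conditioning $E_\delta$, the branch $\gamma_\delta$ is the loop-erasure of the $h_{(c_\delta d_\delta)}$-transformed walk. By the domain Markov property, given $\gamma_\delta[0,n]$ the remainder is again a LERW \emph{conditioned} to exit through $(c_\delta d_\delta)$; hence the conditional probability of ending on $(c_\delta d_\delta)$ is identically $1$, not $M_n^\delta$. Your $M_n^\delta$ is a martingale under the \emph{unconditioned} law, not the conditioned one. What \emph{is} a martingale under the conditioned law is the ratio
\[
\frac{h^{\Omega_\delta\setminus\gamma_\delta[0,n]}_{(\tilde c_\delta\tilde d_\delta)}(\gamma_\delta(n)^+)}{h^{\Omega_\delta\setminus\gamma_\delta[0,n]}_{(c_\delta d_\delta)}(\gamma_\delta(n)^+)}
= \PP\bigl[\gamma_\delta \text{ ends in }(\tilde c_\delta\tilde d_\delta)\,\big|\,\gamma_\delta[0,n]\bigr],
\]
for each sub-arc $(\tilde c\tilde d)\subset(cd)$; see~\eqref{eq:hit}. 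This one-parameter family of observables is what is actually used to pin down the driving SDE. A single scalar observable with no free parameter, even if it were a martingale, would give one It\^o constraint and would not by itself determine both the quadratic variation and the drift.

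\medskip
\textbf{The paper sidesteps the tip-at-corner problem via localization.} You propose to apply Lemma~\ref{lem::ratio} ``at the tip'' in slit domains. But Lemma~\ref{lem::ratio} is proved only at the fixed corner $b_\delta^+$; extending it to a moving tip in a rough slit domain is nontrivial. The paper avoids this entirely: it conditions on an initial segment $\gamma[0,\tau(s)]$ (the first exit of $B(b,s)$), observes that the remaining curve is a LERW in a domain where the tip is \emph{not} at a Dirichlet/Neumann transition, and then invokes \cite{HanLiuWuUST} directly to identify the conditional law as $\SLE_2(-1,-1;-1,-1)$. The only new work is the bound~\eqref{eqn::aux} showing $\gamma(\tau(s))\notin(bc)$, which does use Lemma~\ref{lem::ratio}, but only at the fixed point $b$. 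Finally one lets $s\to0$. This localization-and-cite trick is both simpler and more robust than analyzing the observable at a tip sitting on a corner; as the paper notes, before this identification one cannot even rule out that $\gamma$ touches $(bc)\cup(da)$, so your ``short auxiliary Beurling step'' is not as short as it looks.
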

\begin{proof}
For every $s>0$, we define $\tau_n(s):=\inf\{t>0:d(\gamma_{\delta_n}(t),b_{\delta_n}^+)\ge s\}$ and define $\tau(s):=\inf\{t>0:d(\gamma(t),b)\ge s\}$. We may couple $\{\gamma_{\delta_n}\}_{n\ge 0}$ and $\gamma$ together such that $\gamma_{\delta_n}$ converges to $\gamma$ almost surely. By considering the continuous modification, we may also  assume that $\tau_n(s)$ converges to $\tau(s)$ almost surely. See for instance~\cite{KarrilaMultipleSLELocalGlobal} for details. 

First of all, we will prove that $\gamma(\tau(s))$ does not belong to $(bc)$ almost surely. Denote by $l_s$ the connected component of $\partial B(b,s)\cap \Omega$ seperating $b$ and $(cd)$. We define $y:=l_s\cap(bc)$ and define $y_{\delta_n}$ to be a discrete approximation of $y$ on $(b_{\delta_n}c_{\delta_n})$. For every $\eps>0$, we have that
\[\PP[\gamma_{\delta_n}\text{ hits }\partial B_\delta(y_\delta,2\eps)]\le\frac{h_{\partial B_{\delta_n}(y_{\delta_n},2\eps)}(b_{\delta_n}^+)}{h_{(c_{\delta_n}d_{\delta_n})}(b_{\delta_n}^+)}.\]
By letting $n\to\infty$, by Lemma~\ref{lem::ratio}, we have that
\[\PP[y\in\gamma]\le\PP[\gamma\text{ hits }\partial B(y,\eps)]\le \frac{\partial_n (h_{\partial B(y,2\eps)}\circ\varphi^{-1})(b)}{\partial_n( h_{(cd)}\circ\varphi^{-1})(b)},\]
where $\varphi$ is the conformal map from $\Omega$ onto the rectangle.
By letting $\eps\to 0$, we have
\begin{equation}\label{eqn::aux}
\PP[y\in\gamma]=0.
\end{equation}
This implies that $\gamma(\tau(s))$ does not belong to $(bc)$ almost surely.

Second, we define $T(\eps):=\inf\{t>\tau(s):d(\gamma(\tau(s)),\partial \Omega)\le\eps\}$. We denote by $\Omega(t)$ the connected component of $\Omega\setminus\gamma[0,(t+\tau(s))\wedge T(\epsilon)]$ which contains $d$ and denote by $b(0)$ the last hitting point of $\gamma[0,(t+\tau(s))\wedge T(\eps)]$ on $[bc]$ (if there is one). We define $T_n(\eps)$ and $\Omega_{\delta_n}(t)$ and $b_{\delta_n}(0)$  similarly in the discrete setting. By considering the continuous modification, we may also assume that $T_n(\eps)$ converges to $T(\eps)$ almost surely.
Now, we explain briefly how to deduce that given $\gamma[0,\tau(s)]$, the conditional law of $\gamma$ is the same as the law of $\SLE_2(-1,-1;-1,-1)$ in $\Omega(0)$, which starts from $\gamma(\tau(s))$ with marked points $(a,d;b(0),c)$.

We fix $\tilde c,\tilde d\in[cd]$. We denote by $\tilde c_{\delta_n} ,\tilde d_{\delta_n}$ some discrete approximations of $(c_{\delta_n} d_{\delta_n})$. Given $\Omega_{\delta_n}(t)$, for $v_{\delta_n}\in \Omega_{\delta_n}(t)$, we define 
\[h_{t,(\tilde c_{\delta_n}\tilde d_{\delta_n})}(v_{\delta_n}):=\PP[\LR_{\delta_n}\text{ exits }\Omega_{\delta_n}(t)\text{ at }(\tilde c_{\delta_n}\tilde d_{\delta_n})],\]
where $\LR_{\delta_n}$ has the same law as the random walk on $\Omega_{\delta_n}(t)$, which starts from $v_{\delta_n}$ and is killed on $(a_{\delta_n}b_{\delta_n}(t)) \cup (c_{\delta_n} d_{\delta_n})$, with the reflecting boundaries $(b_{\delta_n}(t)c_{\delta_n})\cup(d_{\delta_n} a_{\delta_{n}})$.

Note that for $r<t$, by the domain Markov property for loop-erased random walk,
\begin{align}\label{eq:hit}
\PP[\gamma_{\delta_n}\text{ hits }(\tilde c_{\delta_n}\tilde d_{\delta_n})\cond\gamma_{\delta_n}[0,(t+\tau_n(s))\wedge T_n(\epsilon)]]
=&\frac{\sum_{v_{\delta_n}\sim \gamma_{\delta_n}((t+\tau_n(s))\wedge T_n(\eps))}h_{t,(\tilde c_{\delta_n}\tilde d_{\delta_n})}(v_{\delta_n})}{\sum_{v_{\delta_n}\sim \gamma_{\delta_n}((t+\tau_n(s))\wedge T_n(\eps))}h_{t,(c_{\delta_n} d_{\delta_n})}(v_{\delta_n})}.
\end{align}
Recall that $\varphi$ is the conformal map from $(\Omega;a,b,c,d)$ onto the rectangle, which maps $(a,b,c,d)$ to $(0,1,1+\ii K,\ii K)$. We fix the conformal map $f$ from the rectangle onto $\HH$, which maps $(0,1,\ii K)$ to $(0,1,\infty)$. We denote $f\circ\varphi(c)$ by $x$ and denote $f\circ\varphi(\tilde c)$ by $\tilde x$ and denote $f\circ\varphi(\tilde d)$ by $\tilde y$. We fix the conformal map $h_{\tau(s)}$ from $f\circ\varphi(\Omega(0))$ onto $\HH$ such that $h_{\tau(s)}(z)=z+o(1)$ when $z$ tends to $\infty$. We denote by $W_{\tau(s),\cdot}$ the driving function of $h_{\tau(s)}\circ f\circ\varphi(\gamma[\tau(s),T(\eps)])$ and denote by $(h_{\tau(s),t};0\le t\le T(\eps)-\tau(s))$ the corresponding Loewner flow. Then, by using results from \cite{HanLiuWuUST} (see in particular the start of proof of Theorem 1.6), it follows that the conditional law of $h_{\tau(s)}\circ f\circ\varphi(\gamma[\tau(s),T(\eps)])$ given $\gamma[0, \tau(s)]$ must be an SLE$_2(-1, -1, -1, -1)$. (Indeed, given $\gamma[0, \tau(s)]$ the rest of the curve is the loop-erasure of a chordal SLE in which the boundary conditions alternate between Dirichlet and Neumann, and the change of boundary condition does not occur at the tip of the curve, so this is exactly the setup of \cite{HanLiuWuUST}). Briefly, we recall the argument: first, using \eqref{eq:hit} and the same argument as in~\cite[Corollary 5.5]{HanLiuWuUST}, we have 
\begin{align*}
& \lim_{n\to\infty}\PP[\gamma_{\delta_n}\text{ hits }(\tilde c_{\delta_n}\tilde d_{\delta_n})\cond\gamma_{\delta_n}[0,(t+\tau_n(s))\wedge T_n(\epsilon)]]\\
=&\int_{h_{\tau(s),t}(\tilde x)}^{h_{\tau(s),t}(\tilde y)}\partial_n P(z;h_{\tau(s),t}(0),W_{\tau(s),t},h_{\tau(s),t}\circ h_{\tau(s)}\circ f\circ\varphi(b(0)),h_{\tau(s),t}\circ h_{\tau(s)}(x))|_{z=w}dw,
\end{align*}
where $P$ is the Poisson kernel corresponding to our boundary conditions, which is given in~\cite[Equation 5.10]{HanLiuWuUST}.
By the same argument in~\cite[Lemma 5.10]{HanLiuWuUST} (similar to \cite{LawlerSchrammWernerLERWUST}), we know that for $w\in (x,\infty)$,
\[h_{\tau(s),t}'(w)\partial_n P(z;h_{\tau(s),t}(0),W_{\tau(s),t},h_{\tau(s),t}\circ h_{\tau(s)}\circ f\circ\varphi(b(0)),h_{\tau(s),t}\circ h_{\tau(s)}(x))|_{z=w}\]
is a martingale for $h_{\tau(s)}\circ f\circ\varphi(\gamma[\tau(s),T(\eps)])$. This can be used to identify uniquely the law of the driving function. 

In particular $\gamma$ will not hit $[bc]\cup[da]$ after time $\tau(s)$. By letting $\eps\to 0$, we know that given $\gamma[0,\tau(s)]$, the conditional law of $\gamma$ is the same as the law of $\SLE_2(-1,-1;-1,-1)$ in $\Omega(0)$, which starts from $\gamma(\tau(s))$ with marked points $(a,d;b(0),c)$. Finally, by letting $s\to 0$, this implies that $\gamma$ only hits $\partial\Omega$ at its two ends. This implies that we always have $b(0)=b$. We denote by $W$ the driving function of $f\circ\varphi(\gamma)$. We denote by $(g_t;t\ge 0)$ the corresponding Loewner flow. Then, we have
\[W_{\tau(s)+t}=W_{\tau(s),t}\quad\text{and}\quad g_{\tau(s)+t}=h_{\tau(s),t}\circ h_{\tau(s)}.\]
By definition of $\SLE_2(-1,-1;-1,-1)$, after parameterizing $f\circ\varphi(\gamma)$ by its half-plane capacity, we have
\[W_{t+\tau(s)}-W_{\tau(s)}=\sqrt{2}(B_{t+\tau(s)}-B_{\tau(s)})+\int_{\tau(s)}^{\tau(s)+t}\left(\frac{-1}{W_r-g_r(0)}+\frac{-1}{W_r-g_r(1)}+\frac{-1}{W_r-g_r(x)}\right) dr,\]
for $t\le T(\eps)-\tau(s)$. By letting $\eps\to 0$ and then letting $s\to 0$, we have that 
\[W_{t}-W_{0}=\sqrt{2}B_{t}+\int_{0}^{t}\left(\frac{-1}{W_r-g_r(0)}+\frac{-1}{W_r-g_r(1)}+\frac{-1}{W_r-g_r(x)}\right) dr,\]
where $B$ is the standard $1$-dimensional Brownian motion. Thus, we have that $\gamma$ has the same law as the law of $\SLE_2(-1,-1;-1,-1)$ in $\Omega$, which starts from $b$ with marked points $(a,d;b,c)$. This completes the proof.  
\end{proof}

Now, we discuss the convergence of $\{\eta_L^\delta\}_{\delta>0}$ and $\{\eta_R^\delta\}_{\delta>0}$. In~\cite{HanLiuWuUST}, the authors considered the $\ust$ on $(\Omega_\delta;a_\delta,b_\delta,c_\delta,d_\delta)$ where $(a_\delta b_
\delta)$ and $(c_\delta d_\delta)$ are wired. As mentioned before, this tree has a unique branch connecting the two wired vertices to one another, which we denote by $\tilde \gamma_\delta$ (note that this curve is not conditioned on its starting point).
 These authors proved that the discrete Peano curves on either side of $\tilde \gamma_{\delta}$, which we denote by $\tilde\eta_L^\delta$ and $\tilde\eta_R^\delta$, converge to a pair of $\hSLE_8$. In our setting, the tightness of discrete Peano curves may not been proved as in~\cite{HanLiuWuUST}, since we condition on an event with very small probability. But assuming the convergence of $\tilde\eta_L^\delta$ and $\tilde\eta_R^\delta$, we can prove the convergence of $\eta_L^\delta$ and $\eta_R^\delta$ before hitting $(a_\delta b_\delta)$. In the remaining part, we will only consider the convergence of $\{\eta_R^\delta\}_{\delta>0}$, since the convergence for $\{\eta_L^\delta\}_{\delta>0}$ is similar.

We denote by $\tau_{\eps}^\delta$ to be the hitting time by $\eta_R^\delta$ of the $\eps$-neighbourhood of $(d_\delta b_\delta)$ and denote by $\tilde\tau^\delta_{\eps}$ for $\tilde\eta_R^\delta$ similarly.  We denote by $\tilde\eta_R$ the $\hSLE_8$ curve from $c$ to $b$ with marked points $d,a$. We first recall the driving function of this curve. Let us fix a conformal map $\phi$ from $\Omega$ onto $\HH$, which maps $(c,d,b)$ to $(0,1,\infty)$. Suppose $\tilde W$ is the driven function of $\phi(\tilde\eta_R)$ and $(\tilde g_t:t\ge 0)$ the corresponding conformal maps. Then, before hitting $(\phi(a),+\infty)$, by definition, the law of $\tilde W$ can be described as that of the solution to the following hypergeometric stochastic differential equation:
\begin{equation}\label{E:hSLE}
d\tilde W_t=\sqrt 8 dB_t+\left(\frac{2}{\tilde W_t-\tilde g_t(\phi(d))}-\frac{2}{\tilde W_t-\tilde g_t(\phi(a))}-8\frac{\tilde g_t(\phi(a))-\tilde g_t(\phi(d))}{\left(\tilde g_t(\phi(a))-\tilde W_t\right)^2}\frac{F'\left(\frac{\tilde g_t(\phi(d))-\tilde W_t}{\tilde g_t(\phi(a))-\tilde W_t}\right)}{F\left(\frac{\tilde g_t(\phi(d))-\tilde W_t}{\tilde g_t(\phi(a))-\tilde W_t}\right)}\right)dt,
\end{equation}
where $F(\cdot)={}_2F_{1}(1/2,1/2,1;\cdot)$.
See~\cite{HanLiuWuUST} for more details.
For every $v_\delta\in\Omega_\delta\setminus\tilde\eta_R^\delta[0,t]$, we define 
\[h_{t}(v_\delta):=\PP[\LR_\delta\text{ hits }(\tilde\eta_R^\delta(t)d_\delta)\text{ before }(a_\delta b_\delta)],\]
where $\LR_\delta$ has the same law as as before, namely a random walk starting from $v_\delta$ and ending at $(a_\delta b_\delta)\cup(\tilde\eta_R^\delta(t)d_\delta)$, with reflecting boundaries $(b_\delta \tilde\eta_R^\delta(t))$ and $(d_\delta a_\delta)$.

We still need two notations. We denote by $SF_2^\delta(\Omega_\delta)$ the set of spanning forests on $\Omega_\delta$ where $(a_\delta b_\delta)$ and $(c_\delta d_\delta)$ are wired but belong to different components. We denote by $ST^\delta(\Omega_\delta)$ the set of spanning trees on $\Omega_\delta$ (where, once again, $(a_\delta b_\delta)$ and $(c_\delta d_\delta)$ are wired).
\begin{lemma}\label{lem::disRN}
Suppose $r$ is a continuous function on the curves space $(\LP,d)$. Then, for every $t>0$, we have
\[\E[r(\eta_R^\delta[0,t\wedge\tau^\delta_{\eps}])]=\E\left[r(\tilde\eta_R^\delta[0,t\wedge\tilde\tau^\delta_{\eps}])\frac{h_{t\wedge\tilde\tau^\delta_{\eps}}(b^+_\delta)\times \frac{|SF_2^\delta(\Omega_\delta\setminus\tilde\eta_R^\delta[0,t\wedge\tilde\tau^\delta_{\eps}])|}{|ST^\delta(\Omega_\delta\setminus\tilde\eta_R^\delta[0,t\wedge\tilde\tau^\delta_{\eps}])|}}{h_{(c_\delta d_\delta)}(b_\delta^+)\times\frac{|SF_2^\delta(\Omega_\delta)|}{|ST^\delta(\Omega_\delta)|}}\right],\]
where $SF_2^\delta(\Omega_\delta\setminus\tilde\eta_R^\delta[0,t\wedge\tilde\tau^\delta_{\eps}])$ and $ST^\delta(\Omega_\delta\setminus\tilde\eta_R^\delta[0,t\wedge\tilde\tau^\delta_{\eps}])$ are defined for $\Omega_\delta\setminus\tilde\eta_R^\delta[0,t\wedge\tilde\tau^\delta_{\eps}]$ similarly to $SF_2^\delta(\Omega_\delta)$ and $ST^\delta(\Omega_\delta)$, and $h_{(c_\delta d_\delta)}$ is defined in \eqref{E:hcd}.
\end{lemma}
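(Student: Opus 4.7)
The plan is to view $\eta_R^\delta$ as the conditioned version of $\tilde\eta_R^\delta$ — indeed, by construction the law of $\eta_R^\delta$ is that of $\tilde\eta_R^\delta$ given $E_\delta$ — and to compute the resulting Radon–Nikodym derivative explicitly. Writing $w_1 := (a_\delta b_\delta)$ and $w_2 := (c_\delta d_\delta)$ for the two wired super-vertices, Bayes' formula combined with the measurability of $r(\tilde\eta_R^\delta[0,t\wedge\tilde\tau_\eps^\delta])$ with respect to the initial segment of the Peano curve yields
\[
\E[r(\eta_R^\delta[0,t\wedge\tau_\eps^\delta])]=\frac{\E\bigl[r(\tilde\eta_R^\delta[0,t\wedge\tilde\tau_\eps^\delta])\,\PP(E_\delta\mid\tilde\eta_R^\delta[0,t\wedge\tilde\tau_\eps^\delta])\bigr]}{\PP(E_\delta)}.
\]
The task therefore reduces to identifying $\PP(E_\delta)$ and its conditional analogue in closed form.

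For the denominator I would combine LERW reversibility with the electric-network interpretation of the UST. The path $\tilde\gamma_\delta$ is distributed as the LERW from $w_1$ to $w_2$, and by reversibility $E_\delta$ — which asserts that $\tilde\gamma_\delta$ starts with the edge $b_\delta b_\delta^+$ — is equivalent to the statement that the \emph{last} edge of a random walk from $w_2$ absorbed at $w_1$ is the directed edge $b_\delta^+\to b_\delta$. Since this edge is traversed at most once, its probability equals the expected net number of traversals, which by Kirchhoff's theorem equals the unit edge current $I_{b_\delta\to b_\delta^+}$ associated with a unit flow from $w_1$ to $w_2$. A short voltage computation gives $I_{b_\delta\to b_\delta^+}=R_{\mathrm{eff}}(w_1,w_2)\cdot h_{(c_\delta d_\delta)}(b_\delta^+)$, and the classical matrix-tree identity $R_{\mathrm{eff}}(w_1,w_2)=|SF_2^\delta(\Omega_\delta)|/|ST^\delta(\Omega_\delta)|$ then produces exactly the denominator in the lemma.

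The numerator is handled by the same computation after invoking the spatial Markov property of the UST. Conditionally on $\tilde\eta_R^\delta[0,t\wedge\tilde\tau_\eps^\delta]$, the remainder of the tree should be a UST in the slit domain $\Omega_\delta\setminus\tilde\eta_R^\delta[0,t\wedge\tilde\tau_\eps^\delta]$ with updated wired arcs: $(a_\delta b_\delta)$ remains wired to $w_1$, while $(c_\delta d_\delta)$ together with the right-hand boundary of the revealed Peano curve plays the role of $w_2$. Because $\tilde\tau_\eps^\delta$ stops the curve at positive distance from $b_\delta$, the edge $b_\delta b_\delta^+$ still lives inside this slit domain, so the event $E_\delta$ retains its meaning. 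Repeating the current/effective-resistance computation inside the slit domain (with the random walk now using the updated reflecting/wired boundary, which is exactly the setup in the definition of $h_{t\wedge\tilde\tau_\eps^\delta}$) gives
\[
\PP(E_\delta\mid\tilde\eta_R^\delta[0,t\wedge\tilde\tau_\eps^\delta])=h_{t\wedge\tilde\tau_\eps^\delta}(b_\delta^+)\cdot\frac{|SF_2^\delta(\Omega_\delta\setminus\tilde\eta_R^\delta[0,t\wedge\tilde\tau_\eps^\delta])|}{|ST^\delta(\Omega_\delta\setminus\tilde\eta_R^\delta[0,t\wedge\tilde\tau_\eps^\delta])|}.
\]
Substituting these two identifications into the Bayes formula above yields the claimed expression.

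The main obstacle I expect is the careful bookkeeping for the Markov property at the level of the Peano curve. The curve $\tilde\eta_R^\delta$ winds between primal and dual edges, so one must check that once the initial segment is revealed, the induced conditional law of the \emph{primal} spanning tree on the complement is genuinely uniform among spanning trees of the slit domain satisfying the stated wired boundary conditions; only then can the current/effective-resistance identity be reapplied verbatim in the slit domain. Once this Markov step is in place, the rest is Ohm's law plus the matrix-tree identity $R_{\mathrm{eff}}=|SF_2|/|ST|$.
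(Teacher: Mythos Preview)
Your argument is correct, but the paper takes a shorter, more elementary route to the same formula for $\PP(E_\delta)$. Instead of invoking LERW reversibility, Kirchhoff currents, and the matrix--tree identity $R_{\mathrm{eff}}=|SF_2|/|ST|$, the paper observes a direct bijection: a spanning tree in which the branch from $w_1=(a_\delta b_\delta)$ to $w_2=(c_\delta d_\delta)$ begins with the edge $b_\delta b_\delta^+$ corresponds, upon deleting that edge, to a two-component spanning forest in which $b_\delta^+$ lies in the $w_2$-component, and conversely. This gives at once
\[
\PP(E_\delta)=\frac{|\{f\in SF_2^\delta(\Omega_\delta):\,b_\delta^+\text{ connects to }(c_\delta d_\delta)\}|}{|ST^\delta(\Omega_\delta)|}
=\frac{|SF_2^\delta(\Omega_\delta)|}{|ST^\delta(\Omega_\delta)|}\cdot h_{(c_\delta d_\delta)}(b_\delta^+),
\]
the second factor coming from Wilson's algorithm (in a uniform two-rooted forest, the component of $b_\delta^+$ is decided by which root a random walk from $b_\delta^+$ hits first). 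Your route recovers exactly the same two factors via $\PP(E_\delta)=I_{b_\delta\to b_\delta^+}=R_{\mathrm{eff}}\cdot h_{(c_\delta d_\delta)}(b_\delta^+)$, so the approaches agree; the bijection simply bypasses the electric-network detour. Both proofs then finish by the UST domain Markov property for the Peano curve, which you correctly flag as the step requiring the most care; the paper handles it the same way (and with the same level of detail) as you propose.
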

\begin{proof}
We denote by $|S|$ the cardinality of a set $S$. Note that 
\begin{align*}
&\PP[\text{ the first step of }\tilde \gamma_{\delta} \text{ is }b_\delta b_\delta^+]\\
=&\frac{|\{t^\delta\in ST^\delta(\Omega_\delta): \text{ the first step of the branch connecting }a_\delta b_\delta \text{ to }c_\delta d_\delta\text{ is }b_\delta b_\delta^+\}|}{|ST^\delta(\Omega_\delta)|}\\
=&\frac{|SF_2^\delta(\Omega_\delta)|}{|ST^\delta(\Omega_\delta)|}\times\frac{|\{f^\delta\in SF_2^\delta(\Omega_\delta):b_\delta^+\text{ connects to }(c_\delta d_\delta)\}|}{|SF_2^\delta(\Omega_\delta)|}\\
=&\frac{|SF_2^\delta(\Omega_\delta)|}{|ST^\delta(\Omega_\delta)|}\times h_{(c_\delta d_\delta)}(b_\delta^+),
\end{align*}
where the last equality is from Wilson's algorithm. The conclusion is obtained from the domain Markov property of the $\ust$.
\end{proof}
Now, we prove the convergence of $\{\eta_R^\delta[0,\tau^\delta_{\eps}]\}_{\delta>0}$ for every $\eps>0$. We define $\tilde\tau_{\eps}$ to be the hitting time by $\tilde\eta_R$ of the $\eps$-neighbourhood of $(db)$. 
\begin{proposition}\label{prop::hsle}
The discrete Peano curves $\{\eta_R^\delta[0,\tau^\delta_{\eps}]\}_{\delta>0}$ converge in law to $\SLE_8(2,2)$ with marked points $d,a$ stopped at $ \tau_\eps$. The precise formula of the driven function is given in~\eqref{eqn::counterSLE}.
\end{proposition}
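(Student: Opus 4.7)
The plan is to deduce the convergence of $\eta_R^\delta$ from the known convergence $\tilde\eta_R^\delta\to\hSLE_8$ (Theorem 1.6 of \cite{HanLiuWuUST}) via the Radon--Nikodym identity of Lemma~\ref{lem::disRN} and Girsanov's theorem. Concretely, Lemma~\ref{lem::disRN} writes the law of $\eta_R^\delta[0,t\wedge\tau_\eps^\delta]$ as the law of $\tilde\eta_R^\delta[0,t\wedge\tilde\tau_\eps^\delta]$ weighted by an explicit discrete martingale $M_t^\delta$. Since $\tilde\eta_R^\delta$ converges, and since the limiting law of $\eta_R^\delta$ will be absolutely continuous with respect to the hSLE$_8$ law up to the time $\tilde\tau_\eps$, it suffices to (i) identify the $\delta\to 0$ limit of $M_t^\delta$ as a local martingale $M_t$ for $\tilde\eta_R$, and (ii) show that the Girsanov change of measure by $M_t$ produces the $\SLE_8(2,2)$ driving function.

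The first term of $M_t^\delta$, namely $h_{t\wedge\tilde\tau_\eps^\delta}(b_\delta^+)/h_{(c_\delta d_\delta)}(b_\delta^+)$, is exactly the setting of Lemma~\ref{lem::ratio}: its limit is the ratio $\partial_n h_t(b)/\partial_n h_{(cd)}(b)$ of boundary Poisson kernels at $b$ (applied in the slit domain $\Omega\setminus\tilde\eta_R[0,t]$). Combined with the conformal map $h_{\tau,t}\circ h_\tau\circ f\circ\varphi$ from Proposition~\ref{prop::middle}, this can be written as an explicit function of $\tilde W_t$, $\tilde g_t(\phi(a))$ and $\tilde g_t(\phi(d))$. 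For the combinatorial factor, Kirchhoff's matrix-tree theorem identifies $|SF_2^\delta(\Lambda)|/|ST^\delta(\Lambda)|$ as the discrete effective resistance between the two wired arcs of $\Lambda$, a discrete harmonic-type quantity whose scaling limit is the inverse of the conformal modulus of the corresponding quadrilateral; its ratio across $\Omega_\delta$ vs.\ $\Omega_\delta\setminus\tilde\eta_R^\delta[0,t]$ therefore converges to an explicit continuum partition-function ratio, depending smoothly on $(\tilde W_t,\tilde g_t(\phi(a)),\tilde g_t(\phi(d)))$. The product is the candidate martingale $M_t$; because $M_t^\delta$ is a martingale for every $\delta$ (as a conditional Radon--Nikodym derivative), so is the limit.

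With $M_t$ in explicit form, Girsanov's theorem applied to the SDE~\eqref{E:hSLE} yields the driving function of $\eta_R$ under the tilted law: one computes $d\langle \log M,\tilde W\rangle_t$ via It\^o's formula, and the extra drift combines with the hypergeometric drift term already present in~\eqref{E:hSLE}. The ratio $F'/F$ term and the $-2/(\tilde W_t-\tilde g_t(\phi(a)))$ term cancel, leaving $+2/(\tilde W_t-\tilde g_t(\phi(a)))$, so that the tilted driving function reads $dW_t=\sqrt 8\,dB_t+\bigl(\tfrac{2}{W_t-g_t(\phi(d))}+\tfrac{2}{W_t-g_t(\phi(a))}\bigr)dt$, which is precisely $\SLE_8(2,2)$. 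The tightness of $\{\eta_R^\delta[0,\tau_\eps^\delta]\}$ follows from the tightness of $\{\tilde\eta_R^\delta\}$ combined with the uniform boundedness of $M_t^\delta$ on $\{t\le\tilde\tau_\eps^\delta\}$; a standard argument (convergence of the finite-dimensional distributions of the driving functions, plus tightness) then upgrades the identification of the law to weak convergence as curves.

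The main technical obstacle is controlling the ratio $|SF_2^\delta|/|ST^\delta|$ in slit domains whose geometry degenerates as the tip of $\tilde\eta_R^\delta$ approaches $b$; this is precisely why the proposition is stated with the cutoff $\tau_\eps^\delta$. Within the $\eps$-cutoff, the relevant quantities are uniformly comparable thanks to the Beurling estimate (Lemma~\ref{lem::Beur}) and the uniform extremal-length assumption of Section~\ref{SS:gen}, which allow one to upgrade the pointwise convergence of the harmonic and combinatorial factors to a joint continuous convergence in the Schramm topology and justify the Girsanov identification up to $\tau_\eps^\delta$ uniformly in $\delta$. A second, more bookkeeping-heavy obstacle is to verify that the explicit formula one obtains for $M_t$ (in terms of $\tilde W_t$ and the images of the force points under $\tilde g_t$) indeed has the form predicted by the $\SLE_8(2,2)/\hSLE_8$ Radon--Nikodym derivative at the SLE level, which is essentially an exercise using the SDE~\eqref{E:hSLE} and the Loewner equation.
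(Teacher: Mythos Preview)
Your proposal is correct and follows essentially the same approach as the paper: pass to the limit in the Radon--Nikodym identity of Lemma~\ref{lem::disRN}, identify the limiting martingale $M_t$, and apply Girsanov to the $\hSLE_8$ SDE~\eqref{E:hSLE}. The paper carries out the computation you label ``an exercise'' explicitly: it uses the Schwarz--Christoffel formula for the conformal map to the slit rectangle to show that the product of the boundary-derivative ratio (from Lemma~\ref{lem::ratio}) and the conformal-modulus ratio (from \cite[Lemma~4.6]{HanLiuWuUST}) equals
\[
M_{t}=\sqrt{\frac{\tilde g_{t}(\phi(a))-\tilde W_{t}}{\phi(a)-\tilde W_0}}\cdot\frac{{}_2F_1\!\left(\tfrac12,\tfrac12,1;\tfrac{\phi(d)-\tilde W_0}{\phi(a)-\tilde W_0}\right)}{{}_2F_1\!\left(\tfrac12,\tfrac12,1;\tfrac{\tilde g_{t}(\phi(d))-\tilde W_{t}}{\tilde g_{t}(\phi(a))-\tilde W_{t}}\right)},
\]
from which your claimed Girsanov cancellation (the $F'/F$ terms cancel exactly, and $-2+4=+2$ at the force point $a$) is immediate. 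The uniform boundedness you invoke is also proved in the paper by a short compactness argument for the modulus factor together with~\eqref{Eq:boundratio} for the harmonic factor.
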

\begin{proof}
By~\cite[Theorem~1.5]{HanLiuWuUST}, we have $\tilde\eta_R^\delta$ converges to $\tilde\eta_R$ in law. We couple $\{\tilde\eta_R^\delta\}_{\delta>0}$ and $\tilde\eta_R$ together, such that $\tilde\eta_R^\delta$ converges to $\tilde\eta_R$ almost surely. If we define $\tilde\tau_{\eps,+}:=\inf\{\tilde\tau_{\eps'}:\eps'<\eps\}$, from the absolute continuity of $\tilde\eta_R$ with respect to $\SLE_8$, we know that almost surely, $\tilde\tau_{\eps}=\tilde\tau_{\eps,+}$. Thus, under this coupling, we have $\tilde\tau^\delta_{\eps}$ converges to $\tilde\tau_{\eps}$ almost surely. 

By the same argument as in Lemma~\ref{lem::conv_subarc}, we can prove that $h_{t\wedge\tilde\tau^\delta_{\eps}}$ converges a.s. locally uniformly (under this coupling distribution). We denote the limit by $h_{t\wedge\tilde\tau_{\eps}}$, and we note that it is a bounded harmonic function which satisfies the following boundary conditions: 
$$h_{t\wedge\tilde\tau_{\eps}}
\circ\phi^{-1}\circ\tilde g_{t\wedge\tilde\tau_{\eps}}^{-1}
\begin{cases}
 =1 & \text{ on } (\tilde W_{t\wedge\tilde\tau_{\eps}},\tilde g_{t\wedge\tilde\tau_{\eps}}(\phi(d)))\\
  = 0 & \text{ on }(\tilde g_{t\wedge\tilde\tau_{\eps}}(\phi(a)),\infty)\\
  \end{cases}
  $$
  and 
  $$
 \partial_n \left(h_{t\wedge\tilde\tau_{\eps}}
\circ\phi^{-1}\circ\tilde g_{t\wedge\tilde\tau_{\eps}}^{-1}\right) 
= 0 \text{ on } (-\infty,\tilde W_{t\wedge\tilde\tau_{\eps}})\cup(\tilde g_{t\wedge\tilde\tau_{\eps}}(\phi(d)),\tilde g_{t\wedge\tilde\tau_{\eps}}(\phi(a))).
$$
 By~\cite[Lemma 4.6]{HanLiuWuUST}, we have that $\frac{|SF_2^\delta(\Omega_\delta\setminus\tilde\eta_R^\delta[0,t\wedge\tilde\tau^\delta_{\eps}])|}{|ST^\delta(\Omega_\delta\setminus\tilde\eta_R^\delta[0,t\wedge\tilde\tau^\delta_{\eps}])|}$ converges to the conformal modulus of the domain $(\Omega\setminus\tilde\eta_R[0,t\wedge\tilde\tau_\eps])$ with marked points $(a, b, \tilde \eta(t \wedge \tilde \tau_\eps), d)$, as $\delta\to 0$ (as can be seen from that proof, the only requirement for this convergence is the Carath\'eodory convergence of the domains). Thus, if we denote by $f_{t\wedge\tilde\tau_{\eps}}$ the conformal map from $(\HH;\tilde W_{t\wedge\tilde\tau_{\eps}},\tilde g_{t\wedge\tilde\tau_{\eps}}(\phi(d)),\tilde g_{t\wedge\tilde\tau_{\eps}}(\phi(a)),\infty)$ onto $(R;0,1,1+\ii K,K)$, where $R$ is a rectangle with unit width, then we have 
\[h_{t\wedge\tilde\tau^\delta_{\eps}}(\cdot)\times \frac{|SF_2^\delta(\Omega_\delta\setminus\tilde\eta_R^\delta[0,t\wedge\tilde\tau^\delta_{\eps}])|}{|ST^\delta(\Omega_\delta\setminus\tilde\eta_R^\delta[0,t\wedge\tilde\tau^\delta_{\eps}])|}\]
converges to $K-\Im f_{t\wedge\tilde\tau_{\eps}}\circ \tilde g_{t\wedge\tilde\tau_{\eps}}\circ\phi$ locally uniformly on $\Omega\setminus\tilde\eta_{R}[0,t\wedge\tilde\tau_{\eps}]$ (as this is the only bounded harmonic function satisfying the required boundary conditions on $R$).
From the Schwarz--Christorffel formula, we have
\[f_{t\wedge\tilde\tau_{\eps}}(z)=\frac{\int_{0}^{\frac{z-\tilde W_t}{\tilde g_{t\wedge\tilde\tau_{\eps}}(\phi(d))-\tilde W_{t\wedge\tilde\tau_{\eps}}}}\left(s(s-1)\left(s-\frac{\tilde g_{t\wedge\tilde\tau_{\eps}}(\phi(a))-\tilde W_{t\wedge\tilde\tau_{\eps}}}{\tilde g_{t\wedge\tilde\tau_{\eps}}(\phi(d))-\tilde W_{t\wedge\tilde\tau_{\eps}}}\right)\right)^{-1/2}ds}{\int_{0}^1\left(s(s-1)\left(s-\frac{\tilde g_{t\wedge\tilde\tau_{\eps}}(\phi(a))-\tilde W_{t\wedge\tilde\tau_{\eps}}}{\tilde g_{t\wedge\tilde\tau_{\eps}}(\phi(d))-\tilde W_{t\wedge\tilde\tau_{\eps}}}\right)\right)^{-1/2}ds}.\]
For the denominator, if we change the variable by setting $s=\sin^2\theta$, we have
\begin{align*}
&\int_{0}^1\left(s(s-1)\left(s-\frac{\tilde g_{t\wedge\tilde\tau_{\eps}}(\phi(a))-\tilde W_{t\wedge\tilde\tau_{\eps}}}{\tilde g_{t\wedge\tilde\tau_{\eps}}(\phi(d))-\tilde W_{t\wedge\tilde\tau_{\eps}}}\right)\right)^{-1/2}ds\\ =&2\sqrt{\frac{\tilde g_{t\wedge\tilde\tau_{\eps}}(\phi(d))-\tilde W_{t\wedge\tilde\tau_{\eps}}}{\tilde g_{t\wedge\tilde\tau_{\eps}}(\phi(a))-\tilde W_{t\wedge\tilde\tau_{\eps}}}}\times\int_{0}^{\pi/2}\frac{d\theta}{\sqrt{1-\frac{\tilde g_{t\wedge\tilde\tau_{\eps}}(\phi(d))-\tilde W_{t\wedge\tilde\tau_{\eps}}}{\tilde g_{t\wedge\tilde\tau_{\eps}}(\phi(a))-\tilde W_{t\wedge\tilde\tau_{\eps}}}\sin^2\theta}}\\
=&2\sqrt{\frac{\tilde g_{t\wedge\tilde\tau_{\eps}}(\phi(d))-\tilde W_{t\wedge\tilde\tau_{\eps}}}{\tilde g_{t\wedge\tilde\tau_{\eps}}(\phi(a))-\tilde W_{t\wedge\tilde\tau_{\eps}}}}\times{}_2F_1\left(1/2,1/2,1;\frac{\tilde g_{t\wedge\tilde\tau_{\eps}}(\phi(d))-\tilde W_{t\wedge\tilde\tau_{\eps}}}{\tilde g_{t\wedge\tilde\tau_{\eps}}(\phi(a))-\tilde W_{t\wedge\tilde\tau_{\eps}}}\right),
\end{align*}
where the last equality is obtained from the standard relation between elliptic function and hypergeometric function. In particular, as $z \to \infty$, 
$$
f'_{t \wedge \tilde \tau_\eps} (z) \sim \sqrt{{\tilde g_{t\wedge\tilde\tau_{\eps}}(\phi(a))-\tilde W_{t\wedge\tilde\tau_{\eps}}}}
\times z^{-3/2} \times
\frac1{{}_2F_1\left(1/2,1/2,1;\frac{\tilde g_{t\wedge\tilde\tau_{\eps}}(\phi(d))-\tilde W_{t\wedge\tilde\tau_{\eps}}}{\tilde g_{t\wedge\tilde\tau_{\eps}}(\phi(a))-\tilde W_{t\wedge\tilde\tau_{\eps}}}\right)}.
$$
Furthermore, since
\[g_{t\wedge\tilde\tau_{\eps}}(z)=z+o(1) \quad\text{as } z\to\infty,\]
we obtain by Lemma~\ref{lem::ratio},
\begin{align}
&\lim_{\delta\to 0} \frac{h_{t\wedge\tilde\tau^\delta_{\eps}}(b^+_\delta)\times \frac{|SF_2^\delta(\Omega_\delta\setminus\tilde\eta_R^\delta[0,t\wedge\tilde\tau^\delta_{\eps}])|}{|ST^\delta(\Omega_\delta\setminus\tilde\eta_R^\delta[0,t\wedge\tilde\tau^\delta_{\eps}])|}}{h_{(c_\delta d_\delta)}(b_\delta^+)\times\frac{|SF_2^\delta(\Omega_\delta)|}{|ST^\delta(\Omega_\delta)|}}\nonumber \\
=&\sqrt{\frac{\tilde g_{t\wedge\tilde\tau_{\eps}}(\phi(a))-\tilde W_{t\wedge\tilde\tau_{\eps}}}{\phi(a)-\tilde W_0}}\times\frac{{}_2F_1\left(1/2,1/2,1;\frac{\phi(d)-\tilde W_0}{\phi(a)-\tilde W_0}\right)}{{}_2F_1\left(1/2,1/2,1;\frac{\tilde g_{t\wedge\tilde\tau_{\eps}}(\phi(d))-\tilde W_{t\wedge\tilde\tau_{\eps}}}{\tilde g_{t\wedge\tilde\tau_{\eps}}(\phi(a))-\tilde W_{t\wedge\tilde\tau_{\eps}}}\right)}.\label{Eq:DF}
\end{align}
We denote the limit by $M_{t\wedge\tilde\tau_{\eps}}$. Note that 
\begin{equation}\label{Eq:boundratio}
\frac{h_{t\wedge\tilde\tau_{b_\delta,\eps},(c_\delta d_\delta)}(b^+_\delta)}{h_{(c_\delta d_\delta)}(b_\delta^+)}\le C
\end{equation}
where $C = C(\eps)$ depends only on $\eps>0$. Indeed, if the walk $\cR$ reaches $\tilde \eta^\delta ([0, \tilde \tau_\eps^\delta]) $ before touching $(a_\delta b_\delta)$, then it can hit $(d_\delta, c_\delta)$ with uniformly positive probability (depending only on $\eps$). 

Moreover, we claim that the ratio
 $$\frac{|SF_2^\delta(\Omega_\delta\setminus\tilde\eta_R^\delta[0,t\wedge\tilde\tau^\delta_{\eps}])|}{|ST^\delta(\Omega_\delta\setminus\tilde\eta_R^\delta[0,t\wedge\tilde\tau^\delta_{\eps}])|}$$
 is uniformly bounded by a constant which may depend on $\eps$ but not on $\tilde \eta$. Indeed, let us argue by contradiction. Suppose this was not the case. Then we would find a sequence $\delta_n \to 0$ and a a sequences of paths $\eta^{n}$ such that the ratio above tends to $\infty$ along the sequence $\delta_n$. But arguing by compactness, we can always find a furthersubsequence (call it $\delta_n$ again with an abuse of notations) such that $\Omega\setminus \eta^n[0, \tau^{\delta_n}_\eps]$ converges in the Carath\'eodory sense. But, as already mentioned (and see Lemma 4.6 in \cite{HanLiuWuUST} for a proof), such a convergence is sufficient to guarantee the convergence of the above ratio to conformal modulus of the limiting domain, which gives the desired contradiction.

This implies that there exists a constant $C'$ depending only on $\eps$ and $\Omega$, such that for every $\delta>0$,
\[
 \frac{|SF_2^\delta(\Omega_\delta\setminus\tilde\eta_R^\delta[0,t\wedge\tilde\tau^\delta_{\eps}])|}{|ST^\delta(\Omega_\delta\setminus\tilde\eta_R^\delta[0,t\wedge\tilde\tau^\delta_{\eps}])|}\le C'.
 \]
Therefore, the martingale $M_{t\wedge \tilde \tau_\eps^\delta}$ is uniformly bounded by a constant (which may depend on $\eps$ but not on anything else).  
Thus, if $r$ is a continuous function on the curves space $(\LP,d)$, by dominated convergence theorem, we have
\[\lim_{\delta\to 0}\E[r(\eta_R^\delta)[0,t\wedge\tau_{b_\delta,\eps}]]=\E[r(\tilde\eta_R[0,t\wedge\tilde\tau_\eps])M_{t\wedge\tilde\tau_\eps}].\]
This complete the proof of the convergence of $\{\eta_R^\delta[0,\tau_{\eps}^\delta]\}_{\delta>0}$ in law. We denote by $\eta_R[0, \tau_\eps]$ the limit. 

Now, we derive the explicit formula of the law of $\phi(\eta_R[0,\tau_{\eps}])$. Since $(\LP,d)$ is a metric space, for any open set $O$, the indicator function $\one_O$ can be approximated by bounded continuous functions $\{r_n\}_{n\ge 0}$ such that $r_{n}\nearrow \one_O$. Thus, we have
\[
\E[\one_O(\eta_R[0,t\wedge\tau_\eps])]=\E\left[\one_O(\tilde\eta_R[0,t\wedge\tilde\tau_
\eps])M_{t\wedge\tilde\tau_\eps}\right].
\]
This implies that the law of $(\eta_R[0,t\wedge\tau_\eps]:t\ge 0)$ equals the law of $(\tilde\eta_R[0,t\wedge\tilde\tau_
\eps]:t\ge 0)$ weighted by the martingale $(M_{t\wedge\tilde\tau_\eps}:t\ge 0)$.
If we denote by $W$ the driving function of $\phi(\eta_R[0,\tau_{\eps}])$ and denote by $(g_{t\wedge\tau_{\eps}}:t\ge 0)$ the corresponding conformal maps, by Girsanov's theorem and \eqref{Eq:DF}, we have
\begin{equation}\label{eqn::counterSLE}
dW_{t\wedge\tilde\tau_\eps}=d\tilde W_{t\wedge\tilde\tau_\eps}+\frac{d \langle M, \tilde W\rangle_{t\wedge\tilde\tau_\eps}}{M_{t\wedge\tilde\tau_\eps}}=\sqrt{8}dB_{t\wedge\tilde\tau_\eps}+\left(\frac{2}{W_{t\wedge\tilde\tau_\eps}-g_{t\wedge\tilde\tau_\eps}(\phi(d))}+\frac{2}{W_{t\wedge\tilde\tau_\eps}-g_{t\wedge\tilde\tau_\eps}(\phi(a))}\right)d{t\wedge\tilde\tau_\eps}.
\end{equation}
This completes the proof.
\end{proof}
\begin{remark}\label{rem::hyper}
\begin{itemize}
\item
By a similar proof, we have that for every $\eps>0$, the discrete Peano curve $\eta_L^\delta$ stopped at the hitting time of the $\eps$-neighbourhood of $(b_\delta d_\delta)$ converges to chordal $\SLE_8(2,-2)$ from $d$ to $a$ with force points $b,c$ stopped at the hitting time of the $\eps$-neighbourhood of $(ab)$.
\item
In the discrete setting, $\gamma_{\delta}$ is the left boundary of $\eta_L^\delta$ and is also the right boundary of $\eta_R^\delta$. The convergence of $\{\eta_L^\delta\}_{\delta>0}$ and $\{\eta_R^\delta\}_{\delta>0}$ as well as that of $\gamma_\delta$ in Proposition \ref{prop::middle} imply that the left boundary of $\SLE_8(2,-2)$ and the right boundary of $\SLE_8(2,2)$ are both given by a chordal $\SLE_2(-1,-1;-1,-1)$ starting from the appropriate boundary point. This identity is equivalent to the coupling between the flow line and counterflow lines given in~\cite{MillerSheffieldIG1}. In Section~\ref{sec::Convergence of winding}, we will further show the convergence of the discrete winding field (i.e., dimer height function) to the corresponding Gaussian free field under which flow and counterflow lines are coupled. 
\item
Recall that the law of the $\ust$ we consider in this paper is equivalent to the law of $\ust$ with alternating boundary conditions (as 
described e.g. below Theorem \ref{T:IG}), conditional on the discrete Peano curve $\eta_R^\delta$ hitting $(a_\delta b_\delta)$ at  $b_\delta$. By almost the same argument as above, it can be shown that if we consider the $\ust$ with alternating boundary conditions, if we now condition on the event that the discrete Peano curve $\eta_R^\delta$ hits $(a_\delta b_\delta)$ at $z_\delta$ and assume $z_\delta$ converges to $z\in(ab)$, then the conditional law of $\eta_R^\delta$ converges to $\SLE_8(2,2,-4)$ with marked points $d$, $a$ and $z$.  This implies that given the hitting point of $\hSLE_8$ at $(ab)$, which we denote by $z$, the conditional law will be $\SLE_8(2,2;-4)$ with marked points $d$, $a$ and $z$. From this Theorem \ref{T:condSLE8} follows easily.
\item
It is natural to ask whether the identities described above hold more generally for chordal $\hSLE_\kappa(\nu)$ process for $\kappa\in (4,8)$ and $\nu>\frac{\kappa}{2}-4$. In fact, in the forthcoming paper \cite{Liu_reverse}, the following result will be shown. Suppose $\eta$ has the same law as a chordal $\hSLE_\kappa(\nu)$ curve from $b$ to $c$ with marked points $d$ and $a$, given the hitting point of $\eta$ at $(ab)$ (which we denote by $z$), the conditional law of $\eta$ equals to the law of $\SLE_\kappa(2,\kappa-6,-4)$ with marked points $d,a$ and $z$. As a consequence, using tools from imaginary geometry, this decomposition implies the time-reversibility of $\hSLE_\kappa(\nu)$ for $\kappa\in(4,8)$ and $\nu>-2$. 
\end{itemize}
\end{remark}

\subsection{Convergence of the discrete trees}
\label{subsec::coupling}

In this section, we will prove Theorem~\ref{thm::treeconv}. Recall that at this stage we have proved convergence of the interface $\gamma_\delta$ towards an SLE$_2$ type chordal curve, and we have proved convergence of the Peano curves on either side of it, but only up until they hit $(ab)$. This is not completely sufficient to get convergence of the discrete trees in the Schramm sense, because in order to describe all the branches of the tree we would need to know the convergence of the full Peano curves not just stopped when they hit $(ab)$. On the other hand, it is clear that, at the discrete level, given $\gamma_\delta$, the two components describing the rest of the tree can be viewed as a uniform spanning trees in their respective domain with Dobrushin boundary conditions, with the arcs $(da)$ and $(bc)$ reflecting, and every other part of the boundary being wired. This is very close to the setup of the original paper of Lawler, Schramm and Werner \cite{LawlerSchrammWernerLERWUST}, but there is a technical subtlety: namely, in order to get a strong enough form of convergence (say uniform) the authors of \cite{LawlerSchrammWernerLERWUST} require the boundary to be smooth, which obviously is not the case here (convergence in the sense of driving function and hence Carath\'eodory convergence does not require smoothness in their paper, unfortunately this is not sufficient here). 

However, in order to upgrade the form of convergence from driving function convergence to strong (or uniform up to reparametrisation) convergence, it suffices to prove tightness with respect to this strong topology, as then the limit law is uniquely identified by the convergence of the driving function (which, we recall, holds without assumption on the domain). But tightness in this sense is in fact not hard to show: it follows along the same line as already argued in Lemma \ref{lem::tightness}, and in fact is considerably simpler since there is no need to condition on an event of asymptotically degenerate probability. 
We leave the details to the reader; and obtain the convergence of the branches of the tree $\cT^\delta$ to the brances of $\cT$ in the sense of finite-dimensional distribution of the branches. To conclude the proof of Theorem~\ref{thm::treeconv}, it remains to apply the following well known compactness argument, due to Schramm \cite[Theorem 10.2]{SchrammFirstSLE} (this is sometimes known as Schramm's lemma). Its proof is unchanged, and so we can simply quote the result here:

\begin{lemma}\label{lem::subtree}
For every $\epsilon>0$, there exist $\delta_0>0$ and $\epsilon_0>0$, such that the following holds. For any set $\{z_1,\ldots,z_{n}\}$ with the property that every $z\in \Omega$, there exists $z_i$ such that there exists a curve connecting $z$ and $z_i$ whose length is less than $\eps_0$. We view $(ab)$ and $(cd)$ as two points and define $V:=\{z_1,\ldots,z_n,(ab),(cd)\}$. Denote by $V_\delta$ the discrete approximation on $\Omega_\delta$. Denote by $\LT_\delta(V)$ the minimal subtree containing $V_\delta$. Then,we have
\[\PP[d_{\LH}(\LT_\delta,\LT_\delta(V))>\delta_0]\le\eps.\] 
\end{lemma}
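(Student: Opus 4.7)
The plan is to combine Wilson's algorithm with the discrete Beurling estimate of Lemma~\ref{lem::Beur}. The first step is a reduction to branch diameters: for each vertex $x \in \Omega_\delta$, let $\beta_x$ denote the branch of $\cT_\delta$ from $x$ to the first vertex $x^* \in \cT_\delta(V)$ appearing on its tree-path toward the root. I will show that
\[
d_{\LH}(\cT_\delta, \cT_\delta(V)) \leq \sup_{x \in \Omega_\delta} \diam(\beta_x).
\]
Indeed, for any triple $(x, y, \gamma_{x,y}) \in \cT_\delta$, the projected triple $(x^*, y^*, \gamma_{x^*, y^*}) \in \cT_\delta(V)$ satisfies $|x - x^*| \le \diam(\beta_x)$ and $|y - y^*| \le \diam(\beta_y)$, and since $\gamma_{x,y}$ differs from $\gamma_{x^*, y^*}$ only by concatenation with $\beta_x$ and the reverse of $\beta_y$, the Hausdorff distance between these two paths is also bounded by $\max(\diam(\beta_x), \diam(\beta_y))$. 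The opposite inclusion of triples (from $\cT_\delta(V)$ into $\cT_\delta$) is trivial.

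Next, by Wilson's algorithm one can sample $\cT_\delta$ in two stages: first build $\cT_\delta(V)$ by successive LERWs started from $z_1, z_2, \ldots$ and rooted at $(ab)\cup(cd)$, and then, conditional on $\cT_\delta(V)$, sample each $\beta_x$ as a LERW from $x$ stopped on first hitting $\cT_\delta(V) \cup (ab) \cup (cd)$. For any given $x \in \Omega_\delta$, the $\epsilon_0$-density of $V$ provides some $z_i \in V \subset \cT_\delta(V)$ with $|x - z_i| \le \epsilon_0$, and $\cT_\delta(V)$ is a connected subgraph of macroscopic diameter (it contains the whole path from $(ab)$ to $(cd)$, i.e.\ the interface $\gamma_\delta$). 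Applying Lemma~\ref{lem::Beur} with $A = \cT_\delta(V)$, $d \le \epsilon_0$ and $r = \delta_0/C$ then yields the pointwise estimate
\[
\PP[\diam(\beta_x) > \delta_0] \leq C (\epsilon_0/\delta_0)^{\alpha}
\]
for constants $C, \alpha > 0$ independent of $\delta$, of $x$, and of the specific choice of $V$.

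The hard part will be to upgrade this pointwise bound to a supremum over all $x \in \Omega_\delta$, since a naive union bound would cost a factor $|\Omega_\delta| \asymp \delta^{-2}$ that diverges as $\delta \to 0$. My plan is a two-scale net argument. Fix a deterministic net $N \subset \Omega$ of spacing $\eta$, where $\eta \ll \delta_0$ is chosen independent of $\delta$ so that $|N| \asymp \eta^{-2}$ is also $\delta$-independent. Choosing $\epsilon_0 \ll \eta$, the Beurling bound above applied to each $p \in N$, followed by a (finite) union bound, gives that with probability at least $1 - \epsilon/2$ every $\beta_p$ has diameter at most $\eta$. Then for a non-net vertex $x$ with nearest net point $p$, I run Wilson's algorithm in the order $V, N, \{x\}$: the branch $\beta_x$ is a LERW from $x$ stopped on the enlarged connected set $\cT_\delta(V) \cup \beta_p$, which contains $p$ at Euclidean distance $\le \eta/2$ from $x$ and still has macroscopic diameter. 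A second application of Lemma~\ref{lem::Beur} then controls the diameter of $\beta_x$ in terms of the small ratio $\eta/\delta_0$. The most delicate point, which I expect to be the main technical obstacle, is to make this second bound simultaneous over all non-net $x$ without reintroducing a $\delta$-dependent union bound; I plan to handle this either by iterating over finer scales of nets (a chaining argument), or by exploiting the merging property of LERW branches in Wilson's algorithm: once the branches from the net points are short, each $\beta_x$ must merge with some $\beta_p$ before traveling far, because the merging point is controlled by a further Beurling estimate with the same small ratio.
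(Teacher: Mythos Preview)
The paper does not give its own proof of this lemma; it simply invokes Schramm \cite[Theorem~10.2]{SchrammFirstSLE}, stating that ``its proof is unchanged'' in the present setting. So the comparison is against Schramm's original argument.

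Your reduction $d_{\LH}(\cT_\delta,\cT_\delta(V))\le\sup_x\diam(\beta_x)$ and your pointwise Beurling bound on each $\beta_x$ are correct and match the opening moves of Schramm's proof. One small wrinkle you glossed over: $\cT_\delta$ here is the UST \emph{conditioned} on the event $E_\delta$, so the sentence ``sample $\cT_\delta(V)$ by successive LERWs'' is not literally right. This is harmless, however: since $(ab),(cd)\in V$ the interface $\gamma_\delta$ lies in $\cT_\delta(V)$, and conditionally on $\cT_\delta(V)$ the remaining branches $\beta_x$ are genuine unconditional LERWs by the domain Markov property, so your pointwise bound survives intact.

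The real gap is exactly where you flag it, and neither proposed fix closes it. Working out the chaining over nets $N_k$ of spacing $2^{-k}$: at scale $k$ the union bound costs $|N_k|\asymp 2^{2k}$ against a Beurling gain of only $(2^{-(k-1)}/R_k)^\alpha$, and making both $\sum_k R_k<\delta_0$ and the total probability small forces $\alpha>2$. On $\Z^2$ the Beurling exponent is $\tfrac12$, and Lemma~\ref{lem::Beur} gives no lower bound on $\alpha$ in the general setting of Section~\ref{SS:gen}. Your ``merging'' alternative, once unpacked (absorb $N$ into the target, then bound $\beta_x$ to $\cT_\delta(V\cup N)$, then iterate), is the first step of the same chaining and inherits the same defect at the next scale. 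Schramm's actual argument in \cite[\S\S10--11]{SchrammFirstSLE} avoids any union bound over the $\delta^{-2}$ vertices of $\Omega_\delta$; the mechanism is an estimate on the number of disjoint tree crossings of a \emph{fixed} annulus (obtained by running Wilson's algorithm and noting that each successive branch has a uniformly positive chance of merging with an earlier one), which then needs to be combined only with a union bound over $O(\delta_0^{-2})$ macroscopic annuli --- a count independent of $\delta$. You should consult Schramm's original for this step.
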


This concludes the proof of Theorem \ref{thm::treeconv}. 

\section{General case}
\label{sec::Convergence of branches}

In this section, we will prove of the convergence of the discrete trees considered in Lemma~\ref{lem::bij} and give the coupling of the limiting tree with a GFF for $n\ge 3$ in the sense of Theorem \ref{thm::flowcoupling}. Fix a simply connected domain 
\[(\Omega;x_1,\ldots,x_{2n};z_1,\ldots,z_{n-1}),\]
 where $x_1,\ldots,x_{2n}$ are marked boundary points on $\partial \Omega$ (understood as prime ends) in counterclockwise order and $z_i$ equals $x_{2i}$ or $x_{2i+1}$ for $1\le i\le n-1$. (It may be useful to keep in mind the combinatorial setup of Section \ref{subsec::Temperleyan}.) We assume that $\partial \Omega$ is locally connected and there exists a simply connected domain $\tilde \Omega$ whose boundary is $C^1$ and simple such that $\Omega\subset\tilde \Omega$ and $\partial \Omega\cap\partial\tilde \Omega$ equals $\cup_{i=1}^{n}(x_{2i-1}x_{2i})$. Suppose a sequence of discrete simply connected domains $(\Omega_\delta;x_1^\delta,\ldots,x^\delta_{2n};z^\delta_1,\ldots,z^\delta_{n-1})$ converges to $(\Omega;x_1,\ldots,x_{2n};z_1,\ldots,z_{n-1})$ in the following sense: there exists $C>0$ such that for $1\le i\le n$, we have
\begin{equation}\label{eqn::converge1}
d((x^\delta_{2i-1}x^\delta_{2i}),(x_{2i-1}x_{2i}))\le C\delta,\quad d((x^\delta_{2i}x^\delta_{2i+1}),(x_{2i}x_{2i+1}))\to 0;
\end{equation}
and for $1\le i\le n-1$, we have 
\begin{equation}\label{eqn::converge2}
d(z_i^\delta,z_i)\to 0.
\end{equation}
Recall the spanning trees appearing in Temperley's bijection are described by a set $\LU(\overline\Omega^*_0)$, see Lemma \ref{lem::bij}.
 

It turns out dealing with $\LU(\overline\Omega^*_0)$ is hard because, although it can still be seen as an asymptotically degenerate conditioning of the uniform spanning tree with alternating boundary conditions (which is well understood by \cite{HanLiuWuUST}), this conditioning is still to complicated to be described directly. Instead, we will describe the complement of the event which serves to condition the tree with alternating boundary conditions, using an inclusion-exclusion description.  First, let $\tilde \cT_\delta$ be the UST on $\Omega_\delta$ with alternating boundary conditions (where  $c_i = (x_{2i}x_{2i+1})$ is wired for each $1\le i \le n$). Let 
$$\tilde E = \tilde E(z_{1}^\delta z_1^{\delta,+},\ldots,z_{n-1}^{\delta}z_{n-1}^{\delta,+})$$ be the event that the first step of the branch in $\tilde \cT_\delta$ connecting $(x_{2i}^{\delta}x_{2i+1}^{\delta})$ to $(x_{2n}^{\delta}x_{1}^{\delta})$ is through the edge $z_{i}^{\delta}z_{i}^{\delta,+}$. Thus $\LU(\overline\Omega^*_0)$ consists of the restriction of alternating spanning trees $\tilde \cT_\delta$ to this event  $\tilde E(z_{1}^\delta z_1^{\delta,+},\ldots,z_{n-1}^{\delta}z_{n-1}^{\delta,+})$.

Now, we consider a graph $G^*$ with the boundary condition that $\cup_{i=1}^{n}(x^\delta_{2i-1}x^\delta_{2i})$ is wired to be a single vertex (thus not only $c_i = (x_{2i} x_{2i+1})$ is wired, but these arcs are also wired together). Fix $1\le s\le n-1$, 
and fix a subset of indices $1\le i_1<  \ldots <  i_s\le n-1$. Define the set of spanning trees 
\begin{equation}
\label{eq:A}
\LA_{i_1,\ldots,i_s}:=\{\LT^*_\delta \text{ on } G^*, z_{i_k}^{\delta,+}\text{ connects to }(x_{2i_{k+1}}^\delta x_{2i_{k+1}+1}^\delta)\text{ in }\Omega_\delta\cap\LT^*_\delta,\text{ for }1\le k\le s\},
\end{equation}
where here we used the convention that  $i_{s+1}=i_1$ in this description.
 Thus an equivalent description of the event $\LA_{i_1,\ldots,i_s}$ is that the edges $(z^\delta_{i_1} z_{i_1}^{\delta,+}), \ldots, (z^\delta_{i_s} z_{i_s}^{\delta,+})$ are all closed (in the sense that they are not part of the tree $\cT^*_\delta$), but adding them to the tree would create a loop connecting $(c_{i_1}, \ldots, c_{i_s})$ in $\Omega_\delta$.

Finally, define the event 
\begin{equation}\label{eqn::eventdef}
E^* = E^*(\Omega_\delta; z_1^{\delta,+},\ldots,z_{n-1}^{\delta,+}):=\left(\cup_{s=1}^{n-1}\cup_{i_1,\ldots,i_s}\LA_{i_1,\ldots,i_s}\right)^c.
\end{equation}
Note that there is a bijection from $\tilde E$ to $E^*$: 
for every $\tilde \LT_\delta  \in \tilde E(z_{1}^\delta z_1^{\delta,+},\ldots,z_{n-1}^{\delta}z_{n-1}^{\delta,+})$, we delete the edges $\{z_1^\delta z_1^{\delta,+},\ldots,z_{n-1}^\delta z_{n-1}^{\delta,+}\}$ and connect $\cup_{i=1}^n(x_{2i}^\delta x_{2i+1}^\delta)$ outside of $\Omega_\delta$ through the wired vertex. The inclusion-exclusion principle shows this is a bijection.

In this section, we will therefore focus on consider the convergence of the uniform spanning tree $\cT^*_\delta$ on $G^*$ after conditioning it on the event $E^*$ above. 
%
We denote by $\gamma_i^\delta$ the path in $\cT^*_\delta$ starting from $z_i^{\delta,+}$ and ending at the first time it hits $\cup_{j\neq i}(x^\delta_{2j}x^\delta_{2j+1})$. 
The goal of this section is to show the following theorem.
\begin{theorem}\label{thm::gentreeconv}
The discrete tree $\LT^*_\delta$ converges as $\delta\to 0$ under the metric $d_{\LH}$. Denote by $\LT^*$ the limit. 
The continuous tree $\LT^*$ can be constructed as follows. Fix any dense set $\{z_i\}_{i\ge 1}$ in $\Omega$. We first sample the coupling of $\{\gamma_{1},\ldots,\gamma_{n-1}\}$ and the GFF $h_{\Omega}+u_{\Omega}(\cdot;x_1,\ldots,x_{2n};z_1,\ldots,z_{n-1})$ as in Theorem~\ref{thm::flowcoupling}. Recall that we denote by $\Omega_1,\ldots,\Omega_{n}$ the connected components of $D\setminus\cup_{i=1}^{n-1}\gamma_i$. For each $i \ge 1$, we denote by $j(i)$ the unique index $1\le j(i) \le n$ such that $z_i\in \Omega_{j(i)}$. Then, we sample the flow line starting from interior point $\gamma_{z_i}$ in $\Omega_{j(i)}$ similarly as in  Theorem~\ref{thm::treeconv}. Then $\cT^*$ is the closure of $\cup_{i\ge 1} \gamma_{z_i}$.
\end{theorem}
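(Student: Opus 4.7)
The plan is to follow the same overall strategy as the proof of Theorem~\ref{thm::treeconv}, but now with $n-1$ interfaces rather than one, and with a substantially more intricate conditioning event $E^*$ defined through inclusion--exclusion.

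\textbf{Step 1: joint convergence of the interfaces.} First, I would prove that the branches $(\gamma_1^\delta, \ldots, \gamma_{n-1}^\delta)$ converge jointly in the metric~\eqref{eqn::curves_metric} to the flow line family $(\gamma_1, \ldots, \gamma_{n-1})$ given by the coupling of Theorem~\ref{thm::flowcoupling}. The key is to argue one interface at a time: after extracting a subsequential limit of $\gamma_1^\delta$, I would adapt the argument of Lemma~\ref{lem::simple}, Lemma~\ref{lem::tightness} and Proposition~\ref{prop::middle} to get tightness and identify the driving function. To replace the single Radon--Nikodym factor $h_{(c_\delta d_\delta)}(b_\delta^+)$ used in Section~\ref{subsec::middle} (cf. Lemma~\ref{lem::ratio} and Lemma~\ref{lem::disRN}), I would use an inclusion--exclusion expansion based on \eqref{eqn::eventdef}: writing
\[
\PP[E^*] = \sum_{s=0}^{n-1} (-1)^s \sum_{i_1 < \cdots < i_s} \PP[\LA_{i_1,\ldots,i_s}],
\]
and identifying each term via Wilson's algorithm with a product of hitting probabilities of the reflecting random walk $\LR_\delta$. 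Lemmas~\ref{lem::conv_circ}--\ref{lem::ratio} then provide the scaling limits of each such ratio as a harmonic function on $\Omega$ with mixed Dirichlet/Neumann boundary conditions, giving a martingale weighting analogous to~\eqref{Eq:DF}. Using Girsanov, this identifies the limiting driving function of $\gamma_1^\delta$ as exactly the SDE~\eqref{eqn::drving}. This is the main obstacle: verifying that the delicate signed sum of hitting terms does not degenerate and produces precisely the drift $\sum_i -1/(W-g(x_i)) + \sum_i 2/(W-g(z_i))$ predicted by imaginary geometry.

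\textbf{Step 2: iterative identification.} Given $\gamma_1$, the flow line coupling of Theorem~\ref{thm::flowcoupling} tells us that the remaining flow lines split into two independent families, one in $\Omega_L$ and one in $\Omega_R$. On the discrete side, once we condition on $\gamma_1^\delta$, the restriction of $\cT^*_\delta$ to each complementary component is distributed (up to a new inclusion--exclusion conditioning with fewer marked points) as a UST of the same type, with reduced parameter. An induction on $n$, with base case $n=2$ provided by Theorem~\ref{thm::treeconv}, gives joint convergence of the branches $\gamma_i^\delta$ together with the final interface structure. Since the discrete domains obtained after cutting along $\gamma_1^\delta$ inherit the hypotheses on the reflecting/wired arcs only in a Carath\'eodory sense near the tip, I would verify that the ratio bounds \eqref{Eq:boundratio} and the uniform crossing/Beurling estimates from Lemma~\ref{lem::Beur} still apply in the random subdomain -- this is routine given assumption~6.

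\textbf{Step 3: passing from the interfaces to the full tree.} Once the full $(n-1)$-tuple $(\gamma_1^\delta, \ldots, \gamma_{n-1}^\delta)$ and its flow line limit are identified, branches $\gamma_{z_\delta}$ emanating from interior points $z \in \Omega_j$ can be treated exactly as in Section~\ref{subsec::coupling}: by the domain Markov property they are branches of a UST in the random subdomain $\Omega_{\delta,j}$ with Dobrushin-type boundary data, and by the argument sketched below Theorem~\ref{thm::treeconv} (tightness in the strong topology plus identification of the driving function by the martingale observable $P_H$) they converge to the interior flow lines $\gamma_{z}$ of Theorem~\ref{thm::intcoupling} applied inside each $\Omega_j$. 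Finally, Lemma~\ref{lem::subtree} reduces convergence of $\cT^*_\delta$ in the Hausdorff-type metric $d_\cH$ to convergence of finitely many branches, which has just been established. This completes the construction of $\cT^*$ as the closure of $\cup_{i\ge 1}\gamma_{z_i}$, matching the statement of the theorem.
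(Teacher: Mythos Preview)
Your overall architecture matches the paper's: reduce to convergence of the boundary branches $(\gamma_1^\delta,\ldots,\gamma_{n-1}^\delta)$, identify their limit with the flow lines of Theorem~\ref{thm::flowcoupling}, then handle interior branches and invoke Schramm's lemma exactly as in Section~\ref{subsec::coupling}. Steps~2 and~3 are essentially what the paper does. However, Step~1 glosses over the two places where genuinely new work is required.

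\textbf{Tightness.} Simply ``adapting'' Lemma~\ref{lem::simple} and Lemma~\ref{lem::tightness} is not enough. Those arguments rely on controlling a ratio like $h_A(b_\delta^+)/h_{(c_\delta d_\delta)}(b_\delta^+)$, which in the $n=2$ case is handled by Lemma~\ref{lem::ratio}. For general $n$ the conditioning event $E^*$ involves all $n-1$ branches simultaneously, and you first need a two-sided estimate $\PP[E^*]\asymp \prod_{i=1}^{n-1} h_r^\delta(z_i^{\delta,+})$. The lower bound is easy (force each branch into a tube), but the upper bound is delicate: when you run Wilson's algorithm, the walk from $z_i^{\delta,+}$ may enter $B(z_j^{\delta,+},r)$ before escaping its own ball, so the branches are not independent. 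The paper resolves this by a combinatorial case analysis that produces, for each realisation, an \emph{ordering} of the branches such that each one is forced to escape its own ball before merging with the previously revealed ones. Without this a priori bound you cannot absorb the conditioning in the three-point estimate, and tightness fails.

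\textbf{Identification of the drift.} Your inclusion--exclusion formula is not the right one: the complement of $E^*$ is a union of events $\LA_{i_1,\ldots,i_s}$ indexed by \emph{cycles}, not by subsets, so the expansion runs over partitions of subsets of $\{1,\ldots,n-1\}$ into cycles, i.e.\ over permutations. Recognising this, and using that the branches on $\LA_\sigma$ are disjoint, one applies Fomin's determinantal identity to obtain $\PP[E^*]=\det\bigl(I-(h_{\Omega_\delta}(z_i^{\delta,+};(x_{2j}^\delta x_{2j+1}^\delta)))_{i,j}\bigr)$. The paper then compares $\gamma_1^\delta$ not directly to an unconditioned curve but to an \emph{intermediate} curve $\tilde\gamma_1^\delta$ (conditioned only on $z_1^{\delta,+}$ connecting to the other arcs), whose limit is already known from the $n=2$ arguments. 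The Radon--Nikodym derivative between the two is a ratio of Fomin determinants; after taking scaling limits via the Schwarz--Christoffel maps~\eqref{eqn::conformalexpression}, the determinant factorises as a Vandermonde-type product $\prod_{i\ge 2}(W_t-U_{i,t})$ times smooth (in $t$) factors. This factorisation is what makes the Girsanov drift come out as exactly $\sum_i \frac{-1}{W_t-g_t(x_i)}+\sum_{i\ge 2}\frac{2}{W_t-g_t(z_i)}$. Your sketch flags this as ``the main obstacle'' but does not supply the mechanism; the Fomin determinant and its polynomial factorisation are precisely the missing idea.
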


By the same arguments as in Section~\ref{subsec::coupling}, it suffices to show the convergence of the boundary branches:
\begin{theorem}\label{thm::genflow}
The discrete curves $\{\gamma_1^\delta,\ldots,\gamma_{n-1}^\delta\}$ converge as $\delta \to 0$ to the flow lines $\{\gamma_1^I,\ldots,\gamma_{n-1}^I\}$ defined in Theorem~\ref{thm::flowcoupling}. 
\end{theorem}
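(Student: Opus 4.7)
The plan is to prove Theorem \ref{thm::genflow} by induction on $n$, with the case $n=2$ already handled in Section \ref{sec::GFFtree}. The inductive step uses the domain Markov property of the UST together with a Girsanov-type computation to describe the effect of the conditioning on $E^*$.

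First, I would establish tightness of $\gamma_1^\delta$ in the Schramm topology. The arguments of Lemma \ref{lem::simple} and Lemma \ref{lem::tightness} generalize provided one has an analog of Lemma \ref{lem::ratio}: for any disc $B$ intersecting $\partial\Omega$ or any sub-arc $(\tilde c\tilde d)$ of a wired piece, the ratio of the corresponding discrete harmonic measures evaluated at $z_1^{\delta,+}$ converges to its continuum analog. This follows from the discrete Beurling estimate (Lemma \ref{lem::Beur}) together with the uniform harmonic convergence arguments of Lemma \ref{lem::conv_circ} and Lemma \ref{lem::conv_subarc}, applied now to the multiply-marked domain. A crucial additional input is that, by inclusion--exclusion on the definition \eqref{eqn::eventdef}, the conditional probability $\PP[E^* \mid \gamma_1^\delta[0,t]]/\PP[E^*]$ remains uniformly bounded in $t$ and $\delta$ as long as $\gamma_1^\delta[0,t]$ avoids the other $z_j$'s, so that it does not destabilize the tightness estimates.

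To identify the limit of $\gamma_1^\delta$, I would parallel Proposition \ref{prop::middle} and Proposition \ref{prop::hsle}. Under the \emph{unconditioned} UST on $G^*$, Wilson's algorithm describes $\gamma_1^\delta$ as the loop erasure of a random walk started at $z_1^{\delta,+}$ with reflection on $\cup_i(x_{2i-1}^\delta x_{2i}^\delta)$ and absorption on $\cup_i(x_{2i}^\delta x_{2i+1}^\delta)$. A standard martingale observable computation (following Proposition \ref{prop::middle} and \cite{LawlerSchrammWernerLERWUST, HanLiuWuUST}) identifies its scaling limit as the chordal $\SLE_2(-1,\ldots,-1)$ in $\Omega$ from $z_1$ with force points at each $x_i$, arising from the boundary data alternating at these points. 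Next, I would compute the Radon--Nikodym derivative between the conditional law (given $E^*$) and the unconditional law using an analog of Lemma \ref{lem::disRN} combined with the inclusion--exclusion definition of $E^*$. In the scaling limit, each factor becomes a ratio of continuum harmonic measures and conformal moduli (the latter as in \cite[Lemma 4.6]{HanLiuWuUST}); a Girsanov computation should then produce exactly the additional drift terms $+2/(W_t - g_t(z_j))$ at $z_2,\ldots,z_{n-1}$ appearing in \eqref{eqn::drving}, thereby matching the driving function of $\gamma_1^I$.

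For the inductive step, I would condition on $\gamma_1^\delta$ and invoke the domain Markov property: the restrictions of $\cT^*_\delta$ to the two sub-domains on either side of $\gamma_1^\delta$ are independent USTs with alternating boundary conditions (where the tip of $\gamma_1^\delta$ and the arc containing its endpoint become additional wired vertices), each conditioned on its own analog of $E^*$ corresponding to the concave corners lying in that sub-domain. By the Carath\'eodory convergence of these sub-domains (inherited from $\gamma_1^\delta \to \gamma_1^I$) and the inductive hypothesis, the remaining branches converge to the flow lines described recursively by Theorem \ref{thm::flowcoupling}. The main obstacle is the Radon--Nikodym computation described above: the inclusion--exclusion for $E^*$ contains $2^{n-1}-1$ terms, and one must show that the product structure inherited from Wilson's algorithm causes all $\LA_{i_1,\ldots,i_s}$ with $s\ge 2$ to factorize (asymptotically) into products of single-crossing probabilities, each of which vanishes at a rate controlled by Lemma \ref{lem::Beur}, so that only the single-index contributions survive in the scaling limit of the Radon--Nikodym density.
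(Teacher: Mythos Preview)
Your overall architecture (tightness, identification via a Radon--Nikodym change of measure, then induction on the remaining branches) matches the paper's, but two of the key steps fail as stated.

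\medskip
\textbf{Reference law.} You propose to compare the conditioned curve $\gamma_1^\delta$ to the \emph{unconditioned} branch from $z_1^{\delta,+}$ in the UST on $G^*$. That branch is degenerate: since $z_1\in\{x_2,x_3\}$, the vertex $z_1^{\delta,+}$ sits at microscopic distance from the wired arc $(x_2^\delta x_3^\delta)$, and the LERW from $z_1^{\delta,+}$ hits this adjacent arc immediately with probability tending to $1$. There is no scaling limit of the unconditioned branch as an $\SLE_2(-1,\dots,-1)$ curve from $z_1$; the martingale observable argument you invoke needs a starting point separated from the wired boundary. The paper resolves this by introducing an \emph{intermediate} reference law $\tilde\gamma_1^\delta$, conditioned only on $z_1^{\delta,+}$ connecting to $\cup_{j\ge 2}(x_{2j}^\delta x_{2j+1}^\delta)$ within $\Omega_\delta$ (Proposition~\ref{P:RNinter}); this single-branch conditioning is exactly of the type handled in Section~\ref{sec::GFFtree}, and its scaling limit has an explicit driving function via the Schwarz--Christoffel map~\eqref{eqn::conformalexpression}.

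\medskip
\textbf{Inclusion--exclusion does not simplify.} Your final paragraph asserts that in the scaling limit ``only the single-index contributions survive'' in the Radon--Nikodym density. This is false. Each event $\LA_{i_1,\dots,i_s}$ has probability of order $\prod_{k}h_r^\delta(z_{i_k}^{\delta,+})$, and $\PP[E^*]$ itself is of order $\prod_{i=1}^{n-1}h_r^\delta(z_i^{\delta,+})$ (Lemma~\ref{lem::estimate}); after normalisation all terms in the inclusion--exclusion contribute at the same scale. The paper's key observation (Lemma~\ref{lem::disradon}) is that the inclusion--exclusion sum for $\PP[E^*]$ is a \emph{Fomin determinant} $\det(I-(h_{\Omega_\delta}(z_i^{\delta,+};(x_{2j}^\delta x_{2j+1}^\delta)))_{i,j})$, so the Radon--Nikodym derivative of $\gamma_1^\delta$ with respect to $\tilde\gamma_1^\delta$ is a ratio of such determinants. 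In the limit (Lemma~\ref{lem::contradon}) this becomes a ratio of determinants of normal derivatives of the Schwarz--Christoffel maps $\phi_j$. The Girsanov computation then relies on a polynomial factorisation of this determinant, namely $\det(P_j(U_{i,t}))=\prod_{i\ge 2}(W_t-U_{i,t})\cdot R$ with $R$ independent of $W_t$ (equation~\eqref{eqn::aux_4}); it is this factorisation, not any vanishing of higher-order terms, that produces exactly the drift $\sum_{i\ge 2}\frac{2}{W_t-g_t(z_i)}$ in~\eqref{eqn::drving}. Without the determinantal structure you cannot carry out the Girsanov step.
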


\subsection{Tightness of the boundary branches}

We start with the tightness of the boundary branches. First of all, we will give an estimate about the probability of $E^*(\Omega_\delta;x_1^\delta,\ldots,x_{2n}^\delta;z_1^{\delta,+},\ldots,z_{n-1}^{\delta,+})$. Fix a small $r>0$ such that $\partial B(x_i,2r)\cap\partial B(x_j,2r)=\emptyset$ for $i\neq j$.
We denote by $\partial B(x_i^\delta,r)$ the discrete approximation of $\partial B(x_i,r)$ on $\Omega_\delta$. For every $v_\delta\in \cup_{i=1}^{n-1} B(z_i^\delta,r)$, we define the harmonic function 
\[h^\delta_r(v_\delta):=\PP_{v_\delta}[\LR_\delta\text{ hits }\cup_{i=1}^{n-1} \partial B(z_i^\delta,r)\text{ before }\cup_{i=1}^{n}(x_{2i}^\delta x_{2i+1}^\delta)],\]
where $\LR_\delta$ has the same law as the random walk killed at $\cup_{i=1}^{n}(x_{2i}^\delta x_{2i+1}^\delta)$, and with reflecting boundaries $\cup_{i=1}^{n}(x_{2i-1}^\delta x_{2i}^\delta)$.
\begin{lemma}\label{lem::estimate}
There exists $C>0$ and $c>0$ such that
\[c\le \frac{\PP[ \cT^*_\delta \in E^*(\Omega_\delta;x_1^\delta,\ldots,x_{2n}^\delta;z_1^{\delta,+},\ldots,z_{n-1}^{\delta,+})]}{\Pi_{i=1}^{n-1}h^\delta_r(z_i^{\delta,+})}\le C.\]
\end{lemma}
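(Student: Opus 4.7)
The plan is to sample $\cT^*_\delta$ by Wilson's algorithm, sequentially building the branches from $z_1^{\delta,+}, \ldots, z_{n-1}^{\delta,+}$ as loop-erased reflected random walks (reflecting on $\cup_i(x_{2i-1}^\delta x_{2i}^\delta)$, killed at the wired vertex identifying $\cup_i c_i^\delta$). Each branch exits at a well-defined arc $c_{\sigma(i)}^\delta$; the event $E^*$ is precisely that the map $\sigma:\{1,\ldots,n-1\}\to\{1,\ldots,n\}$ contains no cycle within $\{1,\ldots,n-1\}$. I will sandwich $E^*$ between two events whose probabilities are of order $\prod_i h^\delta_r(z_i^{\delta,+})$.

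For the upper bound, $E^*\subset \bigcap_{i=1}^{n-1}\LA_i^c$ where $\LA_i^c=\{\sigma(i)\neq i\}$. Since $z_i^{\delta,+}$ lies adjacent to $z_i^\delta\in c_i^\delta$ and the balls $B(z_i^\delta,2r)$ are pairwise disjoint, any such nontrivial exit forces the walk to reach $\partial B(z_i^\delta,r)$ before any $c_j^\delta$, an event of probability $h^\delta_r(z_i^{\delta,+})$. Applying Wilson's sequentially,
\[
\PP[E^*]\le \PP\Bigl[\bigcap_i\LA_i^c\Bigr] = \prod_{i=1}^{n-1}\PP[\LA_i^c\mid \eta_1^\delta,\ldots,\eta_{i-1}^\delta],
\]
and each conditional factor I will bound by $Ch^\delta_r(z_i^{\delta,+})$: by the strong Markov property it suffices to show that the reflected walk from $z_i^{\delta,+}$ in the punctured domain has probability at most $Ch^\delta_r(z_i^{\delta,+})$ of reaching $\partial B(z_i^\delta,r)$, which follows from the Beurling estimate (Lemma~\ref{lem::Beur}) and the fact that the earlier branches sit outside $B(z_i^\delta,2r)$.

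For the lower bound, I will take $F=\{\sigma(i)=n\text{ for all }i\}$; then $F\subset E^*$, since any $\LA_{i_1,\ldots,i_s}$ would require $i_{k+1}=n\in\{1,\ldots,n-1\}$, a contradiction. Applying Wilson's sequentially, decompose $\PP[F]=\prod_i\PP[\sigma(i)=n\mid\eta_1^\delta,\ldots,\eta_{i-1}^\delta]$. For each factor I apply the strong Markov property at the first hitting time of $\partial B(z_i^\delta,r)$: the walk reaches $\partial B(z_i^\delta,r)$ with probability $\asymp h^\delta_r(z_i^{\delta,+})$, and conditionally it hits $c_n^\delta$ (or an earlier branch that is itself in $c_n^\delta$'s component, by the inductive hypothesis) before any other $c_j^\delta$ with uniformly positive probability. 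The latter uses the macroscopic crossing assumption (Assumption~5 of Section~\ref{SS:gen}) applied in the simply connected remaining domain, together with the fact that $c_n^\delta$ retains a macroscopic accessible portion.

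The main obstacle is controlling the sequential conditioning uniformly over the previously built branches. The disjointness of the balls $B(z_i^\delta,2r)$ decouples the local harmonic measure near $z_i^{\delta,+}$ from the earlier branches (up to a constant, by Lemma~\ref{lem::Beur}), yielding the local factor $h^\delta_r(z_i^{\delta,+})$. For the lower bound, one must moreover show that the earlier branches do not isolate $z_i^{\delta,+}$ from the relevant portion of $c_n^\delta$; this is the key inductive point, ensured by conditioning on $F$, since all earlier branches then terminate at $c_n^\delta$ and leave $c_n^\delta$ accessible in $z_i^{\delta,+}$'s connected component, where the uniform crossing estimate delivers the macroscopic lower bound.
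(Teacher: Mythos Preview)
Your upper bound has a real gap. You claim that on $\LA_i^c$ the walk from $z_i^{\delta,+}$ must reach $\partial B(z_i^\delta,r)$ before being killed, and you justify the conditional bound by asserting that ``the earlier branches sit outside $B(z_i^\delta,2r)$''. This assertion is false in general: nothing prevents an earlier branch $\gamma_j^\delta$ from entering $B(z_i^\delta,r)$. When that happens, the $i$-th Wilson walk can hit $\gamma_j^\delta$ \emph{inside} the ball, and the resulting tree path from $z_i^{\delta,+}$ then follows $\gamma_j^\delta$ to $c_{\sigma(j)}^\delta$. If $\sigma(j)\notin\{i\}$, the event $\LA_i^c$ holds even though the walk never escaped $B(z_i^\delta,r)$, so the conditional probability $\PP[\LA_i^c\mid\gamma_1^\delta,\ldots,\gamma_{i-1}^\delta]$ can be of order one on such configurations. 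Your product bound then fails to give $C\prod_i h_r^\delta(z_i^{\delta,+})$.

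This is precisely the obstacle the paper identifies as making the upper bound ``much more delicate''. The paper's fix is a combinatorial/inductive argument: one classifies indices as \emph{good} (the branch avoids all balls $B(z_j^\delta,r)$, $j\neq$ the starting index) or \emph{bad}, reveals the good ones first (each such branch must genuinely escape its ball, contributing a factor $h_r^\delta$), wires them into the boundary, and iterates. When all remaining indices are bad, a further case analysis (based on the chronologically last ball entered by the first branch and where the merging occurs) identifies at each step some index whose walk is forced to escape its ball. Summing over finitely many combinatorial possibilities yields the bound. Your argument would need an ingredient of this type; the fixed ordering $1,\ldots,n-1$ does not suffice.

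For the lower bound, your event $F=\{\sigma(i)=n\text{ for all }i\}$ is a correct subset of $E^*$, and the sequential approach can be made to work, but the step ``with uniformly positive probability'' hides exactly the kind of dependence on the random earlier branches you would need to control. The paper avoids this entirely by choosing disjoint deterministic tubes $U_1,\ldots,U_{n-1}$ in $\Omega$, each connecting $B(z_i,r)$ to $(x_{2n}x_1)$, and lower-bounding $\PP[E^*]$ by $\PP[\cap_i\{\gamma_i^\delta\subset U_i^\delta\}]$; disjointness makes the Wilson steps independent, and each factor is compared to $h_r^\delta(z_i^{\delta,+})$ via Lemma~\ref{lem::ratio}. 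This is both simpler and avoids any conditioning on random geometry.
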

\begin{proof}
For the lower bound, we consider $n-1$ disjoint open sets $\{U_1,\ldots,U_n\}$ such that $B(z_i,r)\cap \Omega\subset U_i$ and $\partial U_i\cap\partial D\subset (x_{2n}x_{1})\cup (B(z_i,r)\cap\partial \Omega)$. Denote $U_i^\delta$ a discrete approximation of $U_i$ on $\Omega_\delta$ (see Figure~\ref{fig::lowerbound}).
\begin{figure}[ht!]
\begin{center}
\includegraphics[width=0.5\textwidth]{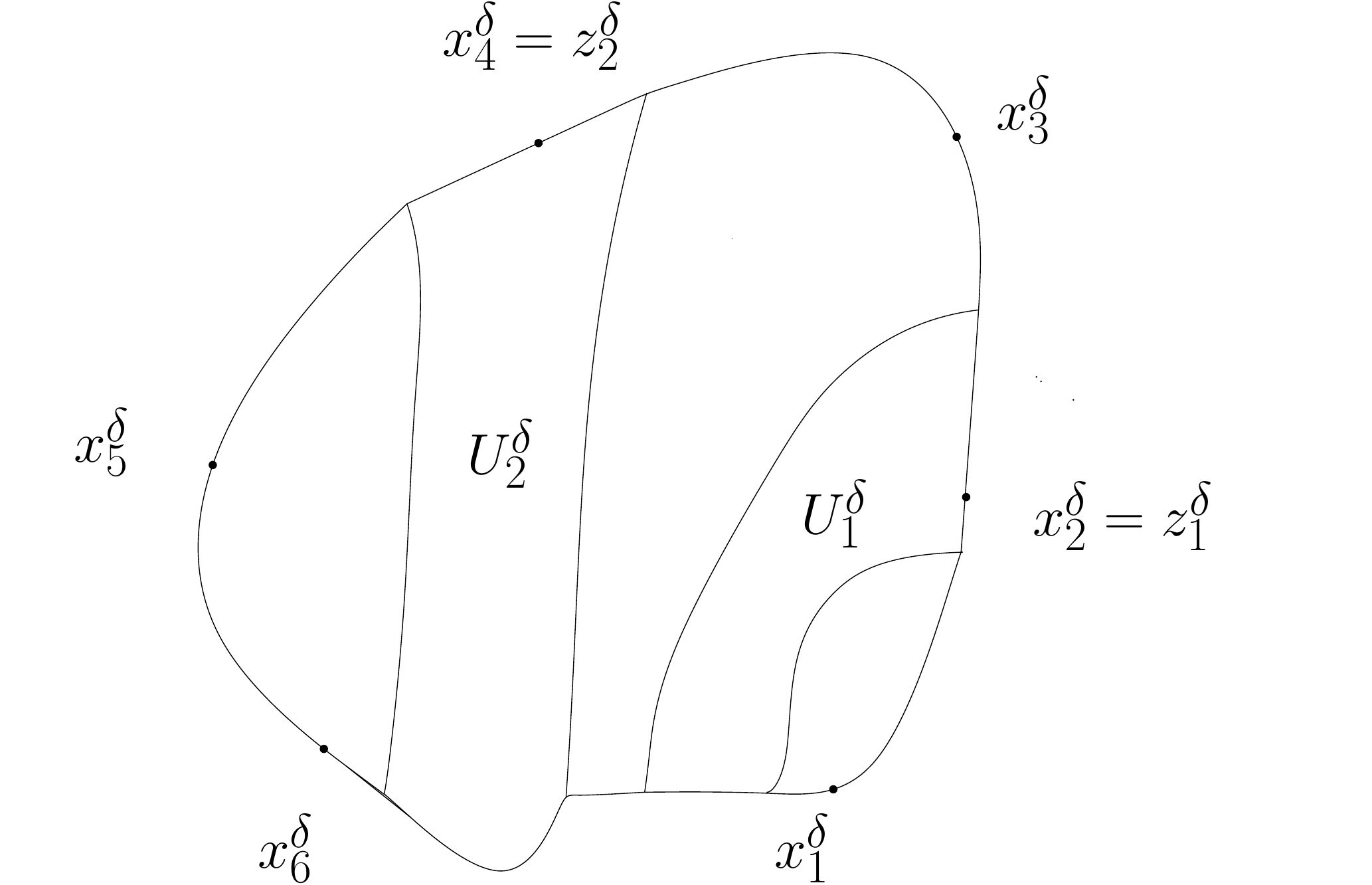}
\end{center}
\caption{\label{fig::lowerbound}This is an illustration of the discrete neighbour $U_1^\delta$ and $U_2^\delta$ when $n=3$.}
\end{figure}

Note that
\[\PP[\cT^*_\delta \in E^*(\Omega_\delta;x_1^\delta,\ldots,x_{2n}^\delta;z_1^{\delta,+},\ldots,z_{n-1}^{\delta,+})]\ge\PP[\cap_{i=1}^{n-1}\{\gamma_i^\delta\subset U_i^\delta\}].\]
For $v_\delta\in \Omega_\delta$, we define
\[h_i^\delta(v_\delta):=\PP_{v_\delta}[\LR_\delta \text{ leaves }U_i^\delta\text{ in }(x_{2n}^\delta x_{1}^\delta)],\]
where $\LR_\delta$, as usual, is a random walk killed at $\cup_{i=1}^{n}(x_{2i}^\delta x_{2i+1}^\delta)$, and with reflecting boundaries on $\cup_{i=1}^{n}(x_{2i-1}^\delta x_{2i}^\delta)$. Thus, by Wilson's algorithm, we have
\[\PP[\cT^*_\delta \in E^*(\Omega_\delta;x_1^\delta,\ldots,x_{2n}^\delta;z_1^{\delta,+},\ldots,z_{n-1}^{\delta,+})]\ge\Pi_{i=1}^{n-1}h_i^\delta(z_i^{\delta,+})\]
By Lemma~\ref{lem::ratio}, we have there exists a constant $c>0$, such that for every $1\le i\le n$, 
\[\frac{h_i^\delta(z_i^{\delta,+})}{h^\delta_r(z_i^{\delta,+})}\ge c.\]
The lower bound follows.

The upper bound is much more delicate and uses a careful combinatorial analysis together with choice of ordering in Wilson's algorithm. Roughly, we want to say that if we were to generate the spanning tree $\cT^*_\delta$ with Wilson's algorithm, we would roughly have $n-1$ independent walks which must at least verify the event defining $h_r^\delta (z_i^{\delta, +})$. The trouble is that it is \emph{a priori} possible for a walk starting from $z_i^{\delta, +}$ to come very close to $z_j^{\delta, +}$ (closer than distance $r$) for some $j \neq i$. This would prevent us from comparing the probability of $E^*$ with that of $n-1$ independent events. 

To deal with this, we will show up to a finite number of combinatorial possibilities, it is always possible to reveal the curves one at a time, in in a certain order such that, each time,  the walk we choose to reveal must escape the corresponding ball before touching the wired boundary (which includes the paths already revealed so far, as dictated by Wilson's algorithm). 

More precisely, we must consider events of the following type. Call an index $1\le i \le m-1$ \emph{good} if $ \gamma^\delta_i \cap (\cup_j B_j) = \emptyset$, with $B_j = B(z_j^{\delta, +}, r)$, and call it bad otherwise. Let $I$ denote the set of good indices. 
We then consider the events 
\begin{equation}\label{Eq:badindex}
\PP( \cT^*_\delta \in E^*, I = \{i_1, \ldots, i_m\} )
\end{equation}
where $1\le m \le n-1$, $1\le i_1 < \ldots < i_m \le n-1$. For each such event that we need to consider, we want to bound its probability by $C \prod_{i=1}^{n-1} h_r^\delta (z_i^{\delta, +})$.  To do so we use Wilson's algorithm with an order that depends specifically on the event we are considering. Namely, in a first stage, we reveal the \emph{good} branches $\gamma^\delta_j$ for $j\in I $, using independent random walks $\cR_j^\delta$ from the point $z_j^{\delta, +}$ for $j \in I$. Clearly, for $E^*$ to be satisfied, it must be the case that during this first stage, for each $j \in I$, $\cR_j^\delta$ escapes $B_j$. By independence, this has a probability $\prod_{j \in I} h_r^\delta (z^{\delta, +}_j)$. Now in a second stage, given the good branches, let us wire together the target arc $(x_{2n}, x_1)$ and the good branches; this forms a new wired boundary in Wilson's algorithm; see Figure~\ref{fig::upperbound}.
\begin{figure}
\begin{center}
\includegraphics[width=0.5\textwidth]{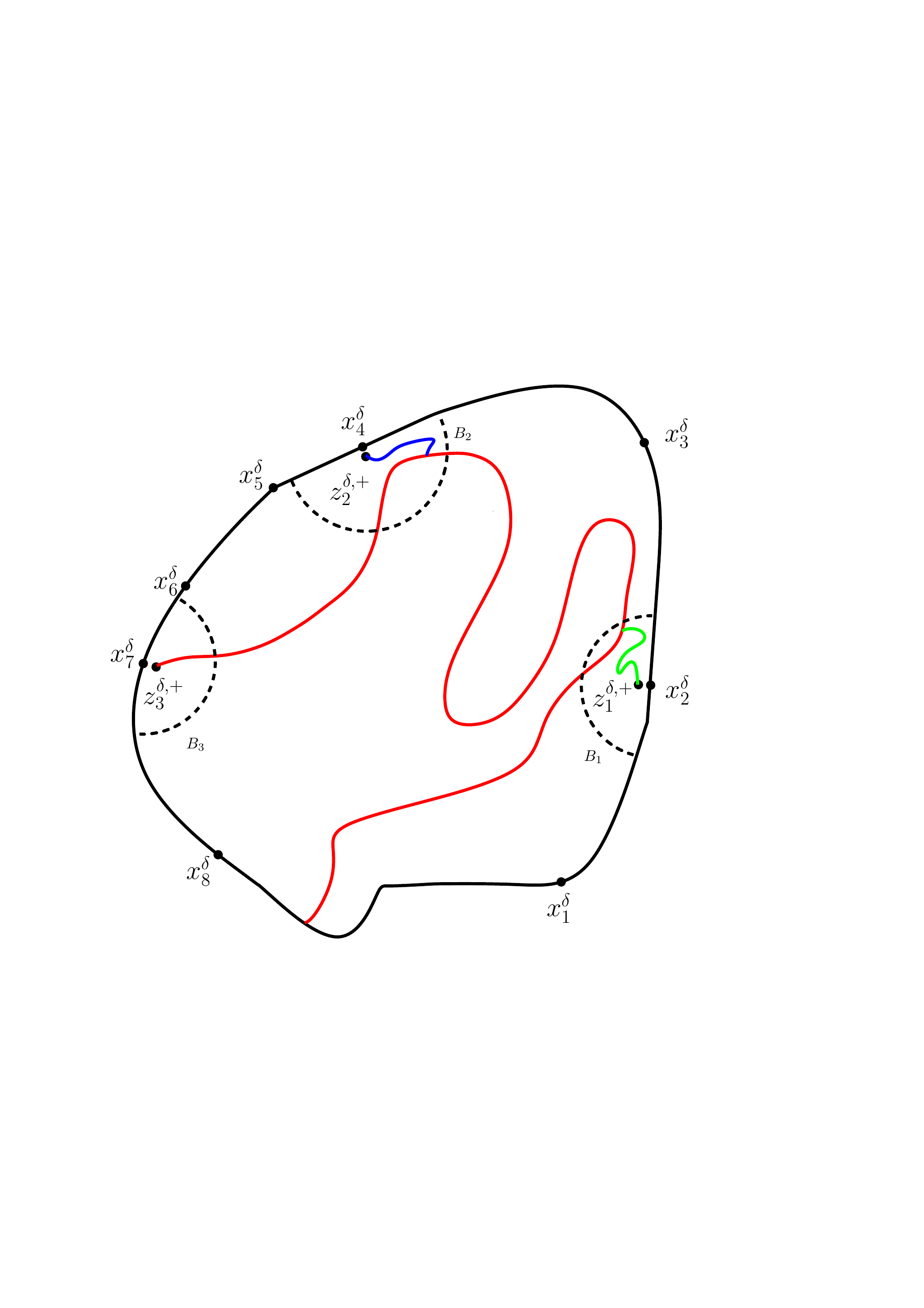}
\end{center}
\caption{\label{fig::upperbound}
An illustration of $\gamma_1^\delta, \gamma_2^\delta, \gamma_3^\delta$ for $n = 4$. In this case, only the green curve $\gamma_1^\delta$ is good initially, and so we start by revealing this in Wilson's algorithm. Once it is revealed and added to the wired boundary, we observe that the blue curve $\gamma_2^\delta$ becomes good, and so we reveal it at this point. Only once this is done, does the curve $\gamma_3^\delta$ become good and can be revealed. 
}
\end{figure}

We can now consider, among the remaining bad branches, those which are \emph{good} with respect to this new target. By iterating this procedure (i.e., induction) can reduce the number $n-1$ of branches that we need to consider;  at each stage of this induction we bound the conditional probability of $E^*$ by the product over all the good indices of $h_r^\delta (z^{\delta, +}_j)$. The only exception is when all the branches are bad.  

Thus let us assume that all the branches are bad, i.e., let us explain how to bound the probability in \eqref{Eq:badindex} when $I= \emptyset$. Consider $\gamma_1^\delta$, which is an oriented path from $z_1^{\delta , +}$ to the (current) wired boundary. We let $J$ be the index $j$ of the last (with respect to the chronological order of the path $\gamma_1^\delta$) ball $B_j$ such that $\gamma_1^\delta\cap B_{j}\neq\emptyset$, and $\gamma_{j}^\delta$ merges with $\gamma_1^\delta$ in $B_{j}$. If there is no such an index, we define $J:=1$. 

Note that by definition of $J$, on the event $\{ J = j\}$, for all $i\neq j$, $\gamma_i^\delta$ does not merge with $\gamma_{j}^\delta$ in $B_i$: this is because $\gamma_j^\delta$ coincides with the path $\gamma_1^\delta$ after the latter hits $B_j$, and $J$ was defined to be the last (chronologically) index where this property holds. However we mention that it is possible that $\gamma_{j}^\delta$ enters $B_i$, however in that case the path $\gamma^\delta_i$ does not merge with $\gamma_{j}^\delta$ there.
Either way, we add the event $\{ J = j\}$ to \eqref{Eq:badindex}, and try to bound $\PP( \cT^*_\delta \in E^*, I = \emptyset, J= j )$ by  
first choosing to reveal first $\gamma_{j}^\delta$ in Wilson's algorithm. Note that on this event, the walk $\cR_j^\delta$ must leave $B_j$, and so we pick up another term $h_r^\delta (z^{\delta, +}_j)$ in the conditional probability. We then iterate the argument one more step. (Having revealed $\gamma_j^\delta$, note the following delicate subtlety: although as explained above $\gamma_j^\delta$ \emph{may} have entered $B_i$ for some $i \neq j$, on the event $\{ J = j\}$ it is nevertheless the case that $\cR_i^\delta$ will need to escape $B_i$ before the touching the wired boundary, and that will bring a factor $h_r^\delta (z^{\delta, +}_j)$ to the subsequent conditional probability in the later steps of this induction).

Altogether, there is only a finite (combinatorial) number of cases that one needs to consider. In each case, at each step of the inductive argument the conditional probability is bounded by a product of term of the form $h_r^\delta (z^{\delta, +}_j)$ and the corresponding indices are then removed from the set of unexplored indices. The induction only stops when this set is empty, so in total, in each combinatorial case the overall probability is bounded by $\prod_{j=1}^{n-1} h_r^\delta (z^{\delta, +}_j)$. Summing over all the (finite number of) possible combinatorial cases, we obtain the desired upper-bound. 
\end{proof}

We have the following corollary, which is needed to prove the tightness of $\{\gamma_{1}^\delta,\ldots,\gamma_{n-1}^\delta\}_{\delta>0}$.
\begin{corollary}\label{coro::distance}
For $1\le i\le n-1$, let $(x_{2e_i}^\delta x_{2e_i+1}^\delta)$ be the boundary arc in which $\gamma_i^\delta$ terminates. Then, for every $\eps>0$, there exists $\eps_0>0$ such that
\[\PP[\text{there exists }i\text{ such that }d(\gamma_i^\delta,\cup_{j\neq i,e_i}(x_{2j}^\delta x_{2j+1}^\delta))< \eps_0|E^*(\Omega_\delta;x_1^\delta,\ldots,x_{2n}^\delta;z_1^{\delta,+},\ldots,z_{n-1}^{\delta,+})]<\epsilon.\]
\end{corollary}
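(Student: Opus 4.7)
The plan is to combine the lower bound on $\PP[E^*]$ from Lemma~\ref{lem::estimate} with the discrete Beurling estimate Lemma~\ref{lem::Beur}, applied inside the Wilson's algorithm argument that was used for the upper bound in Lemma~\ref{lem::estimate}. More precisely, the target is to establish, for every pair $(i,j)$ with $j\ne i,e_i$, the bound
\[
\PP\bigl[d(\gamma_i^\delta,(x_{2j}^\delta x_{2j+1}^\delta))<\eps_0,\ E^*\bigr]\le C\,\eps_0^{\,c}\,\prod_{k=1}^{n-1}h_r^\delta(z_k^{\delta,+})
\]
for some $C,c>0$ uniform in $\delta$, $i$, $j$, $e_i$. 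Dividing by $\PP[E^*]\ge c'\prod_k h_r^\delta(z_k^{\delta,+})$ and union-bounding over the finitely many choices of triples $(i,j,e_i)$ will then give the corollary once $\eps_0$ is chosen small enough.

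To set up, fix $r>0$ small enough that the balls $B(z_i,2r)$ are pairwise disjoint and, for every $i\ne j$, disjoint from the arc $(x_{2j}x_{2j+1})$. Then proceed as in the upper bound of Lemma~\ref{lem::estimate}: generate $\cT^*_\delta$ with Wilson's algorithm, insisting on starting with the branch $\gamma_i^\delta$ from $z_i^{\delta,+}$, then continuing with the good/bad inductive revealing scheme used in that proof. On $E^*$, each revealed branch $\gamma_k^\delta$ contributes a factor $h_r^\delta(z_k^{\delta,+})$ coming from the requirement that its underlying random walk escape $B(z_k^\delta,r)$ before touching the current wired boundary.

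The new input is for the first branch $\gamma_i^\delta$: on the event $\{d(\gamma_i^\delta,(x_{2j}^\delta x_{2j+1}^\delta))<\eps_0\}$, the underlying walk $\cR_i^\delta$ must, in addition to escaping $B(z_i^\delta,r)$, enter the $\eps_0$-neighborhood of $(x_{2j}^\delta x_{2j+1}^\delta)$ and then exit it without hitting $(x_{2j}^\delta x_{2j+1}^\delta)$ (because $j\ne e_i$, so the terminating arc is different). Applying the strong Markov property at the first entry time of $\cR_i^\delta$ into this neighborhood, followed by the discrete Beurling estimate Lemma~\ref{lem::Beur} with $A=(x_{2j}^\delta x_{2j+1}^\delta)$ and $d=\eps_0$, bounds this conditional probability by $C(\eps_0/r)^{c}$. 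Plugging this into the inductive product then yields the claimed upper bound.

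The main obstacle I expect is to cleanly fit the choice ``reveal $\gamma_i^\delta$ first'' into the combinatorial case analysis of the Lemma~\ref{lem::estimate} upper bound, which is organized by the set of good/bad indices and then by the merging index $J$. One needs to verify that each such combinatorial configuration still yields, after conditioning on $\{d(\gamma_i^\delta,(x_{2j}^\delta x_{2j+1}^\delta))<\eps_0\}$, exactly the product $\prod_k h_r^\delta(z_k^{\delta,+})$ times the extra Beurling factor $\eps_0^{\,c}$; the other (less delicate) point is to check that $r$ can be chosen uniformly so that the Beurling escape really takes place at a macroscopic scale away from the starting ball $B(z_i^\delta,r)$, which is guaranteed by the separation assumption on $r$ made at the outset.
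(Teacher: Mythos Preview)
Your plan has a genuine gap precisely at the point you flag as the ``main obstacle'': forcing $\gamma_i^\delta$ to be revealed first in Wilson's algorithm is \emph{not} in general compatible with recovering the full product $\prod_{k}h_r^\delta(z_k^{\delta,+})$ for the remaining branches. Concretely, suppose $\gamma_i^\delta$ enters some ball $B_k$ and that $\gamma_k^\delta$ merges with $\gamma_i^\delta$ inside $B_k$ (this is allowed on $E^*$ provided $e_i\ne k$). After you reveal $\gamma_i^\delta$, the walk $\cR_k^\delta$ generating $\gamma_k^\delta$ is killed on the wired arcs \emph{and} on $\gamma_i^\delta$; it can therefore hit $\gamma_i^\delta$ inside $B_k$ without ever escaping $B_k$, so you do not pick up the factor $h_r^\delta(z_k^{\delta,+})$. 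The good/bad and $J$-index scheme in Lemma~\ref{lem::estimate} is designed exactly to avoid this: it selects the revealing order \emph{a posteriori} so that, in each combinatorial case, the next branch's walk is forced to escape its ball. Fixing the first index to be $i$ destroys that guarantee, and the missing factor cannot be recovered from $E^*$ alone.

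The paper's proof takes a different route that sidesteps this issue. It keeps the configuration-dependent order from Lemma~\ref{lem::estimate} intact and proceeds by induction on the position $l$ of the branch in that order, using a sequence of scales $\eps_1\gg\eps_2\gg\cdots\gg\eps_{n-1}$. The inductive hypothesis is that the previously revealed branches $\gamma_1^\delta,\ldots,\gamma_{l-1}^\delta$ stay at distance $>\eps_{l-1}$ from all arcs other than their own starting and terminal arcs. Conditioning on this event, when $\gamma_l^\delta$ is revealed its walk must (i) escape $B_l$, and (ii) if it comes within $\eps_l$ of a forbidden arc, escape back to distance $\eps_{l-1}$ without hitting the current wired boundary (which, by the inductive hypothesis, is at least $\eps_{l-1}$ away). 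Beurling then gives a factor $C(\eps_l/\eps_{l-1})^c$, which is made small by choosing $\eps_l\ll\eps_{l-1}$. This multi-scale induction is what replaces the single Beurling factor you were hoping for, and it is essential precisely because the previously revealed branches may come close to the arc in question.
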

\begin{proof}
We decompose over possible combinatorial cases considered in 
 in Lemma~\ref{lem::estimate} and suppose without loss of generality that  the corresponding order in which the curves are revealed is $\{\gamma_{1}^\delta,\ldots,\gamma_{n-1}^\delta\}_{\delta>0}$ (let $A$ be this event). 
 Note that this implies that, for $j<k$, $\gamma^\delta_{k}$ does not merge with $\gamma^\delta_{j}$ before leaving $B_k$. We will show the following statement by induction: for $1\le i\le n-1$, there exists $\eps_0>0$, such that
\[\PP[A; d(\gamma_i^\delta,\cup_{j\neq i,e_i}(x_{2j}^\delta x_{2j+1}^\delta))< \eps_0|E^*
]\le\eps_i.\]
for some choice of $\eps_i$ which can be made arbitrarily small. For $i=1$, note that by Lemma~\ref{lem::estimate} and Lemma~\ref{lem::Beur}, there exists $C>0$ and $c>0$ such that
\begin{align*}
\PP[A;d(\gamma_1^\delta,\cup_{j\neq 1,e_1}(x_{2j}^\delta x_{2j+1}^\delta))< \eps_0|E^*]
\le C\frac{\PP[d(\gamma_1^\delta,\cup_{j\neq 1,e_1}(x_{2j-1}^\delta x_{2j}^\delta))< \eps_0]}{h^\delta_r(z_1^{\delta,+})}\le C\eps_0^c.
\end{align*}
By choosing $\eps_1 = \eps_0^c$ small enough, the statement is proved for $i=1$.
Suppose the statement holds for $i\le l-1$.
We choose $\eps_l$ which is much smaller than $\eps_{l-1}$. Then, for $i=l$, by Lemma~\ref{lem::estimate} and Lemma~\ref{lem::Beur}, there exist $C>0$ and $c>0$, such that 
\begin{align*}
&\PP[A;d(\gamma_l^\delta,\cup_{j\neq l,e_l}(x_{2j}^\delta x_{2j+1}^\delta))< \eps_l | E^*]\\
\le&\sum_{i=1}^{l-1}\eps_{i}+C\frac{\PP[A;E^*\cap\{d(\gamma_l^\delta,\cup_{j\neq l,e_l}(x_{2j}^\delta x_{2j+1}^\delta))< \eps_l,d(\gamma_k^\delta,\cup_{j\neq k,e_k}(x_{2j}^\delta x_{2j+1}^\delta))>\eps_{l-1}\text{ for }k<l\}]}{\Pi_{k=1}^{n-1}h^\delta_r(z_l^{\delta,+})}\\
\le& \sum_{i=1}^{l-1}\eps_{i}+C\frac{\PP[d(\gamma_l^\delta,\cup_{j\neq l,e_l}(x_{2j}^\delta x_{2j+1}^\delta))< \eps_l\cond d(\gamma_k^\delta,\cup_{j\neq k,e_k}(x_{2j}^\delta x_{2j+1}^\delta))>\eps_{l-1}\text{ for }k<l]}{h^\delta_r(z_l^{\delta,+})}\\
\le&\sum_{i=1}^{l-1}\eps_{i}+ C\left(\frac{\eps_l}{\eps_{l-1}}\right)^c.
\end{align*}
By considering $\eps_l\ll \eps_{l-1}$, we complete the statement for $i=l$. Thus, by induction, we get the result (see Figure~\ref{fig::hitting estimate}).
\begin{figure}
\begin{center}
\includegraphics[width=0.5\textwidth]{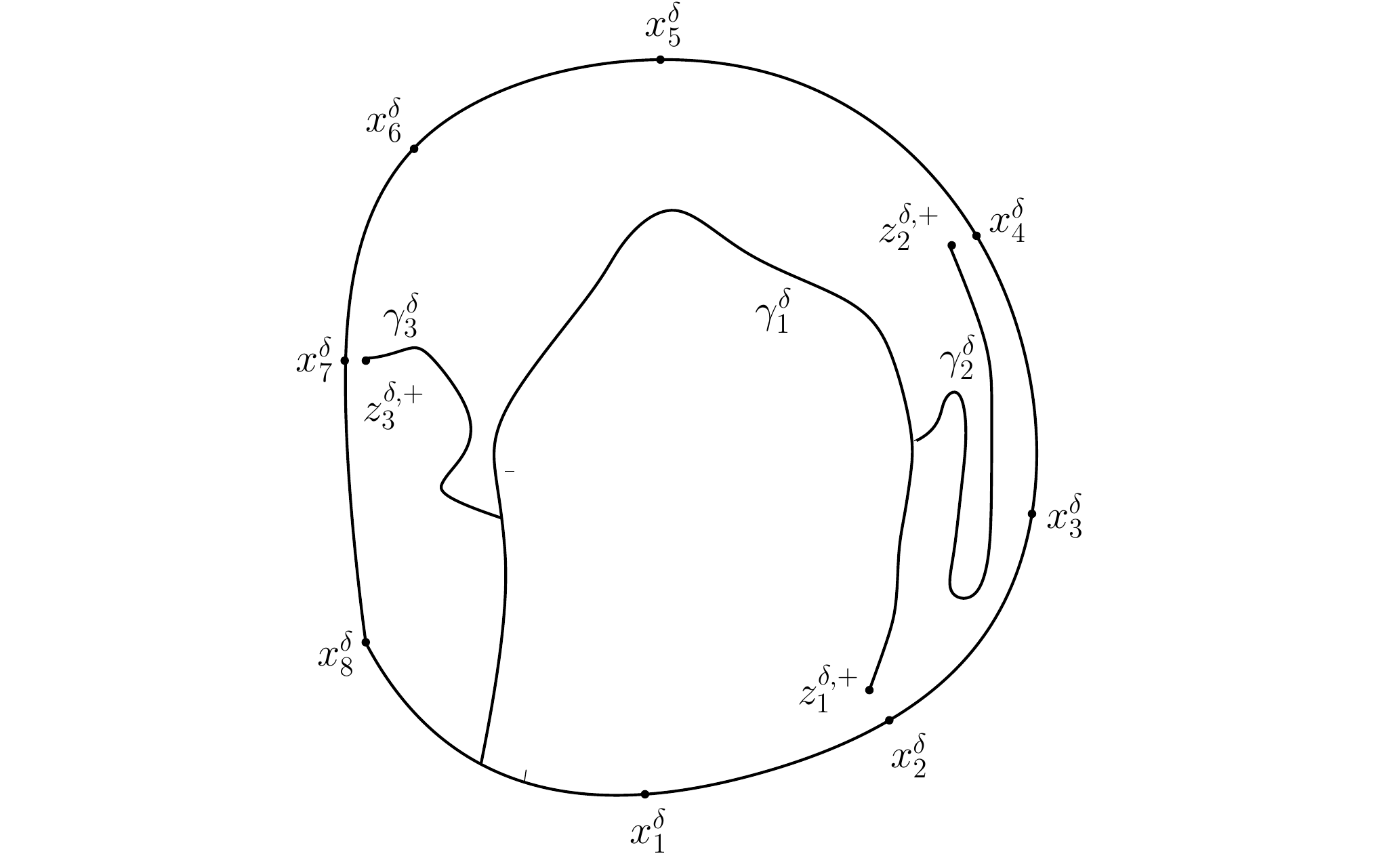}
\end{center}
\caption{\label{fig::hitting estimate}This is an illustration of the induction. We first generate $\gamma_1^\delta$ and then generate $\gamma_2^\delta$. Note that for $\gamma_2^\delta$, we restrict on the event that $d(\gamma_1^\delta,(x_3^\delta x_4^\delta)\cup(x_5^\delta x_6^\delta))>\eps_1$. }
\end{figure}
\end{proof}
Next, we consider the tightness of the  branches $\{\gamma_{1}^\delta,\ldots,\gamma_{n-1}^\delta\}_{\delta>0}$.
\begin{lemma}\label{lem::Gtightness}
The sequence of discrete curves $\{\gamma_{1}^\delta,\ldots,\gamma_{n-1}^\delta\}_{\delta>0}$ is tight. Moreover, for any subsequential limit $\{\gamma_{1},\ldots,\gamma_{n-1}\}$, almost surely, the limiting curve $\gamma_i$ is simple for each $1\le i \le n-1$, and only hits $\cup_{j=1}^{n}[x_{2j}x_{2j+1}]$ at its two endpoints.
\end{lemma}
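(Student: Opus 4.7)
The plan is to adapt the arguments of Lemma~\ref{lem::simple}, Lemma~\ref{lem::tightness}, and Corollary~\ref{coro::distance} to the multi-branch setting. The core technical step is a uniform one-branch bubble estimate that survives the asymptotically degenerate conditioning on $E^*$.

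First I would prove the following one-branch analogue of Lemma~\ref{lem::simple}: for each $z_0 \in \Omega$, $\beta > 0$, and $1 \le i \le n-1$, letting $\LA_i^\delta(z_0, \beta, \alpha)$ denote the event that there are two points $o_1, o_2 \in \gamma_i^\delta \cap B(z_0, \beta/4)$ with $|o_1-o_2| \le \alpha$ such that the subarc of $\gamma_i^\delta$ between them exits $B(z_0, \beta)$, one has $\PP[\LA_i^\delta(z_0, \beta, \alpha) \mid E^*] \le \epsilon$ uniformly in $\delta$, provided $\alpha = \alpha(\beta, \epsilon)$ is small enough. To do so I generate $\gamma_i^\delta$ first in Wilson's algorithm as the loop-erasure of a reflected walk $\LR_i$ from $z_i^{\delta,+}$, and then the remaining branches conditionally. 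By the Beurling estimate (Lemma~\ref{lem::Beur}) together with the uniform crossing estimate, the probability that $\LR_i$ makes many $\beta$-crossings at $z_0$ and also escapes $B_i := B(z_i^{\delta,+}, r)$ before hitting the wired arcs is bounded by $C(\beta)\, \alpha^c\, h_r^\delta(z_i^{\delta,+})$. Conditionally on the resulting $\gamma_i^\delta$, absorb it into the wired boundary and run Wilson's algorithm for the remaining branches: the same combinatorial bookkeeping as in the upper bound of Lemma~\ref{lem::estimate} produces a further factor $\prod_{j \ne i} h_r^\delta(z_j^{\delta,+})$. Dividing by the lower bound $\PP[E^*] \ge c \prod_j h_r^\delta(z_j^{\delta,+})$ from Lemma~\ref{lem::estimate} yields the claimed $C\,\alpha^c$ bound.

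With this one-branch bubble estimate, tightness of each family $\{\gamma_i^\delta\}_{\delta>0}$ under the metric~\eqref{eqn::curves_metric} follows from the Aizenman--Burchard-type no-bubble compactness argument used in Lemma~\ref{lem::tightness} and \cite[Lemma 3.12]{LawlerSchrammWernerLERWUST}, and joint tightness of the tuple $(\gamma_1^\delta, \ldots, \gamma_{n-1}^\delta)$ follows from a union bound over $i$. The limiting curves are then simple because the absence of small bubbles survives the limit. To show that each $\gamma_i$ hits $\cup_{j=1}^n [x_{2j}x_{2j+1}]$ only at its two endpoints, I combine two ingredients. For the terminal endpoint, Corollary~\ref{coro::distance} already shows that $\gamma_i^\delta$ stays at macroscopic distance from all non-terminal arcs conditionally on $E^*$. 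For the starting endpoint $z_i$, I mimic the argument in Lemma~\ref{lem::tightness}: let $\tau_s^\delta$ be the first exit time of $\gamma_i^\delta$ from the ball of radius $s$ around $z_i^{\delta,+}$, and bound the probability that after $\tau_s^\delta$ the curve returns to within $\epsilon$ of the two boundary arcs adjacent to $z_i$. Using Wilson's algorithm with the same ordering as above, this probability is bounded by $C(s)\,\epsilon^c$ via Beurling and Lemma~\ref{lem::estimate}. Sending $\delta \to 0$, then $\epsilon \to 0$, then $s \to 0$ yields the result.

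The main obstacle is the very first step: the conditioning on $E^*$ is asymptotically degenerate, so Beurling-type bounds cannot be applied directly. The crux is to match the order in which branches are revealed by Wilson's algorithm to the specific event being estimated, so that the $n-1$ factors $h_r^\delta(z_j^{\delta,+})$ cancel exactly between the numerator (Beurling applied to the critical branch) and the denominator (the lower bound of Lemma~\ref{lem::estimate} on $\PP[E^*]$), leaving only the small Beurling gain $\alpha^c$. A minor additional subtlety is that, as in the proof of Lemma~\ref{lem::estimate}, one must decompose over the finitely many combinatorial configurations describing which branches are ``good'' at which stage of Wilson's algorithm, and handle each case separately.
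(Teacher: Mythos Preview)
Your high-level strategy --- a one-branch bubble estimate surviving the degenerate conditioning, followed by the LSW compactness argument and Corollary~\ref{coro::distance} for the endpoint behaviour --- is indeed the right plan and matches the paper's. But the specific implementation you propose, namely ``generate $\gamma_i^\delta$ \emph{first} in Wilson's algorithm and then bound the remaining part of $E^*$ by the combinatorial bookkeeping of Lemma~\ref{lem::estimate}'', has a genuine gap. The conditional bound $\PP[E^*\mid\gamma_i^\delta]\le C\prod_{j\ne i}h_r^\delta(z_j^{\delta,+})$ is \emph{not} uniform over realisations of $\gamma_i^\delta$: if $\gamma_i^\delta$ enters some ball $B_j=B(z_j^{\delta,+},r)$ (which, given that $\gamma_i^\delta$ already escaped $B_i$, costs nothing further), then after absorbing $\gamma_i^\delta$ into the wired boundary the walk from $z_j^{\delta,+}$ can hit $\gamma_i^\delta$ inside $B_j$ and thereby satisfy the $j$-th constraint of $E^*$ with probability of order $1$, not $h_r^\delta(z_j^{\delta,+})$. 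So the factor $h_r^\delta(z_j^{\delta,+})$ is simply missing from your upper bound, and you end up one degenerate factor short of cancelling the denominator $\PP[E^*]$. Your last paragraph calls this a ``minor additional subtlety'', but it is exactly what forces the ordering to be adaptive rather than fixed.

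The paper handles this by \emph{not} generating $\gamma_i^\delta$ first. Instead it works inside the adaptive ordering of Lemma~\ref{lem::estimate} from the outset (event $A$): by construction, in that ordering each branch, when it is revealed, must escape its own ball before merging with anything already laid down. The single-branch bubble event $\LA_\delta(z_0,\beta,\alpha;\gamma_i^\delta)$ is then bounded at the step where $\gamma_i^\delta$ appears in this ordering, by applying Lemma~\ref{lem::simple} in the domain $\Omega_\delta\setminus\cup_{j<i}\gamma_j^\delta$ (see~\eqref{eqn::1}); all $n-1$ factors $h_r^\delta(z_j^{\delta,+})$ are produced automatically and the Beurling gain $\alpha^c$ survives. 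There is a second ingredient that your proposal does not have: because the previously revealed branches are now part of the killed boundary, a near-bubble can also be formed by $\gamma_i^\delta$ approaching $\cup_{j<i}\gamma_j^\delta$, leaving its neighbourhood, and returning. The paper controls this separately via the events $\tilde E_i$ and a cascade of nested scales $1/C_0^i$ (see~\eqref{eqn::2} and Figure~\ref{fig::twocases}). Your one-branch estimate alone does not rule this out, and it is needed to make the application of Lemma~\ref{lem::simple} in the random slit domain go through cleanly.
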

\begin{proof}
The proof is very similar to the proofs of Lemma~\ref{lem::simple} and Lemma~\ref{lem::tightness}. We explain how to modify the proof of Lemma~\ref{lem::simple} to this setting in detail. Fix an $\eps$ and choose $\epsilon_0$ as in Corollary~\ref{coro::distance}. We fix a constant $\beta<\frac{\eps_0}{4}$. Let $\LA_{\delta}(z_0,\beta,\alpha;\cup_{i=1}^{n-1}\gamma_{i}^\delta)$ denote the event that there are two points $o_1,o_2\in\cup_{i=1}^{n-1}\gamma_{i}^\delta$ with
$o_1,o_2\in B(z_0,\alpha)$, such that the subarc of $\cup_{i=1}^{n-1}\gamma_{i}^\delta$ between $o_1$ and $o_2$ (which potentially goes via the wired boundary) is not contained in $B(z_0,\beta)$. We choose $\alpha$ such that $\alpha<\frac{1}{C^n_0}$, where $C_0$ is a large constant we will determine later. Once again we will generate $\cup_{i=1}^{n-1}\gamma_\delta^i$ according to the order given in Lemma~\ref{lem::estimate}; we may suppose that the order is given by $\gamma_1^\delta,\ldots,\gamma_{n-1}^\delta$ (let $A$ be this event). Recall that we denote by $(x^\delta_{2e_i}x^\delta_{2e_i+1})$ the arc which is hit by $\gamma^\delta_i$. Define 
\[E_i:=\{d(\gamma_l^\delta,\cup_{j\neq l,e_l}(x_{2j}^\delta x_{2j+1}^\delta))>\eps_0,\text{ for }1\le l\le i-1\}\]
and set $E = \cap_{i=1}^{n-1} E_i$.
 Let $\cN_i( r)$ denote the $r-$neighbourhood of $\cup_{j< i} \gamma_{j}^\delta$. Define  also the event 
\begin{align*}
\tilde E_i :=&\{\gamma^\delta_i \text{ hits }\cN_i (1/C_0^i), 
\text{then exits  } \cN_i(1/C_0^{i-1}) 
\text{ and then comes back to $\cN_i( 1/ C_0^i)$}\}.
\end{align*}
Note that 
\begin{align*}
&\PP[A \cap E\cap\LA_{\delta}(z_0,\beta,\alpha;\cup_{i=1}^{n-1}\gamma_{i}^\delta)|E^*(\Omega_\delta;x_1^\delta,\ldots,x_{2n}^\delta;z_1^{\delta,+},\ldots,z_{n-1}^{\delta,+})]\\
\le&\sum_{i=1}^{n-1}\PP[A \cap E\cap\LA_{\delta}(z_0,\beta,\alpha;\gamma_{i}^\delta)|E^*]+\sum_{i=1}^{n-1}\PP[A \cap E_i\cap\tilde E_i|E^*].
\end{align*}
By applying Lemma~\ref{lem::estimate}, we have that 
\begin{align}\label{eqn::1}
&\PP[A \cap E\cap \LA_{\delta}(z_0,\beta,\alpha;\gamma_{i}^\delta)|E^*(\Omega_\delta;x_1^\delta,\ldots,x_{2n}^\delta;z_1^{\delta,+},\ldots,z_{n-1}^{\delta,+})]\notag\\
\le &C\frac{\PP[E_{i}\bigcap (\cap_{j=1}^{i}\{\gamma_j^\delta\text{ does not connect to }(x_{2j}^\delta x_{2j+1}^\delta)\}) \bigcap\LA_{\delta}(z_0,\beta,\alpha;\gamma_{i}^\delta)]}{\Pi_{j=1}^{i}h^\delta_r(z_j^{\delta,+})}\notag\\
\le &C\frac{\PP[\{\gamma_i^\delta\text{ does not connect to }(x_{2i}^\delta x_{2i+1}^\delta)\}\cap\LA_{\delta}(z_0,\beta,\alpha;\gamma_{i}^\delta)\cond E_{i}\bigcap(\cap_{j=1}^{i-1}\{\gamma_j^\delta\text{ does not connect to }(x_{2j}^\delta x_{2j+1}^\delta)\}) ]}{h^\delta_r(z_i^{\delta,+})}\notag\\
\le& C\eps,
\end{align}
where the last inequality is obtained by applying Lemma~\ref{lem::simple} to $\Omega_\delta\setminus\cup_{j<i}\gamma_j^\delta$, and choosing $C_0$ large enough. 

Second, note that if $E_i\cap\tilde E_i\cap E^*\left(\Omega_\delta;x_1^\delta,\ldots,x_{2n}^\delta;z_1^{\delta,+},\ldots,z_{n-1}^{\delta,+}\right)\neq\emptyset$, we must have that the distance of $z_0$ and $(x_{2i}^\delta x_{2i+1}^\delta)$ is larger than $\eps_0-2\beta>\eps_0/2$. Then using once again Lemma~\ref{lem::estimate}, we have that 
\begin{align}\label{eqn::2}
&\PP [A; E_i\cap\tilde E_i|E^*(\Omega_\delta;x_1^\delta,\ldots,x_{2n}^\delta;z_1^{\delta,+},\ldots,z_{n-1}^{\delta,+})]\notag\\
\le &C\frac{\PP[\cap_{j=1}^{i}\{\gamma_j^\delta\text{ does not connect to }(x_{2j}^\delta x_{2j+1}^\delta)\}\cap E_i\cap\tilde E_i]}{\Pi_{j=1}^{i}h^\delta_r(z_j^{\delta,+})}\notag\\
\le& C\times C_0^{-c}\frac{\PP[\cap_{j=1}^{i-1}\{\gamma_j^\delta\text{ does not connect to }(x_{2j}^\delta x_{2j+1}^\delta)\}]}{\Pi_{j=1}^{i-1}h^\delta_r(z_j^{\delta,+})}\times\frac{h^\delta_{\eps_0/2}(z_i^{\delta,+})}{h^\delta_r(z_i^{\delta,+})}\notag\\
\le& C\eps,
\end{align}
where the second inequality is obtained from the discrete Beurling estimate given in Lemma~\ref{lem::Beur} and the last inequlity is obtained by choosing large constant $C_0$, Lemma~\ref{lem::ratio} and Lemma~\ref{lem::estimate}.
By Corollary~\ref{coro::distance}, we have
\begin{equation}\label{eqn::3}
\PP[E^c|E^*(\Omega_\delta;x_1^\delta,\ldots,x_{2n}^\delta;z_1^{\delta,+},\ldots,z_{n-1}^{\delta,+})]\le \epsilon.
\end{equation}
Combining~\eqref{eqn::1},~\eqref{eqn::2} and~\eqref{eqn::3}, we complete the proof. See Figure~\ref{fig::twocases}.
\begin{figure}[ht!]
\begin{center}
\includegraphics[width=0.8\textwidth]{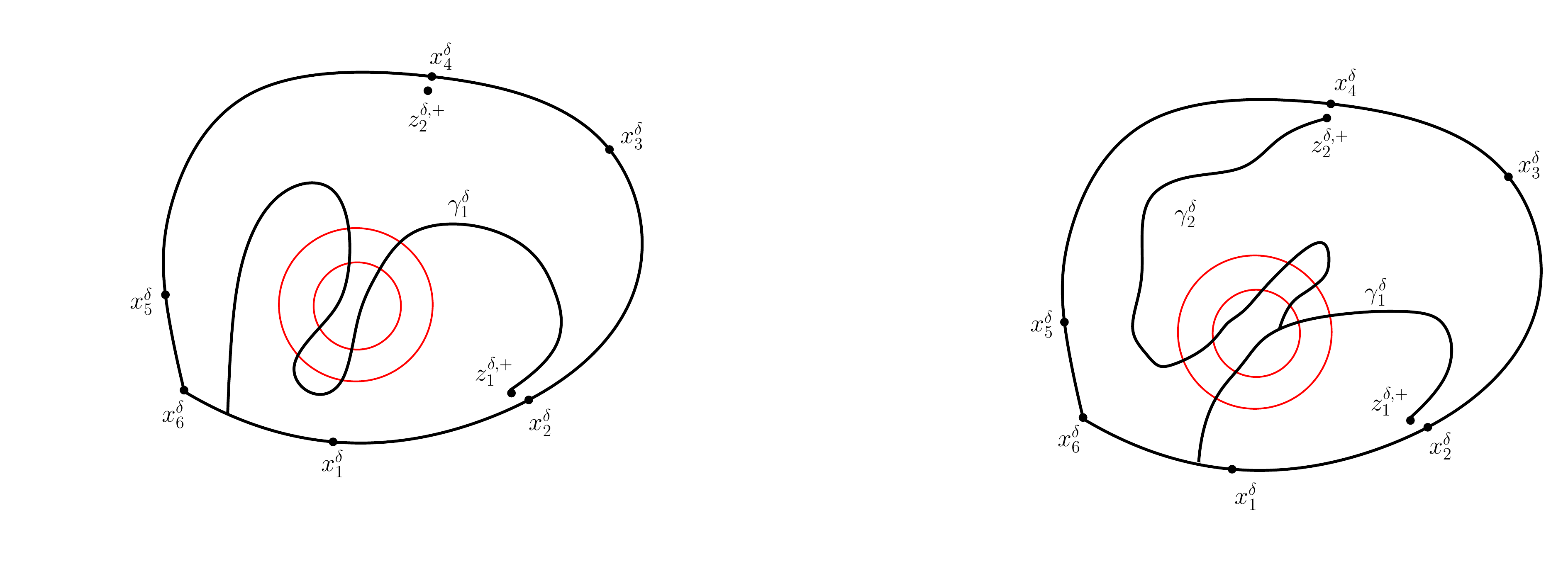}
\end{center}
\caption{\label{fig::twocases}This is an illustration of the two estimates~\eqref{eqn::1} and~\eqref{eqn::2}. The left one is the case that $\LA_{\delta}(z_0,\beta,\alpha_0;\gamma_{1}^\delta)$ occurs. The right one is the case that $\gamma^\delta_1$ and $\gamma_2^\delta$ together make an almost loop.}
\end{figure}
\end{proof}

\subsection{Identification of the limit}

In the remaining part of this section, we will identify the subsequential limit of the boundary branches. We will adopt the following strategy. First, we will consider the convergence of the branch in the $\ust$ with the boundary condition that $\cup_{i=1}^{n}(x_{2i}^\delta x_{2i+1}^\delta)$ are all wired to be a single boundary vertex, conditional on $z_1^{\delta,+}$ connects to $\cup_{j=2}^{n}(x^\delta_{2j}x^\delta_{2j+1})$. We denote this branch by $\tilde\gamma_1^\delta$; this has a unique scaling limit identified by the arguments for the case $n =2 $ (in Section \ref{sec::GFFtree}).
 Second, we will show that $\gamma_1^\delta$ is absolutely continuous with respect to $\tilde\gamma_1^\delta$ and identify the limiting Radon-Nikodym derivative, when the curve is restricted to a neighbourhood of $z_1^{\delta,+}$. In the third and final step, we will derive the law of $\gamma_1$ from the Radon-Nikodym derivative and combine with imaginary geometry (Theorem~\ref{thm::flowcoupling}) to complete the proof.

For the first step, we denote by $\tilde E(\Omega_\delta;x_1^\delta,\ldots,x_{2n}^\delta;z_1^{\delta,+})$ the set of spanning trees with the boundary condition that $\cup_{i=1}^{n}(x_{2i}^\delta x_{2i+1}^\delta)$ are all wired to be a single vertex and $z_1^{\delta,+}$ connects to $\cup_{l=2}^{n}(x_{2l}^\delta x_{2l+1}^\delta)$ within $\Omega_\delta$. We denote by $\tilde\gamma_1^\delta$ the branch connecting $z_1^{\delta,+}$ to $\cup_{l=2}^n(x_{2l}^\delta x_{2l+1}^\delta)$. We recall that we have fixed a conformal map $\varphi$ from $D$ onto $\HH$ such that $\varphi(x_1)<\cdots<\varphi(x_{2n})$. 

We also denote by $\phi_1(\cdot)=\phi_1(\cdot;\varphi(x_1),\ldots,\varphi(x_{2n}))$ the unique conformal map which maps the upper half plane $(\HH;\varphi(x_1), \ldots,\varphi(x_{2n}))$ onto a rectangle with vertical slits, such that $\phi_1(\varphi(x_{1}))=0$, $\phi_1(\varphi(x_{2}))=1$, $\phi_1(\varphi(x_{3}))=1+\ii K$, $\phi_1(\varphi(x_{2n}))=\ii K$, where $K$ is the height of the rectangle. See Figure~\ref{fig::conformal}.
\begin{figure}[ht!]
\begin{center}
\includegraphics[width=0.8\textwidth]{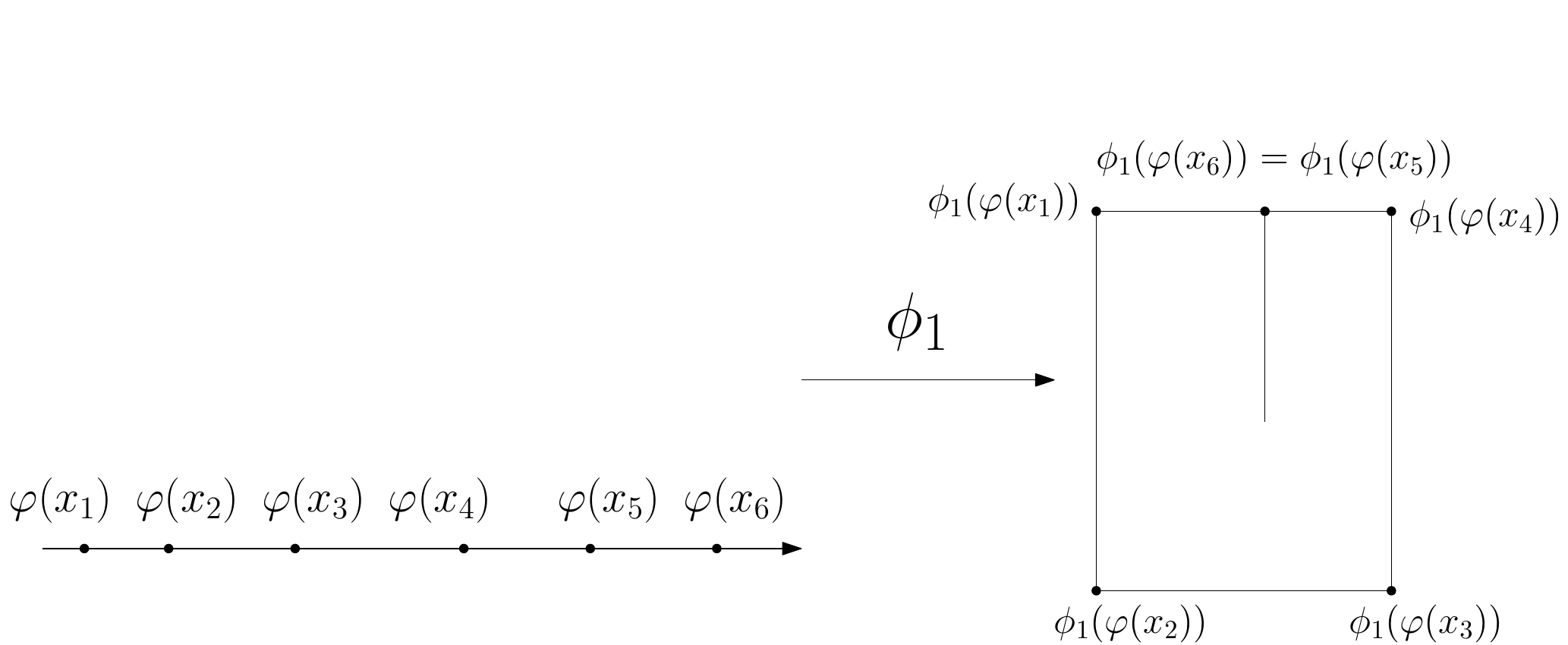}
\end{center}
	\caption{\label{fig::conformal}This is an illustration of $\phi_1$ when $n=3$. In this picture, $\phi_1(\varphi(x_{1}))=\ii$, $\phi_1(\varphi(x_{2}))=0$, $\phi_1(\varphi(x_{3}))=K$, $\phi_1(\varphi(x_{4}))=K+\ii$, where $K$ is the width of the rectangle. Note that $\phi_1$ maps $(\varphi(x_5)\varphi(x_6))$ to the vertical slit and $\phi_1(\varphi(x_5))=\phi(\varphi(x_6))$.}
\end{figure} 
(We will later write down an exact expression for $\phi_1$).
\begin{proposition}\label{P:RNinter}
The discrete curves $\{\tilde\gamma_1^\delta\}_{\delta>0}$ converge when $\delta\to 0$. We denote the limiting curve by $\tilde\gamma_1$. Then, almost surely $\tilde\gamma_1$ hits $\cup_{i=1}^n[x_{2i}x_{2i+1}]$ only at its two endpoints. The law of $\tilde\gamma_1$ can be described as follows: we denote by $\tilde W$ the driving function of $\varphi(\tilde\gamma_1)$ and denote by $(\tilde g_t:t\ge 0)$ the corresponding Loewner flow in the upper half plane, then we have
\[d \tilde W_t=\sqrt{2} B_t+2\partial_w\log(\partial_z \phi_1(z;\tilde g_t(\varphi(x_1)),\ldots,\tilde g_t(\varphi(x_{2n})))|_{z=w})|_{w=\tilde W_t}dt,\quad \tilde W_0=\varphi(z_1).\]
Moreover, $\tilde\gamma_1$ does not hit $(x_1x_2)\cup(x_{3}x_{4})$ almost surely.
\end{proposition}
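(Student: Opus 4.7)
The plan is to follow the structure of Proposition~\ref{prop::middle} (the $n=2$ case), generalised to multiple marked points via the slit-rectangle uniformisation $\phi_1$. First, I would establish tightness of $\{\tilde\gamma_1^\delta\}_{\delta>0}$ in the metric~\eqref{eqn::curves_metric} by Schramm's compactness argument, essentially repeating the proofs of Lemma~\ref{lem::simple} and Lemma~\ref{lem::tightness}: the required no-bubble estimate follows from the Beurling estimate (Lemma~\ref{lem::Beur}) together with the harmonic-measure ratio bound of Lemma~\ref{lem::ratio}, adapted to this multi-arc boundary. The conditioning event (that $z_1^{\delta,+}$ connects to $\bigcup_{l=2}^n(x_{2l}^\delta x_{2l+1}^\delta)$ within $\Omega_\delta$) has probability uniformly bounded from below by the type of Wilson-algorithm estimate appearing in Lemma~\ref{lem::estimate}, so the conditioning is nondegenerate and can be absorbed by absolute continuity. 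Any subsequential limit $\tilde\gamma_1$ is then a simple curve in $\overline\Omega$ which meets $\bigcup_{i=1}^n [x_{2i} x_{2i+1}]$ only at its two endpoints.

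To identify the driving function, I would use the observable
\[
M_t^\delta(v) := \PP_v\bigl[\LR_\delta \text{ in } \Omega_\delta \setminus \tilde\gamma_1^\delta[0,t] \text{ exits at } \bigcup_{l=2}^n(x_{2l}^\delta x_{2l+1}^\delta) \text{ before } (x_2^\delta x_3^\delta)\bigr],
\]
which is a martingale at the tip by the Doob $h$-transform description of the conditioned LERW. By Lemma~\ref{lem::conv_subarc} adapted to the slit domain, $M_t^\delta$ converges locally uniformly to the harmonic function $M_t$ on $\Omega \setminus \tilde\gamma_1[0,t]$ equal to $1$ on $\bigcup_{l=2}^n(x_{2l} x_{2l+1})$, $0$ on $(x_2 x_3) \cup \tilde\gamma_1[0,t]$, and with Neumann boundary conditions on the reflecting arcs. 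Under the composition $\phi_1^{(t)} \circ \tilde g_t \circ \varphi$, where $\phi_1^{(t)} := \phi_1(\,\cdot\,; \tilde g_t(\varphi(x_1)), \ldots, \tilde g_t(\varphi(x_{2n})))$, this boundary value problem reduces to a simple linear one on a slit rectangle, giving an explicit expression for $M_t$ in terms of $\phi_1^{(t)}$. Applying It\^o's formula to a natural functional built from $\phi_1^{(t)}$ and its derivatives at the tip, using the Loewner equations $\partial_t \tilde g_t(\varphi(x_i)) = 2/(\tilde g_t(\varphi(x_i)) - \tilde W_t)$, and requiring the resulting quantity to be a local martingale, pins down the drift to be $2\partial_w\log(\partial_z\phi_1^{(t)}(z))|_{z=w}|_{w=\tilde W_t}$, analogously to the route used in Proposition~\ref{prop::hsle}.

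It remains to verify that $\tilde\gamma_1$ avoids $(x_1 x_2) \cup (x_3 x_4)$ almost surely. Both arcs are reflecting arcs adjacent to the forbidden wired arc $(x_2 x_3)$, and by Beurling (Lemma~\ref{lem::Beur}) a random walk in the $\eps$-neighbourhood of any point in either arc hits $(x_2 x_3)$ before the target arcs $\bigcup_{l=2}^n(x_{2l} x_{2l+1})$ with probability tending to $1$ as $\eps\to 0$. After the Doob $h$-transform (conditioning on not hitting $(x_2 x_3)$), the conditioned LERW therefore has vanishing probability of visiting such a neighbourhood, uniformly in $\delta$; passing to the limit gives the claim. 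The main technical obstacle is the It\^o computation in the SDE identification step: tracking the time-dependence of $\phi_1^{(t)}$ under the Loewner flow, and identifying the correct martingale combination of its derivatives at the tip, requires some care, and is more intricate than in the $n=2$ case because of the slit structure of the rectangle image.
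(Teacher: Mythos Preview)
Your overall strategy---tightness via the scheme of Lemmas~\ref{lem::simple}--\ref{lem::tightness}, then identification of the driving function through a harmonic martingale observable---matches the paper's, which simply refers back to the arguments of Proposition~\ref{prop::middle}. Two points, however, need correction.

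First, the conditioning event is \emph{not} nondegenerate: its probability is comparable to $h_r^\delta(z_1^{\delta,+})$, which tends to zero as $\delta\to 0$ since $z_1^{\delta,+}$ is adjacent to the wired arc $(x_2^\delta x_3^\delta)$. Lemma~\ref{lem::estimate} does not bound this probability away from zero; it only controls the \emph{ratio} $\PP[E^*]/\prod_i h_r^\delta(z_i^{\delta,+})$. The degenerate conditioning must be handled exactly as in Proposition~\ref{prop::middle}, via the harmonic-ratio limit of Lemma~\ref{lem::ratio}, which you do cite---so this is a mis-statement rather than a structural gap.

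Second, and more seriously, your Beurling argument for the final claim is incorrect. The assertion that a random walk in the $\eps$-neighbourhood of \emph{any} point of $(x_1 x_2)\cup(x_3 x_4)$ hits $(x_2 x_3)$ before the target arcs with probability tending to $1$ fails away from $x_2$ and $x_3$: a point on $(x_1 x_2)$ near $x_1$ is adjacent to the target arc $(x_{2n} x_1)$, and a reflected walk started from its $\eps$-neighbourhood hits $(x_{2n} x_1)$ with probability bounded away from zero, uniformly in $\eps$. The Doob $h$-transform therefore does not make such visits improbable. The paper instead deduces this avoidance property from the explicit Schwarz--Christoffel formula~\eqref{eqn::conformalexpression} for $\phi_1$: the drift $2\partial_w\log\partial_z\phi_1$ decomposes as $-\sum_j (W_t - \tilde g_t(\varphi(x_j)))^{-1}$, which is the $\SLE_2(-1,\ldots,-1)$ drift, plus contributions from the slit preimages $\mu_l^{(1)}$ lying in intervals away from $(\varphi(x_1),\varphi(x_4))$. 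Hence $\tilde\gamma_1$ is locally absolutely continuous with respect to $\SLE_2(-1,-1;-1,-1)$ near $(x_1 x_2)\cup(x_3 x_4)$, and the latter is known not to touch its adjacent reflecting arcs.
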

\begin{proof}
This follows from the same arguments as in Proposition~\ref{prop::middle}. The fact that $\tilde\gamma_1$ does not hit $(x_1x_2)\cup(x_{3}x_{4})$ almost surely is from the absolutely continuity of $\tilde\gamma_1$ with respect to $\SLE_2(-1,-1;-1,-1)$. See the explicit expression of $\phi_1$ in~\eqref{eqn::conformalexpression}.
\end{proof}
Now, we come to the second step. By Lemma~\ref{lem::Gtightness}, we may suppose that the discrete curves $\{\gamma_1^\delta,\ldots,\gamma_{n-1}^\delta\}$ converge to $\{\gamma_1,\ldots,\gamma_{n-1}\}$ in law as $\delta\to 0$. We couple $\{\gamma_1^\delta,\ldots,\gamma_{n-1}^\delta\}$ and $\{\gamma_1,\ldots,\gamma_{n-1}\}$ together such that $\{\gamma_1^\delta,\ldots,\gamma_{n-1}^\delta\}$ converge to $\{\gamma_1,\ldots,\gamma_{n-1}\}$ almost surely as $\delta\to 0$ with respect to the sup norm topology up to reparametrisation defined in \eqref{eqn::curves_metric}. 

We fix a neighbourhood $U$ of $z_1$ such that $U$ does not contain $x_j$ for $x_j\neq z_1$. We denote by $U_\delta$ a discrete approximation. We denote by $\tau_U$ the hitting time by $\gamma_1$ of $\partial U$ and similarly denote by $\tau_{U}^\delta$ the corresponding time for $\gamma_1^\delta$. We couple $\{\tilde\gamma_1^\delta\}_{\delta>0}$ and $\tilde\gamma_1$ together such that $\tilde\gamma_1^\delta$ converges to $\tilde\gamma_1$ almost surely. Moreover, by changing $U$ slightly if necessary, we may assume without loss of generality that $\tau_{U}^\delta$ converges to $\tau_U$ almost surely. We define $\tilde\tau_U$ and $\tilde\tau_{U}^\delta$ for $\tilde\gamma_1$ and $\tilde\gamma_1^\delta$ similarly. Again, we may assume without loss of generality that $\tilde\tau_U^\delta$ converges to $\tilde\tau_U$ almost surely.

In the next lemma, we will derive the discrete Radon-Nikodym derivative of $\gamma_1[0,\tau_U]$ with respect to
 $\tilde\gamma_1[0,\tilde\tau_U]$. For every vertex $v_\delta\in \Omega_\delta$ and every edge $e_\delta$ which connects to $\cup_{i=1}^{n}(x_{2i}^\delta x_{2i+1}^\delta)$, we define
\[h_{\Omega_\delta}(v_\delta;(x^\delta_{2j}x^\delta_{2j+1})):=\PP_{v_\delta}[\LR_\delta\text{ hits }(x^\delta_{2j}x^\delta_{2j+1})\text{ before }\cup_{i\neq j}(x_{2i}^\delta x_{2i+1}^\delta)],\]
and define 
\[h_{\Omega_\delta}(v_\delta;e_\delta):=\PP_{v_\delta}[\LR_\delta\text{ hits }\cup_{j=1}^{n}(x^\delta_{2j}x^\delta_{2j+1})\text{ through }e_\delta],\]
where, as usual, $\LR_\delta$ is a random walk starting from $v_\delta$ and ending at $\cup_{i=1}^{n}(x^\delta_{2i}x^\delta_{2i+1})$, with reflecting boundaries along $\cup_{i=1}^{n}(x^\delta_{2i-1}x^\delta_{2i})$. 

We fix a simple discrete curve $(\eta_\delta(i):0\le i\le m)$ connecting $ z_1^{\delta,+}$ to $\partial U_\delta$ (i.e., the outer vertex boundary of $U_\delta$). We define $\Omega_\delta(\eta_\delta)$ to be $\Omega_\delta\setminus(z_1^\delta z_1^{\delta,+}\cup\eta_{\delta}[0,m-1])$. We define $z_1^{\delta,+}(\eta_\delta):=\eta_\delta(m)$ and define $z_i^{\delta,+}(\eta_\delta):=z_i^{\delta,+}$ for $2\le i\le n-1$.
\begin{lemma}\label{lem::disradon}
 We have the following explicit formula:
\begin{align*}
\frac{\PP[\gamma_1^\delta[0,\tau_U^\delta]=\eta_\delta]}{\PP[\tilde\gamma_1^\delta[0,\tilde\tau_U^\delta]=\eta_\delta]}=
&\frac{1-h_{\Omega_\delta}(z_1^{\delta,+};(x^\delta_{2}x^\delta_{3}))}{\det (F(\Omega_\delta))} 
\frac
{\det (F(\Omega_\delta (\eta_\delta)))}
{1-h_{\Omega_\delta(\eta_\delta)}(z_1^{\delta,+}(\eta_\delta);(x^\delta_{2}x^\delta_{3}))}.
\end{align*}
where the matrix $F(\Omega_\delta)$ is defined by 
$$
F(\Omega_\delta) = I-(h_{\Omega_\delta}(z_i^{\delta,+};(x_{2j}^\delta x_{2j+1}^\delta)))_{1\le i,j\le n-1}.
$$
\end{lemma}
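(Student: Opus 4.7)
The plan is to decompose both probabilities on the left-hand side via Bayes' rule and use the domain Markov property of the UST to pass to the truncated domain $\Omega_\delta(\eta_\delta)$; after this the ratio becomes a product of two elementary ratios matching the formula in the lemma, provided we can identify $\PP[\cT^*_\delta\in E^*]$ with $\det F(\Omega_\delta)$.

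First, writing $q_\delta(\eta_\delta)$ for the (unconditional) probability that the LERW from $z_1^{\delta,+}$ in $\cT^*_\delta$ begins with $\eta_\delta$, Bayes' rule gives
\[
\PP\bigl[\gamma_1^\delta[0,\tau_U^\delta]=\eta_\delta\bigr]=\frac{q_\delta(\eta_\delta)\,\PP[E^*\mid\mathrm{init}=\eta_\delta]}{\PP[E^*]}
\quad\text{and}\quad
\PP\bigl[\tilde\gamma_1^\delta[0,\tilde\tau_U^\delta]=\eta_\delta\bigr]=\frac{q_\delta(\eta_\delta)\,\PP[\tilde E_1\mid\mathrm{init}=\eta_\delta]}{\PP[\tilde E_1]},
\]
so $q_\delta(\eta_\delta)$ cancels in the quotient. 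By the domain Markov property for Wilson's algorithm, conditioning on the initial segment $\eta_\delta$ of the branch from $z_1^{\delta,+}$ is equivalent to restarting on the reduced graph $\Omega_\delta(\eta_\delta)$ with source $z_1^{\delta,+}(\eta_\delta):=\eta_\delta(m)$ and with the same wired boundary $\cup_j (x_{2j}^\delta x_{2j+1}^\delta)$. A direct Wilson computation then yields
\[
\PP[\tilde E_1\mid\mathrm{init}=\eta_\delta]=1-h_{\Omega_\delta(\eta_\delta)}\bigl(z_1^{\delta,+}(\eta_\delta);(x_2^\delta x_3^\delta)\bigr),
\]
and analogously $\PP[\tilde E_1]=1-h_{\Omega_\delta}(z_1^{\delta,+};(x_2^\delta x_3^\delta))$.

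The core technical step is the determinantal identity $\PP[\cT^*_\delta\in E^*]=\det F(\Omega_\delta)$, together with its analogue on $\Omega_\delta(\eta_\delta)$. Using the bijection $\tilde E\leftrightarrow E^*$ recalled in Section~\ref{sec::Convergence of branches}, this reduces to the combinatorial statement $|\tilde E|=\det F(\Omega_\delta)\cdot T_\ast$, where $T_\ast=\det L^{\mathrm{int}}$ counts the spanning trees of the wired-together graph by Kirchhoff's matrix-tree theorem and $L^{\mathrm{int}}$ is the discrete Dirichlet Laplacian on $\Omega_\delta$. The classical representation $H=(L^{\mathrm{int}})^{-1}B'$ of the harmonic-measure matrix $H=(h_{\Omega_\delta}(z_i^{\delta,+};(x_{2j}^\delta x_{2j+1}^\delta)))$, together with the matrix-determinant lemma, gives $\det F(\Omega_\delta)\cdot T_\ast=\det(L^{\mathrm{int}}-B'P)$, where $B'$ is the $V^{\mathrm{int}}\times(n-1)$ matrix of edge-weights from interior vertices to the boundary arcs $c_j=(x_{2j}^\delta x_{2j+1}^\delta)$ for $j<n$, and $P$ projects onto the source rows $\{z_i^{\delta,+}\}$. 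The remaining identity $|\tilde E|=\det(L^{\mathrm{int}}-B'P)$ then follows from Chaiken--Chen's all-minors matrix-tree theorem, once one interprets $\det(L^{\mathrm{int}}-B'P)$ as a signed enumeration of spanning forests of the alt-BC graph realising the prescribed parent structure (in the rooting at $c_n$, the parent of $c_i$ is $z_i^{\delta,+}$ for each $i<n$), with the planarity of the configuration forcing all surviving signs to be $+1$.

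The main obstacle is this last combinatorial identity: it does \emph{not} reduce to a pointwise matching of individual cycle probabilities with products of harmonic measures (as one would naively expect by comparison with independent walks), and in fact such a pointwise identity fails already in simple finite examples. Rather, the equality $|\tilde E|=\det(L^{\mathrm{int}}-B'P)$ holds only after summing over all acyclic $\sigma$-configurations, and the delicate point is to verify that the signed Chaiken--Chen expansion indeed collapses to this positive count; as a backup strategy I would induct on $n$, using a cofactor expansion of $\det F$ together with a carefully chosen Wilson ordering that peels off one source $z_i^{\delta,+}$ at a time.
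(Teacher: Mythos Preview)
Your reduction via Bayes' rule and the domain Markov property is correct and matches the paper exactly: both conditional probabilities share the unconditional factor $q_\delta(\eta_\delta)$, and after cancellation the ratio becomes
\[
\frac{\PP[\tilde E(\Omega_\delta;z_1^{\delta,+})]}{\PP[E^*(\Omega_\delta;z_1^{\delta,+},\ldots,z_{n-1}^{\delta,+})]}\cdot
\frac{\PP[E^*(\Omega_\delta(\eta_\delta);z_1^{\delta,+}(\eta_\delta),\ldots,z_{n-1}^{\delta,+}(\eta_\delta))]}{\PP[\tilde E(\Omega_\delta(\eta_\delta);z_1^{\delta,+}(\eta_\delta))]},
\]
with the two $\tilde E$-probabilities given by Wilson's algorithm as $1-h(\cdot;(x_2^\delta x_3^\delta))$.

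Where your proposal diverges from the paper is in the proof of $\PP[E^*]=\det F(\Omega_\delta)$. You attempt to pass through the matrix-tree theorem, the matrix-determinant lemma, and Chaiken's all-minors expansion, and you correctly identify that the sign analysis is the sticking point. The paper avoids this machinery entirely and instead computes $\PP[E^*]$ directly by inclusion--exclusion on the events $\LA_{i_1,\ldots,i_s}$ defining the complement of $E^*$. The key observation is that each term in the inclusion--exclusion is indexed by a permutation $\sigma\in S_{n-1}$ (the cycles of $\sigma$ being the tuples $(i_1,\ldots,i_s)$), and carries sign $(-1)^{|\mathrm{Supp}(\sigma)|}\,\mathrm{sgn}(\sigma)$. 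On the event $\LA_\sigma$, the assumption $z_i\in\{x_{2i},x_{2i+1}\}$ together with planarity forces the relevant branches to be \emph{disjoint}, so Fomin's determinantal identity (\cite{MR1837248}, or Proposition~9.6.2 in \cite{LawlerLimic}) applies and the sum over permutations with fixed support $\{r_1,\ldots,r_k\}$ collapses to the $k\times k$ principal minor $\det\bigl(h_{\Omega_\delta}(z_{r_j}^{\delta,+};(x_{2r_l}^\delta x_{2r_l+1}^\delta))\bigr)$. One then concludes with the elementary polynomial identity
\[
\det(I-A)=1+\sum_{k=1}^{n-1}(-1)^k\sum_{1\le r_1<\cdots<r_k\le n-1}\det\bigl((A_{r_j,r_l})_{1\le j,l\le k}\bigr).
\]

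So your proposal has the right skeleton but an unnecessarily heavy (and, as you yourself note, incomplete) argument for the determinantal step. Replace the Chaiken--Chen route by inclusion--exclusion on the $\LA$-events plus Fomin's formula; the planarity input you need is precisely the disjointness of the cyclic branches, which is where the paper's standing assumption on the $z_i$'s is used.
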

\begin{proof}
This is a version of Fomin's determinantal formula (\cite{MR1837248} and see also Proposition 9.6.2 in \cite{LawlerLimic}). 
Note that if we write $E^* (\Omega_\delta; z_1^{\delta,+},\ldots,z_{n-1}^{\delta,+})$ for $E^*( \Omega_\delta; x_1,^\delta, \ldots, x_{2n}^\delta; z_1^{\delta,+},\ldots,z_{n-1}^{\delta,+})$ and likewise with $\tilde E$, 
\begin{align}\label{eqn::formula}
\frac{\PP[\gamma_1^\delta[0,\tau_U^\delta] =\eta_\delta\cond E^*(\Omega_\delta;z_1^{\delta,+},\ldots,z_{n-1}^{\delta,+})]}{\PP[\tilde\gamma_1^\delta[0,\tilde\tau_U^\delta]
=\eta_\delta\cond \tilde E(\Omega_\delta; z_1^{\delta,+})]}
&= \frac{|\tilde E(\Omega_\delta; z_1^{\delta,+})|}{| E^*(\Omega_\delta; z_1^{\delta,+},\ldots,z_{n-1}^{\delta,+})|}\frac{| E^*(\Omega_\delta(\eta_\delta); z_1^{\delta,+}(\eta_\delta),\ldots,z_{n-1}^{\delta,+}(\eta_\delta))|}{|\tilde E(\Omega_\delta(\eta_\delta); z_1^{\delta,+}(\eta_\delta))|}\notag\\
&= \frac{\PP[\tilde E(\Omega_\delta; z_1^{\delta,+})]}{\PP[E^*(\Omega_\delta; z_1^{\delta,+},\ldots,z_{n-1}^{\delta,+})]} \frac{\PP[E^*(\Omega_\delta(\eta_\delta); z_1^{\delta,+}(\eta_\delta),\ldots,z_{n-1}^{\delta,+}(\eta_\delta))]}{\PP[\tilde E(\Omega_\delta(\eta_\delta); z_1^{\delta,+}(\eta_\delta))]}.
\end{align}
Clearly the numerator of the first fraction and the denominator of the second are given by $1- h (z_1^{\delta, +} ;( x_2^\delta, x_3^\delta))$ in their respective domains $\Omega_\delta$ and $\Omega_\delta (\eta_\delta)$. The remaining two terms can be written as determinants, as follows.   
Recall the definition of $E^*(\Omega_\delta; z_1^{\delta,+},\ldots,z_{n-1}^{\delta,+})$ in~\eqref{eqn::eventdef}. By the inclusion-exclusion formula, we have
\begin{align*}
\PP[E^*(\Omega_\delta; z_1^{\delta,+},\ldots,z_{n-1}^{\delta,+})] 
=&1-\sum_{j=1}^{n-1}(-1)^{j-1}\sum_{(i_{1,1},\ldots,i_{1,s_1}),\ldots,(i_{j,1},\ldots,i_{j,s_j})}\PP[\LA_{i_{1,1},\ldots,i_{1,s_1}}\cap\cdots\cap \LA_{i_{j,1},\ldots,i_{j,s_j}}],
\end{align*}
where the sum is taken over all the non-intersecting set of numbers and we recall that the events $\cA_{j_1, \ldots, j_s}$ are defined in \eqref{eq:A}. Note that if we fix a subset of $\{1,\ldots,n-1\}$,  which we denote by $\{r_1,\ldots,r_k\}$, then every partition of $\{r_1,\ldots,r_k\}$ appears in the sum exactly one time. We note that each term in this sum corresponds uniquely to a permutation $\sigma$ of the indices $\{1, \ldots, n-1\}$: for instance the event $\LA_{i_{1,1},\ldots,i_{1,s_1}}$ specifies that the permutation contains a cycle mapping $i_{1,1}$ to $i_{1,2}, \ldots, i_{1, s_1}$ to $i_{1,1}$. Furthermore, the sign $(-1)^{j}$ then can be written as  $(-1)^k \text{sgn}(\sigma)$, where $k = |\text{Supp}(\sigma)| $ is the size of the support of $\sigma$. 
Thus, if we simply denote by $\sigma$ the corresponding partition and denote by $\LA_\sigma$ the intersection of the corresponding events, we have
\begin{align*}
\PP[E^*(\Omega_\delta; z_1^{\delta,+},\ldots,z_{n-1}^{\delta,+})]
=&1+\sum_{k=1}^{n-1}(-1)^{k}
\sum_{\sigma\in S_{n-1} : |\text{Supp}(\sigma)| = k}
\text{sgn}(\sigma)\PP[\LA_\sigma].
\end{align*}
Note that on the event $\LA_\sigma$, the branches from $z_{r_j}^{\delta,+}$ for $1\le j\le k$ are disjoint. This fact comes from our assumptions on $(\Omega;x_1,\ldots,x_{2n};z_1,\ldots,z_{n-1})$ that $z_j$ equals $x_{2j-1}$ or $x_{2j}$ for $1\le j\le n-1$ and that the branches are oriented. By~\cite[Theorem 6.2]{MR1837248} (see also see also Proposition 9.6.2 in \cite{LawlerLimic}), we know that for any edges $\{e^\delta_{1},\ldots,e^\delta_{k}\}$ such that $e^\delta_{j}$ connects to $(x_{2r_j}^\delta x_{2r_j+1}^\delta)$ for $1\le j\le k$, we have
\begin{align*}
&\sum_{\sigma:\sigma\text{ is a permutation on }\{r_1,\ldots,r_k\}}\text{sgn}(\sigma)\PP[\cap_{j=1}^{k}\{z_{r_j}^{\delta,+}\text{ connects to }(x^\delta_{2\sigma(r_j)}x^\delta_{2\sigma(r_j)+1})\text{ in }\Omega_\delta\text{ through }e_{\sigma(r_j)}^\delta\}]\\=&\det((h_{\Omega_\delta}(z_{r_j}^{\delta,+},e^\delta_{l}))_{1\le j,l\le k}).
\end{align*}
By summing over all the possible edges $\{e^\delta_{1},\ldots,e^\delta_{k}\}$, we deduce that 
\[\sum_{\sigma:\sigma\text{ is a permutation on }\{r_1,\ldots,r_k\}}\text{sgn}(\sigma)\PP[\LA_\sigma]=\det((h_{\Omega_\delta}(z_{r_j}^{\delta,+},(x_{2r_l}^\delta x_{2r_l+1}^\delta)))_{1\le j,l\le k}).\]
Thus, we have
\[\PP[E(\Omega_\delta; z_1^{\delta,+},\ldots,z_{n-1}^{\delta,+})]=1+\sum_{k=1}^{n-1}(-1)^{k}\sum_{1\le r_1<\ldots<r_{k}\le n-1}\det((h_{\Omega_\delta}(z_{r_j}^{\delta,+},(x_{2r_l}^\delta x_{2r_l+1}^\delta)))_{1\le j,l\le k}).\]
The conclusion comes from a simple observation: suppose $A=(A_{ij})_{1\le i,j\le n-1}$ is a $(n-1)\times (n-1)$ matrix, then we have
\[\det(I-rA)=1+\sum_{k=1}^{n-1}(-r)^{k}\sum_{1\le r_1<\ldots<r_{k}\le n-1}\det((A_{r_j,r_l})_{1\le j,l\le k}).\]
(This is well known and easily seen by expanding the left side and considering the coefficient of $r^j$ for $0\le j\le n-1$.) We conclude the proof by setting $r=1$ in the above.
\end{proof}
\begin{remark}
A similar Fomin determinantal expression for the Radon-Nikodym derivative also appears in~\cite{KarrilaKytolaPeltolaCorrelationsLERWUST} and~\cite{KarrilaUSTBranches}, although the boundary conditions considered there are somewhat different: in~\cite{KarrilaKytolaPeltolaCorrelationsLERWUST}, the authors considered the branches of the $\ust$ given their starting points and ending points. In~\cite{KarrilaUSTBranches}, the author gave an explicit formula of the law of multiple $\SLE_2$ depending on the construction in~\cite{KarrilaKytolaPeltolaCorrelationsLERWUST}. In other words the difference is that we do not specify the endpoints of the branches here. 
\end{remark}
The following corollary gives the explicit form of the Radon-Nikodym derivative of $\gamma_1^\delta[0,\tau_U^\delta]$ with respect to $\tilde\gamma_1^\delta$. For $\tilde\gamma_1^\delta$, we define 
\[M^\delta_{t}:=
\frac
{\det ( F(\Omega_\delta(\tilde\gamma_1^\delta[0,t\wedge\tilde\tau_U^\delta])))  }
{1-h_{\Omega_\delta(\tilde\gamma_1^\delta[0,t\wedge\tilde\tau_U^\delta])}(z_1^{\delta,+}(\tilde\gamma_1^\delta[0,t\wedge\tilde\tau_U^\delta]);(x^\delta_{2}x^\delta_{3}))}\]
\begin{corollary}\label{coro::disradon}
Suppose $r$ is a continuous function on the curves space $(\LP,d)$ defined in~\eqref{eqn::curves_metric}. Then, we have
\[\E[r(\gamma_1^\delta[0,t\wedge\tau_U^\delta])]=\E\left[\frac{M^\delta_{t}}{M^\delta_{0}}r(\tilde\gamma_1^\delta[0,t\wedge\tilde\tau_U^\delta])\right].\]
In particular, $(M^\delta_{t}:t\ge 0)$ is a martingale for $\tilde\gamma_1$ stopped at $\tilde\tau_U^\delta$. Furthermore, the martingale $M^\delta_t/M_0^\delta$ is uniformly bounded in time and in $\delta$. 
\end{corollary}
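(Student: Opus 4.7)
My plan is to derive the first identity as a direct consequence of Lemma~\ref{lem::disradon}, after a short extension of that lemma, and then to deduce the martingale property abstractly from the Radon--Nikodym interpretation. As a first step, I would observe that Lemma~\ref{lem::disradon} extends verbatim from paths $\eta_\delta$ that reach $\partial U_\delta$ to arbitrary finite self-avoiding paths $\eta_\delta$ starting at $z_1^{\delta,+}$ and remaining in $U_\delta$: the Fomin-type determinantal computation in the proof uses only the domain Markov property of the loop-erased random walks generating $\gamma_1^\delta$ and $\tilde\gamma_1^\delta$, which applies in both situations. With this extension in hand, the first identity follows by conditioning on the event $\{\gamma_1^\delta[0,t\wedge\tau_U^\delta]=\eta_\delta\}$, multiplying by $r(\eta_\delta)$, and summing over all such realizations $\eta_\delta$, noting that by construction the right-hand side of Lemma~\ref{lem::disradon} evaluated at $\eta_\delta$ is precisely $M^\delta_t/M^\delta_0$ on $\{\tilde\gamma_1^\delta[0,t\wedge\tilde\tau_U^\delta]=\eta_\delta\}$.

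For the martingale property, the identity above says that $M^\delta_t/M^\delta_0$ is the Radon--Nikodym derivative of the law of $\gamma_1^\delta[0,t\wedge\tau_U^\delta]$ with respect to the law of $\tilde\gamma_1^\delta[0,t\wedge\tilde\tau_U^\delta]$. Since these stopped laws are consistent with the natural filtration $\cF_t=\sigma(\tilde\gamma_1^\delta(s):s\le t\wedge\tilde\tau_U^\delta)$, the standard consistency of Radon--Nikodym derivatives along a filtration (equivalently, the tower property of conditional expectations) gives that $(M^\delta_t/M^\delta_0)_{t\ge 0}$ is a nonnegative $\PP$-martingale with respect to the filtration generated by $\tilde\gamma_1^\delta$, stopped at $\tilde\tau_U^\delta$.

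For the uniform boundedness, I will control the two factors in $M^\delta_t/M^\delta_0$ separately. The harmonic factor $1-h_{\Omega_\delta(\eta_\delta)}(z_1^{\delta,+}(\eta_\delta);(x_2^\delta x_3^\delta))$ is bounded below uniformly in $\delta$ and in $\eta_\delta\subset\overline U_\delta$, because $U$ was chosen disjoint from all $x_j$ with $j\ne 1$, so $z_1^{\delta,+}(\eta_\delta)$ stays at positive distance from $(x_2^\delta x_3^\delta)$, and by an application of the uniform crossing estimate the random walk has uniformly positive probability to exit through $(x_{2n}^\delta x_1^\delta)$ rather than $(x_2^\delta x_3^\delta)$. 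For the determinant factor, the inclusion--exclusion computation in the proof of Lemma~\ref{lem::disradon} yields the identity $\det F(\Omega_\delta)=\PP[E^*(\Omega_\delta;z_1^{\delta,+},\ldots,z_{n-1}^{\delta,+})]$, and likewise for $\Omega_\delta(\eta_\delta)$. Hence $\det F(\Omega_\delta(\eta_\delta))\le 1$, while by Lemma~\ref{lem::estimate} the denominator satisfies $\det F(\Omega_\delta)\ge c\prod_{i=1}^{n-1}h_r^\delta(z_i^{\delta,+})$, and each $h_r^\delta(z_i^{\delta,+})$ converges as $\delta\to 0$ to a strictly positive harmonic measure of $\partial B(z_i,r)\cap\Omega$ from $z_i$, giving a uniform lower bound independent of $\delta$.

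The step I expect to be the most delicate is the first one, namely justifying cleanly that the Fomin-type ratio in Lemma~\ref{lem::disradon} is applicable to stopped paths that need not reach $\partial U_\delta$: one has to be careful that the inclusion--exclusion decomposition of the denominator is applied in the correct domain $\Omega_\delta(\eta_\delta)$ after the Markov step, and that the normalising factors involving $z_1^{\delta,+}(\eta_\delta)$ and the restricted harmonic measures match up. Once this bookkeeping is in place, the martingale property and boundedness follow as described.
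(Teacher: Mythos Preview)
Your derivation of the first identity and of the martingale property is essentially the paper's argument: sum Lemma~\ref{lem::disradon} over all admissible paths $\eta_\delta$ (including those stopped before reaching $\partial U_\delta$, which indeed requires only the domain Markov step), and read off the martingale property from the Radon--Nikodym interpretation. This part is fine.

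The uniform bound argument, however, contains a genuine gap. Both of your key claims are false. First, $z_1^{\delta,+}(\eta_\delta)$ does \emph{not} stay at positive distance from the arc $(x_2^\delta x_3^\delta)$ in the slit domain: in $\Omega_\delta(\eta_\delta)$ the path $\eta_\delta[0,m-1]$ is wired to this arc, and the tip $\eta_\delta(m)$ is adjacent to it, so $1-h_{\Omega_\delta(\eta_\delta)}(z_1^{\delta,+}(\eta_\delta);(x_2^\delta x_3^\delta))\to 0$ as $\delta\to 0$, not bounded below. Second, $h_r^\delta(z_i^{\delta,+})$ does \emph{not} converge to a positive harmonic measure: $z_i^{\delta,+}$ is at lattice distance one from the killing arc $(x_{2i}^\delta x_{2i+1}^\delta)$, so $h_r^\delta(z_i^{\delta,+})\to 0$ as $\delta\to 0$. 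Your bound $\det F(\Omega_\delta)\ge c\prod_i h_r^\delta(z_i^{\delta,+})$ is therefore a lower bound by something tending to zero, which is useless.

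The correct argument, indicated in the paper, is that all four quantities in~\eqref{eqn::formula} are degenerate and one must exploit the \emph{cancellation} between them. Write $M^\delta_t/M^\delta_0$ as the ratio of probabilities in~\eqref{eqn::formula}. Apply the two-sided estimate of Lemma~\ref{lem::estimate} in both $\Omega_\delta$ and $\Omega_\delta(\eta_\delta)$, and observe that $\PP[\tilde E(\cdot)]$ is likewise comparable to $h_r^\delta(z_1^{\delta,+}(\cdot))$ in each domain (this is the argument around~\eqref{Eq:boundratio}: once the walk escapes $B(z_1,r)$, it reaches a distant arc with uniformly positive probability). The factors $h_r^\delta(z_1^{\delta,+})$ and $h_r^\delta(z_1^{\delta,+}(\eta_\delta);\Omega_\delta(\eta_\delta))$ then cancel in numerator and denominator, and what remains is a ratio $\prod_{i\ge 2} h_r^\delta(z_i^{\delta,+};\Omega_\delta(\eta_\delta)) / \prod_{i\ge 2} h_r^\delta(z_i^{\delta,+};\Omega_\delta)$. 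Since $\eta_\delta\subset U_\delta$ and $U$ is bounded away from $z_2,\ldots,z_{n-1}$, each of these ratios is uniformly bounded (the slit does not affect escape from a faraway ball), giving the uniform bound on $M^\delta_t/M^\delta_0$.
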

\begin{proof}
This comes from Lemma~\ref{lem::disradon} by summing over all possible $\eta_\delta$. The uniform bound comes from ~\eqref{eqn::formula} and Lemma~\ref{lem::estimate} (the uniformity in $t$ then follows in the same manner as in \eqref{Eq:boundratio}).
\end{proof}
Now, we come to the third step. We will first show the convergence of $M^\delta_{t}/M^\delta_{0}$. To do this, let us recall some basic facts about Schwarz--Christoffel maps. 
Recall that $\varphi$ is a fixed conformal map from $\Omega$ to $\HH$ such that $\varphi(x_1)\cdots<\varphi(x_{2n})$. Let $y_i = \varphi(x_i)$ and let $\mathbf{y}$ denote the point configuration $\mathbf{y} = (y_1, \ldots, y_{2n})$. For $1\le j\le n$, suppose $h_j$ is the bounded harmonic function satisfying the following boundary condition: $h_j$ equals $1$ on $(x_{2j}x_{2j+1})$ and $h_j$ equals $0$ on $\cup_{i\neq j}(x_{2i}x_{2i+1})$; furthermore $\partial_n (h_j\circ\varphi^{-1})$ equals $0$ on $\cup_{i=1}^n(y_{2i-1}, y_{2i})$. By the results in~\cite[Section 4]{LiuPeltolaWuUST}, we have that $1-h_j\circ \varphi^{-1}$ is the imaginary part of the conformal map $\phi_j$, which maps $(\HH;\varphi(x_1),\ldots,\varphi(x_{2n}))$ onto the rectangle with $n-2$ slits as in Figure \ref{fig::conformal}. The explicit form of the conformal map $\phi_j(\cdot)=\phi_j(\cdot;\mathbf{y})$ is given by 
\begin{equation}\label{eqn::conformalexpression}
\phi_j(z)= \phi_j(z ; \mathbf{y}) =i\int_{y_{2j}}^z\frac{\Pi_{l=1}^{n-2}(u-\mu_{l}^{(j)})}{\Pi_{j=1}^{2n}(u-y_j)^{1/2}}du\times \left(\int_{y_{2j+1}}^{y_{2j+2}}\frac{\Pi_{l=1}^{n-2}(u-\mu_{l}^{(j)})}{\Pi_{j=1}^{2n}(u-y_j)^{1/2}}du\right)^{-1},
\end{equation}
where the parameter $\mu_l^{(j)}$ (the pre-image of the tip of the $l$-th slit) is uniquely determined by the following equations:
\[
\phi_j(y_{2l+2}) - \phi_j(y_{2l+1}) =C_j 
\int_{y_{2l+1}}^{y_{2l+2}}\frac{\Pi_{l=1}^{n-2}(u-\mu_{l}^{(j)})}{\Pi_{j=1}^{2n}(u-y_j)^{1/2}}du=0,\quad\text{ for }l\neq j-1,j,\]
(with $C_j$ the factor on the right hand side of \eqref{eqn::conformalexpression}).
We will only need the fact that $\mu_l^{(j)}=\mu_l^{(j)}(\mathbf{y})$ is a smooth function of $\mathbf{y}$,  by the inverse function theorem.
Note that from the boundary condition, by the same argument as in Lemma~\ref{lem::conv_circ} (i.e., convergence of random walk to Brownian motion with orthogonal reflection along the corresponding arcs), we know that $h_{\Omega_\delta}(\cdot;(x^\delta_{2j}x^\delta_{2j+1}))$ converges to $1-\Im(\phi_j\circ\varphi)$ uniformly for $1\le j\le n$. For the same reason, if we let $\mathbf{y}_t = \tilde g_{t\wedge \tilde \tau_U} ( \mathbf{y})$, where $\tilde g_t$ is the Loewner flow associated with $\varphi (\tilde \gamma_1)$ in $\HH$, then 
  $h_{\Omega_\delta(\tilde\gamma_1^\delta[0,t\wedge\tilde\tau_U^\delta])}(\cdot;(x_{2j}^\delta x_{2j+1}^\delta))$ converges to 
\[1-\Im [ \phi_j \circ 
\tilde g_{t\wedge\tilde\tau_U}(\cdot ; \mathbf{y}_t)
\circ\varphi] 
\] 
uniformly for $1\le j\le n$.

In the following lemma, we will show the convergence of $M^\delta_{t\wedge \tilde\tau_U^\delta}/M^\delta_{0}$. 
Recall that we denote by $\tilde W$ the driving function of $\varphi(\tilde\gamma_1)$. To simplify the notation, we introduce the following definitions. Define $z_{1,t}:=\tilde\gamma_1(t\wedge\tilde\tau_U)$ and define $z_{i,t}:=z_i$ for $2\le i\le n-1$. For $1\le i\le n-1$ and for $1\le j\le n$, we define
\begin{align}
h_{ij}(t):=&\frac{\Pi_{l=1}^{n-2}(\tilde g_{t\wedge\tilde\tau_U}(\varphi(z_{i,t}))-\mu_{l}^{(j)}( \mathbf{y}_t ))}{\Pi_{1\le l\le 2n,x_l\neq z_{i,t}}(\tilde g_{t\wedge\tilde\tau_U}(\varphi(z_{i,t}))-y_{l,t})^{1/2}}\times \left(\int_{ y_{2j+1,t}}^{y_{2j+2,t}}\frac{\Pi_{l=1}^{n-2}(u-\mu_{l}^{(j)}( \mathbf{y}_t)}{\Pi_{j=1}^{j=2n}(u- y_{l, t})^{1/2}}du\right)^{-1},\label{hij}
\end{align}
where $\mathbf{y}_t = (y_{1,t}, \ldots, y_{n,t})$ (which is related to the expression in \eqref{eqn::conformalexpression} by taking the derivative with respect to $z$ and changing the domain). 
Define 
\[M_{t}:=\frac{\det\left(\left(h_{ij}(t)\right)_{1\le i,j\le n-1}\right)}{h_{11}(t)}\times \Pi_{i=2}^{n-1}\tilde g'_{t\wedge\tilde\tau_U}(\varphi(z_{i,t})).\] 
\begin{lemma}\label{lem::contradon}
The discrete Radon-Nikodym derivative $M^\delta_{t}/M^\delta_{0}$ below Lemma \ref{lem::disradon} converges to $M_{t}/M_{0}$ as $\delta\to 0$. Moreover, $M_t/M_0$ is the Radon-Nikodym derivate of $\gamma_1[0,t\wedge\tau_U]$ with respect to $\tilde\gamma_1[0,t\wedge\tilde\tau_U]$,
\end{lemma}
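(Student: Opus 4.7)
The plan is to derive the lemma from the discrete identity of Corollary~\ref{coro::disradon} by passing to the limit. This requires two steps: (a) almost sure convergence $M_t^\delta/M_0^\delta \to M_t/M_0$ along a coupling in which $\tilde\gamma_1^\delta \to \tilde\gamma_1$ uniformly up to reparametrization, and (b) dominated convergence using the uniform bound on $M_t^\delta/M_0^\delta$ from Corollary~\ref{coro::disradon}. Once (a) and (b) are in hand, applying them to $\E[r(\gamma_1^\delta[0,t\wedge\tau_U^\delta])] = \E[(M_t^\delta/M_0^\delta) r(\tilde\gamma_1^\delta[0,t\wedge\tilde\tau_U^\delta])]$ for each bounded continuous $r$ yields $\E[r(\gamma_1[0,t\wedge\tau_U])] = \E[(M_t/M_0) r(\tilde\gamma_1[0,t\wedge\tilde\tau_U])]$, which identifies $M_t/M_0$ as the required Radon--Nikodym derivative (martingality then follows from this identification applied to stopping times $t$).

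For step (a), I will analyse the matrix $F(\Omega_\delta(\tilde\gamma_1^\delta[0,t]))$ entry-by-entry. For $i\ge 2$ the random walk starts at the fixed boundary vertex $z_i^{\delta,+}$, which is adjacent to a mixed Dirichlet/Neumann right-angle corner of the limit domain; for such corners the continuous harmonic function $1 - \Im(\phi_j \circ \tilde g_{t\wedge\tilde\tau_U} \circ \varphi)$ vanishes like $(v-z_i)^{1/2}$. Applying Lemma~\ref{lem::ratio} to the ratio of two such hitting probabilities (one being the reference $h_{\Omega_\delta}(z_1^{\delta,+};(x_2^\delta x_3^\delta))$), one obtains the convergence of the ratio to the quotient of the appropriate renormalised normal derivatives. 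By explicit differentiation of the Schwarz--Christoffel formula \eqref{eqn::conformalexpression} and conformal covariance of boundary derivatives under the Loewner map $\tilde g_{t\wedge\tilde\tau_U}$, this renormalised normal derivative equals $h_{ij}(t)$ from \eqref{hij} multiplied by the Jacobian $\tilde g'_{t\wedge\tilde\tau_U}(\varphi(z_i))$. For the $i=1$ row, the reference point $z_{1,t}^{\delta,+} = \tilde\gamma_1^\delta(t\wedge\tilde\tau_U^\delta)$ sits at the moving tip of the slit; it is an interior point of $\Omega_\delta$ but a boundary point of the slit domain, and its image under $\tilde g_{t\wedge\tilde\tau_U}\circ\varphi$ is the driving value $\tilde W_{t\wedge\tilde\tau_U}$. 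A parallel analysis yields convergence of the corresponding row to $h_{1j}(t)$, with no separate Jacobian factor emerging for $i=1$, which is exactly why $\tilde g'_{t\wedge\tilde\tau_U}(\varphi(z_1))$ does not appear in the definition of $M_t$ and $h_{11}(t)$ appears in the denominator.

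Concretely, factoring out common $\delta$-dependent boundary-scaling constants $c_i^\delta$ from the $i$-th row of $F(\Omega_\delta(\tilde\gamma_1^\delta[0,t]))$ and from the scalar $1-h_{\Omega_\delta(\tilde\gamma_1^\delta[0,t])}(z_1^{\delta,+}(\tilde\gamma_1^\delta[0,t]);(x_2^\delta x_3^\delta))$, and noting that the same factors $c_i^\delta$ appear (with $\tilde g_0 = \mathrm{id}$, so Jacobian $1$) when $t$ is replaced by $0$, the ratio $M_t^\delta/M_0^\delta$ has all $c_i^\delta$ cancel. The remaining finite factors assemble into
\begin{equation*}
\frac{M_t^\delta}{M_0^\delta} \;\longrightarrow\; \frac{\det(h_{ij}(t))_{1\le i,j\le n-1} / h_{11}(t)}{\det(h_{ij}(0))_{1\le i,j\le n-1} / h_{11}(0)} \cdot \prod_{i=2}^{n-1}\tilde g'_{t\wedge\tilde\tau_U}(\varphi(z_i)) \;=\; \frac{M_t}{M_0}.
\end{equation*}
Continuity of the determinant and continuity of $\mu_l^{(j)}(\mathbf{y})$ in $\mathbf{y}$ (by the inverse function theorem applied to the Schwarz--Christoffel period equations defining $\mu_l^{(j)}$, as already invoked above) transfer entrywise convergence to convergence of the matrix invariants.

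The main obstacle is the boundary analysis of the preceding paragraph: correctly identifying the universal scaling factor $c_i^\delta$ at each right-angle mixed boundary corner, verifying that after normalisation the ratio of discrete hitting probabilities converges to the explicit Schwarz--Christoffel expression \eqref{hij}, and ensuring the Jacobian bookkeeping produces precisely the product $\prod_{i=2}^{n-1}\tilde g'_{t\wedge\tilde\tau_U}(\varphi(z_i))$ with the $i=1$ contribution absorbed into $h_{11}(t)$. Once this asymptotic factorisation is established, the remaining determinantal cancellation and the passage to the Radon--Nikodym identity are routine.
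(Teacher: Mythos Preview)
Your proposal is correct and follows essentially the same route as the paper. The only cosmetic difference is that the paper makes your abstract scaling factors $c_i^\delta$ concrete: it divides the $i$-th row of $F(\Omega_\delta(\tilde\gamma_1^\delta[0,t]))$ by the reference hitting probability $h_{\Omega_\delta}(z_i^{\delta,+};(x_{2n}^\delta x_1^\delta))$ for $i\ge 2$ and by $h_{\Omega_\delta(\tilde\gamma_1^\delta[0,t])}(z_1^{\delta,+}(\tilde\gamma_1^\delta[0,t]);(x_{2n}^\delta x_1^\delta))$ for $i=1$, then invokes the boundary-derivative convergence of Lemma~\ref{lem::ratio} (together with \cite[Corollary~3.8]{ChelkakWanMassiveLERW}) to get exactly the $h_{ij}(t)$ and the Jacobians $\tilde g'_{t\wedge\tilde\tau_U}(\varphi(z_i))$ for $i\ge 2$; the remaining steps (uniform bound plus dominated convergence, monotone class to extend from continuous $r$ to indicators) are identical to yours.
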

\begin{proof}
For the first statement, we have assumed that $\tilde\gamma_1^\delta$ converges to $\tilde\gamma_1$ and $\tilde\tau_U^\delta$ converges to $\tilde\tau_U$ almost surely as $\delta\to 0$. Thus, by the same proofs of~\cite[Corollary 3.8]{ChelkakWanMassiveLERW} and Lemma~\ref{lem::ratio}, we have
\begin{align*}
&\lim_{\delta\to 0}\frac{\det(I-(h_{\Omega_\delta(\tilde\gamma_1^\delta[0,t\wedge\tilde\tau_U^\delta])}(z_i^{\delta,+}(\tilde\gamma_1^\delta[0,t\wedge\tilde\tau_U^\delta]);(x_{2j}^\delta x_{2j+1}^\delta)))_{1\le i,j\le n-1})}{h_{\Omega_\delta(\tilde\gamma_1^\delta[0,t\wedge\tilde\tau_U^\delta])}(z_1^{\delta,+}(\tilde\gamma_1^\delta[0,t\wedge\tilde\tau_U^\delta]);(x^\delta_{2n}x^\delta_{1}))\times \Pi_{i=2}^{n-1}h_{\Omega_\delta}(z_i^{\delta,+};(x^\delta_{2n}x^\delta_{1}))}\\
=&\frac{\det\left(\left(h_{ij}(t)\right)_{1\le i,j\le n-1}\right)}{h_{1,n}(t)\times\Pi_{i=2}^{n-1}h_{i,n}(0)}\times \Pi_{i=2}^{n-1}(\tilde g'_{t\wedge\tilde\tau_U}(\varphi(z_{i,t})),
\end{align*}
and 
\begin{align*}
\lim_{\delta\to 0}\frac{1-h_{\Omega_\delta(\tilde\gamma_1^\delta[0,t\wedge\tilde\tau_U^\delta])}(z_1^{\delta,+}(\tilde\gamma_1^\delta[0,t\wedge\tilde\tau_U^\delta]);(x^\delta_{2}x^\delta_{3}))}{h_{\Omega_\delta(\tilde\gamma_1^\delta[0,t\wedge\tilde\tau_U^\delta])}(z_1^{\delta,+}(\tilde\gamma_1^\delta[0,t\wedge\tilde\tau_U^\delta]);(x^\delta_{2n}x^\delta_{1}))}=\frac{h_{11}(t)}{h_{1,n}(t)}.
\end{align*}
The similar estimates hold for $t=0$. This implies the first statement.

For the second statement, by Corollary \ref{coro::disradon}, we know that $\{M^\delta_{t\wedge \tilde\tau_U^\delta}/M^\delta_{0}\}_{\delta>0}$ is uniformly bounded. Thus, for any bounded continuous function $r:(\LP,d)\to \R$, we have
\[\E[r(\gamma_1[0,t\wedge\tau_U])]=\lim_{\delta\to 0}\E[r(\gamma^\delta_1[0,t\wedge\tau_U^\delta])]=\lim_{\delta\to 0}\E\left[\frac{M^\delta_{t}}{M^\delta_0}r(\tilde\gamma^\delta_1[0,t\wedge\tilde\tau_U^\delta])\right]=\E\left[\frac{M_{t}}{M_0}r(\tilde\gamma_1[0,t\wedge\tilde\tau_U])\right].\]
We conclude by the monotone class theorem. 
\end{proof}
With Lemma~\ref{lem::contradon} at hand, we can show Theorem~\ref{thm::genflow}.
\begin{proof}[Proof of Theorem~\ref{thm::genflow}]
Let $W$ be the driving function of $\gamma_1$ and denote by $(g_t:t\ge 0)$ the corresponding Loewner flow in $\HH$.
By Proposition \ref{P:RNinter},  Lemma~\ref{lem::contradon} and Girsanov's theorem, we know that before $\tau_U$, the driving function $W$ satisfies the following SDE:
\begin{equation}\label{eqn::aux_1}
dW_t=\sqrt 2 B_t+2\partial_z\log(\partial_z \phi_1(z; \mathbf{y}_t)))|_{z= W_t}dt+d\langle \log M_t, \sqrt 2 B_t\rangle,\quad W_0=\varphi(z_1).
\end{equation}
Using the definition of $\phi_1(z)$, 
\begin{equation}\label{eqn::aux12}
\partial_z\log(\partial_z \phi_1(z; \mathbf{y}_t))|_{z= W_t}= \sum_{j=1}^{n-2} \frac1{W_t - \mu_j^l (\mathbf{y}_t)} - \frac12 \sum_{j=1}^{2n} \frac1{W_t - y_{j,t}} .  
\end{equation}
Applying It\^o's formula to $\log h_{11} (t)$ (coming from the definition of $h_{11}(t)$ in \eqref{hij}),
we deduce that 
\begin{equation}\label{eqn::aux2}
2\partial_z\log(\partial_z \phi_1(z; \mathbf{y}_t))|_{z= W_t}dt= 
d\langle \log h_{11}(t), \sqrt 2 B_t\rangle.
\end{equation}
(This could also have been deduced without doing the explicit computation but using the fact that  $\partial_z \phi_1(z, \mathbf{y}_t)|_{z = W_t} = h_{11}(t)$ and the above covariation term depends only on the martingale part of $\log h_{11} (t)$, which is computed using ordinary calculus). It follows that the logarithmic term in \eqref{eqn::aux_1} cancels the contribution coming from $h_{11}(t)$ in the term $d\langle \log M_t, \sqrt{2} B_t\rangle$.  

Now, we treat the contribution coming from $\det\left(\left(h_{ij}(t)\right)_{1\le i,j\le n-1}\right)$ in $d\langle \log M_t, \sqrt{2} B_t\rangle$. For $1\le j\le n$, we define $P_j$ as the polynomial
\[P_j(z):=\Pi_{l=1}^{n-2}(z-\mu_{l}^{(j)}(  \mathbf{y}_t)).\]
By the explicit form of $h_{ij}(t)$ in \eqref{hij}, if we write $U_{i,t} = g_{t\wedge\tau_U}(\varphi(z_{i,t}))$ for $1\le i\le n-1$ (so $U_{1,t} = W_t$),
\begin{align}\label{eqn::aux_3}
\det\left(\left(h_{ij}(t)\right)_{1\le i,j\le n-1}\right)=&\det(P_j( U_{i,t})_{1\le i,j\le n-1})\times\frac{1}{\Pi_{l=1}^{2n}(W_{t\wedge\tau_U}- y_{l,t})^{1/2}} \times Q( \mathbf{y}_t) 
\end{align}
where $Q$ is a smooth function of $ \mathbf{y}_t$ (whose precise value depends in particular on whether $z_i$ is $x_{2i}$ or $x_{2i+1}$).

If we replace $U_{i,t}$ by a dummy variable $u_i$, then the determinant of $(P_j( U_{i,t})_{1\le i,j\le n-1}$ is a priori a polynomial in the variables $u_i$. Furthermore, when $u_i = u_1 = W_t$, the determinant equals $0$. This implies that there exists a polynomial $R  = R(U_{1,t}, \ldots, U_{n-1, t})$ (which further depends on $\mu_l^{(j)} (\mathbf{y}_t)$ implicitly, although we suppress this dependence in the notation), such that
\begin{equation}\label{eqn::aux_4}
\det(P_j( U_{i,t})_{1\le i,j\le n-1})=\Pi_{i=2}^{n-1}(W_{t\wedge\tau_U}- U_{i,t})\times R.
\end{equation}
However, considering the degree of $U_{1,t} = W_{t\wedge\tau_U}$ in the determinant on the left hand side, we know that $R$ is a polynomial of $(U_{2,t}, \ldots, U_{n-1, t})$ as well as $\mu_l^{(j)} (\mathbf{y}_t)$. 
Since $\mathbf{y}_t$ is a smooth function of time, and since $\mu_l^{(j)}$ is also smooth (as already mentioned) it follows that that $R$ is a smooth function of time. When taking the logarithm it will therefore only contributed a finite term, which is irrelevant for the computation of $d \langle \log M_t, \sqrt{2} B_t \rangle$. The same comment applies to $Q( \mathbf{y}_t)$. 

Altogether, combining~\eqref{eqn::aux_1},~\eqref{eqn::aux2},~\eqref{eqn::aux_3} and~\eqref{eqn::aux_4} together and applying It\^o's formula, we know that before time $\tau_U$, the driving function satisfies the SDE given in Theorem~\ref{thm::flowcoupling}. Thus, we can construct a coupling of $\gamma_1$ and the flow line starting from $\varphi(z_1)$, which we denote by $\gamma_1^I$, such that they are equal before exiting the neighbourhood $U$. We denote the probability measure for this coupling by $\PP_U$. Now, we choose an increasing sequence of neighbourhoods $\{U_i\}_{i\ge 1}$ such that $\cup_{i=1}^{\infty}U_i=\Omega$. Note that $\{\PP_{U_i}\}_{i\ge 1}$ are consistent. Therefore, we can define a coupling law $\PP$ that works on all of $\Omega$. Under the probability measure $\PP$, the curves $\gamma_1$ and $\gamma_1^I$ are equal before hitting $\cup_{j=2}^{n}(\varphi(x_{2j})\varphi(x_{2j+1}))$. Since both of these  curves are continuous at the hitting time of this boundary, we know that they are also equal at the hitting time. This finishes the proof of the convergence of $\{\gamma_1^\delta\}_{\delta>0}$.

To finish the proof, we couple $\{\gamma^\delta_1\}_{\delta>0}$ and $\gamma_1^I$ together such that $\gamma_1^\delta$ converges to $\gamma_1^I$ almost surely. Recall that we denote by $\Omega_L$ the connected component on the left hand side of $\HH\setminus\gamma_1^I$ and denote by $\Omega_R$ the component on the right hand side. We denote by $\Omega_L^\delta$ and $\Omega_R^\delta$ similarly. Note that $\Omega_L^\delta$ converges to $\Omega_L$ and $\Omega_R^\delta$ converges to $\Omega_R$ in the sense of~\eqref{eqn::converge1} and~\eqref{eqn::converge2}. Applying the domain Markov property and using the conditional description of flow lines Theorem~\ref{thm::flowcoupling}, we can iterate the above argument to show that conditionally on $\gamma_1^\delta$, $\gamma_2^\delta$ also converges to $\gamma_2^I$. Iterating this argument, 
this finishes the proof of Theorem~\ref{thm::genflow}. 
\end{proof}
\section{Convergence of the winding field}
\label{sec::Convergence of winding}

We fix a simply connected domain $(\Omega;x_1,\ldots,x_{2n};z_1,\ldots,z_{n-1})$ such that $\partial \Omega$ is $C^1$ and simple. As before, we suppose that a sequence of discrete simply connected domains $\{\Omega_\delta;x_1^\delta,\ldots,x_{2n}^\delta;z_1^\delta,\ldots,z_{n-1}^{\delta}\}$ converges to $(\Omega;x_1,\ldots,x_{2n};z_1,\ldots,z_{n-1})$ in the sense that \eqref{eqn::converge1} and \eqref{eqn::converge2} hold.  

In this section, we will show how the convergence of the tree proved in Theorem \ref{thm::gentreeconv} implies the convergence of its associated winding field and hence a new proof of Theorem \ref{thm::heightconv}.
Recall that Temperley's bijection in Lemma~\ref{lem::bij} identifies the dimer configuration to a uniform spanning tree  $\ust$ such that $\cup_{i=1}^{n}(x_{2i}^\delta x_{2i+1}^\delta)$ is wired to be one vertex, conditional on the event $E^*(\Omega_\delta;x_1^\delta,\ldots,x_{2n}^\delta;z_1^{\delta,+},\ldots,z_{n-1}^{\delta,+})$, 
as studied in Section~\ref{sec::Convergence of branches}.  Recall that we denote by $\gamma_i^\delta$ the branch connecting $z_i^{\delta,+}$ to $\cup_{j\neq i}(x_{2j}^\delta x_{2j+1}^\delta)$ stopped at the first time when it hits $\cup_{j\neq i}(x_{2j}^\delta x_{2j+1}^\delta)$. For $v_\delta\in\Omega_\delta$, we denote by $\tilde\gamma_{v_\delta}$ the branch connecting $v_\delta$ to $\cup_{j=1}^{n}(x_{2j}^\delta x_{2j+1}^\delta)\cup\cup_{i=1}^{n-1}\gamma_i^\delta$. We define the winding field $h_\delta$ on $\Omega_\delta$ as follows. We denote by $\Omega_{v_\delta}$ the connected component of $\Omega_\delta\setminus(\cup_{i=1}^{n-1}\gamma_i^\delta)$ containing $v_\delta$ and suppose the free boundary arc $(x_{2j+1}^\delta x_{2j+2}^\delta)\subset\partial \Omega_{v_\delta}$. We denote by $\gamma_{v_\delta}$ the branch connecting $x_{2j+2}^\delta$ to $v_\delta$. Define $h_\delta(v_\delta)$ to be the \textbf{intrinsic winding} of $\gamma_{v_\delta}$ (from $x_{2j+2}^\delta$ to $v_\delta$), i.e., the sum of the turning angles of $\gamma_{v_\delta}$ together with the winding of each edge if any, plus the winding of the boundary arc $(x_2^\delta x_{2j+2}^\delta)$. Recall also that in Temperley's bijection, this winding field corresponds to the dimer height function with respect to the ``winding reference flow''; see, e.g., 
Proposition 4.4 in \cite{BLR_Riemann2} where this is carefully written in the required generality (note that for this reference flow the total mass coming into or out of every vertex is $2\pi$ instead of the sometimes more conventional unit mass; this affects the scaling of the height function).  
 
In this section, we will show the convergence of the (non)-centered winding field $h_\delta$ on $\Omega_\delta$ and show that the limit has the same distribution as $(1/\chi)(h_\Omega + u_\Omega)$, where we recall that $\chi$ is the imaginary geometry parameter, which here is $\chi = 1/\sqrt{2}$.
\begin{theorem}\label{thm::windconv}
For every $f\in C_c^\infty(\Omega)$, and for every $k\ge 1$, we have
\[\E\left[ \left|(h_\delta,f)-\frac{1}{\chi}(h_\Omega + u_\Omega,f)\right|^k\right]\to 0,\quad\text{ as }\delta\to 0.\]
\end{theorem}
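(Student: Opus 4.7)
The plan is to combine the convergence of the Temperleyan tree (Theorem \ref{thm::gentreeconv}) with the framework for convergence of dimer height functions developed in \cite{BLRdimers}. The key observation is that, via Temperley's bijection, the winding field $h_\delta$ is precisely the (winding-referenced) dimer height function, and Theorem~\ref{thm::gentreeconv} identifies the scaling limit of the underlying tree as the flow line tree of $h_\Omega+u_\Omega$ in imaginary geometry. Since in imaginary geometry the winding of a flow line starting from an interior point $z$ encodes (up to a rotation) the value of the field at $z$ divided by $\chi$, matching the winding fields on both sides should yield the identification $\lim h_\delta = (h_\Omega+u_\Omega)/\chi$.

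\textbf{Main steps.} First, decompose $h_\delta=\E[h_\delta]+\mathring h_\delta$. For the deterministic part, one writes $\E[h_\delta(v_\delta)]$ as the expected intrinsic winding of the LERW branch from $v_\delta$ to $\cup_i(x_{2i}^\delta x_{2i+1}^\delta)\cup\bigcup_j\gamma_j^\delta$. Using Theorem \ref{thm::gentreeconv} together with standard computations on winding of $\SLE_2$-type curves (as e.g. in \cite{BLRdimers} and \cite{BLR_Riemann1}), one checks that $\E[h_\delta]$ converges locally uniformly away from the marked points $z_1,\ldots,z_{n-1}$ to the unique bounded harmonic function on $\Omega$ with the boundary data of $(1/\chi)u_\Omega$; this identification uses exactly the change-of-coordinate formula $u_\Omega=u_\HH\circ\varphi^{-1}-\chi\arg(\varphi^{-1})'$ that enters imaginary geometry for $\kappa=2$.

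Second, for the centered field $\mathring h_\delta$, we invoke the main result of \cite{BLRdimers} in its local form: on any compact $K\subset\Omega$ avoiding a neighbourhood of the non-Temperleyan corners, the tree locally looks Temperleyan and the centered winding field restricted to $K$ converges in distribution (and in moments) to $(1/\chi)h_\Omega$ restricted to $K$, where $h_\Omega$ is the Dirichlet GFF. The inputs required are (i) convergence of the tree in the Schramm sense (provided by Theorem \ref{thm::gentreeconv}), (ii) the uniform crossing and Beurling estimates (Lemma \ref{lem::Beur}), and (iii) the invariance principle in Section \ref{SS:gen}; all of these are available in our setting.

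Third, we must promote local convergence to global convergence for $(h_\delta,f)$, $f\in C_c^\infty(\Omega)$. Decompose $f=f_K+f_r$ with $f_K$ supported on a compact $K$ away from the corners and $f_r$ supported on a small $r$-neighbourhood of the corners. The term $(h_\delta,f_K)$ converges in $L^k$ to $(1/\chi)(h_\Omega+u_\Omega,f_K)$ by the local argument above combined with the convergence of the mean. For $(h_\delta,f_r)$, we derive $\delta$-uniform moment bounds using the Beurling-type estimates of Lemma \ref{lem::Beur}: the winding of a LERW branch to the boundary has subgaussian tails with parameters that degenerate only logarithmically near a boundary corner, so $\E[|(h_\delta,f_r)|^k]$ is controlled by a power of $r$ which vanishes as $r\to 0$, uniformly in $\delta$. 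The same estimate applies to the continuous limit, so letting $r\to 0$ after $\delta\to 0$ yields the result.

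\textbf{Main obstacle.} The delicate point is Step 3, controlling the contribution from small neighbourhoods of the non-Temperleyan corners $z_i$. Near each $z_i$ the mean $\E[h_\delta]$ develops logarithmic singularities of size $\lambda/\chi$ (corresponding to the $\pm\lambda$ jumps in the boundary data of $u_\Omega$), and the fluctuation field has no \emph{a priori} reason to satisfy clean boundary estimates there since the wired and reflecting arcs meet in a singular way. The required $\delta$-uniform bound on $\E[|(h_\delta,f_r)|^k]$ must therefore be extracted from a careful combination of Wilson's algorithm, the Beurling estimate of Lemma \ref{lem::Beur}, and the inclusion–exclusion identity underlying Lemma \ref{lem::estimate}. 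Once this uniform boundary estimate is in hand, the convergence statement of Theorem \ref{thm::windconv} follows by routine truncation and the $L^k$-convergence on compacts.
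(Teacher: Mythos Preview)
Your overall strategy — combine Theorem~\ref{thm::gentreeconv} with the machinery of \cite{BLRdimers} — is correct and is what the paper does. However, the specific decomposition you propose and the obstacle you identify are both off.

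\textbf{The corner issue is not an issue.} Since $f\in C_c^\infty(\Omega)$ and the non-Temperleyan corners $z_1,\dots,z_{n-1}$ lie on $\partial\Omega$, the support of $f$ is automatically bounded away from all corners. Thus your Step~3 truncation, and the ``main obstacle'' you single out (logarithmic singularities of the mean near the $z_i$), never enter: $(h_\delta,f)$ simply does not see a neighbourhood of the corners. Your decomposition $f=f_K+f_r$ is vacuous.

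\textbf{What actually needs work.} The genuine technical content, which your outline glosses over, is to re-establish the inputs to the \cite{BLRdimers} machinery \emph{under the conditioning} on the degenerate event $E^*$ (Lemma~\ref{lem::bij}). Specifically one needs, uniformly in $\delta$ and conditionally on $E^*$: (i) a polynomial bound on the probability that $\cup_i\gamma_i^\delta\cup\tilde\gamma_{v_\delta}$ hits a small ball (the analogue of \cite[Lemma~2.9]{BLRdimers}); and (ii) uniform moment bounds on the winding of branches across annuli (the analogue of \cite[Lemma~4.13]{BLRdimers}). Neither of these is a black-box consequence of \cite{BLRdimers}, because the conditioning on $E^*$ has probability tending to zero and involves $n-1$ interacting branches; the paper handles this via the combinatorial ordering argument of Lemma~\ref{lem::estimate}. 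Your claim that on compacts ``the tree locally looks Temperleyan'' is also inaccurate: the interfaces $\gamma_1^\delta,\dots,\gamma_{n-1}^\delta$ pass through the bulk, so there is no local reduction to the Temperleyan case.

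\textbf{How the paper proceeds.} Rather than separating mean and centered parts, the paper works directly with a continuous truncated winding field $h_t(z)$ built from the limiting tree $\cT$, verifies the analogues of the \cite{BLRdimers} inputs (Lemmas~\ref{lem::hitball}, \ref{lem::moments}, \ref{lem::windanddistance}), and then identifies $\lim_{t\to\infty}\E[h_t(z)]=(1/\chi)u_\Omega(z)$ by conditioning on the interfaces $\gamma_1,\dots,\gamma_{n-1}$, using the reflection symmetry of the flow-line law in the disc, and taking expectation over the interfaces via the flow-line coupling of Theorem~\ref{thm::flowcoupling}. This last step is where the specific boundary data $u_\Omega$ emerges, and it replaces your proposed direct computation of $\lim\E[h_\delta]$.
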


When we are not working on the square lattice but rather in the generalised setup of Section \ref{SS:gen}, this result requires centering: that is, we obtain
\[
\E\left[\left|(h_\delta,f)
-\E( (h_\delta, f)) - \frac{1}{\chi}(h_\Omega ,f)\right|^k\right]\to 0,\quad\text{ as }\delta\to 0.
\]

The proof is similar to the proof of the main result in~\cite[Theorem 1.2]{BLRdimers}. See also the main result of \cite{BLR_Riemann1, BLR_Riemann2}, where a similar strategy is adopted; as made clear in these papers the convergence of the spanning tree $\cT_\delta$ is enough to guarantee the existence of a scaling limit uniquely determined by the limit of $\cT_\delta$ provided a few simple estimates can be established; roughly speaking these are moment bounds on the winding and improbability for a given branch to visit a given small ball. Here we also wish to identify the limit of the winding field with $(1/\chi) (h_\Omega + u_\Omega)$, and this makes the problem very similar to \cite{BLRdimers}.

 We will adapt the same  strategy and only point out how to modify the technical lemmas there to our setting. We first establish~\cite[Lemma 4.3]{BLRdimers}, which is fundamental in the whole proof there. For $0<r'<R$, we define $A(v_\delta,r',R):=B(v_\delta,R)\setminus B(v_\delta,r')$ and for $r>0$, we define $\Omega_\delta(r):=\Omega_\delta\setminus\cup_{i=1}^{n-1} B(z_i^{\delta,+},2r)$.
\begin{lemma}\label{lem::withoutcondhit}
Fix $1<C_1<C_2$. For every $C_1r'\le R\le C_2r'$, there exists $\alpha=\alpha(C_1,C_2)\in (0,1)$ and $c$ such that for every $v_\delta\in \Omega_\delta(r)$, for $w_\delta\in A(v_\delta,r'+\frac{R-r'}{3},R-\frac{R-r'}{3})\cap \Omega_\delta$ such that $B(v_\delta,R)\cap\cup_{i=1}^{n}(x_{2i}^\delta x_{2i+1}^\delta)=\emptyset$, if we denote by $X$ the random walk starting from $w_\delta$ killed at $\cup_{i=1}^{n}(x_{2i}^\delta x_{2i+1}^\delta)$ and with reflecting boundaries along $\cup_{i=1}^{n}(x_{2i-1}^\delta x_{2i}^\delta)$, then, 
\begin{equation}\label{eqn::without}
\PP_{w_\delta}\left[X\text{ separates }\partial B(v_\delta,r') \text{ and }\partial B(v_\delta,R) \text{ before hitting }\partial A(v_\delta,r',R)\right]\ge \alpha.
\end{equation}
\end{lemma}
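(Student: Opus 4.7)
The plan is to follow the strategy of \cite[Lemma 4.3]{BLRdimers}, adapted to the presence of reflecting boundary arcs $(x_{2i-1}^\delta x_{2i}^\delta)$ that may enter the annulus $A(v_\delta, r', R)$. As in that reference, the goal is to produce the separating loop by concatenating uniformly-many ``crossings'' whose individual probability is bounded below.

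\textbf{Case 1.} Suppose $B(v_\delta, R)$ meets no reflecting arc. Since by hypothesis it also meets no wired arc, inside this ball the walk $X$ behaves as an unconstrained continuous-time random walk on $G^\delta$. In this situation the conclusion follows from \cite[Lemma 4.3]{BLRdimers} without modification: one covers $A(v_\delta, r', R)$ by four thin rectangles with aspect ratio bounded in terms of $C_1, C_2$ arranged to form a topological loop around $v_\delta$, applies the uniform crossing estimate (Assumption~\ref{crossingestimate} of Section~\ref{SS:gen}) to each crossing, and concatenates the four events by the strong Markov property at the successive hitting times to obtain a bound of the form $\alpha \ge c \alpha_0^4$, where $c = c(C_1, C_2) > 0$.

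\textbf{Case 2.} Suppose $B(v_\delta, R)$ meets at least one reflecting arc. We argue by contradiction: assume there exist sequences $\delta_n \downarrow 0$, radii $r'_n \le R_n \le C_2 r'_n$, and points $v_{\delta_n}, w_{\delta_n}$ satisfying the hypotheses, such that the probability in \eqref{eqn::without} tends to zero. Translating so that $v_{\delta_n} = 0$ and rescaling so that $R_n = 1$, the condition $v_\delta \in \Omega_\delta(r)$ (which we impose $R \le r$ in the applications to guarantee) keeps the non-smooth transition points $z_i^{\delta, +}$ out of the ball $B(0, 2)$ at unit scale. By the simple $C^1$ assumption on the reflecting arcs, together with Carath\'eodory compactness, we may extract a further subsequence along which the rescaled domains $(\Omega_{\delta_n} - v_{\delta_n})/R_n$ converge to a limit $\Omega_\infty$ which is either the full plane or a half-plane with a straight reflecting boundary. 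By Assumption~\ref{InvP}, the rescaled walk from $w_{\delta_n}$ converges in law (in Skorokhod topology) to Brownian motion in $\Omega_\infty$ with normal reflection, started from the limiting point $w_\infty \in A(0, r'_\infty, 1)$, where $r'_\infty \in [1/C_2, 1/C_1]$.

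In both the planar and the half-plane cases, this limiting reflected Brownian motion has strictly positive probability of tracing a path that, possibly together with a segment of the reflecting line, separates $\partial B(0, r'_\infty)$ from $\partial B(0, 1)$ inside the annulus; this is standard in the planar case, and in the half-plane case it follows from the fact that reflected Brownian motion has full topological support among reflected continuous paths (or equivalently by a conformal map to a half-disk). This yields the desired contradiction. The main technical obstacle is that the separation event is not open in the Skorokhod topology; this is handled by slightly shrinking the inner radius to $r' + \epsilon$ and enlarging to the open event that the walk traces a loop in $A(v_\delta, r'+\epsilon, R-\epsilon)$ separating the two shifted circles, applying the Portmanteau theorem to this open event, and then using Assumption~\ref{Bounded} together with the Beurling-type estimate of Lemma~\ref{lem::Beur} to absorb the $\epsilon$-discrepancy.
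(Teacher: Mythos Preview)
Your approach is correct and essentially matches the paper's: the paper gives a one-line proof pointing to Lemma~\ref{lem::Beur}, whose proof is precisely the contradiction/compactness/invariance-principle argument you have spelled out (split into the purely interior case, handled by the uniform crossing estimate as in \cite{BLRdimers}, and the case where a reflecting arc enters the annulus, handled by rescaling and passing to the reflected Brownian limit). One small omission: your limit identification ``either the full plane or a half-plane'' tacitly assumes $R_n \to 0$; if $R_n$ stays bounded away from zero the rescaled domains converge instead to (a dilate of) $\Omega$ itself, but the same positivity argument for reflected Brownian motion applies there as well.
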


\begin{proof}
This follows directly from the Beurling estimate of Lemma \ref{lem::Beur}. 
\end{proof} 

The following two lemmas in the discrete setting are the main inputs in the proof of~\cite[Theorem 1.2]{BLRdimers}. Recall that $r$ is a small constant such that $\partial B(x_i,2r)\cap\partial B(x_j,2r)=\emptyset$ for $i\neq j$. Define $\Omega_\delta(r):=\{v_\delta: v_\delta\in \Omega_\delta\text{ and }d(v_\delta,z_i^\delta)>2r,\text{ for }1\le i\le n-1\}$.
\begin{lemma}\label{lem::hitball}
There exists $c>0$ and $C>0$, such that for every $v_\delta\in \Omega_\delta(r)$ and $z\in \Omega(r)$, for every $\eps>0$, we have
\[\PP[\left(\cup_{i=1}^{n-1}\gamma^\delta_i\cup\tilde\gamma_{v_\delta}\right)\cap B(z,\eps d)\neq\emptyset\cond E^*]\le C\eps^c,\]
where we define $d:=|v_\delta-z|\wedge d(v_\delta,\cup_{i=1}^{n}(x_{2i}^\delta x_{2i+1}^\delta))\wedge d(z,\cup_{i=1}^{n}(x_{2i}^\delta x_{2i+1}^\delta))$.
%
%
\end{lemma}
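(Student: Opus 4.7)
The plan is to first remove the conditioning on $E^*$ by using the lower bound from Lemma~\ref{lem::estimate}, which gives $\PP[E^*]\ge c_0 \prod_{i=1}^{n-1} h_r^\delta(z_i^{\delta,+})$ for some $c_0>0$. Writing $F := \{(\cup_i \gamma_i^\delta \cup \tilde\gamma_{v_\delta}) \cap B(z,\eps d) \neq \emptyset\}$ we obtain
\[
\PP[F \mid E^*] \;=\; \frac{\PP[F \cap E^*]}{\PP[E^*]} \;\le\; \frac{\PP[F]}{c_0 \prod_{i} h_r^\delta(z_i^{\delta,+})},
\]
so it suffices to show $\PP[F] \le C \eps^c \prod_i h_r^\delta(z_i^{\delta,+})$ for some constants $c,C>0$. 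This reduction also explains why the bound should not depend on $\PP[E^*]$ itself: all the ``geometric'' difficulty is absorbed into the unconditional hitting bound.

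The next step would decompose $F$ according to which branch is the first to hit $B(z,\eps d)$. If it is one of the $\gamma_{i_0}^\delta$, then by Wilson's algorithm applied in the order used in the upper bound proof of Lemma~\ref{lem::estimate}, for each $j\neq i_0$ the walk generating $\gamma_j^\delta$ must escape the ball of radius $r$ around $z_j^{\delta,+}$ before touching the wired boundary (giving an independent factor $h_r^\delta(z_j^{\delta,+})$). For the branch $\gamma_{i_0}^\delta$ itself, the underlying random walk $\cR$ must (a) hit $\partial B(z,\eps d)$ and (b) then reach the target boundary without surrounding $B(z,\eps d)$ (otherwise the visit would be erased in the loop-erasure). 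By the strong Markov property, the first event has probability bounded by $C\, h_r^\delta(z_{i_0}^{\delta,+})$, and conditionally on it, the second event entails that $\cR$ must cross the annulus $A(z,\eps d, d/2)$ out-to-in without making a full loop around the inner disc: by the discrete Beurling estimate (Lemma~\ref{lem::Beur}) this has probability at most $C\eps^c$. Multiplying all the factors together yields the desired bound in this case. When instead $\tilde\gamma_{v_\delta}$ is the first branch to hit $B(z,\eps d)$, we condition on $\cup_i \gamma_i^\delta$ and apply the same two-step Beurling argument to the LERW from $v_\delta$ to the wired boundary augmented by the already-drawn branches; the missing factors $h_r^\delta(z_i^{\delta,+})$ from the denominator are then provided by the unconditioned escape events, as in Lemma~\ref{lem::estimate}.

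The main obstacle is the combinatorial bookkeeping required to properly order the branches in Wilson's algorithm: a branch may merge into one of the others before any reach $B(z,\eps d)$, and so one must first identify a ``terminal'' branch which, on the event that it merges into the tree last, generates the $h_r^\delta$ factor for all the indices that merge into it. This is handled exactly by the inductive argument in the upper bound of Lemma~\ref{lem::estimate}, producing only a finite (combinatorial in $n$) number of cases. A secondary technical point is that the Beurling factor $\eps^c$ uses that $z\in \Omega(r)$ and $v_\delta \in \Omega_\delta(r)$, so the relevant annulus $A(z,\eps d, d/2)$ lies inside a region where Lemma~\ref{lem::Beur} applies uniformly in $\delta$; the assumption on the boundary and the invariance principle~\eqref{InvP} from Section~\ref{SS:gen} guarantee this in the general setup.
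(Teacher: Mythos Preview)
Your overall strategy---remove the conditioning via Lemma~\ref{lem::estimate}, decompose according to which branch hits, and use an annulus argument to extract the factor $\eps^c$---is exactly the paper's approach. However, the first reduction as you wrote it does not work. You pass from $\PP[F\cap E^*]$ to $\PP[F]$ and then claim $\PP[F]\le C\eps^c\prod_{i=1}^{n-1}h_r^\delta(z_i^{\delta,+})$. This bound is false for $n\ge 3$: on the event $F$ alone (say $\gamma_{i_0}^\delta$ hits $B(z,\eps d)$), the other branches $\gamma_j^\delta$, $j\neq i_0$, are completely unconstrained, so there is no reason for the walks from $z_j^{\delta,+}$ to escape $B(z_j^\delta,r)$. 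In other words, the escape factors $h_r^\delta(z_j^{\delta,+})$ you invoke in the second paragraph come from $E^*$, not from $F$, and you have already thrown $E^*$ away. The correct thing---and what the paper does---is to bound $\PP[F\cap E^*\cap A]$ directly, where $A$ is a fixed ordering event from the upper-bound proof of Lemma~\ref{lem::estimate}; on $A\cap E^*$ the later walks in the ordering each contribute a factor $h_r^\delta(z_j^{\delta,+})$, which then cancels against the corresponding factor in $\PP[E^*]$. The paper organises this cancellation inductively and uses an intermediate scale $\sqrt{\eps}\,d$ to separate the case ``an earlier branch already came close to $z$'' from the case ``the current branch is the first to do so''.

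A second, smaller issue: the discrete Beurling estimate of Lemma~\ref{lem::Beur} does not by itself give the ``no full loop in the annulus'' bound you need. Beurling controls the probability of escaping a large ball before hitting a given connected set; it says nothing about separating the two boundary circles of an annulus. What is actually used (and what you are implicitly appealing to) is the uniform-crossing/separation estimate, namely~\eqref{eqn::crossing} from Lemma~\ref{lem::condhit}: in each of the $\asymp\log(1/\eps)$ concentric annuli between radii $\eps d$ and $d$, the walk has a uniformly positive conditional probability of making a full turn, and if it does so the visit to $B(z,\eps d)$ is erased in the loop-erasure. Multiplying over the annuli gives the factor $\eps^c$. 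This is precisely the mechanism in the paper's proof (see~\eqref{eqn::case21}).
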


We will give the proof after the proof of Lemma~\ref{lem::condhit}.
\begin{lemma}\label{lem::moments}
For $s<t$ and $v_\delta\in \Omega_\delta(r)$, we define $W(\gamma_{v_\delta}[s,t],v_\delta)$ to be the topological winding of $\gamma_{v_\delta}$ during the time interval $[s,t]$ viewed from $v_\delta$. We denote by $\sigma_l$ the first time that $\gamma_{v_\delta}$ hits $B(v_\delta,l)$ and denote by $\tau_l$ the last time it is in $B(v_\delta,el)$. Then for every $k\ge 1$, there exists $M_k>0$ depending on $r$ and $k$, such that for every $l>0$ and $v_\delta\in \Omega_\delta(r)$, we have
\begin{equation}\label{eqn::moments}
\E\left[\sup_{Y\subset \gamma_{v_\delta}[\sigma_l,\tau_l]\cap B^c(v_\delta, l)}|W(Y,v_\delta)|^k\bigg| E^*
\right]\le M_k,
\end{equation}
where the supreme is over all continuous paths in $\gamma_{v_\delta}[\sigma_l,\tau_l]$ and $|W(Y,v_\delta)|$ is the  absolute value of the topological winding viewed from $v_\delta$.
\end{lemma}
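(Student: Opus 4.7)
We reduce Lemma \ref{lem::moments} to a moment bound on the winding of a loop-erased random walk (LERW) in a domain with partially wired and partially reflecting boundary. The reduction proceeds via Wilson's algorithm: conditionally on the branches $\gamma_1^\delta, \ldots, \gamma_{n-1}^\delta$ and on the event $E^*$ (which depends only on these branches), the branch $\gamma_{v_\delta}$ is distributed as the loop-erasure of a random walk started at $v_\delta$, killed upon hitting the enlarged wired boundary $\cup_{j=1}^n (x_{2j}^\delta x_{2j+1}^\delta) \cup \cup_{i=1}^{n-1}\gamma_i^\delta$ and reflecting on $\cup_{j=1}^n (x_{2j-1}^\delta x_{2j}^\delta)$. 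No further conditioning on $\gamma_{v_\delta}$ is present, so it suffices to establish \eqref{eqn::moments} for this LERW uniformly over the realization of the revealed branches, which we then average out.

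The resulting LERW moment bound follows the strategy of \cite[Lemma 4.7]{BLRdimers}. The plan is to decompose dyadically: for $j \ge 0$, set $A_j := A(v_\delta, 2^j l, 2^{j+1} l)$, and bound
\[
\sup_{Y \subset \gamma_{v_\delta}[\sigma_l,\tau_l] \cap B^c(v_\delta,l)} |W(Y,v_\delta)| \le C \sum_{j\ge 0} W_j,
\]
where $W_j$ is the winding contribution of $\gamma_{v_\delta}$ restricted to $A_j$. A topological argument (essentially, that each LERW excursion in a fixed annulus contributes winding at most $O(1)$ when viewed from the centre) yields $W_j \le C' N_j$, where $N_j$ is the number of LERW excursions in $A_j$.

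To control $N_j$, we invoke Lemma \ref{lem::withoutcondhit}: with probability at least $\alpha = \alpha(C_1,C_2)>0$, each random walk excursion in $A_j$ produces a topologically separating loop inside $A_j$. When this occurs, the loop-erasure procedure prevents any subsequent crossing of $A_j$ from reaching the inner circle. This gives exponential tails $\PP[N_j \ge m \mid \text{LERW reaches } A_j] \le (1-\alpha)^m$, uniformly in $j$, $v_\delta$, and $\delta$. Combining this with the geometric decay $\PP[\gamma_{v_\delta}\text{ reaches }\partial B(v_\delta,2^j l)] \le C\cdot 2^{-cj}$ (obtained by iterating the Beurling estimate of Lemma \ref{lem::Beur} scale by scale) gives $\E[N_j^k] \le C_k \cdot 2^{-cj}$, and a standard power-mean summation yields the desired uniform bound $\E[|W(Y,v_\delta)|^k] \le M_k$.

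The main obstacle in transferring the argument of \cite{BLRdimers} to our setting is the presence of the reflecting arcs $\cup_j (x_{2j-1}^\delta x_{2j}^\delta)$: when a dyadic annulus $A_j$ intersects such an arc, the random walk can follow or bounce along the reflecting boundary in ways absent in the purely wired setting, potentially destroying the topological separation needed for the exponential tail on $N_j$. This is precisely the difficulty that Lemma \ref{lem::withoutcondhit} is designed to resolve: it guarantees that the reflected walk still produces a separating loop inside $A_j \cap \Omega_\delta$ with uniformly positive probability. Together with the reflected Beurling estimate of Lemma \ref{lem::Beur}, this yields all the geometric inputs needed, and the remainder of the proof of \cite[Lemma 4.7]{BLRdimers} applies with essentially the same combinatorial bookkeeping.
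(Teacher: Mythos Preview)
Your reduction handles only part of the branch. In the paper's setup, $\gamma_{v_\delta}$ is the \emph{full} tree branch from the boundary point $x_{2j+2}^\delta$ to $v_\delta$; this is the concatenation of a boundary segment (wired arcs and possibly pieces of the $\gamma_i^\delta$) with the interior branch $\tilde\gamma_{v_\delta}$. Your Wilson-algorithm argument gives the correct description of $\tilde\gamma_{v_\delta}$ conditionally on $\gamma_1^\delta,\ldots,\gamma_{n-1}^\delta$, but it says nothing about the winding contributed by the $\gamma_i^\delta$ themselves. When some $\gamma_i^\delta$ enters $B(v_\delta,el)$ (which happens with nonnegligible probability), the segment $\gamma_{v_\delta}[\sigma_l,\tau_l]$ includes a piece of $\gamma_i^\delta$, and its winding around $v_\delta$ must be controlled \emph{under the conditioning on $E^*$}. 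Since $\PP[E^*]\to 0$, ``averaging out'' does not suffice: one needs the two-sided comparison of Lemma~\ref{lem::estimate}, which lets one reveal the $\gamma_i^\delta$ in a carefully chosen order and trade the conditioning on $E^*$ for the product $\prod_i h_r^\delta(z_i^{\delta,+})$. This is exactly what the paper does in the estimates \eqref{eqn::estimate1} and \eqref{eqn::estimate4}, and it is the step your argument is missing.

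There is also a problem with your dyadic summation. The claim $\PP[\gamma_{v_\delta}\text{ reaches }\partial B(v_\delta,2^j l)]\le C\,2^{-cj}$ is false as written: $\gamma_{v_\delta}$ connects $v_\delta$ to the boundary and therefore crosses every circle $\partial B(v_\delta,2^j l)$ with probability one (for $2^j l$ up to the distance to the boundary). What you actually need is that the \emph{restricted} segment $\gamma_{v_\delta}[\sigma_l,\tau_l]$ reaches $\partial B(v_\delta,2^j l)$ with exponentially small probability in $j$, i.e., that after first entering $B(v_\delta,l)$ the path makes an excursion back out to scale $2^j l$. This is not a direct consequence of the Beurling estimate of Lemma~\ref{lem::Beur}. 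The paper sidesteps this by adapting the decomposition of \cite[Lemma~4.14]{BLRdimers}: the relevant segment is covered by a random index set $K$ of random-walk crossings of concentric annuli, and one proves the exponential tail $\PP[|K|\ge m]\le Ce^{-cm}$ using the \emph{conditional} separation estimate of Lemma~\ref{lem::condhit} (conditioning on the exit point), together with the piecewise winding bounds of Lemma~\ref{lem::piecewind}. Your unconditional separation Lemma~\ref{lem::withoutcondhit} is the right starting point, but you still need these conditional refinements to make the bookkeeping go through.
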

\begin{proof}
The proof is very similar to the proof of~\cite[Lemma 4.13]{BLRdimers}. We summarize the proof briefly and show how to modify the basic inputs there to our setting. We will show the following stronger result: there exists $C>0$ such that for $n\ge 1$, we have
\begin{align}\label{eqn::winding1}
\PP\left[\sup_{Y\subset \gamma_{v_\delta}[\sigma_l,\tau_l]\cap B^c(v_\delta, l)}|W(Y,v_\delta)|>n\bigg| E^*
\right]\le Ce^{-cn}.
\end{align}
This can be divided into the following four estimates: we define $\tau_i(l)$ to be the last time that $\gamma_i^\delta$ hits $\partial B(v_\delta,el)$.
\begin{equation}\label{eqn::estimate1}
\PP\left[\sup_{Y\subset\cup_{i=1}^{n-1}\gamma_i^\delta[0,\tau_i(l)]\cap B^c(v_\delta, l)}|W(Y,v_\delta)|>n\bigg| E^*\right]\le Ce^{-cn}.
\end{equation}
Recall that we denote by $\tilde\gamma_{v_\delta}$ the segment of $\gamma_{v_\delta}$ stopped at its hitting time at $\cup_{i=1}^{n-1}\gamma_i^\delta\cup\cup_{i=1}^{n}(x_{2i}^\delta x_{2i+1}^\delta)$. 
\begin{itemize}
\item
On the event that $\cup_{i=1}^{n-1}\gamma_i^\delta\cap B(v_\delta,el)=\emptyset$, if we define $\tilde\sigma_l$ the first time that $\tilde\gamma_{v_\delta}$ hits $\partial B(v_\delta,l)$ and denote by $\tilde\tau_l$ the last time it hits $\partial B(v_\delta,el)$, then
\begin{equation}\label{eqn::estimate2}
\PP\left[\sup_{Y\subset \tilde\gamma_{v_\delta}[\tilde\sigma_l,\tilde\tau_l]\cap B^c(v_\delta, l)}|W(Y,v_\delta)|>n\bigg| \gamma_1^\delta,\ldots,\gamma_{n-1}^\delta\right]\le Ce^{-cn}.
\end{equation}
\item
On the event that $\cup_{i=1}^{n-1}\gamma_i^\delta\cap B(v_\delta,el)\neq\emptyset$ and $\cup_{i=1}^{n-1}\gamma_i^\delta\cap B(v_\delta,l)=\emptyset$, in this case if we define $\tilde\tau_l'$
to be the minimum of $\tilde\tau_l$ and the hitting time of $\tilde\gamma_{v_\delta}$ at $\cup_{i=1}^{n-1}\tilde\gamma_i^\delta[0,\tau_i(l)]$, then
\begin{equation}\label{eqn::estimate3}
\PP\left[\sup_{Y\subset \tilde\gamma_{v_\delta}[\tilde\sigma_l,\tilde\tau'_l]\cap B^c(v_\delta, l)}|W(Y,v_\delta)|>n\bigg| \gamma_1^\delta,\ldots,\gamma_{n-1}^\delta\right]\le Ce^{-cn}.
\end{equation}
\item
On the event that $\cup_{i=1}^{n-1}\gamma_i^\delta\cap B(v_\delta,l)\neq\emptyset$, 
\begin{equation}\label{eqn::estimate5}
\PP\left[\sup_{Y\subset \tilde\gamma_{v_\delta}[\tilde\sigma_l,\tilde\tau'_l]\cap B^c(v_\delta, l)}|W(Y,v_\delta)|>n\bigg| \gamma_1^\delta,\ldots,\gamma_{n-1}^\delta\right]\le Ce^{-cn}.
\end{equation}
\end{itemize}
Note that~\eqref{eqn::estimate5} can be obtained from~\eqref{eqn::estimate1} since the winding of segment of $\tilde\gamma_{v_\delta}$ is bounded by the winding of the segment of $\cup_{i=1}^{n-1}\gamma_i^\delta$. Thus, we only need to check~\eqref{eqn::estimate1},~\eqref{eqn::estimate2} and~\eqref{eqn::estimate3}.

For~\eqref{eqn::estimate1}, we will generate $\{\gamma_{1}^\delta,\ldots,\gamma_{n-1}^\delta\}_{\delta>0}$ in the order given in Lemma~\ref{lem::estimate}. We denote by $A$ the event that the order is given by $\gamma_{1}^\delta,\ldots,\gamma_{n-1}^\delta$. Note that this implies that, for $1\le j<k \le n-1$, we have $\gamma^\delta_{k}$ does not merge with $\gamma^\delta_{j}$ before hitting $\partial B(z^\delta_{k},r)$.
Thus, by Lemma~\ref{lem::estimate}, to show \eqref{eqn::estimate1} it suffices to establish the following estimate: for $1\le i\le n-1$, on the event $A$,
\begin{align}\label{eqn::estimate4}
&\PP\left[\sup_{Y\subset\gamma_i^\delta[0,\tau_i(l)]\cap B^c(v_\delta, l)}|W(Y,v_\delta)|>n,\,\gamma^\delta_{i}\text{ does not merge with }\cup_{j=1}^{i-1}\gamma^\delta_{j}\text{ before hitting }\partial B(z^\delta_{i},r)\bigg| \gamma_1^\delta,\ldots,\gamma_{i-1}^\delta\right]\notag\\ \le &Ch_r^\delta(z_\delta^+)e^{-cn}.
\end{align}

The main input of the proof is the following statement, see~\cite[Lemma 4.7, Lemma 4.8]{BLRdimers}. We state it in our setting. 
\begin{lemma}\label{lem::piecewind}
Fix $1<C_1<C_2$. For every $C_1r'\le R\le C_2r'$, there exists $\alpha=\alpha(C_1,C_2)\in (0,1)$ and $C$ such that for every $v_\delta\in \Omega_\delta(r)$ and $B(v_\delta,r')\cap \cup_{i=1}^{n}(x_{2i}^\delta x_{2i+1}^\delta)=\emptyset$ for $w_\delta\in A(v_\delta,r'+\frac{R-r'}{3},R-\frac{R-r'}{3})\cap \Omega_\delta$, if we denote by $\cR$ the random walk starting from $w_\delta$ and killed at $\cup_{i=1}^{n}(x_{2i}^\delta x_{2i+1}^\delta)$ with reflecting boundaries $\cup_{i=1}^{n}(x_{2i-1}^\delta x_{2i}^\delta)$, then we have the following two estimates.
\begin{itemize}
\item
Define $\tau$ to be the exit time of $A(v_\delta,r',R)\cap \Omega_\delta$, if we assume that $B(v_\delta, R)\cap \cup_{i=1}^{n}(x_{2i}^\delta x_{2i+1}^\delta)=\emptyset$, for all $u$ such that $\PP_{w_\delta}(\cR_\tau=u)>0$, for $n\ge 1$,
\begin{equation}\label{eqn::wind1}
\PP_{w_\delta}\left[\sup_{Y\subset \cR[0,\tau]}|W(Y,v_\delta)|>n|X_\tau=u\right]\le C(1-\alpha)^n,
\end{equation}
where the supreme is over all continuous paths $Y$ obtained by erasing portions from $\cR[0,\tau]$.
\item
Define $\tilde \tau$ to be the exit time by $\cR$ of $\Omega_\delta\setminus B(v_\delta,r')$. If we assume $r'<d(v_\delta,\cup_{i=1}^{n}(x_{2i}^\delta x_{2i+1}^\delta))<er'$, for $n\ge 1$,
\begin{equation}\label{eqn::wind2}
\PP_{w_\delta}\left[\sup_{Y\subset \cR[0,\tilde\tau]}|W(Y,v_\delta)|>n|\cR_{\tilde\tau}\in\partial \Omega_\delta\right]\le C(1-\alpha)^n;
\end{equation}
for all $u\in\partial B(v_\delta,r')$ such that $\PP_{w_\delta}(\cR_{\tilde\tau}=u)>0$, for $n\ge 1$,
\begin{equation}\label{eqn::wind3}
\PP_{w_\delta}\left[\sup_{Y\subset \cR[0,\tilde\tau]}|W(Y,v_\delta)|>n|\cR_{\tilde\tau}=u\right]\le C(1-\alpha)^n;
\end{equation}
where the supreme is over all continuous paths $Y$ obtained by erasing portions from $\cR[0,\tilde\tau]$.
\end{itemize}
\end{lemma}
The similar estimate holds for the random walk generating $\tilde\gamma_{v_\delta}$ if we replace $\Omega_\delta$ by $\Omega_\delta\setminus\cup_{i=1}^{n-1}\gamma_{i}^\delta$ and replace $\cup_{i=1}^{n}(x_{2i}^\delta x_{2i+1}^\delta)$ by $\cup_{i=1}^{n}(x_{2i}^\delta x_{2i+1}^\delta)\cup\cup_{i=1}^{n-1}\gamma_i^\delta$. Note that in our setting, the annulus $A(v_\delta,r'+\frac{R-r'}{3},R-\frac{R-r'}{3})$ may intersect the reflecting boundaries. 
Lemma~\ref{lem::piecewind} can be proved by the same argument as~\cite[Lemma 4.7, Lemma 4.8]{BLRdimers} and Lemma~\ref{lem::withoutcondhit}.
 
We continue the proof of Lemma~\ref{lem::moments}. A technical lemma shows that $\tilde\gamma_{v_\delta}[\tilde\sigma_l,\tilde\tau_l]$ and $\gamma_{i}^\delta[0,\tau_i(l)]$ for $1\le i\le n-1$ can be decomposed into a number of pieces of random walk trajectories, whose numbers and contribution to the winding can be controled. See more details in~\cite[Lemma 4.14]{BLRdimers}. We describe the decomposition for $\tilde\gamma_{v_\delta}[\tilde\sigma_l,\tilde\tau_l]$ since it is same for $\gamma_{i}^\delta[0,\tau_i(l)]$. First, we recall some fundamental notations in~\cite[Section 4]{BLRdimers}. We define $r_i:=e^il$ for $i\ge -1$ and define $C_i:=\partial B(v_\delta,r_i)\cap \Omega_\delta$. If $C_i$ intersects $\cup_{i=1}^{n-1}\gamma_i^\delta\cup\cup_{i=1}^{n}(x_{2i}^\delta x_{2i+1}^\delta)$, we define $C_i$ to be $\cup_{i=1}^{n-1}\gamma_i^\delta\cup\cup_{i=1}^{n}(x_{2i}^\delta x_{2i+1}^\delta)$ and we define the corresponding index to be $i_{max}$. We define a sequence of stopping times and the corresponding indices as follows. We define $\tau_1$ to be the first hitting time at $C_{-1}$ and define $i(1)=-1$. Having defined $\tau_k$, we define $\tau_{k+1}$ to be the first hitting time at $C_{i(k)+1}$ or $C_{i(k)-1}$ and define $i(k+1)$ to be the index of the circle it hits. If $i(k)=-1$, we define $\tau_{k+1}$ to be the hitting time of $\LR$ at $C_0$ and define $i(k+1):=0$. We define $k_{exit}$ to be the index such that $i(k_{exit})=i_{max}$ and define $k_{max}$ to be the index of the last hitting time at $C_1$. As in~\cite[Lemma 4.14]{BLRdimers}, we can define a index set $K$ such that
\begin{equation}\label{eqn::decom}
\tilde\gamma_{v_\delta}[\tilde\sigma_l,\tilde\tau_l]\cap B(v_\delta,l)^c\subset \cup_{k\in K}\LR[\tau_k,\tau_{k+1}].
\end{equation}
and there exists $C>0$ and $c>0$ such that
\begin{equation}\label{eqn::expdecay}
\PP[|K|\ge n]\le Ce^{-cn}.
\end{equation}
The proof of~\cite[Lemma 4.14]{BLRdimers} works exactly in our setting. 
The main input is the following Lemma, see~\cite[Corollary 4.5, Corollary 4.6]{BLRdimers}.
\begin{lemma}\label{lem::condhit}
Fix $1<C_1<C_2$. For every $C_1r'\le R\le C_2r'$, there exists $\alpha=\alpha(C_1,C_2)\in (0,1)$ and $c>0$ such that the following holds. Let $v_\delta\in D_\delta(r)$, and let $w_\delta\in  A(v_\delta,r'+\frac{R-r'}{3}, R-\frac{R-r'}{3})\cap \Omega_\delta$ such that $B(v_\delta,R)\cap\cup_{i=1}^{n}(x_{2i}^\delta x_{2i+1}^\delta)=\emptyset$. 
Define $\tau$ to be the exit time of $A(v_\delta,r',R)\cap \Omega_\delta$. Let $u\in\partial A(v_\delta,r',R)\cap \Omega_\delta$ such that $\PP_{w_\delta}(\cR_\tau=u)>0$. Then
\begin{equation}\label{eqn::crossing}
\PP_{w_\delta}\left[\cR\text{ separates }\partial B(v_\delta,r') \text{ and }\partial B(v_\delta,R) \text{ before hitting }\partial A(v_\delta,r',R)|\cR_\tau=u\right]\ge \alpha
\end{equation}
and
\begin{equation}\label{eqn::hit1}
\PP_{w_\delta}\left[\cR\text{ hits }\partial B(v_\delta,r')\text{ before }\partial B(v_\delta, R)|\cR_\tau=u\right]\ge \alpha,
\end{equation}
\end{lemma}
Lemma~\ref{lem::condhit} can be proved by the same argument as~\cite[Corollary 4.5, Corollary 4.6]{BLRdimers} and Lemma~\ref{lem::withoutcondhit}.

We continue the proof of Lemma~\ref{lem::moments}. Note that combining with~\eqref{eqn::wind1},~\eqref{eqn::wind2} and~\eqref{eqn::wind3},~\eqref{eqn::decom} and~\eqref{eqn::expdecay}, we get~\eqref{eqn::estimate2} and~\eqref{eqn::estimate3} directly. For~\eqref{eqn::estimate4}, note that the contribution of the part of the random walk generating $\gamma_i^\delta$ before hitting $B(z_i^\delta,r)$ to the winding is less $2\pi$, since by our assumption $z_i^\delta\notin B(v_\delta,2r)$. Thus, we can apply the same argument for pieces of the random walk starting from $\partial B(z_i^\delta,r)$ without conditioning. This leads to~\eqref{eqn::estimate4} and completes the proof.
\end{proof}
 We still need to show Lemma~\ref{lem::hitball}.
\begin{proof}[Proof of Lemma~\ref{lem::hitball}]
It suffices to show that for every $1\le i\le n$,
\begin{equation}\label{eqn::hit1}
\PP[\gamma_{i}^\delta\cap B(z,\eps d)\neq\emptyset,\,\cup_{j=1}^{i-1}\gamma_{j}^\delta\cap B(z,\sqrt\eps d)=\emptyset\cond E^*]\le C\eps^c,
\end{equation}
and
\begin{equation}\label{eqn::hit2}
\PP[\tilde\gamma_{v_\delta}\cap B(z,\eps d)\neq\emptyset,\,\cup_{i=1}^{n-1}\gamma_{i}^\delta\cap B(z,\sqrt\eps d)=\emptyset\cond E^*]\le C\eps^c
\end{equation}
For~\eqref{eqn::hit1}, we will once again generate $\{\gamma_{1}^\delta,\ldots,\gamma_{n-1}^\delta\}_{\delta>0}$ in the order given in Lemma~\ref{lem::estimate}. Define the event 
\[A_i:=\{\gamma^\delta_{k}\text{ does not merge with }\gamma^\delta_{j}\text{ before hitting }\partial B(z^\delta_{k},r)\text{ for }1\le j<k\le i\}.\] By Lemma~\ref{lem::estimate}, there exists $C>0$, such that
\begin{align}\label{eqn::hit}
&\PP[\gamma_{i}^\delta\cap B(z,\eps d)\neq\emptyset,\,\cup_{j=1}^{i-1}\gamma_{j}^\delta\cap B(z,\sqrt\eps d)=\emptyset, A\cond E^*]
\le 
C\frac{\PP[\gamma_{i}^\delta\cap B(z,\eps d)\neq\emptyset,\,\cup_{j=1}^{i-1}\gamma_{j}^\delta\cap B(z,\sqrt\eps d)=\emptyset,\,A_i]}{\Pi_{j=1}^{i}h^\delta_r(z_i^{\delta,+})}.
\end{align}
We will prove~\eqref{eqn::hit} by induction. 
We define $r_j:=e^{j}\eps d$ for $1\le j\le [-\log\sqrt\eps]$. Note that on the event $\{\gamma_{1}^\delta\cap B(z,\eps d)\neq\emptyset\}$, the random walk $\LR$ generating $\gamma_1^\delta$ does not make a full turn in $A(z_\delta,r_{i-1},r_{i})$ for every $1\le i\le [-\log\sqrt\eps]$ after the last hitting of $\partial B(z_\delta, \eps d)$. We denote by $\tau_i$ the last hitting time of $B(z_\delta,\frac{r_{i-1}+r_{i}}{2})$ after the last hitting time of $\partial B(z_\delta,\eps d)$ and denote by $\tau_i'$ the first hitting time of $\partial B(z_\delta, r_i)$ after $\tau_i$. We will consider the portion of $\LR$ after the first hitting of $\partial B(z_\delta, \sqrt\eps d)$. Then, by~\eqref{eqn::crossing}, for every $w_\delta\in\partial B(z_\delta,\sqrt\eps d)$, we have
\begin{align*}
&\PP_{w_\delta}[\LR[\tau_i,\tau'_i]\text{ does not separate }\partial B(z_\delta,r_{i-1})\text{ and }\partial B(z_\delta,r_i))\text{ for }1\le i\le [-\log\sqrt\eps]]\\
=&\E_{w_\delta}[\PP_{w_\delta}[\LR[\tau_i,\tau'_i]\text{ does not separate }\partial B(z_\delta,r_{i-1})\text{ and }\partial B(z_\delta,r_i))\text{ for }1\le i\le [-\log\sqrt\eps]\cond \LR_{\tau'_1},\ldots,\LR_{\tau'_{[-\log\sqrt\eps]}}]\\
=&\E_{w_\delta}[\Pi_{i=1}^{[-\log\sqrt\eps]}\PP_{\LR_{\tau_i}}[\LR[\tau_i,\tau'_i]\text{ does not separate }\partial B(z_\delta,r_{i-1})\text{ and }\partial B(z_\delta,r_i))\cond \LR_{\tau'_i}]\\
\le& (1-\alpha)^{[s]}\\
\le& C\eps^c,\quad\text{ for some }C>0\text{ and }c>0.
\end{align*}
Then, by the Markov property of $\LR$ at the hitting time of $B(z_\delta,\sqrt\eps d)$, there exists $C>0$ and $c>0$,
\begin{align}\label{eqn::case21}
&\frac{\PP[\gamma_{1}^\delta\text{ hits }B(z_\delta,\eps d)]}{h^\delta_r(z_1^{\delta,+})}\notag\\
\le&\max_{w_\delta\in\partial B(z_1^\delta,r)}\PP_{w_\delta}[\LR[\tau_i,\tau'_i]\text{ does not separate }\partial B(z_\delta,r_{i-1})\text{ and }\partial B(z_\delta,r_i))\text{ for }1\le i\le [-\log\sqrt\eps]]\notag\\
\le &C\eps^c
\end{align}

Suppose~\eqref{eqn::hit1} holds for $1\le i\le m-1$. For $i=m$, on the event 
$\{\cup_{j=1}^{m-1}\gamma_{j}^\delta\cap B(z,\sqrt\eps d)=\emptyset,\,A_{m-1}\}$,
we need to show that there exists $C>0$ and $c>0$ such that
\begin{align*}
\PP[\gamma_{m}^\delta\cap B(z,\eps d)\neq\emptyset,\,A_m\cond\gamma_1^\delta\ldots,\gamma_{m-1}^\delta]\le C\eps^ch_r^\delta(z_m^{\delta,+}).
\end{align*}
This can be shown by the same argument for $i=1$.
Thus, by induction, we get~\eqref{eqn::hit1}.

For~\eqref{eqn::hit2}, note that on the event $\{\cup_{i=1}^{n-1}\gamma_{i}^\delta\cap B(z,\sqrt\eps d)=\emptyset\}$, $\tilde\gamma_{v_\delta}$ hits $\partial B(z,\eps d)$ before merging with $\cup_{i=1}^{n-1}\gamma_i^\delta$. In this case, ~\eqref{eqn::hit2} can be shown by the same argument as~\eqref{eqn::hit1}.
This completes the proof.
\end{proof}
To show Theorem~\ref{thm::windconv}, we first need to show the convergence of the truncated winding field in the continuous setting. Then, by applying the same argument as in \cite{BLRdimers} (see also~\cite[Theorem 6.1]{BLR_Riemann1} where the same idea is used, as well as \cite{OffcriticalDimers}), we will get Theorem~\ref{thm::windconv} directly. Recall the continuous tree $\LT$ defined in~\ref{thm::gentreeconv}, almost surely, for almost everywhere $z\in \Omega$, there is a unique path connecting $z$ to $\cup_{i=1}^n(x_{2i}x_{2i+1})\cup\cup_{i=1}^{n-1}\gamma_i$, which we denote by $\tilde\gamma_z$. From the construction, we know that almost surely,  $\tilde\gamma_z$ hits $\cup_{i=1}^n(x_{2i}x_{2i+1})\cup\cup_{i=1}^{n-1}\gamma_i$ only at its endpoint. We extend $\tilde\gamma_z$ as follows: we denote by $\Omega_z$ the connected component of $\Omega\setminus (\cup_{i=1}^{n-1}\gamma_i)$ which contains $z$ and suppose $(x_{2j+1}x_{2j+2})\subset\partial \Omega_z$. We add to $\tilde \gamma_z$ the ``boundary segment'' (including possibly part of $\cup_{i=1}^{n-1}\gamma_i$) connecting the endpoint of $\tilde \gamma_z$ to $x_{2j+2}$; we call the resulting path $\gamma_z$.
We then parametrise $\gamma_z$ by $[-1,\infty)$, such that the segment $\gamma[-1,0]$ is from $x_{2j+2}$ to the endpoint of $\tilde  \gamma_z$. For $t\ge 0$, we parametrise $\gamma_z$ by its capacity in $\Omega$, that is, for $t\ge 0$, we have 
\[-\log(R(z,\Omega\setminus\gamma_z[-1,t]))=t-\log(R(z,\Omega)),\] where $R(\cdot,\cdot)$ is the conformal radius. Note that there are continuous function $r(s)$ and $\theta(s)$, such that
\[z-\gamma_z(s)=r(s)e^{i\theta(s)}.\]
We define the truncated winding field as 
\[h_t(z):=\theta(t)-\theta(-1)+\text{the winding of $(x_2 x_{2j+2})$}.\] 
We need to prove the following Proposition. See~\cite[Theorem 3.1]{BLRdimers} and we state it in our setting (and in a weaker version).
\begin{proposition}~\cite[Theorem 3.1]{BLRdimers}
For every $f\in C_c^\infty(\Omega)$, and for every $k\ge 1$, we have
\[\E\left[\left|(h_t,f)-\frac{1}{\chi}(h_\Omega+u_\Omega,f)\right|^k\right]\to 0,\quad\text{ as }\delta\to 0.\]
\end{proposition}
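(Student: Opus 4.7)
The plan is to adapt the strategy of \cite[Theorem~3.1]{BLRdimers} to our multi-interface setting; the statement should be read as $t\to\infty$ (no $\delta$ appears in the truncated field $h_t$). The key inputs are the imaginary geometry couplings of Theorem~\ref{thm::flowcoupling} and Theorem~\ref{thm::intcoupling} together with the flow-line structure of the limiting tree $\cT^*$ provided by Theorem~\ref{thm::gentreeconv}.

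\textbf{Step 1 (Conditional decomposition).} First I would condition on the interface flow lines $\gamma_1,\ldots,\gamma_{n-1}$. By Theorem~\ref{thm::flowcoupling}, the domain $\Omega$ splits into $n$ connected components $\Omega_1,\ldots,\Omega_n$, and on each $\Omega_j$ the conditional law of $h_\Omega+u_\Omega$ is $h_{\Omega_j}+u_{(\Omega_j;x_{2j-1},x_{2j})}-\pi\chi/2$ with $h_{\Omega_j}$ an independent Dirichlet GFF. By Theorem~\ref{thm::gentreeconv}, for $z\in\Omega_j$ the path $\tilde\gamma_z$ coincides with an interior flow line of this restricted field (as in Theorem~\ref{thm::intcoupling}). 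This reduces the problem to the single-component setting of \cite[Theorem~3.1]{BLRdimers}, now with Dirichlet boundary on the smooth arc $(x_{2j-1}x_{2j})$ and on the adjacent interfaces $\gamma_{j-1}, \gamma_j$.

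\textbf{Step 2 (Mean and covariance).} In each $\Omega_j$, I would invoke the standard winding identities of \cite[Sections~3.1--3.2]{BLRdimers}: as $t\to\infty$, the conditional expectation $\E[h_t(z)\mid\gamma_1,\ldots,\gamma_{n-1}]$ converges to the harmonic function on $\Omega_j$ whose boundary values are the boundary winding of $\partial\Omega_j$ plus $(1/\chi)$ times the conditional boundary data of $h_\Omega+u_\Omega$. Combining the winding of the global arc $(x_2 x_{2j+2})$ built into the definition of $h_t$ with the harmonic part, and invoking the identity $u_\Omega=u_\HH\circ\ph^{-1}-\chi\arg(\ph^{-1})'$, the limit equals $(1/\chi)\E[(h_\Omega+u_\Omega)(z)\mid\gamma_1,\ldots,\gamma_{n-1}]$. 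For the covariance, the argument of \cite[Section~3.2]{BLRdimers} gives, for $z,w$ in the same component $\Omega_j$,
\[\lim_{t\to\infty}\mathrm{Cov}(h_t(z),h_t(w)\mid\gamma_1,\ldots,\gamma_{n-1})=\frac{1}{\chi^2}G_{\Omega_j}(z,w),\]
while for $z,w$ in distinct components the flow lines $\gamma_z,\gamma_w$ are conditionally independent so the covariance vanishes, matching the conditional Green function on $\Omega\setminus\cup_i\gamma_i$.

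\textbf{Step 3 (Assembly and main obstacle).} Pairing with $f\in C_c^\infty(\Omega)$, Steps~1--2, the tower property, and conditional Gaussianity of the limit combine with moment bounds on windings (imported from Lemma~\ref{lem::moments} and passed to the limit by tightness together with uniform integrability) to yield $L^k$-convergence of $(h_t,f)$ to $(1/\chi)(h_\Omega+u_\Omega,f)$. The principal obstacle I expect is obtaining a \emph{continuous-side} moment bound on windings in neighbourhoods of the interfaces $\gamma_i$, which are only H\"older (they are of $\SLE_2$-type), not $C^1$. The idea to handle this will be to transport Lemma~\ref{lem::moments} to the scaling limit: tightness of the Peano curves established in Section~\ref{sec::Convergence of branches} combined with the Skorokhod representation theorem gives a coupling on which windings converge almost surely, and the Beurling estimate of Lemma~\ref{lem::Beur} (applied in domains cut by the $\gamma_i$) provides the required uniform tail control.
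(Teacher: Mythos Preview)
Your proposal is essentially correct and aligned with what the paper does, with one structural difference worth noting. The paper does not actually give a self-contained proof of this proposition: it simply asserts that the argument of \cite[Theorem~3.1]{BLRdimers} transfers verbatim once the relevant inputs are checked, and then verifies those inputs (the analogues of \cite[Lemmas~2.9, 2.10, Theorem~2.11]{BLRdimers}, which here are Lemma~\ref{lem::hitball}, \eqref{eqn::winding1}, and Lemma~\ref{lem::windanddistance}) together with the convergence of the mean \eqref{eqn::convexp}. Your Step~1--2 for the mean is exactly the paper's computation of \eqref{eqn::convexp}: condition on $\gamma_1,\ldots,\gamma_{n-1}$, use Theorem~\ref{thm::flowcoupling} to identify the conditional boundary data on each $\Omega_j$, and then take expectation.

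The difference lies in how the rough-boundary issue you flag in Step~3 is handled. You propose to run the BLRdimers argument \emph{inside each} $\Omega_j$ after conditioning, which forces you to confront $\SLE_2$-type boundaries; you then suggest transporting the discrete winding moments across the limit via tightness. The paper avoids this entirely by running the BLRdimers machinery \emph{globally} in $\Omega$, which has $C^1$ boundary by hypothesis: Lemma~\ref{lem::windanddistance} is stated and proved for the full domain (reduced to $\Omega=\U$, $v=0$), and the interfaces enter only through the events $\{\cup_i\gamma_i\cap B(0,e^{-t})\neq\emptyset\}$, whose probability is controlled by Lemma~\ref{lem::hitball}. The conditioning on $\gamma_1,\ldots,\gamma_{n-1}$ appears only at the final step, where one computes the harmonic mean via \eqref{eq::au1}--\eqref{eq::au3}. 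This global viewpoint is simpler and sidesteps the obstacle you identify; your route would also work but requires the extra uniform-integrability argument you sketch.
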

We will not give a proof in this paper since the proof of~\cite[Theorem 3.1]{BLRdimers} works exactly in our setting. We only point out why the basic inputs  also hold in our setting and derive the convergence of the expectation:
\begin{equation}\label{eqn::convexp}
\lim_{t\to\infty}\E[h_t(z)]=\frac{1}{\chi}u_\Omega(z;x_1,\ldots,x_{2n};z_1,\ldots,z_{n-1}).
\end{equation}
Note that~\cite[Lemma 2.9]{BLRdimers} is obtained from Lemma~\ref{lem::hitball} and~\cite[Lemma 2.10]{BLRdimers} is obtained from~\eqref{eqn::winding1}. It remains to prove the analogue of~\cite[Theorem 2.11]{BLRdimers}, which is the following:
\begin{lemma}\label{lem::windanddistance}
Suppose $\Omega$ is the unit disk and $v=0$. 
\begin{itemize}
\item
For every $k\ge 1$, there exists $C_k>0$ such that
\[\E[|h_t(0)|^k]\le C_k (1+t)^k.\]
\item
There exists $C>0$ and $c>0$ such that for all $0<s<t$, we have
\[\PP[|\gamma_0(t)|>e^{-t+s}]\le Ce^{-cs}.\]
\end{itemize}
\end{lemma}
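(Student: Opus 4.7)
The plan is to import both estimates from the discrete setting by means of the convergence of trees in Theorem~\ref{thm::gentreeconv}. I will couple the discrete trees $\{\cT_\delta\}_{\delta>0}$ with the limiting continuous tree $\cT$ so that $d_{\LH}(\cT_\delta,\cT)\to 0$ almost surely, upgrade this into an almost sure coupling of the branches $\gamma_{0_\delta}$ and $\gamma_0$ parametrized by conformal radius (via Carath\'eodory convergence of $\Omega\setminus\gamma_{0_\delta}[-1,t]$ to $\Omega\setminus\gamma_0[-1,t]$), and then invoke Fatou's lemma to transfer the corresponding uniform discrete bounds.

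For the first assertion, the approach will be a multi-scale decomposition of the winding. For each integer $j\ge 0$, let $\sigma_j$ (respectively $\tau_j$) denote the first (respectively last) time at which $\gamma_0$ lies in the ball $B(0,e^{-j+1})$. The Koebe $1/4$ theorem together with the parametrization by conformal radius ensures that only a bounded number of such scales contribute to the time interval $[j-1,j]$, so that $h_t(0)$ splits as a sum of at most $C(1+t)$ winding contributions, one at each dyadic annular scale, plus the boundary term coming from $\gamma_0[-1,0]$. By the stretched-exponential tail bound of Lemma~\ref{lem::moments}, each of these contributions has $L^k$-norm uniformly bounded in $j$; Minkowski's inequality then yields $\|h_t(0)\|_k\le C_k(1+t)$.

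For the second assertion, the parametrization by conformal radius combined with Koebe gives $\mathrm{dist}(0,\gamma_0[-1,t]\cup\partial\Omega)\in[e^{-t}/4,e^{-t}]$; in particular there exists $t'\le t$ with $|\gamma_0(t')|\le e^{-t}$. Therefore on the event $\{|\gamma_0(t)|>e^{-t+s}\}$ the sub-arc $\gamma_0[t',t]$ must cross the annulus $A(0,e^{-t},e^{-t+s})$ from its inner to its outer boundary. I will then pass to the discrete setting, where this crossing event for the discrete branch $\gamma_{0_\delta}$ can be controlled by iterating the uniform crossing estimate of Lemma~\ref{lem::withoutcondhit} over $\lfloor s\rfloor$ nested sub-annuli of fixed modulus, yielding a bound of order $C(1-\alpha)^{\lfloor s\rfloor}=O(e^{-cs})$. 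Passing to the limit via the coupling above gives the claim.

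The main obstacle will be the first step: although Theorem~\ref{thm::gentreeconv} gives Hausdorff convergence of the trees, neither the parametrization by conformal radius nor the winding field are continuous functionals with respect to $d_{\LH}$ in an obvious way, so one has to work somewhat harder to produce a coupling along which discrete moment and tail bounds can be pushed through to the continuous setting. This technical step is essentially identical to the corresponding one in the proof of Theorem~2.11 in \cite{BLRdimers}, and their argument applies verbatim once Lemmas~\ref{lem::moments} and~\ref{lem::withoutcondhit} have been established in our piecewise-Temperleyan context.
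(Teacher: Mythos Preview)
Your proposal is correct and follows essentially the same route as the paper. Two small corrections worth noting: for the second item, the paper first uses Lemma~\ref{lem::hitball} to rule out the event that the boundary branches $\cup_{i=1}^{n-1}\gamma_i^\delta$ come within $e^{-t}$ of the origin, and then on the complement applies the \emph{conditional} crossing estimate~\eqref{eqn::crossing} of Lemma~\ref{lem::condhit} (not the unconditional Lemma~\ref{lem::withoutcondhit}) so that the separation events at the $\lfloor s\rfloor$ nested scales factorize after conditioning on the exit points $\cR_{\tau'_i}$; this is what produces the $(1-\alpha)^{\lfloor s\rfloor}$ bound.
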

\begin{proof}
For the first term, by Koebe's $1/4$ theorem and Schwartz's lemma, we have
\[\frac{e^{-t}}{4}\le d(0,\gamma_0(t))\le e^{-t}.\]
We define $r_i:=e^{-t+i}$ for $0\le i\le [t]$ and define $r_{[t]+1}:=\partial \Omega$. Define $\tau_i$ to be the first time of $\tilde\gamma_0$ at $\partial B(0,r_i)$. Then, by Lemma~\ref{lem::moments} and the convergence of disrete branches, we have
\[\E[|h_t(0)|^k]^{1/k}\le \sum_{i=0}^{[t]}\E[|h_{\tau_{i}}(0)-h_{\tau_{i+1}}(0)|^k]^{1/k}+2\pi\le M_kt+2\pi.\]
This completes the proof of the first item.

For the second term, recall that we denote by $\{\gamma_1^\delta,\ldots,\gamma^\delta_{n-1}\}$ the discrete branches starting from $z^{\delta,+}_1,\ldots,z^{\delta,+}_{n-1}$ to $(x^\delta_{2n}x^\delta_{1})$. Recall that we denote by $\tilde\gamma_0^\delta$ the branch from $0$ to $\cup_{i=1}^{n}(x^\delta_{2i} x^\delta_{2i+1})\cup\cup_{i=1}^{n-1}\gamma_{i}^\delta$. By~\eqref{eqn::hit1}, we have
\[\PP[d(\cup_{i=1}^{n-1}\gamma^\delta_i,0)<e^{-t}]\le Ce^{-ct}.\] 
On the event $\{d(\cup_{i=1}^{n-1}\gamma^\delta_i,0)>e^{-t}\}\cap\{|\gamma^\delta_0(t)|>e^{-t+s}\}$, we have $\tilde\gamma^\delta_0$ hits $\partial B(0,e^{-t+s})$ before the last hitting of $\partial B(0,e^{-t})$. We denote by $X$ the random walk generating $\tilde\gamma^\delta_0$ and denote by $\tilde \tau_i$ the last hitting time of $B(0,\frac{r_{i}+r_{i-1}}{2})$ for $1\le i\le [s]$ before the last hitting time of $\partial B(0,e^{-t})$. We denote by $\tilde\tau_i'$ the first hitting time of $\partial B(0,r_{i-1})$ after $\tilde\tau_i$. Then, by~\eqref{eqn::crossing}, we have
\begin{align*}
&\PP[\{d(\cup_{i=1}^{n-1}\gamma^\delta_i,0)>e^{-t}\}\cap\{|\gamma^\delta_0(t)|>e^{-t+s}\}]\\
\le&\PP[X[\tilde\tau_i,\tilde\tau'_i]\text{ does not separate }\partial B(0,r_{i})\text{ and }\partial B(0,r_{i-1})\text{ for }1\le i\le [s]]\\
=&\E[\PP[\LR[\tilde\tau_i,\tilde\tau'_i]\text{ does not separate }\partial B(0,r_{i})\text{ and }\partial B(0,r_{i-1})\text{ for }1\le i\le [s]\cond \LR_{\tau'_1},\ldots,\LR_{\tau'_{[s]}}]\\
=&\E[\Pi_{i=1}^{[s]}\PP_{\LR_{\tau_i}}[\LR[\tau_i,\tau'_i]\text{ does not separate }\partial B(0,r_{i})\text{ and }\partial B(0,r_{i-1})]\\
\le& (1-\alpha)^{[s]}.
\end{align*}
This implies that
\[\PP[|\gamma^\delta_0(t)|>e^{-t+s}]\le \PP[d(\cup_{i=1}^{n-1}\gamma^\delta_i,0)<e^{-t}]+\PP[\{d(\cup_{i=1}^{n-1}\gamma^\delta_i,0)>e^{-t}\}\cap\{|\gamma^\delta_0(t)|>e^{-t+s}\}]\le Ce^{-cs}.\]
From the convergence of $\gamma_0^\delta$ to $\gamma_0$, after parameterizing by capacity, $\gamma_0^\delta$ converges to $\gamma_0$ on every compact interval of $[-1,\infty)$. This completes the proof.
\end{proof}
Now, we will prove~\eqref{eqn::convexp}, which is the finally ingredient we need to show Theorem~\ref{thm::windconv}.
\begin{proof}
Conditional on $\{\gamma_1,\ldots,\gamma_{n-1}\}$, we denote by $\Omega_z$ the connected component of $\Omega\setminus (\cup_{i=1}^{n-1}\gamma_i)$ which contains $z$ and suppose $(x_{2j+1}x_{2j+2})\subset\partial \Omega_z$. Choose the conformal map $\phi$ from $\Omega_z$ onto the unit disk $\U$ such that $\phi(z)=0$ and $\Re\phi(x_{2j+1})=\Re\phi(x_{2j+2})$. We define $h_t^{\U}$ similarly as $h_t$ with respect to the basis point $\phi(x_{2j+2})$ for $\phi(\tilde\gamma_z)$. By~\cite[Lemma 2.4]{BLRdimers}, on the event 
\[\LA_t:=\{d(\cup_{i=1}^{n-1}\gamma_i,0)>e^{-t/4}\}\cap\{d(\gamma_0(t),0)<e^{-t/2}\}\] 
there exists a constant $C>0$ such that
\[\left|\E[h_{t}^{\U}(0)]-\left(\E[h_{t}(z)-\text{the winding of $(x_2 x_{2j+2})$}|\gamma_1,\ldots,\gamma_{n-1}]+\arg\phi'(0)\right)\right|\le Ce^{-t/4}.\]
Recall that $\phi(\tilde\gamma_z)$ is the flow line of the GFF with the boundary condition $u_{(\U;\phi(x_{2j+2}),\phi(x_{2j+1}))}$. The law of $\phi(\tilde\gamma_z)$ is invariant after reflecting with respect to $\R$. Thus, we have
\begin{align*}
\E[h_{t}^{\U}(0)]=\pi +(2\pi-\text{the winding of }\phi(x_{2j+1}x_{2j+2}))=\frac{1}{\chi}u_{(\U;\phi(x_{2j+2}),\phi(x_{2j+1}))}(0).
\end{align*}
By definition of $u_{(\U;\phi(x_{2j+2}),\phi(x_{2j+1}))}$, we have
\[u_{(\U;\phi(x_{2j+2}),\phi(x_{2j+1}))}\circ\phi=u_{(\Omega_z;x_{2j+2},x_{2j+1})}+\chi(\arg \phi'-\text{the winding of $(x_2 x_{2j+2})$}).\]
This implies that on the event $\LA_t$,
\begin{equation}\label{eq::au1}
\left|\E[h_{t}(z)|\gamma_1,\ldots,\gamma_{n-1}]-\frac{1}{\chi}u_{(\Omega_z;x_{2j+2},x_{2j+1})}(z)\right|\le Ce^{-t/4}.
\end{equation}
By Theorem~\ref{thm::flowcoupling}, we deduce
\begin{equation}\label{eq::au2}
\E[u_{(\Omega_z;x_{2j+2},x_{2j+1})}(z)]=u_{\Omega}(z;x_1,\ldots,x_{2n};z_1,\ldots,z_{n-1}).
\end{equation}
Combining with Lemma~\ref{lem::hitball} and Lemma~\ref{lem::windanddistance}, by Cauchy's inequality, there exists $C>0$ and $c>0$ such that 
\begin{equation}\label{eq::au3}
\left|\E\left[\left(\E[h_{t}(z)|\gamma_1,\ldots,\gamma_{n-1}]-\frac{1}{\chi}u_{(\Omega_z;x_{2j+2},x_{2j+1})}(z)\right)\one_{\LA_t^c}\right]\right|\le Ce^{-ct}t.
\end{equation}
Combining with~\eqref{eq::au1},~\eqref{eq::au2} and~\eqref{eq::au3}, we therefore complete the proof of \eqref{eqn::convexp}, and, in turn of Theorem \ref{thm::windconv}.
\end{proof}

{\small \bibliographystyle{alpha}
\bibliography{bibliography}}
\end{document}